\documentclass[12pt]{amsart}

  \usepackage{amssymb}
  \usepackage{latexsym}
  \usepackage{amsfonts}
  \usepackage{amsthm}
  \usepackage{amsmath}
  \usepackage{hyperref}
  \usepackage{comment}
  \usepackage{graphicx}
  \usepackage{fullpage}
  \usepackage{url}
  \numberwithin{equation}{section}



  \theoremstyle{plain}
  \newtheorem{theorem}{Theorem}[section]
  \newtheorem{prop}[theorem]{Proposition}
  \newtheorem{lemma}[theorem]{Lemma}
  
  \newtheorem{cor}[theorem]{Corollary}
  

  \theoremstyle{definition}
  \newtheorem{defn}[theorem]{Definition}
  

  \theoremstyle{remark}
  
  \newtheorem{remark}[theorem]{Remark}

  \newcommand{\RR}{\mathbb{R}}
  \newcommand{\BB}{\mathbb{B}}
  \newcommand{\PD}{\partial}
  \newcommand{\lla}{\left\langle}
  \newcommand{\rra}{\right\rangle}
  
  \newcommand{\lf}{\left}
  \newcommand{\rt}{\right}

  \renewcommand{\Re}{\operatorname{Re}}
  \renewcommand{\Im}{\operatorname{Im}}
  \newcommand{\rg}{\operatorname{rg}}

\title{Blowup Stability at Optimal Regularity for the Critical Wave Equation}

\author{Roland Donninger}
\thanks{R.D.~is supported by the Austrian Science Fund FWF, Project P
  30076: ``Self-similar blowup in dispersive wave equations''.}
\address{Universit\"at Wien, Fakult\"at f\"ur Mathematik,
  Oskar-Morgenstern-Platz 1, 1090 Vienna, Austria}
\email{roland.donninger@univie.ac.at}

\author{Ziping Rao}
\address{Universit\"at Wien, Fakult\"at f\"ur Mathematik,
  Oskar-Morgenstern-Platz 1, 1090 Vienna, Austria}
\email{ziping.rao@univie.ac.at}
  
\begin{document}

\begin{abstract}
We establish Strichartz estimates for the radial energy-critical wave
equation in 5 dimensions in similarity coordinates. Using these, we
prove the nonlinear asymptotic stability of the ODE blowup in the energy space.
\end{abstract}

\maketitle


\section{Introduction}

\noindent We consider the 5 dimensional energy-critical wave equation
\begin{equation}\label{eqn:wave5}
\begin{cases}
& (\PD^2_t - \Delta_x) u(t,x) = |u(t,x)|^\frac{4}{3}u(t,x) \\
& u[0] = (f,g)
\end{cases}
\end{equation}
for $u:I\times \RR^5 \to \RR, I \subset \RR, 0 \in I$, and $u[t]:=(u(t,\cdot), \PD_t u(t,\cdot))$.

For studying well-posedness of this equation in low regularity, the suitable solution concept is given by \textit{Duhamel's formula}
\begin{equation}\label{eqn:duhamel}
u(t,\cdot) = \cos(t|\nabla|) f + \frac{\sin (t|\nabla|)}{|\nabla|} g + \int_0^t \frac{\sin ((t-s)|\nabla|)}{|\nabla|} \lf(|u(s,\cdot)|^\frac{4}{3}u(s,\cdot)\rt) ds,
\end{equation}
where $\cos(t|\nabla|)$ and $\frac{\sin(t|\nabla|)}{|\nabla|}$ are the
standard wave propagators.
We say that a function $u$ is a solution to \eqref{eqn:wave5} if it satisfies the integral equation \eqref{eqn:duhamel}.

It is known that this equation is locally well-posed in $\dot{H}^1
\times L^2(\RR^5)$ and this is the minimal regularity required, see
\cite{liso1995} and \cite{sogg1995}. The proof relies on the
celebrated \textit{Strichartz estimates}. These estimates capture the dispersive nature of the equation by describing a space-time improvement of the solution. This gives enough control of the nonlinearity.

One of the most important features of \eqref{eqn:wave5} is the
possibility of singularity formation (or \textit{blowup}) in finite
time from smooth initial data. This is evidenced by the explicit solution 
$$
u^T(t,x):= c_5(T-t)^{-\frac{3}{2}}, \; \; \; c_5 = \lf( \frac{15}{4} \rt)^{\frac{3}{4}},
$$
where $T>0$ is a free parameter. The solution $u^T$ is homogeneous in space but can be truncated by finite speed of propagation. This leads to an explicit example of singularity formation from smooth compactly supported data.

In order to see whether or not this blowup is generic, we need to study its stability. Ideally one would like to study the stability at optimal regularity, which requires certain Strichartz estimates. More precisely, as shown in \cite{donn2017}, this involves establishing Strichartz estimates for wave equations with self-similar potentials, since perturbing around this solution produces an equation of the form
$$
\lf(\PD^2_t - \Delta_x + (T-t)^{-2} V\lf(\frac{x}{T-t} \rt) \rt)
u(t,x) = \text{nonlinear terms}.
$$
Our goal here is to establish such estimates in 5 dimensions, which
will also give insight into other critical models such as the wave
maps equation. Compared to the 3 dimensional case treated in
\cite{donn2017}, the 5 dimensional case gives rise to a number of new substantial difficulties.

To be more precise, we study the stability of the blowup solution
$u^T$ locally in the backwards lightcone in the energy space. Since
$f\in \dot{H}^1(\RR^5)$ implies $f\in H^1_{loc}(\RR^5)$, it is natural
to consider perturbations in the space $H^1\times L^2$. We restrict
ourselves to radial data, in which case \eqref{eqn:wave5}
effectively reduces to
\begin{equation}\label{eqn:wave5rad}
\begin{cases}
& (\PD^2_t - \PD^2_r - \frac{4}{r}\PD_r) u(t,r) = |u(t,r)|^\frac{4}{3} u(t,r)\\
& u[0] = (f,g).
\end{cases}
\end{equation}
Our notion of solutions in the lightcone is given by the evolution semigroup in similarity coordinates, see Section \ref{subsec:simcoord}. Our main result is the following.

\begin{theorem}\label{thm:main} There exist $\delta, M >0$ such that the following holds. For all radial functions $f,g$ on $\RR^5$ with
$$
\| (f,g) - u^1[0] \|_{H^1\times L^2(\BB_{1+\delta}^5)} \leq \frac{\delta}{M},
$$
there exists a $T\in [1-\delta,1+\delta]$ such that the solution $u$ to $\eqref{eqn:wave5}$ exists in the backwards lightcone $\Gamma_T := \{ (t,x) \in [0,T)\times \RR^5: |x| \leq T-t\}$ and satisfies
$$
\int_0^T \frac{\|u (t,\cdot)- u^T(t,\cdot) \|^2_{L^5(\BB_{T-t}^5)}}{\| u^T(t,\cdot) \|^2_{L^5(\BB_{T-t}^5)}} \frac{dt}{T-t} \lesssim \delta^2.
$$
In particular, $u$ blows up at the tip of the cone $\Gamma_T$.
\end{theorem}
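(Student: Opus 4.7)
The plan is to pass to self-similar coordinates, linearize around the blowup profile, suppress the single unstable direction generated by the time-translation symmetry, and close a Lyapunov--Perron fixed point in the Strichartz space built in the preceding sections.

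First I would introduce $\tau = -\log(T-t)$ and $\xi = x/(T-t)$, and rescale via $u(t,x) = (T-t)^{-3/2}\psi(\tau,\xi)$. In these coordinates the blowup $u^T$ becomes the stationary profile $\psi_\ast(\xi) = c_5$ on $\BB_1^5$, the lightcone $\Gamma_T$ becomes the cylinder $[0,\infty)\times \BB_1^5$, and the measure $dt/(T-t)$ becomes $d\tau$. Writing the perturbation $\varphi = \psi - \psi_\ast$ as a first-order system $\Phi$, the radial equation \eqref{eqn:wave5rad} becomes
\begin{equation*}
\PD_\tau \Phi(\tau) = \mbf{L}\Phi(\tau) + \mbf{N}(\Phi(\tau)),
\end{equation*}
where $\mbf{L}$ is the self-similar wave generator dressed with the potential $V = \tfrac{7}{3}\psi_\ast^{4/3}$ obtained by linearizing $|\psi|^{4/3}\psi$ at $\psi_\ast$, and $\mbf{N}$ collects the superquadratic remainder. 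Up to the normalization by $\|\psi_\ast\|_{L^5(\BB_1^5)}$, the Strichartz quantity in the statement is then exactly the $L^2_\tau L^5_\xi$ norm of $\varphi$ on $[0,\infty)\times \BB_1^5$, i.e. the norm of the Strichartz space $X$ for which $e^{\tau\mbf{L}}$ is controlled in the body of the paper.

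Next I would exploit the semigroup theory for $\mbf{L}$: its only unstable spectral point is a simple eigenvalue at $\lambda = 1$, with eigenfunction obtained by differentiating $u^T$ in $T$ and re-expressing in similarity variables. Let $\mbf{P}$ be the associated rank-one spectral projector. From the Duhamel representation
\begin{equation*}
\Phi(\tau) = e^{\tau\mbf{L}}\Phi(0) + \int_0^\tau e^{(\tau-\sigma)\mbf{L}}\mbf{N}(\Phi(\sigma))\,d\sigma,
\end{equation*}
after subtracting a rank-one correction $e^{\tau\mbf{L}}\mbf{C}$ chosen so that the unstable projection of the right-hand side vanishes identically in $\tau$, the Strichartz estimate restricted to $(1-\mbf{P})$ turns the problem into a Banach contraction on a small ball of $X$ for any sufficiently small $\Phi(0)$. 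To remove the correction I would invoke modulation of the blowup parameter: the initial datum $(f,g) - u^T[0]$ depends continuously on $T$ and its $T$-derivative is precisely the unstable eigenfunction, so a one-dimensional intermediate-value step selects the unique $T\in [1-\delta,1+\delta]$ for which $\mbf{C}$ vanishes, yielding a genuine solution whose Strichartz norm satisfies the bound claimed in the theorem.

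The main obstacle I expect is the nonlinear estimate on $\mbf{N}$ in the Strichartz space. Because $|u|^{4/3}u$ is only $C^{2,1/3}$ as a function of its argument, one cannot Taylor-expand beyond second order; bounding $\mbf{N}(\Phi_1)-\mbf{N}(\Phi_2)$ in the predual of $X$ by a small multiple of $\|\Phi_1 - \Phi_2\|_X$ requires a H\"older/Moser-type estimate on fractional powers near the positive constant $\psi_\ast$, coupled with a precise match between the Strichartz exponents in $X$ and the critical scaling of the nonlinearity. Because the Strichartz estimate operates at optimal regularity there is no derivative to spare, and any mismatch in Lebesgue or H\"older exponents would break the contraction; closing this estimate is the technical heart of the nonlinear part of the proof, and it is what genuinely uses the strength of the 5-dimensional Strichartz bounds developed earlier in the paper.
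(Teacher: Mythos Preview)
Your proposal is correct and follows essentially the same approach as the paper: similarity coordinates, linearization producing a single unstable eigenvalue at $\lambda=1$, Lyapunov--Perron with a rank-one correction $\mathbf{C}$, and a continuity/fixed-point argument in $T$ to kill $\mathbf{C}$. One calibration: the nonlinear estimate you flag as the main obstacle is in fact quite cheap here---since $N(0)=N'(0)=0$ one gets $|N(x)|\lesssim |x|^2+|x|^{7/3}$ and $|N(x)-N(y)|\lesssim |x-y|(|x|+|y|+|x|^{4/3}+|y|^{4/3})$ by elementary real-variable bounds, and $\mathbf{N}$ is then controlled in $L^1_\tau\mathcal{H}$ (not a predual Strichartz space) via H\"older using the admissible pairs $(2,5)$ and $(\tfrac{7}{3},\tfrac{14}{3})$; the genuine technical weight of the paper sits entirely in the linear Strichartz estimates for $\mathbf{S}(\tau)(\mathbf{I}-\mathbf{P})$ established in the earlier sections.
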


\subsection{Remarks}
\begin{itemize}
\item Naively, one might think that stability of the ODE blowup
  solution $u^1$ means that small initial perturbations of $u^1$ lead
  to a solution that converges back to $u^1$. This is, however, wrong
  in general as the perturbation might change the blowup time.
  Consequently, 
   Theorem \ref{thm:main} says that if we
  slightly perturb the initial data for the ODE blowup solution $u^1$,
  the corresponding solution
  $u$ approaches $u^T$ as $t\to T-$ for some $T$ close to $1$.
  In particular, the blowup time depends continuously on the
  perturbation.

  \item Theorem \ref{thm:main} shows that there exists an
    \textit{open} set in the energy space of initial data that lead to
    the ODE blowup. In terms of regularity, this result is optimal.

    \item The convergence of the solution $u$ to $u^T$ is normalized to the blowup behaviour of
      $u^T$ and measured in a Strichartz norm. Observe that $\int_0^T\frac{dt}{T-t}=\infty$ and thus,
      the integrand
     \[ \frac{\|u (t,\cdot)- u^T(t,\cdot) \|^2_{L^5(\BB_{T-t}^5)}}{\|
         u^T(t,\cdot) \|^2_{L^5(\BB_{T-t}^5)}} \]
     must go to zero in an averaged sense as $t\to T-$.
\end{itemize}

\subsection{Related results} Singularity formation in critical wave
equations has attracted a lot of interest in recent years. There are
different types of blowup. In type I blowup, the energy norm of the solution becomes unbounded in finite time (e.g.~the aforementioned ODE blowup). On the other hand, type II blowup solutions have bounded energy norm. Blowup solutions of type II were constructed in \cite{kst2009}, \cite{krsc2014}, \cite{hira2012}, and \cite{jend2017}. Substantial progress has been made in the study of type II blowup. In \cite{dkm2011u, dkm2012u, dkm2012, dkm2016}, type II blowup profiles were classified completely. It is worth mentioning that all type II blowup solutions are unstable. However, in recent breakthrough work, a family of type II blowup solutions that is ``as stable as it can possibly be" has been identified, see \cite{krie2017} and \cite{bukr2017}.

Less is known for the type I case. For subcritical equations, there is
the remarkable series of papers \cite{meza2003, meza2005, meza2007,
  meza2008, meza2012a, meza2012b, meza2015}. Unfortunately, the methods
developed there do not extend to the critical case. A numerical study
in \cite{btt2004} suggests that type I blowup is generic. The
stability of the ODE blowup in the lightcone was established in a
stronger topology in \cite{dosc2014, dosc2016,
  dosc2017}. Subsequently, \cite{donn2017} was able to prove this
stability in the energy norm by establishing suitable Strichartz estimates in 3
dimensions.

Finally, our results may also be interesting from the point of view
of Strichartz estimates for wave equations with variable
coefficients. This is a very active area of current research, see
e.g.~\cite{dafa2008, meta2012}.

\subsection{Outline} We follow the approach from \cite{donn2017}, but
we will point out some major differences in this work. As is well
known, the 3 dimensional radial wave equation is equivalent to the 1
dimensional wave equation with a Dirichlet condition at the
origin. This special structure was heavily 
exploited in \cite{donn2017}. As a consequence, 
the present paper is far from being a straightforward adaptation of the
methods from \cite{donn2017}.

Following the machinery from \cite{dosc2012, donn2011, dsa2012}, we first introduce similarity coordinates in the lightcone $\Gamma_T$ by
\begin{equation}\label{eqn:simcoor}
\tau :=  \log \frac{T}{T-t}, \qquad \rho := \frac{r}{T-t}.
\end{equation}
Notice that this coordinate transformation depends on the parameter
$T$, but we drop this dependence in the notation for simplicity. Under
the transformation $(t,r)\mapsto (\tau,\rho)$, $\Gamma_T$ is mapped to the infinite cylinder $\RR_+ \times \overline{\BB^5}$. We rewrite \eqref{eqn:wave5rad} as an evolution problem of the form
$$
\PD_\tau \Psi(\tau) = \tilde{\mathbf{L}}_0 \Psi(\tau) + \mathbf{F} (\Psi(\tau)),
$$
where $\tilde{\mathbf{L}}_0$ is a spatial differential operator and
$\mathbf F$ is the nonlinearity.
The ODE blowup $u^T$ in this formulation is given by the constant solution $(c_5, \frac{2}{3} c_5)$. Perturbing around this solution by inserting the ansatz $\Psi(\tau) = (c_5, \frac{2}{3} c_5) + \Phi(\tau)$ yields
$$
\PD_\tau \Phi(\tau) = (\tilde{\mathbf{L}}_0 + \mathbf{L}') \Phi(\tau) + \mathbf{N} (\Phi(\tau)).
$$
Here we obtain a potential term $\mathbf{L}'$ from the linearisation
of $\mathbf{F}$, and $\mathbf{N}$ is the remaining nonlinearity. We
show that the closure of $\tilde{\mathbf{L}}_0$, denoted by
$\mathbf{L}_0$, generates a semigroup $\mathbf{S}_0(\tau)$ on
$\mathcal{H} := H^1 \times L^2 (\BB^5)$. For the first component of this semigroup, we have the Strichartz estimates
$$
\lf\Vert \lf[\mathbf{S}_0(\cdot) \mathbf{f} \rt]_1 \rt\Vert_{L^p(\RR_+;L^q(\BB^5))} \lesssim \|\mathbf{f} \|_{\mathcal{H}}, \; \; \frac{1}{p}+\frac{5}{q} = \frac{3}{2}
$$
for $p\in [2, \infty]$ and $q\in [\frac{10}{3},5]$. This follows from the standard Strichartz estimates for the wave operator via a scaling argument.

From the bounded perturbation theorem, we also have that $\mathbf{L}_0
+ \mathbf{L}'$ generates a semigroup $\mathbf{S}(\tau)$ on
$\mathcal{H}$. The operator $\mathbf{L}_0 + \mathbf{L}'$ has one
unstable eigenvalue $\lambda=1$ coming from the time translation
symmetry of the equation, and we use the Riesz projection $\mathbf{P}$
to remove this unstable subspace. An application of the
Gearhart-Pr\"uss theorem yields the bound $\| \mathbf{S}(\tau)
(\mathbf{I} - \mathbf{P}) \|_{\mathcal{B}(\mathcal{H})} \leq
C_\epsilon e^{\epsilon\tau}$ for any (fixed) $\epsilon>0$. In order to obtain Strichartz estimates for the semigroup $\mathbf{S}(\tau)$, we find an explicit representation from Laplace inversion. For sufficiently regular $\mathbf{f}$, let $\tilde{\mathbf{f}} =(\tilde{f}_1,\tilde{f}_2) := (\mathbf{I} - \mathbf{P}) \mathbf{f}$. Then we have
$$
[\mathbf{S}(\tau) \tilde{\mathbf{f}}]_1(\rho) = \frac{1}{2\pi i}\lim_{N\to\infty} \int_{\epsilon-i N}^{\epsilon + i N} e^{\lambda \tau} \int_0^1 G(\rho,s;\lambda) F_\lambda(s) ds d\lambda,
$$
where $G$ is the Green function of the spectral ODE associated to $\mathbf{L}$ and $F_\lambda(s) := (\lambda + \frac{5}{2}) \tilde{f}_1(s)+ s\tilde{f}_1'(s) + \tilde{f}_2(s)$.

The Green function $G$ is constructed by considering a perturbation of
the free equation by a potential, and this is done by Volterra
iterations. Then, we obtain a decomposition $G(\rho, s;\lambda) =
G_0(\rho, s;\lambda) + \tilde{G}(\rho, s;\lambda)$, where $G_0$ is the
Green function of the free equation. Hence the representation of
$\mathbf{S}(\tau)$ also decomposes into a free part
$\mathbf{S}_0(\tau)$ and a perturbation $\mathbf{T}(\tau)$
accordingly. The perturbative part has an extra decay as $|\Im
\lambda| \to \infty$, which is ultimately what we exploit in
establishing the Strichartz estimates. Due to the singular behaviour
of the fundamental system, the construction of $G$ here is
considerably more difficult than in the 3 dimensional case. 


Next, we turn to proving Strichartz estimates for $\mathbf{T}(\tau)$ by carefully examining the oscillatory kernels. An extra difficulty comes in, compared to the 3 dimensional case, from the more complicated form of the free fundamental solutions, one of which is given by
$$
\varphi_1 (\rho; \lambda) = \rho^{-3}(1+\rho)^{\frac{1}{2}-\lambda}(2+\rho(-1+2\lambda)).
$$
One part of this solution has growth of order $O(\lla \lambda \rra)$, which destroys the nice extra decay of $\tilde{G}$. However, this part is also less singular in $\rho$ near $0$ ($\rho^{-2}$ instead of $\rho^{-3}$). Balancing the undesired growth in $\lambda$ and the singularity in $\rho$ is a delicate procedure, which requires additional work. With this we prove Strichartz estimates for the full semigroup
$$
\lf\Vert \lf[\mathbf{S}(\cdot) \mathbf{f} \rt]_1 \rt\Vert_{L^p(\RR_+;L^q(\BB^5))} \lesssim \|\mathbf{f} \|_{\mathcal{H}}, \; \; \frac{1}{p}+\frac{5}{q} = \frac{3}{2}
$$
for $p\in [2, \infty]$ and $q\in [\frac{10}{3},5]$. With the same
techniques we also improve the growth bound from the Gearhart-Pr\"uss theorem to $\| \mathbf{S}(\tau)(\mathbf{I} - \mathbf{P}) \|_{\mathcal{B}(\mathcal{H})} \lesssim 1$.

Using Strichartz estimates and the improved growth bound, we are able to control the nonlinearity as in \cite{donn2017} and finish the proof of Theorem \ref{thm:main}.

\subsection{Notation}

We write $\mathbb{B}^5_R:= \{x\in \RR^5: |x| < R\}$ and $\BB^5 := \BB^5_1$. A function $f\in C^1([0,1])$ with $f'(0)=0$ gives rise to a radial function on $\overline{\BB^5}$. Hence we define
$$
\| f \| ^p _{L^p(\BB^5_R)} := \int_0^R |f(r)|^p r^4 dr,
$$
and
$$
\|f \|^2 _{H^1(\BB^5_R)} := \int_0^R |f'(r)|^2 r^4 dr + \int_0^R |f(r)|^2 r^4 dr.
$$

Bold letters denote 2-component functions, for example, $\mathbf{f} = (f_1, f_2)$. We write $f(x) = O(g(x))$ if $|f(x)| \lesssim |g(x)|$, and $f \sim g$ as $x\to a$ if $\lim_{x\to a} \frac{f(x)}{g(x)}=1$. We also write $f \simeq g$ if $f\lesssim g \lesssim f$. Lastly, we use the Japanese bracket notation $\lla x \rra := \sqrt{1+|x|^2}$.

\section{Similarity coordinates and semigroup theory}

\subsection{Similarity coordinates}\label{subsec:simcoord}

As mentioned before, similarity coordinates are defined by \eqref{eqn:simcoor} on $\Gamma_T$.
For a solution $u \in C^\infty(\Gamma_T)$ to \eqref{eqn:wave5rad}, we define
$$ 
\psi(\tau ,\rho) = (T e^{-\tau})^\frac{3}{2} u(T-T e^{-\tau}, Te^{-\tau}\rho) ,
$$
and we have $\psi \in C^\infty(\RR_+ \times \overline{\BB^5})$. Then, by setting
\begin{equation} \label{eqn:coordtrans}
\begin{aligned}
\psi_1(\tau ,\rho) &:= \psi(\tau ,\rho) ,\\ 
\psi_2(\tau ,\rho) &:= \lf( \PD_\tau + \rho \PD_\rho + \frac{3}{2} \rt) \psi(\tau,\rho) ,
\end{aligned}
\end{equation}
we obtain the first order system from \eqref{eqn:wave5rad}, 
\begin{equation}\label{eqn:wavesim}
\begin{cases}
& \PD_\tau \psi_1 = -\rho \PD_\rho\psi_1 - \frac{3}{2}\psi_1 + \psi_2 \\
& \PD_\tau \psi_2 = \PD_\rho^2 \psi_1 + \frac{4}{\rho} \PD_\rho\psi_1 -\rho \PD_\rho \psi_2 - \frac{5}{2} \psi_2+ |\psi_1|^{\frac{4}{3}} \psi_1 \\
& \psi_1(0,\rho) = T^{\frac{3}{2}} f(T\rho), \; \; \psi_2(0,\rho) = T^{\frac{5}{2}} g( T\rho ).
\end{cases}
\end{equation}
The ODE blowup solution $u^T$ corresponds to the constant solution $(\psi^T_1,\psi^T_2) = (c_5, \frac{3}{2} c_5)$. We take a perturbation around this solution by the ansatz
\begin{equation}\label{eqn:perturbation}
(\psi_1, \psi_2) = \lf(c_5, \frac{3}{2} c_5 \rt) + (\varphi_1, \varphi_2).
\end{equation}
This gives the system
\begin{equation}\label{eqn:NLWpert}
\begin{cases}
& \PD_\tau  \varphi_1  = -\rho \PD_\rho\varphi_1 - \frac{3}{2}\varphi_1 + \varphi_2  \\
& \PD_\tau \varphi_2  =  \PD_\rho^2 \varphi_1 + \frac{4}{\rho} \PD_\rho\varphi_1 -\rho \PD_\rho \varphi_2 - \frac{5}{2} \varphi_2  + F(\varphi_1)  \\
& \varphi_1(0,\rho) = \psi_1(0,\rho) - c_5,\; \;  \varphi_2(0,\rho) = \psi_2(0,\rho) - \frac{3}{2} c_5
\end{cases}
\end{equation}
where $F(x) = - c_5^{\frac{7}{3}} + |c_5 + x |^{\frac{4}{3}}(c_5 + x )$. We take the Taylor expansion of $F$ around zero: $F(x) = F(0) + F'(0) x + N(x)$. We see that $F(0)=0$, and
\begin{equation*}
F'(0) = \frac{7}{3} |c_5 + x|^\frac{4}{3}\Big |_{x=0} = \frac{7}{3} \cdot \frac{15}{4} = \frac{35}{4}.
\end{equation*}
This linearisation produces a potential term. The nonlinearity is then given by
$$
N(x)  = F(x) - F(0) - F'(0) x  = \lf| c_5  + x \rt|^{\frac{4}{3}}  \lf( c_5 + x \rt)-   c_5^{\frac{7}{3}}  - \frac{35}{4} x.
$$

Hence we define the formal differential operators
\begin{equation*}
\tilde{\mathbf{L}}_0 \mathbf{u} (\rho) := \begin{pmatrix}
-\rho u_1'(\rho) - \frac{3}{2}u_1(\rho) + u_2(\rho) \\
u''_1(\rho) + \frac{4}{\rho} u'_1(\rho) -\rho u'_2(\rho) - \frac{5}{2} u_2(\rho)
\end{pmatrix},
\end{equation*}
\begin{equation*}
\mathbf{L}' \mathbf{u} (\rho) := \begin{pmatrix}
0 \\
\frac{35}{4} u_1(\rho)
\end{pmatrix},
\end{equation*}
and
\begin{equation*}
\mathbf{N} ( \mathbf{u} ) (\rho) := \begin{pmatrix}
0 \\
N( u_1(\rho) )
\end{pmatrix}.
\end{equation*}
Let $\Phi(\tau)(\rho) = (\varphi_1(\tau,\rho),\varphi_2(\tau,\rho))$. We can then write the system \eqref{eqn:NLWpert} as
\begin{equation}
\begin{cases}
& \PD_\tau \Phi(\tau) = ( \tilde{\mathbf{L}}_0 + \mathbf{L}') \Phi(\tau) + \mathbf{N} (\Phi(\tau)) \\
& \Phi(0)(\rho) = (\varphi_1(0,\rho), \varphi_2(0,\rho)).
\end{cases}
\end{equation}

We study this evolution on the Banach space
$$
\mathcal{H}:= \{\mathbf{f} \in H^1 \times L^2(\mathbb{B}^5): \mathbf{f} \text{ radial} \},
$$
with the norm
$$
\|\mathbf{f}\|^2_\mathcal{H}:= \|f_1\|^2_{H^1 (\mathbb{B}^5)}+\|f_2\|^2_{L^2(\mathbb{B}^5)}
$$
for $\mathbf{f}=(f_1,f_2)$. In next parts of this section, we will see that the closure of $\tilde{\mathbf{L}}_0 + \mathbf{L}'$ with a suitable domain generates a semigroup $\mathbf{S}(\tau)$ on $\mathcal{H}$, which leads to the concept of energy-class strong lightcone solutions.

\begin{defn}\label{defn:sol} We say that $u:\Gamma_T \to \mathbb{C}$ is an \textit{energy-class solution to \eqref{eqn:wave5rad} in the lightcone} if the corresponding $\Phi: [0,\infty) \to \mathcal{H}$ is in $C([0,\infty);\mathcal{H})$ and satisfies
$$
\Phi(\tau) = \mathbf{S}(\tau)\mathbf{u} + \int_0^\tau \mathbf{S} (\tau - \sigma) \mathbf{N}(\Phi(\sigma)) d\sigma
$$
for all $\tau>0$.
\end{defn}



\subsection{Semigroup of the free evolution}

For the linear differential operator $\tilde{\mathbf{L}}_0$, we define its domain to be
$$
\mathcal{D}(\tilde{\mathbf{L}}_0) := \{ \mathbf{f}=(f_1,f_2)\in C^2\times C^1([0,1]): f_1'(0)=0 \}.
$$
So now $\tilde{\mathbf{L}}_0$ becomes a densely defined (unbounded) linear operator on $\mathcal{H}$. We would like to show that the closure of $\tilde{\mathbf{L}}_0$ generates a semigroup. We start with introducing an equivalent inner product on the dense subspace $\mathcal{D}(\tilde{\mathbf{L}}_0)$ given by 
$$
(\mathbf{f} | \mathbf{g} )_E: = \int_0^1 f_1'(r) \overline{g_1'(r)}r^4 dr  + \int_0^1 f_2(r) \overline{g_2(r)} r^4 dr +  f_1(1) \overline{g_1(1)} ,
$$
and we denote by $\| \cdot \|_E$ the norm defined by $(\cdot|\cdot)_E$.

\begin{lemma}\label{lemma:equivnorm} The norms $\|\cdot\|_E$ and $\|\cdot\|_\mathcal{H}$ are equivalent norms on $\mathcal{D}(\tilde{\mathbf{L}}_0)$. In other words, we have
$$
\|\mathbf{f}\|_E \simeq \|\mathbf{f}\|_{\mathcal{H}}
$$
for all $\mathbf{f} \in \mathcal{D}(\tilde{\mathbf{L}}_0)$.

In particular, taking the completions of $\mathcal{D}(\tilde{\mathbf{L}}_0)$ with respect to both the norms $\| \cdot \|_E$ and $\|\cdot \|_\mathcal{H}$, we conclude that they are also equivalent norms on $\mathcal{H}$.
\end{lemma}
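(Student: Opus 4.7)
The two norms agree on the $f_2$ component and on the $\int_0^1 |f_1'(r)|^2 r^4 dr$ piece, so the equivalence on $\mathcal{D}(\tilde{\mathbf{L}}_0)$ reduces to proving the following two weighted one-dimensional estimates for $f_1\in C^2([0,1])$:
\begin{align*}
\text{(a)}\quad &\int_0^1 |f_1(r)|^2 r^4\,dr \;\lesssim\; \int_0^1 |f_1'(r)|^2 r^4\,dr + |f_1(1)|^2,\\
\text{(b)}\quad &|f_1(1)|^2 \;\lesssim\; \int_0^1 |f_1(r)|^2 r^4\,dr + \int_0^1 |f_1'(r)|^2 r^4\,dr.
\end{align*}
Once these are proved for the dense subspace $\mathcal{D}(\tilde{\mathbf{L}}_0)$, the final sentence (equivalence on $\mathcal{H}$) follows by passing to completions.

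For (a), I would apply the fundamental theorem of calculus on $[r,1]$, writing $f_1(r) = f_1(1) - \int_r^1 f_1'(s)\,ds$, and then use the weight-splitting trick $1 = s^2\cdot s^{-2}$ in Cauchy--Schwarz:
$$
\left|\int_r^1 f_1'(s)\,ds\right|^2 \;\le\; \left(\int_r^1 |f_1'(s)|^2 s^4\,ds\right)\left(\int_r^1 s^{-4}\,ds\right) \;\lesssim\; r^{-3}\int_0^1 |f_1'(s)|^2 s^4\,ds.
$$
Multiplying $|f_1(r)|^2 \lesssim |f_1(1)|^2 + r^{-3}\int_0^1 |f_1'|^2 s^4\,ds$ by $r^4$ and integrating over $[0,1]$ yields (a), since $r^{-3}\cdot r^4 = r$ is integrable. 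For (b), note that $r^4 \simeq 1$ on $[\tfrac12, 1]$, so the weighted norm controls the unweighted $H^1([\tfrac12,1])$ norm of $f_1$, and the standard one-dimensional Sobolev trace estimate then gives $|f_1(1)|^2 \lesssim \|f_1\|_{H^1(\frac12,1)}^2 \lesssim \|f_1\|_{H^1(\mathbb{B}^5)}^2$.

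The only subtle point is the degeneracy of the weight $r^4$ at the origin in step (a): $s^{-4}$ fails to be integrable at $0$, which produces the singular factor $r^{-3}$. However this is exactly compensated by the outer weight $r^4$, leaving an integrable $r$. Observe that the boundary condition $f_1'(0)=0$ is not used here; only continuity of $f_1$ at the origin, which is automatic in $\mathcal{D}(\tilde{\mathbf{L}}_0)$.
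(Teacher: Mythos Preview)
Your proof is correct. Part (b) matches the paper's argument exactly. For part (a), however, you take a different route: you integrate $f_1'$ from $r$ to $1$ and apply Cauchy--Schwarz with the weight splitting $1=s^2\cdot s^{-2}$, whereas the paper writes $|f(r)|^2 r^4 = \left|\int_0^r \frac{d}{ds}(f(s)s^2)\,ds\right|^2$, expands the square, and integrates by parts so that the cross term produces a boundary contribution $2|f(1)|^2$ and a negative term that is simply discarded. Your argument is more direct and transparent---it is essentially a weighted Hardy inequality done by hand---while the paper's integration-by-parts trick avoids the intermediate singular factor $r^{-3}$ altogether and yields slightly sharper implicit constants. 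Both approaches require only $f_1\in C^1([0,1])$, as you correctly observe.
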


\begin{proof} Here we need to show
\begin{equation}\label{eqn:equivh1}
\int_0^1 |f'(r) |^2 r^4 dr + |f(1)|^2 \lesssim \|f\|^2_{H^1(\BB^5)} \lesssim \int_0^1 |f'(r)|^2 r^4 dr + |f(1)|^2
\end{equation}
for all $f\in C^2([0,1])$. Since $\|f\|_E^2 = \|\Re f\|_E^2 + \| \Im f\|_E^2 $, we only need to consider the case when $f$ is real-valued.

By the Sobolev embedding $H^1(\frac{1}{2},1) \hookrightarrow L^\infty(\frac{1}{2},1)$, we have
$$
|f(1)|^2  \lesssim \|f\|_{H^1(\frac{1}{2},1)}^2 \lesssim \|f\|^2_{H^1(\BB^5)},
$$
which gives the first inequality of \eqref{eqn:equivh1}.

Next, we need to control the $L^2$ part of the $H^1$ norm of $f$. Since $r\in[0,1]$, we can write
\begin{align*}
|f(r)|^2 r^4  = & \lf| \int_0^r \frac{d}{ds} \lf( f(s)s^2 \rt) ds \rt|^2 \\
\leq & \int_0^1 \lf| f'(s)s^2 + 2 f(s) s \rt|^2 ds \\
= & \int_0^1 |f'(s)|^2 s^4 ds +  4 \int_0^1 |f(s)|^2 s^2ds + 2 \int_0^1 \lf( \frac{d}{ds} |f(s)|^2 \rt) s^3 ds  \\
= & \int_0^1 |f'(s)|^2 s^4 ds + 2 \lf( |f(s)|^2s^3 \rt) \Big |_{s=0}^{s=1} - 6 \int_0^1  |f(s)|^2 s^2 ds\\
\lesssim & \int_0^1 |f'(s)|^2 s^4 ds +  |f(1)|^2 .
\end{align*}
The right hand side is independent of $r$, so we can integrate over $r$ and obtain
$$
\int_0^1 |f(r)|^2 r^4 dr \lesssim \int_0^1 |f'(s)|^2 s^4 ds +  |f(1)|^2,
$$
and we also have the second inequality of \eqref{eqn:equivh1}.
\end{proof}

With this set up, we now turn to the operator $\tilde{\mathbf{L}}_0$.

\begin{prop}\label{prop:semigp}
The operator $\tilde{\mathbf{L}}_0:\mathcal{D}(\tilde{\mathbf{L}}_0)\subset \mathcal{H} \to \mathcal{H}$ is closable and its closure $\mathbf{L}_0$ generates a strongly-continuous one-parameter semigroup $\{ \mathbf{S}_0(\tau):\tau\geq 0\}$ on $\mathcal{H}$, with the estimate
$$
\| \mathbf{S}_0(\tau) \mathbf{f} \|_{\mathcal{H}} \lesssim \| \mathbf{f} \|_{\mathcal{H}},
$$
for all $\tau \geq 0$ and $\mathbf{f} \in \mathcal{H}$.
\end{prop}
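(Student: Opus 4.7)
My plan is to apply the Lumer--Phillips theorem in the Hilbert space $(\mathcal{H}, (\cdot|\cdot)_E)$. By Lemma \ref{lemma:equivnorm} the $E$-inner product induces a norm equivalent to $\|\cdot\|_{\mathcal{H}}$, so a contraction-semigroup bound in $\|\cdot\|_E$ will deliver the uniform $\|\cdot\|_{\mathcal{H}}$-bound claimed in the proposition. It therefore suffices to verify (i) dissipativity, $\Re(\tilde{\mathbf{L}}_0\mathbf{f}|\mathbf{f})_E \le 0$ for all $\mathbf{f}\in\mathcal{D}(\tilde{\mathbf{L}}_0)$, and (ii) range density, $\rg(\lambda_0-\tilde{\mathbf{L}}_0)$ is dense in $\mathcal{H}$ for some $\lambda_0>0$. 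Lumer--Phillips then yields that $\tilde{\mathbf{L}}_0$ is closable and that its closure $\mathbf{L}_0$ generates a contraction semigroup in the $E$-norm.

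\textbf{Dissipativity.} For $\mathbf{f}\in\mathcal{D}(\tilde{\mathbf{L}}_0)$ I would expand $(\tilde{\mathbf{L}}_0\mathbf{f}|\mathbf{f})_E$ into its three constituent terms and integrate by parts. Differentiating the first component gives $[\tilde{\mathbf{L}}_0\mathbf{f}]_1' = -\rho f_1''-\tfrac{5}{2}f_1'+f_2'$; paired with $\overline{f_1'}\rho^4$ and integrated in the $-\rho^5 f_1''\overline{f_1'}$ piece, this produces $-\tfrac{1}{2}|f_1'(1)|^2+\tfrac{5}{2}\int\rho^4|f_1'|^2$, which cancels the companion $-\tfrac{5}{2}\int\rho^4|f_1'|^2$. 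The analogous manipulation for the $\rho^5 f_2'\overline{f_2}$ term in the second integral produces $-\tfrac{1}{2}|f_2(1)|^2$, while the Laplacian piece $(\rho^4 f_1')'\overline{f_2}$ integrates by parts to $f_1'(1)\overline{f_2(1)}-\int\rho^4 f_1'\overline{f_2'}$ (boundary at $\rho=0$ killed by the weight). On taking real parts, the two cross-terms $\Re\int\rho^4 f_2'\overline{f_1'}$ and $-\Re\int\rho^4 f_1'\overline{f_2'}$ are equal and opposite, hence cancel; combining the remaining boundary contributions with the surface part $(-f_1'(1)-\tfrac{3}{2}f_1(1)+f_2(1))\overline{f_1(1)}$ of $(\cdot|\cdot)_E$ and completing the square yields
\[ \Re(\tilde{\mathbf{L}}_0\mathbf{f}|\mathbf{f})_E = -\tfrac{1}{2}\bigl|f_1'(1)-f_2(1)+f_1(1)\bigr|^2 - |f_1(1)|^2 \le 0. \]

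\textbf{Range and conclusion.} For the range condition I fix $\lambda_0>0$ and solve $(\lambda_0-\tilde{\mathbf{L}}_0)\mathbf{f}=\mathbf{g}$ on a dense subset of smooth data. The first component yields explicitly $f_2=(\lambda_0+\tfrac{3}{2})f_1+\rho f_1'-g_1$; substituting into the second reduces the system to a second-order linear ODE for $f_1$ on $(0,1]$ with a regular singular point at $\rho=0$ (from the $\tfrac{4}{\rho}$-term) and a further singular point at the sonic line $\rho=1$. A Frobenius analysis at the origin singles out a local fundamental solution with $f_1'(0)=0$, and standard linear ODE theory extends it to a $C^2$ solution on $[0,1]$; choosing $\lambda_0$ so as to avoid the indicial exponents at $\rho=1$ keeps the solution and its derivative bounded up to the lightcone, placing $\mathbf{f}$ in $\mathcal{D}(\tilde{\mathbf{L}}_0)$. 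I expect this range step to be the main technical obstacle, since the degeneracy of $\tilde{\mathbf{L}}_0$ at the sonic line is a characteristic feature of self-similar lightcone problems and demands delicate local analysis. Once (i) and (ii) are in hand, Lumer--Phillips gives closability and contraction-semigroup generation in the $E$-norm, and Lemma \ref{lemma:equivnorm} converts this into the stated $\mathcal{H}$-norm bound.
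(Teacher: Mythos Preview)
Your overall strategy (Lumer--Phillips in the $E$-inner product, then transfer via Lemma \ref{lemma:equivnorm}) and your dissipativity computation are correct and match the paper's approach; your completed-square identity
\[ \Re(\tilde{\mathbf{L}}_0\mathbf{f}|\mathbf{f})_E = -\tfrac{1}{2}\bigl|f_1'(1)-f_2(1)+f_1(1)\bigr|^2 - |f_1(1)|^2 \]
is in fact a cleaner way of packaging the same boundary algebra the paper ends up with.

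The range argument, however, has a genuine gap. After reducing to the second-order ODE for $f_1$, you appeal to Frobenius theory at $\rho=0$ and then say that ``standard linear ODE theory extends it to a $C^2$ solution on $[0,1]$'', with the remark that one should ``choose $\lambda_0$ so as to avoid the indicial exponents at $\rho=1$''. This is where the argument breaks down. Standard ODE theory only extends the solution on the open interval $(0,1)$; the singular point at $\rho=1$ is not avoided by any choice of $\lambda_0$. The indicial exponents there are $0$ and $\tfrac{1}{2}-\lambda_0$, so for any $\lambda_0>0$ one of the two homogeneous solutions fails to be $C^2$ at $\rho=1$, and the Frobenius solution selected at the origin will generically contain this singular component. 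Moreover, you are dealing with the \emph{inhomogeneous} equation, so simply picking the regular homogeneous branch at one endpoint does not solve the problem.

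What the paper does---and what your sketch omits---is construct the particular solution for a concrete value $\lambda_0=1$ via variation of constants using an explicit fundamental system $\{\phi_0,\phi_1\}$, and then verify by direct computation (l'H\^opital at $\rho=0$, and an explicit cancellation of the $(1-\rho)^{-1}$ singularities between the $\phi_0''$ term and the inhomogeneity at $\rho=1$) that the resulting $u_1$ lies in $C^2([0,1])$. This endpoint check is precisely the ``delicate local analysis'' you allude to but do not carry out; without it, membership in $\mathcal{D}(\tilde{\mathbf{L}}_0)$ is not established and the range condition is unproven.
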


\begin{proof} We use the Lumer-Philips theorem (\cite[p.83 Theorem 3.15 and p.88 Proposition 3.23]{enna2000}) with the inner product $(\cdot | \cdot)_E$ and norm $\|\cdot\|_E$. Here we have to show two things:
\begin{itemize}
\item $\Re(\tilde{\mathbf{L}}_0 \mathbf{u} | \mathbf{u})_E \leq 0 $ for all $\mathbf{u} \in \mathcal{D}(\tilde{\mathbf{L}}_0)$;
\item range of $(\lambda - \tilde{\mathbf{L}}_0)$ is dense for some $\lambda > 0$.
\end{itemize}

For the first part, let $\mathbf{u} = (u_1,u_2) \in \mathcal{D}(\tilde{\mathbf{L}}_0)$. We compute
\begin{align*}
\Re(\tilde{\mathbf{L}}_0 \mathbf{u} | \mathbf{u})_E
= & \Re\lf( \int_0^1 \lf(-\frac{5}{2} |u_1'(\rho)|^2 -\rho u_1''(\rho)\overline{u_1'(\rho)}  +u_2'(\rho)\overline{u_1'(\rho)} \rt) \rho^4 d\rho \rt)\\
& + \Re\lf( \int_0^1 \lf(u_1''(\rho)\overline{u_2(\rho)} + \frac{4}{\rho} u_1'(\rho) \overline{u_2(\rho)} - \rho u_2'(\rho) \overline{u_2(\rho)} - \frac{5}{2} |u_2(\rho)|^2 \rt)\rho^4 d\rho \rt) \\
&  - \Re\lf(u_1'(1) \overline{u_1(1)}-\frac{3}{2}|u_1(1)|^2  +u_2(1) \overline{u_1(1)} \rt) .
\end{align*}
We first collect the terms containing $u_1$ only and use integration by parts,
\begin{align*}
I_1 
& =  -\frac{5}{2} \int_0^1  |u_1'(\rho)|^2 \rho^4 d\rho - \frac{1}{2} \int_0^1 \frac{d}{d\rho} \lf(|u_1'(\rho)|^2 \rt) \rho^5 d\rho - \Re\lf(u_1'(1) \overline{u_1(1)} \rt) -\frac{3}{2} |u_1(1)|^2  \\
& = - \frac{1}{2}|u_1'(1)|^2 -  \Re\lf(u_1(1) \overline{u_1'(1)} \rt) - \frac{3}{2}|u_1(1)|^2 .
\end{align*}
Similarly we collect the terms containing $u_2$ only,
$$
I_2 = -\frac{1}{2} \int_0^1 \frac{d}{d\rho} \lf( |u_2(\rho)|^2 \rt)\rho^5 d\rho - \frac{5}{2} \int_0^1 |u_2(\rho)|^2 \rho^4 d\rho 
 = - \frac{1}{2} |u_2(1)|^2.
$$
And the rest of the terms are
\begin{align*}
 I_3  
= &  \Re\lf( \int_0^1 \lf( u_2'(\rho)\overline{u_1'(\rho)} \rho^4 + 4 u_1'(\rho) \overline{u_2(\rho)} \rho^3 \rt) d\rho  + \int_0^1 \frac{d}{d\rho}\lf( u_1'(\rho) \rt) \overline{u_2(\rho)}\rho^4 d\rho + u_2(1) \overline{u_1(1)} \rt) \\
= & \Re \lf( u_1'(1) \overline{u_2(1)} \rt) + \Re\lf(u_2(1) \overline{u_1(1)} \rt).
\end{align*}
Summing up all terms, we get
\begin{align*}
& \Re (\tilde{\mathbf{L}}_0 \mathbf{u} | \mathbf{u})_E \\
= &  I_1 + I_2 + I_3 \\
= &- \frac{1}{2}|u_1'(1)|^2 - \frac{3}{2}|u_1(1)|^2 - \frac{1}{2} |u_2(1)|^2  + \Re \lf( u_1'(1) \overline{u_2(1)} \rt) + \Re\lf(u(1) \overline{(u_2(1)-u_1'(1))} \rt) \\
\leq & - \frac{1}{2}|u_1'(1)|^2 - \frac{3}{2}|u_1(1)|^2 - \frac{1}{2} |u_2(1)|^2  + \Re \lf( u_1'(1) \overline{u_2(1)} \rt) + \frac{1}{2} |u_1(1)|^2 + \frac{1}{2} |u_2(1)-u_1'(1)|^2  \\
= & - |u_1(1)|^2  \leq 0.
\end{align*}

Next, in order to show that the range of $(\lambda -\tilde{\mathbf{L}}_0)$ is dense, we show that the system of ODEs
$$
(\lambda-\tilde{\mathbf{L}}_0) \mathbf{u} = \mathbf{f}
$$
admits a solution $\mathbf{u}\in \mathcal{D}(\tilde{\mathbf{L}}_0)$ for every $\mathbf{f} \in C^\infty\times C^\infty([0,1])$, which is a dense subspace of $\mathcal{H}$. Writing $\mathbf{f}=(f_1,f_2)  \in C^\infty\times C^\infty([0,1])$, we need to solve the system
\begin{align*}
\lambda u_1(\rho) +\rho u_1'(\rho) + \frac{3}{2} u_1(\rho) - u_2(\rho) & = f_1(\rho), \\
\lambda u_2(\rho) - u_1''(\rho) -\frac{4}{\rho} u'_1(\rho) + \rho u_2'(\rho) +\frac{5}{2} u_2(\rho) & = f_2(\rho).
\end{align*}
From the first equation we see that
\begin{equation}\label{eqn:u2}
u_2(\rho) = \lf( \lambda + \frac{3}{2} \rt) u_1(\rho) + \rho u_1'(\rho)  - f_1(\rho).
\end{equation} 
Inserting this into the second one, we obtain a single ODE
\begin{equation}\label{eqn:ODElambda}
\lf(-1+\rho^2 \rt) u_1''(\rho) + \lf( - \frac{4}{\rho} + \rho\lf(2\lambda + 5 \rt) \rt) u_1'(\rho) + \lf( \lambda + \frac{5}{2} \rt)\lf( \lambda + \frac{3}{2} \rt) u_1(\rho) = F_\lambda(\rho),
\end{equation}
where $F_\lambda(\rho) := \lf( \lambda + \frac{5}{2} \rt) f_1 (\rho) +
\rho f_1'(\rho) + f_2$. By definition we have $F_\lambda \in
C^\infty([0,1])$. We choose $\lambda = 1$ and use the variation of
constants method to solve \eqref{eqn:ODElambda}. The homogeneous equation (i.e.,  $F_\lambda=0$) admits a fundamental system
$$
\phi_1 (\rho)  = \frac{2+\rho}{\rho^3(1+\rho)^{\frac{1}{2}}}, \qquad \phi_0 (\rho)  =  \frac{1}{\rho^3} \lf(\frac{2+\rho}{(1+\rho)^{\frac{1}{2}}} - \frac{2-\rho}{(1-\rho)^{\frac{1}{2}}} \rt).
$$
The Wronskian is given by
$$
W(\rho):= W(\phi_0,\phi_1)(\rho) = \frac{3}{\rho^4(1-\rho^2)^{3/2}}.
$$
A solution to the non-homogeneous problem is then given by
\begin{align*}
u_1(\rho)  = &  \phi_0(\rho) \int_\rho^1 \frac{\phi_1(s)}{W(s)}\frac{F_1(s)}{-1+s^2} ds + \phi_1(\rho) \int_0^\rho \frac{\phi_0(s)}{W(s)}\frac{F_1(s)}{-1+s^2} ds \\
= & - \frac{\phi_0(\rho)}{3}  \int_\rho^1 s(2+s)(1-s)^{\frac{1}{2}}  F_1(s) ds  \\ & \qquad \qquad  - \frac{\phi_1(\rho)}{3}  \int_0^\rho s \lf( (2+s)(1-s)^{\frac{1}{2}} - (2-s)(1+s)^{\frac{1}{2}} \rt)F_1(s)  ds
\end{align*}
By definition, we have $u_1 \in C^2(0,1)$, hence we only need to check the behaviour at the boundary points $\rho=0$ and $\rho=1$. Using l'Hospital's rule we find that $u_1$ is $C^1([0,1])$ with $u_1'(0)=0$. The second derivative of $u_1$ is given by
\begin{align*}
u_1''(\rho) =  - \frac{\phi_0''(\rho)}{3}   & \int_\rho^1  s(2+s)(1-s)^{\frac{1}{2}} F_1(s) ds \\
& -  \frac{\phi_1''(\rho)}{3}  \int_0^\rho s \lf( (2+s)(1-s)^{\frac{1}{2}} - (2-s)(1+s)^{\frac{1}{2}} \rt)F_1(s)  ds + \frac{F_1(\rho)}{(-1+\rho^2)}.
\end{align*}
Again an application of l'Hospital's rule shows that at $u_1''$ is not singular at $\rho=0$. At $\rho = 1$, observe that the second term is not singular. In the first term, we have $\phi_0''(\rho) \sim -\frac{3}{4} (1-\rho)^{-5/2}$ and $\int_0^\rho s \lf( (2+s)(1-s)^{1/2}  \rt)F_1(s)  ds \sim \frac{2}{3} (1-\rho)^{3/2} F_1(1)$ as $\rho \to 1$. So
$$
- \frac{\phi_0''(\rho)}{3} \int_\rho^1 s(2+s)(1-s)^{1/2} F_1(s) ds \sim \frac{1}{2} (1-\rho)^{-1} F_1(1) \text{ as } \rho \to 1.
$$
This cancels out exactly with the singularity in the last term, since $F_1(\rho)(-1+\rho^2)^{-1} \sim -\frac{1}{2}F(1)(1-\rho)^{-1}$ as $\rho \to 1$. Thus $u_1''$ is indeed continuous at $\rho=1$, and $u_1\in C^2([0,1])$. From \eqref{eqn:u2} we also see that $u_2 \in C^1([0,1])$. Hence we can conclude that the range of $(1-\tilde{\mathbf{L}}_0)$ is dense.

Applying the Lumer-Philips theorem we see that $\tilde{\mathbf{L}}_0$ is closable, its closure $\mathbf{L}_0$ generates a $C_0$ semigroup $\{\mathbf{S_0}(\tau):\tau\geq 0 \}$ on $\mathcal{H}$ satisfying the bound
$$
\| \mathbf{S_0}(\tau) \mathbf{f} \|_E \leq \|\mathbf{f} \|_E
$$
for all $\tau \geq 0$ and $\mathbf{f}\in \mathcal{H}$. But since the norms $\| \cdot \|_E$ and $\|\cdot \|_{\mathcal{H}}$ are equivalent, the conclusion follows.
\end{proof}

This gives the growth bound $\|\mathbf{S}_0(\tau)\|_{\mathcal{H}} \lesssim 1$, and we also have that the spectrum $\sigma(\mathbf{L_0})$ satisfies
$$
\sigma(\mathbf{L}_0) \subset \{z\in \mathbb{C} | \Re z \leq 0 \}
$$
by \cite[p.55 Theorem 1.10]{enna2000}.

\subsection{The free Strichartz estimates}

For a solution to \eqref{eqn:wave5rad} that blows up at $r=0, t=T$, due to the finite speed of propagation, this blow up is influenced only by initial conditions supported in $\mathbb{B}^5_T$.

Solutions to the free wave equation with initial conditions supported in $\mathbb{B}^5_T$, when restricted to the backwards lightcone, correspond to the semigroup $\{\mathbf{S}_0(\tau):\tau>0\}$ in similarity coordinates. The standard Strichartz estimates imply Strichartz estimates in similarity coordinates by a scaling argument.

\begin{prop}\label{prop:freestrichartz}
Let $p\in [2, \infty]$ and $q\in [\frac{10}{3},5]$ such that $\frac{1}{p}+\frac{5}{q} = \frac{3}{2}$. Then we have the bound
$$
\lf\Vert \lf[\mathbf{S}_0(\cdot) \mathbf{f} \rt]_1 \rt\Vert_{L^p(\RR_+;L^q(\BB^5))} \lesssim \|\mathbf{f} \|_{\mathcal{H}}
$$
for all $\mathbf{f} \in \mathcal{H}$. And hence we also have
$$
\lf\Vert \int_0^\tau  \lf[\mathbf{S}_0(\tau -\sigma)  \mathbf{h}(\sigma,\cdot) \rt]_1 d\sigma  \rt\Vert_{L_\tau^p(\RR_+;L^q(\BB^5))} \lesssim \|\mathbf{h} \|_{L^1(\RR_+,\mathcal{H})},
$$
for all $\mathbf{h}\in C([0,\infty),\mathcal{H})\cap L^1(\RR_+,\mathcal{H})$.
\end{prop}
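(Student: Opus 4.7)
The plan is to reduce to the classical Strichartz estimates for the free wave equation on $\RR\times\RR^5$ via a scaling and finite-speed-of-propagation argument. By density of $\mathcal{D}(\tilde{\mathbf{L}}_0)$ in $\mathcal{H}$ together with strong continuity of $\mathbf{S}_0(\tau)$, it suffices to prove the homogeneous estimate for $\mathbf{f}=(f_1,f_2)\in\mathcal{D}(\tilde{\mathbf{L}}_0)$; the general case follows by a standard approximation argument. For such $\mathbf{f}$ I would first radially extend $(f_1,f_2)$ from $\BB^5$ to a pair $(\tilde f,\tilde g)$ on $\RR^5$ via a bounded extension operator $H^1(\BB^5)\times L^2(\BB^5)\to H^1(\RR^5)\times L^2(\RR^5)$, noting that $\|(\tilde f,\tilde g)\|_{\dot H^1\times L^2(\RR^5)}\lesssim \|\mathbf{f}\|_{\mathcal{H}}$ via the embedding $H^1(\RR^5)\hookrightarrow \dot H^1(\RR^5)$.

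Next, let $u$ denote the radial free wave solution on $\RR\times\RR^5$ with Cauchy data $(\tilde f,\tilde g)$. Finite speed of propagation guarantees that $u$ inside the backward lightcone $\Gamma_1$ depends only on the data restricted to $\BB^5$. Applying \eqref{eqn:coordtrans} to $u$ yields a classical solution to the linear system $\PD_\tau \Psi=\tilde{\mathbf{L}}_0\Psi$ with initial datum $\mathbf{f}$, and by uniqueness this must coincide with $\mathbf{S}_0(\tau)\mathbf{f}$; in particular, $[\mathbf{S}_0(\tau)\mathbf{f}]_1(\rho)=e^{-3\tau/2}u(1-e^{-\tau},e^{-\tau}\rho)$ on $\RR_+\times\overline{\BB^5}$. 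The classical Strichartz estimate at $\dot H^1$-scaling gives
$$
\|u\|_{L^p(\RR;L^q(\RR^5))}\lesssim \|(\tilde f,\tilde g)\|_{\dot H^1\times L^2(\RR^5)}
$$
for every admissible pair: the relation $\frac{1}{p}+\frac{5}{q}=\frac{3}{2}$ is exactly the $\dot H^1$-scaling identity in five dimensions, while the wave-admissibility $\frac{2}{p}+\frac{4}{q}\le 2$ cuts out precisely $q\in[\tfrac{10}{3},5]$, $p\in[2,\infty]$. These pairs stay strictly off the Keel-Tao endpoint so no endpoint refinement is required.

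The core of the argument is then the change of variables $t=1-e^{-\tau}$, $\rho=e^\tau r$. A direct Jacobian computation gives $\|[\mathbf{S}_0(\tau)\mathbf{f}]_1\|_{L^q(\BB^5)}=e^{(5/q-3/2)\tau}\|u(t,\cdot)\|_{L^q(\BB^5_{1-t})}$; raising to the $p$-th power, the scaling identity $\frac{1}{p}+\frac{5}{q}=\frac{3}{2}$ forces the weight to collapse to $e^{-\tau}$, and integrating in $\tau$ against $d\tau=e^\tau\,dt$ makes the two exponentials cancel exactly, yielding
$$
\|[\mathbf{S}_0(\cdot)\mathbf{f}]_1\|_{L^p(\RR_+;L^q(\BB^5))}^p=\int_0^1 \|u(t,\cdot)\|_{L^q(\BB^5_{1-t})}^p\,dt\le \|u\|_{L^p(\RR;L^q(\RR^5))}^p,
$$
which combined with the standard Strichartz bound and the extension estimate closes the homogeneous case. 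The inhomogeneous estimate is then a consequence of the homogeneous one by Minkowski's inequality: estimating $\big\|\int_0^\tau[\mathbf{S}_0(\tau-\sigma)\mathbf{h}(\sigma)]_1\,d\sigma\big\|_{L^p_\tau L^q}$ by $\int_0^\infty\|\mathbf{1}_{\tau>\sigma}[\mathbf{S}_0(\tau-\sigma)\mathbf{h}(\sigma)]_1\|_{L^p_\tau L^q}\,d\sigma$, substituting $\tau\mapsto\tau+\sigma$, and invoking the homogeneous bound with datum $\mathbf{h}(\sigma)$. The only non-mechanical step in the whole plan is the scaling verification, namely checking that $p(\tfrac{5}{q}-\tfrac{3}{2})=-1$ exactly balances $d\tau/dt=(1-t)^{-1}=e^\tau$; it is short arithmetic, but it is precisely what makes the argument close.
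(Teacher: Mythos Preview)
Your proposal is correct and follows essentially the same route as the paper: extend the data, identify $[\mathbf{S}_0(\tau)\mathbf{f}]_1$ with a rescaled free wave, and transfer the classical Strichartz estimates via the change of variables $t=1-e^{-\tau}$. The only difference is organizational: the paper treats the single pair $(p,q)=(2,5)$ by Strichartz and the pair $(\infty,\tfrac{10}{3})$ by Sobolev embedding together with the semigroup bound from Proposition~\ref{prop:semigp}, then interpolates, whereas you invoke the full admissible range of classical Strichartz at once and verify that the scaling identity $p(\tfrac{5}{q}-\tfrac{3}{2})=-1$ makes the Jacobian cancel for every pair simultaneously; this is slightly more direct but amounts to the same argument.
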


\begin{proof} Let $\mathbf{f}=(f_1,f_2)\in C^2\times C^1([0,1])$ with $f'(0)=0$. Then $(f_1,f_2)$ can be extended to radial functions $(\tilde{f}_1, \tilde{f}_2)\in C^2\times C^1(\RR^5)$ by Sobolev extension, in such a way that
$$
\|\tilde{f}_1\|_{\dot{H}^1(\RR^5)} \lesssim \| f_1\|_{H^1(\BB^5)} , \text{ and } \|\tilde{f}_2 \|_{L^2(\RR^5)} \lesssim \|f_2\|_{L^2(\BB^5)}.
$$
By the change of coordinates \eqref{eqn:simcoor} with $T=1$,
the free evolution $\mathbf{S}_0(\cdot) \mathbf{f}$ is given by the
classical solution $u \in C^2(\RR_+ \times \RR_+)$, restricted to the
backwards lightcone $\Gamma_1$, of the free radial wave equation
\begin{equation}\label{eqn:freewave}
\begin{cases}
& (\PD^2_t - \PD^2_r - \frac{4}{r} \PD_r) u(t,r) = 0 \\
& u[0] = (\tilde{f}_1,\tilde{f}_2) .
\end{cases}
\end{equation}
In other words, we have
$$
[\mathbf{S}_0(\tau) \mathbf{f}]_1(\rho) = e^{-\frac{3}{2}\tau} u(1-e^{-\tau}, e^{-\tau} \rho)
$$
for $(\tau,\rho) \in \RR_+ \times [0,1]$ by \eqref{eqn:coordtrans}. 
Now we have
\begin{align*}
\|\lf[\mathbf{S}_0 (\tau) \mathbf{f} \rt]_1 \|_{L^5(\mathbb{B}^5)
} = & \lf( \int_0^1|\lf[\mathbf{S}_0 (\tau)  \mathbf{f} \rt]_1 (\rho)|^5 \rho^4 d\rho \rt)^{\frac{1}{5}} \\
= & \lf( \int_0^{1 } \lf| e^{-\frac{3}{2}\tau} u(1-e^{-\tau}, e^{-\tau} \rho )\rt|^5 \rho^4 d\rho \rt)^{\frac{1}{5}} \\
= &  e^{-\frac{1}{2}\tau} \lf(\int_0^{e^{-\tau}} \lf| u(1-e^{-\tau}, r)\rt|^5 r^4 dr \rt)^\frac{1}{5} \\
\leq & e^{-\frac{1}{2}\tau} \|u(1-e^{-\tau},\cdot)\|_{L^5 (\RR^5)} ,
\end{align*}
then
\begin{align*}
\|\lf[\mathbf{S}_0(\cdot)  \mathbf{f} \rt]_1\|_{L^2(\RR_+; L^5(\mathbb{B}^5))} \leq & \lf( \int_0^\infty \lf(e^{-\frac{1}{2}\tau} \|u(1-e^{-\tau},\cdot)\|_{L^5 (\RR^5)} \rt)^2 d\tau \rt)^\frac{1}{2}\\
= & \lf( \int_0^1  \|u(t,\cdot)\|_{L^5 (\RR^5)}^2 dt \rt)^\frac{1}{2}\\
\leq & \|u\|_{L^2(\RR_+; L^5(\RR^5))}.
\end{align*}
By the Strichartz estimates for wave equations (\cite[Theorem 2.6]{tao2006} with $d=5, s=1,q=2,r=5$), we have
$$
\|u\|_{L^2(\RR_+, L^5(\RR^5))} \lesssim \| (\tilde{f}_1,\tilde{f}_2) \| _{\dot{H}^1 \times L^2 (\RR^5)},
$$
hence we obtain 
$$
\|\lf[\mathbf{S}_0 (\cdot) \mathbf{f} \rt]_1\|_{L^2(\RR_+; L^5(\mathbb{B}^5))} \lesssim    \| (\tilde{f}_1,\tilde{f}_2) \| _{\dot{H}^1 \times L^2 (\RR^5)} \lesssim \|(f_1, f_2)\|_{H^1 \times L^2 (\mathbb{B}^5)} = \|\mathbf{f}\|_\mathcal{H}.
$$
By density this holds for all $\mathbf{f}\in \mathcal{H}$.

The other endpoint comes from the Sobolev embedding $H^1(\BB^5) \hookrightarrow L^{\frac{10}{3}}(\BB^5)$ and Proposition \ref{prop:semigp},
$$
\| \lf[\mathbf{S}_0 (\tau) \mathbf{f} \rt]_1\|_{L^\frac{10}{3}(\mathbb{B}^5)} \lesssim  \| \lf[\mathbf{S}_0 (\tau) \mathbf{f} \rt]_1\|_{H^1 (\mathbb{B}^5))} \lesssim \|\mathbf{S}_0 (\tau) \mathbf{f} \|_{\mathcal{H}} \lesssim \|  \mathbf{f} \|_{\mathcal{H}}.
$$
Hence $\| \lf[\mathbf{S}_0 (\cdot) \mathbf{f} \rt]_1\|_{L^\infty(\RR_+; L^\frac{10}{3}(\mathbb{B}^5))} \lesssim \|  \mathbf{f} \|_{\mathcal{H}}$.

For other pairs $(p,q)$ we can interpolate, for all $q \in [\frac{10}{3},5]$,
$$
\| [\mathbf{S}_0 (\tau) \mathbf{f} ]_1 \|_{L^q(\BB^5)} \leq \| [\mathbf{S}_0 (\tau) \mathbf{f} ]_1 \|_{L^{\frac{10}{3}}(\BB^5)}^{\frac{10}{q}-2} \| [\mathbf{S}_0 (\tau) \mathbf{f} ]_1 \|_{L^5(\BB^5)}^{-\frac{10}{q}+3}.
$$
Then using H\"older's inequality we have
$$
\| [\mathbf{S}_0 (\cdot) \mathbf{f} ]_1 \|_{L^p(\RR_+; L^q(\BB^5))} \leq  \| [\mathbf{S}_0 (\cdot) \mathbf{f} ]_1 \|_{L^\infty(\RR_+; L^{\frac{10}{3}}(\BB^5))}^{\frac{10}{q}-2}  \| [\mathbf{S}_0 (\cdot) \mathbf{f} ]_1 \|_{L^2(\RR_+; L^5(\BB^5))}^{-\frac{10}{q}+3} \lesssim \| \mathbf{f}\|_{\mathcal{H}},
$$
provided $(-\frac{10}{q}+3)p=2$, which gives $\frac{1}{p}+\frac{5}{q}=\frac{3}{2}$.

The inhomogeneous estimate is given by Minkowski's inequality as in \cite{donn2017}.
\end{proof}

\subsection{The linearisation}

The operator $\mathbf{L}':\mathcal{H} \to \mathcal{H}$ is bounded and also compact. We define $\mathbf{L} = \mathbf{L}_0 + \mathbf{L}'$ with $\mathcal{D}(\mathbf{L}) = \mathcal{D}(\mathbf{L}_0)$. Then by the bounded perturbation theorem, we have that $\mathbf{L}$ generates a semigroup $\mathbf{S}(\tau)$.

\begin{lemma}\label{lemma:specL} The spectrum of $\mathbf{L}$ satisfies
$$
\sigma(\mathbf{L}) \subset \{z\in \mathbb{C} | \Re z \leq 0 \} \cup \{ 1 \}.
$$
\end{lemma}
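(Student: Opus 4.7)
I would split the argument into (a) confining $\sigma(\mathbf{L}) \cap \{\Re z > 0\}$ to isolated eigenvalues via a compact-perturbation argument, and (b) analyzing the eigenvalue ODE to show that $\lambda = 1$ is the only one.

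For (a), note that $\mathbf{L}' : (u_1, u_2) \mapsto (0, \tfrac{35}{4} u_1)$ is compact on $\mathcal{H}$, since it factors through the Rellich-Kondrachov compact embedding $H^1(\BB^5) \hookrightarrow L^2(\BB^5)$. Proposition \ref{prop:semigp} together with Hille-Yosida gives $\sigma(\mathbf{L}_0) \subset \{\Re z \leq 0\}$, so the stability of the essential spectrum under relatively compact perturbations yields $\sigma_{ess}(\mathbf{L}) = \sigma_{ess}(\mathbf{L}_0) \subset \{\Re z \leq 0\}$. Thus any $\lambda \in \sigma(\mathbf{L})$ with $\Re \lambda > 0$ must be an isolated eigenvalue of finite algebraic multiplicity.

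For (b), suppose $(\lambda - \mathbf{L})\mathbf{u} = 0$ with $\mathbf{u} \in \mathcal{D}(\mathbf{L})$ nontrivial and $\Re \lambda > 0$. Eliminating $u_2$ via $u_2 = (\lambda + \tfrac{3}{2}) u_1 + \rho u_1'$ exactly as in the proof of Proposition \ref{prop:semigp}, but now including the potential term $\tfrac{35}{4} u_1$, produces
\[
(\rho^2 - 1)\, u_1''(\rho) + \lf(-\frac{4}{\rho} + (2\lambda + 5)\rho\rt) u_1'(\rho) + (\lambda - 1)(\lambda + 5)\, u_1(\rho) = 0,
\]
using the factorization $(\lambda + \tfrac{5}{2})(\lambda + \tfrac{3}{2}) - \tfrac{35}{4} = (\lambda - 1)(\lambda + 5)$. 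This Fuchsian equation has indicial exponents $\{0, -3\}$ at $\rho = 0$ and $\{0, \tfrac{1}{2} - \lambda\}$ at $\rho = 1$. Membership in $\mathcal{D}(\mathbf{L}) \subset H^1(\BB^5) \times L^2(\BB^5)$ forbids the $\rho^{-3}$ branch at the origin (the weighted $L^2$ bound fails); and for $\Re \lambda > 0$ the bound $\int |u_1'|^2 \rho^4\, d\rho < \infty$ would force $\Re \lambda < 0$ along the $(1-\rho)^{\tfrac{1}{2} - \lambda}$ branch at $\rho = 1$, so that branch is forbidden as well. Thus an eigenvalue in the right half-plane corresponds precisely to a value of $\lambda$ for which the solution regular at $\rho = 0$ coincides, up to a scalar, with the solution regular at $\rho = 1$.

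Existence of $\lambda = 1$ is transparent: at $\lambda = 1$ the constant term vanishes, so $u_1 \equiv c$ solves the ODE, satisfies both regularity conditions, and produces the eigenvector $(c, \tfrac{5}{2} c)$. To exclude any other $\lambda \in \{\Re z > 0\}$, I would transform the ODE into Gauss hypergeometric form (via, e.g., $z = \rho^2$) and compute explicitly the Gamma-function connection coefficient relating the two Frobenius bases; the zero set of this coefficient in the right half-plane should reduce to $\{\lambda = 1\}$. This is the step where I anticipate the main obstacle, and it requires a separate inspection of the resonant value $\lambda = \tfrac{1}{2}$, where the indicial exponents at $\rho = 1$ coincide and logarithmic solutions arise (which themselves violate the $H^1$ regularity and so cannot contribute an eigenvalue there either).
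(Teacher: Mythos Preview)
Your plan is correct and coincides with the paper's proof: part (a) is handled there via the resolvent identity $(\lambda - \mathbf{L}) = [\mathbf{I} - \mathbf{L}'\mathbf{R}_{\mathbf{L}_0}(\lambda)](\lambda - \mathbf{L}_0)$ together with compactness of $\mathbf{L}'\mathbf{R}_{\mathbf{L}_0}(\lambda)$ (equivalent to your essential-spectrum argument), and part (b) via the substitution $z = \rho^2$, which yields the hypergeometric equation with parameters $(a,b,c) = \bigl(\tfrac{\lambda+5}{2}, \tfrac{\lambda-1}{2}, \tfrac{5}{2}\bigr)$. The connection coefficient you anticipate as the main obstacle is $\tfrac{\Gamma(c-1)\Gamma(a+b-c+1)}{\Gamma(a)\Gamma(b)}$, whose only zero with $\Re\lambda > 0$ is $\lambda = 1$ (coming from $b=0$), and the paper does treat the logarithmic/resonant case separately exactly as you predict.
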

\begin{proof} Let $\lambda\in \sigma(\mathbf{L})$. If $\Re \lambda \leq 0$, then the assertion is satisfied. Thus we consider the case when $\Re \lambda > 0$. In this case, $\lambda \in \sigma(\mathbf{L}) \backslash \sigma (\mathbf{L}_0)$, so the resolvent $\mathbf{R}_{\mathbf{L}_0} (\lambda)$ exists. Then from  $(\lambda-\mathbf{L}) = [1-\mathbf{L}'\mathbf{R}_{\mathbf{L}_0}(\lambda)](\lambda - \mathbf{L}_0)$, we see that $1\in \sigma(\mathbf{L}'\mathbf{R}_{\mathbf{L}_0}(\lambda))$. But since $\mathbf{L}'\mathbf{R}_{\mathbf{L}_0}(\lambda)$ is compact, we have $1\in \sigma_p(\mathbf{L}'\mathbf{R}_{\mathbf{L}_0}(\lambda))$, so there exists $\mathbf{f}\in \mathcal{H}$ with $\mathbf{f}=\mathbf{L}'\mathbf{R}_{\mathbf{L}_0}(\lambda) \mathbf{f}$. Defining $\mathbf{u}:= \mathbf{R}_{\mathbf{L}_0}(\lambda) \mathbf{f}$, we have $(\lambda - \mathbf{L}) \mathbf{u}=0$. Hence $\lambda\in \sigma_p(\mathbf{L})$, and there exists a nonzero $\mathbf{u} \in \mathcal{D}(\mathbf{L})$ such that $(\lambda-\mathbf{L})\mathbf{u} = 0$. Following a similar computation as in Proposition \ref{prop:semigp}, the spectral equation implies
$$
-(1-\rho^2)u''(\rho) +\lf( - \frac{4}{\rho} + \rho(2\lambda + 5)  \rt) u'(\rho) + \lf(\lambda + \frac{5}{2}\rt)\lf(\lambda + \frac{3}{2}\rt)u (\rho) - \frac{35}{4} u (\rho) = 0,
$$
where $u= u_1$. We define $h(\rho^2) = u(\rho)$ and set $z=\rho^2$, then this ODE is transformed into the hypergeometric differential equation
\begin{equation}\label{eqn:ODEhgde1}
z(1-z)h''(z) +\lf( c - (a+b+1)z \rt) h'(z) - ab h(z) = 0,
\end{equation}
where $a = \frac{\lambda}{2}+\frac{5}{2}$, $b=\frac{\lambda}{2} - \frac{1}{2}$, $c=\frac{5}{2}$. In the case when $\Re(c-a-b) = \frac{1}{2}-\Re \lambda$ is not a non-positive integer, a fundamental system at $z=1$ is given by
\begin{align*}
h_1(z;\lambda) &=  {_2F_1} (a,b;a+b+1-c;1-z), \\
\tilde{h}_1(z;\lambda) &= (1-z)^{c-a-b} {_2F_1} (c-a,c-b; c-a -b+1;1-z),
\end{align*}
see \cite{dlmf}. When $\frac{1}{2}-\Re \lambda = -n$ for $n \in \mathbb{N}_0$, one solution is still given by $h_1$, while the second solution is given by
$$
\tilde{h}_1(z;\lambda) = ch_1(z;\lambda) \log(1-z) + (1-z)^{-n} g(z;\lambda)
$$
where $g$ is analytic around $z=1$ with $g(1;\lambda)\not=0$ and $c$ is a constant which can be zero unless $n=0$. Since $\Re \lambda > 0$, the requirement that $u_1 \in H^1(\BB^5)$ excludes the solution $\tilde{h}_1$, so a solution $h$ must be a multiple of $h_1$. A fundamental system at $z=0$ is given by
\begin{align*}
h_0 (z; \lambda) &= {_2F_1} (a,b;c;z), \\
\tilde{h}_0 (z; \lambda) &= z^{1-c} {_2F_1}(a-c+1,b-c+1;2-c;z).
\end{align*}
From the connection formula, we have
\begin{equation}\label{eqn:connform}
h_1(z;\lambda) = \frac{\Gamma(1-c) \Gamma(a+b-c+1)}{\Gamma(a -c+1) \Gamma(b-c+1)}h_0(z;\lambda)+\frac{\Gamma(c-1)\Gamma(a+b-c+1)}{\Gamma(a)\Gamma(b)} \tilde{h}_0(z;\lambda).
\end{equation}
Since $\rho\mapsto \tilde{h}_0(\rho^2;\lambda)$ does not belong to
$H^1(\BB^5_{1/2})$, we need to have
$$
\frac{\Gamma(c-1)\Gamma(a+b-c+1)}{\Gamma(a)\Gamma(b)}  = 0.
$$
This happens only when
$$
-a = - \frac{\lambda}{2} - \frac{5}{2}  \in \mathbb{N}_0 \text{ or } -b = -\frac{\lambda}{2}+\frac{1}{2} \in \mathbb{N}_0,
$$
and the only possibility for $\Re\lambda >0$ is when $\lambda = 1$.
\end{proof}

\begin{lemma}\label{lemma:rbound} For every $\epsilon > 0$, there exist constants $C_\epsilon, R_\epsilon >0$, such that
$$
\| \mathbf{R}_{\mathbf{L}}(\lambda) \|_{\mathcal{H}} \leq C_\epsilon
$$
for all $\lambda$ with $\Re \lambda > \epsilon$ and $|\lambda | > R_\epsilon$.
\end{lemma}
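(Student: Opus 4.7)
My plan is to exploit the second-resolvent identity
$$
\lambda - \mathbf{L} = (\lambda-\mathbf{L}_0)\bigl(\mathbf{I}-\mathbf{R}_{\mathbf{L}_0}(\lambda)\mathbf{L}'\bigr),
$$
valid for $\Re\lambda>0$ by Proposition \ref{prop:semigp}, and invert the right-hand factor by a Neumann series. This will follow once
$$
\|\mathbf{R}_{\mathbf{L}_0}(\lambda)\mathbf{L}'\|_{\mathcal{B}(\mathcal H)}\le \tfrac{1}{2}
$$
is established throughout $\{\Re\lambda>\epsilon,\ |\lambda|>R_\epsilon\}$. Taking $R_\epsilon>1$ keeps $\lambda$ in $\rho(\mathbf L)$ by Lemma \ref{lemma:specL}. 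The Hille--Yosida inequality, which follows from $\|\mathbf S_0(\tau)\|_{\mathcal B(\mathcal H)}\lesssim 1$ in Proposition \ref{prop:semigp}, gives $\|\mathbf R_{\mathbf L_0}(\lambda)\|_{\mathcal B(\mathcal H)}\lesssim 1/\Re\lambda\le 1/\epsilon$, whence $\|\mathbf R_{\mathbf L}(\lambda)\|\le 2\,\|\mathbf R_{\mathbf L_0}(\lambda)\|\le C_\epsilon$.

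Two inputs do the work. First, the perturbation $\mathbf L'(f_1,f_2)=(0,\tfrac{35}{4}f_1)$ is compact on $\mathcal H$ because it factors through the Rellich-compact embedding $H^1(\BB^5)\hookrightarrow L^2(\BB^5)$; in particular $K:=\overline{\mathbf L' B_{\mathcal H}}$ is compact in $\mathcal H$. Second, I claim the vector-valued Riemann--Lebesgue bound
$$
\lim_{|\Im\lambda|\to\infty}\|\mathbf R_{\mathbf L_0}(\lambda)\mathbf f\|_{\mathcal H}=0,\qquad \text{uniformly in }\Re\lambda\ge\epsilon,
$$
for each fixed $\mathbf f\in\mathcal H$. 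To get uniformity in the real part I would rewrite
$$
\mathbf R_{\mathbf L_0}(\lambda)\mathbf f=\int_0^\infty e^{-(\lambda-\epsilon)\tau}\bigl(e^{-\epsilon\tau}\mathbf S_0(\tau)\mathbf f\bigr)\,d\tau,
$$
recognising the right-hand side as the Laplace transform of a fixed $L^1(\RR_+;\mathcal H)$ function, evaluated at $\mu=\lambda-\epsilon$ in the closed right half-plane $\Re\mu\ge 0$. Approximating the integrand in $L^1$-norm by $\mathcal H$-valued step functions $\sum c_i v_i\mathbf 1_{[a_i,b_i]}$, whose Laplace transforms equal $\sum c_i v_i(e^{-\mu a_i}-e^{-\mu b_i})/\mu$ and are therefore $O(1/|\mu|)$ uniformly for $\Re\mu\ge 0$, delivers the claimed uniform decay.

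The passage from pointwise-in-$\mathbf f$ decay to operator-norm decay of $\mathbf R_{\mathbf L_0}(\lambda)\mathbf L'$ is by the standard compactness argument. Given $\eta>0$, compactness of $K$ yields a finite $\eta$-net $\mathbf f_1,\dots,\mathbf f_N\in K$, and
$$
\|\mathbf R_{\mathbf L_0}(\lambda)\mathbf L'\|_{\mathcal B(\mathcal H)}=\sup_{\mathbf f\in K}\|\mathbf R_{\mathbf L_0}(\lambda)\mathbf f\|_{\mathcal H}\le \max_{1\le j\le N}\|\mathbf R_{\mathbf L_0}(\lambda)\mathbf f_j\|_{\mathcal H}+\eta\,\|\mathbf R_{\mathbf L_0}(\lambda)\|_{\mathcal B(\mathcal H)}.
$$
Choosing $\eta$ small relative to $\epsilon$, and then $R_\epsilon'$ large enough that the finite maximum is below $1/4$ for $|\Im\lambda|\ge R_\epsilon'$ and $\Re\lambda\ge\epsilon$, gives $\|\mathbf R_{\mathbf L_0}(\lambda)\mathbf L'\|\le 1/2$ on that region. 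On the complementary slab $|\Im\lambda|\le R_\epsilon'$ inside $\{|\lambda|>R_\epsilon\}$, the constraint $|\lambda|>R_\epsilon\gg R_\epsilon'$ forces $\Re\lambda$ to be large, and the crude bound $\|\mathbf R_{\mathbf L_0}(\lambda)\mathbf L'\|\lesssim \|\mathbf L'\|/\Re\lambda$ closes the argument there.

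The main technical obstacle is ensuring that the Riemann--Lebesgue decay is truly uniform in $\Re\lambda\ge\epsilon$ rather than just along a fixed vertical line; recasting the integral as a Laplace transform on the closed half-plane, where step-function approximants enjoy an explicit $O(1/|\mu|)$ bound, is precisely what makes this uniformity transparent and lets the Neumann series be applied on the entire target region.
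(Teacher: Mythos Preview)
Your argument is correct, but it takes a genuinely different route from the paper's. The paper uses the factorisation with the perturbation on the left, $(\lambda-\mathbf L)=[\mathbf I-\mathbf L'\mathbf R_{\mathbf L_0}(\lambda)](\lambda-\mathbf L_0)$, and obtains the \emph{quantitative} bound $\|\mathbf L'\mathbf R_{\mathbf L_0}(\lambda)\|_{\mathcal H}\lesssim|\lambda+\tfrac{3}{2}|^{-1}$ by an explicit computation: writing $\mathbf u=\mathbf R_{\mathbf L_0}(\lambda)\mathbf f$, the first row of the system reads $(\lambda+\tfrac{3}{2})u_1=f_1-\rho u_1'+u_2$, whence $|\lambda+\tfrac{3}{2}|\,\|u_1\|_{L^2(\BB^5)}\lesssim\|\mathbf f\|_{\mathcal H}+\|\mathbf u\|_{\mathcal H}\lesssim\|\mathbf f\|_{\mathcal H}$; since $\mathbf L'\mathbf u=(0,\tfrac{35}{4}u_1)$, this immediately gives the operator-norm decay and the Neumann series closes for $|\lambda|$ large. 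Your approach instead extracts only \emph{qualitative} decay $\|\mathbf R_{\mathbf L_0}(\lambda)\mathbf L'\|\to 0$ via compactness of $\mathbf L'$ combined with a vector-valued Riemann--Lebesgue lemma for the Laplace transform, upgraded to uniformity over $\Re\lambda\ge\epsilon$ by the step-function trick. The trade-off: the paper's argument is shorter and yields an explicit $O(|\lambda|^{-1})$ rate by exploiting the specific algebraic structure of $\tilde{\mathbf L}_0$ and $\mathbf L'$, while your argument is softer and would apply verbatim to any compact perturbation of a bounded semigroup generator, at the cost of losing the rate and requiring the compactness/net machinery.
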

\begin{proof} Fix $\epsilon > 0$. From Proposition \ref{prop:semigp}, we have
$$
\| \mathbf{R}_{\mathbf{L}_0}(\lambda) \|_\mathcal{H} \lesssim \frac{1}{\Re \lambda}
$$
by \cite[p.55 Theorem 1.10]{enna2000}. From $(\lambda-\mathbf{L}) = [1-\mathbf{L}'\mathbf{R}_{\mathbf{L}_0}(\lambda)](\lambda - \mathbf{L}_0)$, $\mathbf{R}_\mathbf{L}(\lambda)$ exists if and only if $[1-\mathbf{L}'\mathbf{R}_{\mathbf{L}_0}(\lambda)]^{-1}$ exists. 
Now let $\mathbf{u}=\mathbf{R}_{\mathbf{L}_0}(\lambda)\mathbf{f}$, then $\mathbf{u}$ and $\mathbf{f}$ satisfy
$$
\lf( \lambda + \frac{3}{2} \rt) u_1(\rho) = f_1(\rho) -\rho u_1'(\rho) + u_2(\rho).
$$
Recall that
\begin{equation*}
\mathbf{L}' \mathbf{u} (\rho) := \begin{pmatrix}
0 \\
\frac{35}{4} u_1(\rho)
\end{pmatrix},
\end{equation*}
then
\begin{align*}
\lf| \lambda + \frac{3}{2} \rt| \|\mathbf{L}'\mathbf{R}_{\mathbf{L}_0} (\lambda)\mathbf{f}\|_\mathcal{H} = & \lf| \lambda + \frac{3}{2} \rt| \|\mathbf{L}'\mathbf{u} \|_{\mathcal{H}} \lesssim \lf| \lambda + \frac{3}{2} \rt|  \|u_1\|_{L^2(\BB^5)} \\
\leq & \|f_1\|_{L^2(\BB^5)} + \|(\cdot) u_1'\|_{L^2(\BB^5)} + \|u_2 \|_{L^2(\BB^5)} \\
\lesssim & \|\mathbf{f}\|_\mathcal{H} + \|\mathbf{u}\|_\mathcal{H} \\
= & \|\mathbf{f}\|_\mathcal{H} + \|\mathbf{R}_{\mathbf{L}_0}(\lambda)\mathbf{f} \|_\mathcal{H} \\
\lesssim & \|\mathbf{f}\|_\mathcal{H}.
\end{align*}
So $\|\mathbf{L}'\mathbf{R}_{\mathbf{L}_0}(\lambda) \|_\mathcal{H} \lesssim \lf| \lambda + \frac{3}{2} \rt|^{-1}$, and for $|\lambda|$ large enough, we obtain $\|\mathbf{L}'\mathbf{R}_{\mathbf{L}_0} (\lambda)\|_\mathcal{H}  < 1$. Therefore the Neumann series for $[1-\mathbf{L}'\mathbf{R}_{\mathbf{L}_0}(\lambda)]^{-1}$ converges, and we have
\begin{align*}
\|\mathbf{R}_\mathbf{L}(\lambda) \|_\mathcal{H} = & \| \mathbf{R}_{\mathbf{L}_0} (\lambda) [1-\mathbf{L}'\mathbf{R}_{\mathbf{L}_0}]^{-1} (\lambda)\|_\mathcal{H} \\
\leq & \| \mathbf{R}_{\mathbf{L}_0}\|_\mathcal{H} \sum_{k=0}^\infty \|\mathbf{L}'\mathbf{R}_{\mathbf{L}_0}(\lambda)\|_\mathcal{H}^k \\
\leq & C_\epsilon
\end{align*}
as claimed.
\end{proof}

\begin{lemma}\label{lemma:eigenspace} The geometric multiplicity of the eigenvalue $1 \in \sigma_p(\mathbf{L})$ is 1, and its geometric eigenspace is spanned by
$$
\mathbf{g}(\rho) = \begin{pmatrix} 2 \\ 5
\end{pmatrix}.
$$
\end{lemma}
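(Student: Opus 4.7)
The plan is to recycle the spectral ODE analysis from the proof of Lemma \ref{lemma:specL}, specialize to $\lambda=1$, and observe that the parameters of the hypergeometric equation collapse the admissible solution to a constant. First, any $\mathbf{u} = (u_1, u_2) \in \mathcal{D}(\mathbf{L})$ with $(\mathbf{L} - 1)\mathbf{u} = 0$ gives rise to $h(z) := u_1(\sqrt{z})$ satisfying the hypergeometric equation \eqref{eqn:ODEhgde1} with parameters
\[
a = \tfrac{\lambda}{2}+\tfrac{5}{2} = 3, \qquad b = \tfrac{\lambda}{2}-\tfrac{1}{2} = 0, \qquad c = \tfrac{5}{2}.
\]

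Next I would rerun the selection argument from Lemma \ref{lemma:specL}. The two branches of the fundamental system at $z=0$ are $h_0(z;1) = {}_2F_1(3,0;\tfrac{5}{2};z)$ and $\tilde h_0(z;1) = z^{-3/2}\,{}_2F_1(\tfrac{3}{2},-\tfrac{3}{2};-\tfrac{1}{2};z)$; the latter is incompatible with $u_1 \in H^1(\mathbb B^5)$ since $\rho^{-3}$ is not square-integrable against $\rho^4\,d\rho$ (nor is its derivative). Hence $h$ must be a multiple of $h_0(\cdot;1)$. But because $b=0$, the series ${}_2F_1(3,0;\tfrac{5}{2};z)$ truncates at its first term and equals $1$ identically. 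Consequently $u_1 \equiv C$ is a constant.

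Once $u_1$ is determined, $u_2$ is fixed by the first component of the eigenvalue equation: specializing \eqref{eqn:u2} to $\lambda = 1$ and to the homogeneous case $f_1 = 0$ gives
\[
u_2(\rho) = \tfrac{5}{2} u_1(\rho) + \rho u_1'(\rho) = \tfrac{5}{2} C.
\]
Thus every eigenvector at $\lambda = 1$ is a scalar multiple of $(1, \tfrac{5}{2})$, equivalently of $\mathbf{g} = (2,5)$, proving geometric multiplicity one.

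Finally I would include a brief verification that $\mathbf{g}$ actually belongs to $\mathcal{D}(\mathbf{L}) = \mathcal{D}(\mathbf{L}_0)$ and satisfies $\mathbf{L}\mathbf{g} = \mathbf{g}$: the constant function lies in $\mathcal{D}(\tilde{\mathbf L}_0)$ (it is smooth with vanishing derivative at the origin), and a direct computation yields $\tilde{\mathbf{L}}_0 \mathbf{g} = (2, -\tfrac{25}{2})$ and $\mathbf{L}'\mathbf{g} = (0, \tfrac{35}{2})$, summing to $\mathbf{g}$. There is no real obstacle here; the only point requiring care is making sure the regularity condition at $\rho = 0$ is strong enough to eliminate $\tilde h_0$, which is immediate from the explicit power $\rho^{-3}$.
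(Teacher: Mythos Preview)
Your proposal is correct and follows essentially the same route as the paper: reduce the eigenvalue equation at $\lambda=1$ to the hypergeometric ODE \eqref{eqn:ODEhgde1} with $b=0$, discard the branch $\tilde h_0(\rho^2)\sim\rho^{-3}$ by the $H^1(\BB^5)$ requirement, note that ${}_2F_1(3,0;\tfrac52;z)\equiv 1$ forces $u_1$ constant, and recover $u_2$ from \eqref{eqn:u2}. Your explicit verification $\tilde{\mathbf L}_0\mathbf g+\mathbf L'\mathbf g=(2,-\tfrac{25}{2})+(0,\tfrac{35}{2})=\mathbf g$ is a welcome addition but otherwise the arguments coincide.
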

\begin{proof} First we note that $\mathbf{g}\in \mathcal{D}(\mathbf{L})$. A straightforward computation shows that $(1-\mathbf{L})\mathbf{g} = 0$. Now suppose that $\tilde{\mathbf{g}}$ is another eigenfunction with eigenvalue $1$, then $[\tilde{\mathbf{g}}]_1$ also satisfies the transformed ODE \eqref{eqn:ODEhgde1} with $\lambda=1$. In this case $[\tilde{\mathbf{g}}]_1(\rho) = c \cdot {_2 F_1}(a,b,c;\rho^2)$ for $b=\frac{\lambda}{2}-\frac{1}{2} =0$ and some constant $c\in \mathbb{C}\backslash\{0\}$. Therefore $[\tilde{\mathbf{g}}]_1(\rho)$ is a constant, and we must have $[\tilde{\mathbf{g}}]_1 = c' [\mathbf{g}]_1$ for some constant $c'\in \mathbb{C}\backslash\{0\}$. The second component is uniquely determined by the first component, hence we must have $\tilde{\mathbf{g}} = c' \mathbf{g}$.
\end{proof}

The next lemma shows that the algebraic multiplicity of the eigenvalue $\lambda=1$ is also 1, so we can use the Riesz projection to remove the unstable subspace.

\begin{lemma}\label{lemma:boundedpert} There exists a bounded projection $\mathbf{P}: \mathcal{H} \to \lla \mathbf{g} \rra$ such that $\mathbf{P}\mathbf{S}(\tau) = \mathbf{S}(\tau)\mathbf{P}$. In particular, we have $\mathbf{S}(\tau) \mathbf{Pf} = e^\tau \mathbf{P f}$ for all $\tau \geq 0$ and $\mathbf{f}\in \mathcal{H}$. 
\end{lemma}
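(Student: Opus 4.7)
The plan is to construct $\mathbf{P}$ as the Riesz spectral projection associated to the isolated eigenvalue $\lambda=1$. By Lemma \ref{lemma:specL}, the rest of $\sigma(\mathbf{L})$ lies in $\{\Re z\leq 0\}$, and by Lemma \ref{lemma:rbound} the resolvent $\mathbf{R}_{\mathbf{L}}(\lambda)$ is controlled on large half-planes $\{\Re\lambda>\epsilon,|\lambda|>R_\epsilon\}$, so $\lambda=1$ is indeed an isolated point of $\sigma(\mathbf{L})$. Choosing a small positively-oriented circle $\gamma$ around $1$ contained in the resolvent set, set
\[
\mathbf{P}:=\frac{1}{2\pi i}\oint_\gamma \mathbf{R}_{\mathbf{L}}(\lambda)\,d\lambda.
\]
Standard semigroup/spectral theory (e.g.~\cite{enna2000}) then gives at once that $\mathbf{P}$ is a bounded projection, commutes with $\mathbf{L}$ and hence with $\mathbf{S}(\tau)$, and that $\mathrm{rg}\,\mathbf{P}$ is an $\mathbf{L}$-invariant subspace on which $\sigma(\mathbf{L}|_{\mathrm{rg}\,\mathbf{P}})=\{1\}$.

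Next I would show that $\mathrm{rg}\,\mathbf{P}=\langle \mathbf{g}\rangle$. Since $\mathbf{L}'$ is compact, the factorization $\lambda-\mathbf{L}=[\mathbf{I}-\mathbf{L}'\mathbf{R}_{\mathbf{L}_0}(\lambda)](\lambda-\mathbf{L}_0)$ combined with analytic Fredholm theory yields that $\mathrm{rg}\,\mathbf{P}$ is finite-dimensional and equals the generalized eigenspace $\bigcup_{k\geq 1}\ker((\mathbf{L}-1)^k)$. By Lemma \ref{lemma:eigenspace}, $\ker(\mathbf{L}-1)=\langle\mathbf{g}\rangle$, so it remains to rule out a Jordan chain of length $\geq 2$, i.e.~to show that $(\mathbf{L}-1)\mathbf{h}=\mathbf{g}$ has no solution $\mathbf{h}\in\mathcal{D}(\mathbf{L})$.

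Reducing the system $(\mathbf{L}-1)\mathbf{h}=\mathbf{g}$ as in the proof of Proposition \ref{prop:semigp}, one expresses $h_2$ in terms of $h_1,h_1'$ and derives a single scalar ODE
\[
(1-\rho^2)h_1''(\rho)+\Bigl(\tfrac{4}{\rho}-7\rho\Bigr)h_1'(\rho)=12,
\]
which is first order in $u:=h_1'$. Multiplying by the integrating factor $\mu(\rho)=\rho^4(1-\rho^2)^{3/2}$, one integrates explicitly. Regularity at $\rho=0$ forces the integration constant to vanish, giving
\[
h_1'(\rho)=\frac{12\int_0^\rho s^4(1-s^2)^{1/2}\,ds}{\rho^4(1-\rho^2)^{3/2}}.
\]
A direct asymptotic analysis shows $h_1'(\rho)\sim C(1-\rho)^{-3/2}$ as $\rho\to 1^-$ with $C\neq 0$, so that $\int_0^1 |h_1'(\rho)|^2 \rho^4\,d\rho=\infty$ and $\mathbf{h}\notin\mathcal{H}$, a contradiction. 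Thus the algebraic multiplicity of $\lambda=1$ equals one, $\mathrm{rg}\,\mathbf{P}=\langle\mathbf{g}\rangle$, and $\mathbf{L}|_{\mathrm{rg}\,\mathbf{P}}$ acts as multiplication by $1$, so $\mathbf{S}(\tau)\mathbf{P}\mathbf{f}=e^{\tau}\mathbf{P}\mathbf{f}$.

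The main obstacle is the last step: verifying algebraic multiplicity one by hand. This reduces to a careful boundary asymptotic analysis at both $\rho=0$ (where $h_1'(0)=0$ must be enforced) and $\rho=1$ (where the only regular homogeneous solution is constant, so a non-trivial inhomogeneity must inevitably excite the singular $(1-\rho)^{-1/2}$ mode). Everything else is general abstract Riesz–projection theory plus the spectral and resolvent information already established.
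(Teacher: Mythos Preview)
Your proposal is correct and follows essentially the same approach as the paper: define $\mathbf{P}$ as the Riesz projection, use compactness of $\mathbf{L}'$ to get finite-dimensional range, and rule out a Jordan chain at $\lambda=1$ by showing the scalar ODE for $h_1$ has no $H^1$ solution. Your reduction to a first-order equation in $h_1'$ via the integrating factor $\rho^4(1-\rho^2)^{3/2}$ is a slight streamlining of the paper's variation-of-constants argument with the fundamental system $\{2,\tilde g_1\}$, but both arrive at the same obstruction $\int_0^1 s^4(1-s^2)^{1/2}\,ds>0$; the paper detects it as a singularity at $\rho=0$ after imposing regularity at $\rho=1$, while you detect it at $\rho=1$ after imposing regularity at $\rho=0$.
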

\begin{proof}
Since $\lambda=1$ is an isolated point in the spectrum, we define the projection
$$
\mathbf{P} = \frac{1}{2\pi i} \int_\gamma \mathbf{R}_{\mathbf{L}}(\lambda) d\lambda,
$$
where $\gamma$ is a curve in $\rho(\mathbf{L})$ enclosing the point $1$, such that no other points of $\sigma(\mathbf{L})$ lie in it. Then $\mathbf{P}$ commutes with the operator $\mathbf{L}$ (this means $\mathbf{P}\mathbf{L}\subset \mathbf{L}\mathbf{P}$) and the semigroup $\mathbf{S}(\tau)$. We have a decomposition of the Hilbert space $\mathcal{H} = \ker \mathbf{P} \oplus \rg \mathbf{P}$. Then we have the parts of the operator $\mathbf{L}$, $\mathbf{L}_{\ker\mathbf{P}}$ and $\mathbf{L}_{\rg \mathbf{P}}$, as operators on the spaces $ \ker \mathbf{P}$ and $\rg \mathbf{P}$ respectively, with spectra $\sigma (\mathbf{L}_{\ker\mathbf{P}}) = \sigma(\mathbf{L}) \backslash \{1\}$ and $\sigma(\mathbf{L}_{\rg\mathbf{P}}) = \{1\}$. It is obvious that $\lla \mathbf{g} \rra \subset \rg \mathbf{P}$, so we need to show the other inclusion.

First of all, notice that $\dim \rg \mathbf{P} < \infty$. Otherwise,
$1$ would be in the essential spectrum of $\mathbf{L}$, which is
impossible since $\mathbf{L}'$ is compact and $1 \notin
\sigma(\mathbf{L}_0)$(\cite[p.244 Theorem 5.35]{kato1995}). Hence
there is a minimal $k \in \mathbb{N}$ such that $(1-\mathbf{L}_{\rg
  \mathbf{P}})^k \mathbf{u}=0$ for all $\mathbf{u} \in \rg
\mathbf{P}\cap \mathcal{D}(\mathbf{L})$. If $k=1$ then we are
done. Now we argue by contradiction. If $k\geq 2$, then there exists $\mathbf{u} \in \mathcal{D}(\mathbf{L})$ such that $(1-\mathbf{L})\mathbf{u} = \mathbf{g}$. This is the same as saying that there exists a solution to the inhomogeneous ODE
$$
-(1-\rho^2)u''(\rho) +\lf( - \frac{4}{\rho} + 7 \rho  \rt) u'(\rho) = 12 ,
$$
where $u=u_1$. For the homogeneous problem, we already know that one solution is given by the constant solution $g_1(\rho) = 2$. Another linearly independent solution to the homogeneous problem is given by
$$
\tilde{g}_1(\rho) = \frac{1+4\rho^2-8\rho^4}{\rho^3(1-\rho^2)^\frac{1}{2}}.
$$
The Wronskian is
$$
W(g_1, \tilde{g}_1) (\rho) = \frac{6}{\rho^4(1-\rho^2)^\frac{3}{2}}.
$$
Solutions to the inhomogeneous problem are given by
\begin{align*}
u(\rho) = & C_1 + C_2 \tilde{g}_1(\rho) + 24 \int_0^\rho \frac{\tilde{g}_1(s)}{W(g_1, \tilde{g}_1) (s)} \frac{1}{1-s^2} ds + 24 \tilde{g}_1(\rho) \int_\rho^1  \frac{1}{W(g_1, \tilde{g}_1) (s)} \frac{1}{1-s^2} ds \\
= &  C_1 + C_2 \tilde{g}_1(\rho) + 24 \int_0^\rho s(1+4s^2-8s^4) ds + \frac{24(1+4\rho^2-8\rho^4) }{\rho^3(1-\rho^2)^\frac{1}{2}} \int_\rho^1 s^4(1-s^2)^\frac{1}{2} ds
\end{align*}
where $C_1, C_2 \in \RR$ are arbitrary constants. At $\rho=1$, the
requirement that $u \in H^1(\BB^5)$ gives $C_2=0$ and by looking at
the asymptotic behavior at $\rho=0$, we see that we must have
$$
 \int_0^1 s^4(1-s^2)^\frac{1}{2} ds = 0.
$$
However, this clearly fails because the integrand is positive for all $s\in(0,1)$.
\end{proof}

Next, we apply the Gearhart--Pr\"uss theorem to obtain a growth bound for the semigroup. 

\begin{lemma}\label{lemma:growthbound}
For any $\epsilon>0$, there exists a constant $C_{\epsilon} > 0$ such that
$$
\| \mathbf{S}(\tau)(\mathbf{I} - \mathbf{P})\mathbf{f}\|_{\mathcal{H}} \leq C_\epsilon e^{\epsilon \tau} \| (\mathbf{I} -\mathbf{P}) \mathbf{f} \|_{\mathcal{H}}
$$
for all $\tau \geq 0$ and $\mathbf{f}\in \mathcal{H}$.
\end{lemma}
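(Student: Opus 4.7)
The plan is to apply the Gearhart--Pr\"uss theorem on the complementary subspace $\ker\mathbf{P}$. Because $\mathbf{P}$ is a bounded projection with $\mathbf{P}\mathbf{S}(\tau)=\mathbf{S}(\tau)\mathbf{P}$ by Lemma \ref{lemma:boundedpert}, the restriction $\mathbf{S}(\tau)|_{\ker\mathbf{P}}$ is a $C_0$-semigroup on the closed subspace $\ker\mathbf{P}\subset\mathcal{H}$, with generator $\mathbf{L}|_{\ker\mathbf{P}}$ (the part of $\mathbf{L}$ in $\ker\mathbf{P}$). Since $\mathcal{H}$ is a Hilbert space and $\ker\mathbf{P}$ inherits this structure, the sharp Gearhart--Pr\"uss theorem (see \cite[p.302 Theorem 1.11]{enna2000}) applies: the growth bound of $\mathbf{S}|_{\ker\mathbf{P}}$ equals
\[
s_0(\mathbf{L}|_{\ker\mathbf{P}}) \;=\; \inf\Bigl\{\omega\in\RR : \sup_{\Re\lambda>\omega}\|\mathbf{R}_{\mathbf{L}|_{\ker\mathbf{P}}}(\lambda)\|_{\mathcal{B}(\ker\mathbf{P})} <\infty\Bigr\}.
\]
The goal therefore reduces to showing $s_0(\mathbf{L}|_{\ker\mathbf{P}})\leq 0$.

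Fix $\epsilon>0$. By Lemma \ref{lemma:specL} and the spectral decomposition $\sigma(\mathbf{L})=\sigma(\mathbf{L}|_{\ker\mathbf{P}})\cup\sigma(\mathbf{L}|_{\rg\mathbf{P}})$ with $\sigma(\mathbf{L}|_{\rg\mathbf{P}})=\{1\}$, we have $\sigma(\mathbf{L}|_{\ker\mathbf{P}})\subset\{\Re z\le 0\}$. I would then establish a uniform resolvent bound on the half-plane $\{\Re\lambda\geq \epsilon/2\}$, treating two regimes separately. For $|\lambda|>R_{\epsilon/2}$, Lemma \ref{lemma:rbound} already gives $\|\mathbf{R}_{\mathbf{L}}(\lambda)\|_{\mathcal{H}}\leq C_{\epsilon/2}$, and since $\mathbf{R}_{\mathbf{L}|_{\ker\mathbf{P}}}(\lambda)$ is just the restriction of $\mathbf{R}_\mathbf{L}(\lambda)$ to $\ker\mathbf{P}$, the same bound transfers. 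For the compact complementary region $K:=\{\lambda:\Re\lambda\geq \epsilon/2,\ |\lambda|\leq R_{\epsilon/2}\}$, the set $K$ lies entirely in $\rho(\mathbf{L}|_{\ker\mathbf{P}})$ (because $K$ avoids both $\{\Re z\leq 0\}$ and the single point $\{1\}$ --- the latter being in $\rho(\mathbf{L}|_{\ker\mathbf{P}})$), so by continuity of the resolvent on its resolvent set, $\sup_{\lambda\in K}\|\mathbf{R}_{\mathbf{L}|_{\ker\mathbf{P}}}(\lambda)\|<\infty$.

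Combining the two regimes yields $\sup_{\Re\lambda>\epsilon/2}\|\mathbf{R}_{\mathbf{L}|_{\ker\mathbf{P}}}(\lambda)\|<\infty$, hence $s_0(\mathbf{L}|_{\ker\mathbf{P}})\leq \epsilon/2$. The Gearhart--Pr\"uss theorem then furnishes a constant $C_\epsilon>0$ with
\[
\|\mathbf{S}(\tau)\mathbf{h}\|_{\mathcal{H}} \leq C_\epsilon e^{\epsilon\tau}\|\mathbf{h}\|_{\mathcal{H}} \qquad \text{for all } \mathbf{h}\in\ker\mathbf{P},\ \tau\geq 0.
\]
Applying this to $\mathbf{h}=(\mathbf{I}-\mathbf{P})\mathbf{f}\in\ker\mathbf{P}$ and using $\mathbf{S}(\tau)(\mathbf{I}-\mathbf{P})=(\mathbf{I}-\mathbf{P})\mathbf{S}(\tau)$ delivers the claim. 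The only mildly delicate point is verifying that the compact piece $K$ truly lies in the resolvent set of the restricted operator; this rests on the identification $\sigma(\mathbf{L}|_{\ker\mathbf{P}})=\sigma(\mathbf{L})\setminus\{1\}$, which follows from the commutation $\mathbf{P}\mathbf{L}\subset\mathbf{L}\mathbf{P}$ and the spectral statement from Lemma \ref{lemma:boundedpert}. No hard estimates beyond those already in hand are required.
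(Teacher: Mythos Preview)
Your proposal is correct and follows essentially the same route as the paper: combine the uniform resolvent bound from Lemma~\ref{lemma:rbound} for large $|\lambda|$ with the spectral information $\sigma(\mathbf{L}|_{\ker\mathbf{P}})\subset\{\Re z\le 0\}$ to obtain a uniform resolvent bound on the half-plane $\{\Re\lambda>\epsilon\}$, then invoke Gearhart--Pr\"uss. The paper compresses the argument into two sentences, while you spell out explicitly the split into the large-$|\lambda|$ regime and the compact remainder $K$ together with the continuity-of-resolvent argument there; this is exactly what is implicit in the paper's phrase ``From Lemma~\ref{lemma:rbound} and the fact that $\sigma(\mathbf{L}_{\ker\mathbf{P}})\subset\{\Re z\le 0\}$''.
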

\begin{proof}
From Lemma \ref{lemma:rbound} and the fact that $\sigma(\mathbf{L}_{\ker \mathbf{p}}) \subset \{ z\in \mathbb{C} | \Re z \leq 0\}$, we have that for every $\epsilon>0$ there exists $C_\epsilon>0$ such that
$$
\| \mathbf{R}_\mathbf{L}(\lambda) (\mathbf{I}-\mathbf{P}) \|_{\mathcal{H}} \leq C_\epsilon
$$
for all $\lambda$ with $\Re \lambda > \epsilon$. Then from the Gearhart--Pr\"uss theorem (\cite[p.302 Theorem 1.11]{enna2000}) we obtain the stated growth bound.
\end{proof}

%
%

\subsection{Explicit representation of the semigroup}

Let $\mathbf{f} \in C^2\times C^1([0,1]) \cap \mathcal{D}(\mathbf{L})$ and $\tilde{\mathbf{f}} := (\mathbf{I}-\mathbf{P}) \mathbf{f}$. Then $\tilde{\mathbf{f}} \in C^2\times C^1([0,1])\cap \mathcal{D}(\mathbf{L}_0)$. By Laplace inversion (\cite[p.234 Corollary 5.15]{enna2000}), we have
$$
\mathbf{S}(\tau)(\mathbf{I}-\mathbf{P}) \mathbf{f} = \frac{1}{2\pi i} \lim_{N \to \infty} \int_{\epsilon-i N}^{\epsilon + i N} e^{\lambda \tau} \mathbf{R}_\mathbf{L}(\lambda) \tilde{\mathbf{f}} d\lambda
$$
for any $\epsilon > 0$. Hence we would like to find an explicit expression for $\mathbf{R}_\mathbf{L}(\lambda)$. Since  $\mathbf{u} = \mathbf{R}_\mathbf{L} (\lambda) \tilde{\mathbf{f}}$ means $(\lambda-\mathbf{L}) \mathbf{u} = \tilde{\mathbf{f}}$, following the exact same computation as in Proposition \ref{prop:semigp} we get the ODE
\begin{equation}\label{eqn:ODElambdaV}
-(1-\rho^2)u''(\rho) +\lf( - \frac{4}{\rho} + \rho(2\lambda + 5)  \rt) u'(\rho) + \lf(\lambda + \frac{5}{2}\rt)\lf(\lambda + \frac{3}{2}\rt)u(\rho) - \frac{35}{4} u(\rho) = F_\lambda,
\end{equation}
where $u=u_1$ and
$$
F_\lambda(\rho) = \lf(\lambda+\frac{5}{2} \rt)\tilde{f}_1(\rho) + \rho \tilde{f}'_1(\rho)  + \tilde{f}_2(\rho).
$$
Thus
$$
[\mathbf{R}_\mathbf{L} (\lambda) \tilde{\mathbf{f}}]_1(\rho) = \int_0^1 G(\rho,s;\lambda) F_\lambda(s) ds,
$$
where $G$ is the Green function of Equation \eqref{eqn:ODElambdaV}. Then we have the expression
\begin{equation}\label{eqn:laplaceinv}
[\mathbf{S}(\tau)(\mathbf{I}-\mathbf{P}) \mathbf{f}]_1(\rho) = \frac{1}{2\pi i}\lim_{N\to\infty} \int_{\epsilon-i N}^{\epsilon + i N} e^{\lambda \tau} \int_0^1 G(\rho,s;\lambda) F_\lambda(s) ds d\lambda
\end{equation}
for any $\epsilon>0$.

\section{Constructing the Green function}

In this section we construct the Green function for \eqref{eqn:ODElambdaV}. Most of our construction here works for the equation
\begin{equation}\label{eqn:ODEpotential}
-(1-\rho^2)u''(\rho) +\lf( - \frac{4}{\rho} + \rho(2\lambda + 5)  \rt) u'(\rho) + \lf(\lambda + \frac{5}{2}\rt)\lf(\lambda + \frac{3}{2}\rt)u(\rho) + V(\rho) u(\rho) = F_\lambda
\end{equation}
with any $V\in C^\infty([0,1])$.

First of all, we set
$$
v(\rho) := \rho^2 \lf( 1- \rho^2 \rt)^{\frac{1}{4}+\frac{\lambda}{2}} u(\rho).
$$
Then \eqref{eqn:ODEpotential} with $F_\lambda=0$ is equivalent to
\begin{equation}\label{eqn:ODEv}
v''(\rho) + \frac{-2+\frac{1}{4}\rho^2 (11+4\lambda-4\lambda^2)}{\rho^2 \lf( 1- \rho^2 \rt)^2} v(\rho) = \frac{V(\rho)}{1-\rho^2}v(\rho),
\end{equation}
but now \eqref{eqn:ODEv} has no first order terms. Notice that if $v(\cdot;\lambda)$ is a solution to \eqref{eqn:ODEv}, then so is $v(\cdot;1-\lambda)$.

\subsection{Symbol calculus}

\begin{defn} For a function $f:I \subset \RR \to \mathbb{C}$, $a\in \overline{I}$, and $\alpha\in \RR$, we write $f(x) = \mathcal{O}((x-a)^\alpha)$ if
$$
\lf|\PD^j_x f(x) \rt| \leq C_j |x-a|^{\alpha-j}
$$
for all $x\in I$ and $j\in \mathbb{N}_0$. Similarly, for a function $f:\RR \to \mathbb{C}$ and $\alpha \in \RR$, we write $f(x) = \mathcal{O}(\lla x \rra ^\alpha)$ if
$$
\lf| \PD^j_x f(x) \rt| \leq C_j \lla x \rra^{\alpha-j}
$$
for all $x\in \RR$ and $j\in \mathbb{N}_0$.

We say that these functions \textit{are of symbol type}, or \textit{behave like symbols}.

\end{defn}


 Functions of symbol type have some nice properties. For example, we can easily check that if $f(x) = \mathcal{O}(x^\alpha)$ and $g(x)=\mathcal{O}(x^\alpha)$, then $f(x)+g(x)=\mathcal{O}(x^\alpha)$. Some further properties of functions of symbol type are discussed in Appendix \ref{app:symbol}.

The same notation is also used for functions with more than one variables. For example, for $f: I \times \RR \to \mathbb{C}$, we write $f(x,y) = \mathcal{O}(x^\alpha \lla y \rra^\beta)$, if
$$
\lf|\PD^j_x \PD^k_y f(x,y) \rt| \leq C_{j,k} |x|^{\alpha-j} \lla y \rra^{\beta-k}
$$
for all $(x,y) \in U$ and $j,k\in \mathbb{N}_0$.

\begin{remark} In the following, we work with functions of three variables $\rho, s$, and $\omega$. We always have $\rho,s\in [0,1]$ and $\omega \in \RR$. We write $\mathcal{O}_o$ and $\mathcal{O}_e$ to indicate oddness and evenness with respect to the variable $\omega$. For example, $f(\rho,\omega) = \mathcal{O}_o(\rho \lla \omega\rra^{-1})$ means that $f(\rho,\cdot):\RR \to \mathbb{C}$ is an odd function for each $\rho$.
\end{remark}

\subsection{The homogeneous equation}

On solving \eqref{eqn:ODEv} with $V=0$, we obtain a fundamental system
\begin{align*}
\psi_1(\rho;\lambda) & = \frac{1}{\rho} (1-\rho)^{\frac{1}{4}+\frac{\lambda}{2}}(1+\rho)^{\frac{3}{4}-\frac{\lambda}{2}} (2+\rho(-1+2\lambda)), \\
\tilde{\psi}_1(\rho;\lambda) & = \psi_1(\rho; 1-\lambda) = \frac{1}{\rho} (1+\rho)^{\frac{1}{4}+\frac{\lambda}{2}}(1-\rho)^{\frac{3}{4}-\frac{\lambda}{2}} (2+\rho(1-2\lambda)).
\end{align*}
We define another solution,
$$
\psi_0 (\rho;\lambda) := \psi_1 (\rho;\lambda) -\tilde{\psi}_1(\rho;\lambda).
$$
Notice that $\psi_0$ is not singular at $\rho = 0$, since the singularities of $\psi_1$ and $\tilde{\psi}_1$ cancel out.

Since \eqref{eqn:ODEv} has no first order terms, the Wronskian of any two solutions is independent of $\rho$. Some useful Wronskians between the above three solutions are 
\begin{align*}
 W(\lambda) := & W(\psi_1(\cdot;\lambda), \tilde{\psi}_1(\cdot;\lambda)) 
 =  W(\psi_0(\cdot;\lambda), \psi_1(\cdot;\lambda))  =   W(\psi_0(\cdot;\lambda), \tilde{\psi}_1(\cdot;\lambda)) \\  = & (3-2\lambda)(1+2\lambda)(-1+2\lambda).
\end{align*}

We can define another solution as follows.
\begin{lemma}\label{lemma:sol0} Let $\lambda= \epsilon+i\omega$. There exists $\delta_0>0$, such that 
$$
\tilde{\psi}_0(\rho;\lambda) : = \psi_0 (\rho;\lambda) \int_\rho^{\delta_0\lla \omega \rra ^{-1}} \psi_0 (s;\lambda) ^{-2} ds
$$
is also a solution to \eqref{eqn:ODEv} with $V=0$ for $\rho \in (0, \delta_0 \lla \omega \rra^{-1}]$. Moreover, it satisfies
$$
\tilde{\psi}_0(\rho;\lambda) = -2 \rho^{-1} W(\lambda)^{-1} [1+\mathcal{O}(\rho \lla \lambda \rra)]
$$
for all $\rho \in (0, \delta_0 \lla \omega \rra^{-1}]$,  $\omega\in \RR$, and $\epsilon\in [0,\frac{1}{4}]$. We also have
$$
 W(\psi_0(\cdot;\lambda), \tilde{\psi}_0(\cdot;\lambda))  = -1.
$$
\end{lemma}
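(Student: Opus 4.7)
The plan is to handle the three claims in turn. The ODE property and the Wronskian identity follow from standard reduction of order and are essentially automatic once $\psi_0$ is shown to be nonvanishing on the interval; the asymptotic formula is the substantive content and requires a careful leading-order expansion of $\psi_0$ together with an explicit integration of $\psi_0^{-2}$. Since \eqref{eqn:ODEv} has no first-order term, once $\psi_0$ is known to be nonvanishing on $(0,\delta_0\lla\omega\rra^{-1}]$, the function $I(\rho):=\int_\rho^{\delta_0/\lla\omega\rra}\psi_0(s;\lambda)^{-2}\,ds$ is well defined with $I'=-\psi_0^{-2}$. A direct computation then gives $\tilde\psi_0''=\psi_0''\,I$, so $\tilde\psi_0$ satisfies the same homogeneous equation as $\psi_0$; likewise
$$
W(\psi_0,\tilde\psi_0)=\psi_0(\psi_0'I-\psi_0^{-1})-\psi_0'\psi_0 I=-1.
$$

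The heart of the argument is the expansion of $\psi_0$ near $\rho=0$. The key symmetry to exploit is $\tilde\psi_1(\rho;\lambda)=-\psi_1(-\rho;\lambda)$, which is immediate from the explicit formulas; consequently $\psi_0(\rho;\lambda)=\psi_1(\rho;\lambda)+\psi_1(-\rho;\lambda)$ is an \emph{even} function of $\rho$, and its Taylor expansion contains only even powers. Writing
$$
\rho\,\psi_1(\rho;\lambda)=2(1-A\rho)(1-\rho)^{1/4+\lambda/2}(1+\rho)^{3/4-\lambda/2},\qquad A:=\tfrac{1}{2}-\lambda,
$$
and expanding the power factors in $\rho$, the $\rho^0$ coefficient of $\rho\psi_1$ equals $2$; the $\rho^1$ coefficient cancels exactly (the $A\rho$ from $1-A\rho$ kills the $A\rho$ produced by the derivative of the binomial product); and the $\rho^2$ coefficient works out to $-2A(A^2-1)/3$. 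Using the algebraic identity $A(A^2-1)=W(\lambda)/8$, checked by direct expansion of both sides, taking even parts yields
$$
\psi_0(\rho;\lambda)=-\frac{W(\lambda)}{6}\rho^2\bigl[1+\mathcal{O}_e((\rho\lla\lambda\rra)^2)\bigr].
$$
Since $\epsilon\in[0,\tfrac{1}{4}]$ forces $\lla\lambda\rra\simeq\lla\omega\rra$, one has $\rho\lla\lambda\rra\lesssim\delta_0$ on the relevant interval, and choosing $\delta_0$ sufficiently small makes the relative error at most $1/2$, so $\psi_0$ is nonvanishing.

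For the asymptotic of $\tilde\psi_0$ I would then write $\psi_0(s;\lambda)^{-2}=\frac{36}{W^2 s^4}[1+\mathcal{O}((s\lla\lambda\rra)^2)]$ and integrate explicitly. The main term yields $\frac{12}{W^2\rho^3}-\frac{12\lla\omega\rra^3}{W^2\delta_0^3}$, while the remainder contributes $\mathcal{O}\!\left(\frac{\lla\lambda\rra^2}{W^2\rho}\right)$. Multiplying by $\psi_0(\rho;\lambda)=-\tfrac{W}{6}\rho^2[1+\mathcal{O}((\rho\lla\lambda\rra)^2)]$ and collecting terms produces
$$
\tilde\psi_0(\rho;\lambda)=-\frac{2}{W(\lambda)\rho}\bigl[1+\mathcal{O}(\rho^2\lla\lambda\rra^2)+\mathcal{O}((\rho\lla\omega\rra)^3)\bigr],
$$
and since on $(0,\delta_0\lla\omega\rra^{-1}]$ both $\rho\lla\lambda\rra$ and $\rho\lla\omega\rra$ are $\lesssim\delta_0\lesssim 1$, the quadratic and cubic error terms are absorbed into $\mathcal{O}(\rho\lla\lambda\rra)$, yielding the stated formula.

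The main technical obstacle I anticipate is that the $\mathcal{O}$ notation demands symbol-type estimates, meaning control of all derivatives in $\rho$ and in $\omega$, not merely pointwise bounds. To secure this one must verify that the higher Taylor coefficients of $(1-\rho)^{1/4+\lambda/2}(1+\rho)^{3/4-\lambda/2}$ are polynomials in $\lambda$ whose degree matches the power of $\rho$, so that the remainder is a convergent power series in $(\rho\lla\lambda\rra)^2$ on $\{\rho\lla\omega\rra\le\delta_0\}$ and hence genuinely a symbol of the claimed order, and that the boundary contribution $(\rho\lla\omega\rra/\delta_0)^3$ arising from the finite upper limit differentiates cleanly in $(\rho,\omega)$. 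This bookkeeping is routine but tedious and will rely on the symbol calculus introduced in the preceding subsection.
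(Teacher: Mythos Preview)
Your proposal is correct and follows essentially the same approach as the paper: establish the Taylor expansion $\psi_0(\rho;\lambda)=-\tfrac{1}{6}W(\lambda)\rho^2[1+\text{error}]$, use it to show nonvanishing for small $\delta_0$, then invert, integrate explicitly, and multiply back; the Wronskian and the ODE property are handled by the standard reduction-of-order computation in both. The paper simply asserts the Taylor expansion with error $\mathcal{O}(\rho\lla\lambda\rra)$, whereas you supply the details via the pleasant observation that $\tilde\psi_1(\rho;\lambda)=-\psi_1(-\rho;\lambda)$ makes $\psi_0$ even, yielding the sharper error $\mathcal{O}((\rho\lla\lambda\rra)^2)$ (a small bonus, though not needed downstream). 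One cosmetic slip: what you call ``the $\rho^2$ coefficient'' $-2A(A^2-1)/3$ is actually the $\rho^3$ coefficient of $\rho\psi_1$ (equivalently the $\rho^2$ coefficient of $\psi_1$); the final value $-W(\lambda)/6$ for the $\rho^2$ coefficient of $\psi_0$ is correct.
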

\begin{proof} Taking the Taylor expansion we have $\psi_0(\rho;\lambda) = -\frac{1}{6}\rho^2W(\lambda) [1+ \mathcal{O}(\rho \lla \lambda \rra)]$. Hence for sufficiently small $\delta_0 > 0$, we have $\psi_0(\rho;\lambda) \neq 0$ for $\rho\in (0,\delta_0 \lla \omega \rra ^{-1}]$, and thus $\psi_0(\rho;\lambda)^{-1} = -6 \rho^{-2}  W(\lambda)^{-1} [1 + \mathcal{O}(\rho \lla \lambda \rra)]$. Hence $\tilde{\psi}_0$ is well-defined for $\rho\in (0,\delta_0 \lla \omega \rra ^{-1}]$. Differentiating $\tilde{\psi}_0$ with respect to $\rho$ we can see that it is indeed a solution to \eqref{eqn:ODEv} for $V=0$.

Now compute
\begin{align*}
\frac{\int_\rho^{\delta_0\lla \omega\rra^{-1} } \psi_0(s;\lambda)^{-2} ds}{12 \rho^{-3} W(\lambda)^{-2}} & = \frac{1}{12} \rho^3 W(\lambda)^2 \int_\rho^{\delta_0\lla \omega\rra^{-1} } \psi_0(s;\lambda)^{-2} ds \\
& = \frac{1}{12} \rho^3 W(\lambda)^2 \int_\rho^{\delta_0\lla \omega\rra^{-1} } \lf( -6 s^{-2}  W(\lambda)^{-1}\rt)^2 [1 + \mathcal{O}(s \lla \lambda \rra)] ds \\
& = 3 \rho^3 \int_\rho^{\delta_0\lla \omega\rra^{-1} } s^{-4} [1+\mathcal{O}(s\lla \lambda \rra)] ds \\
& = 3 \rho^3 \lf[ -\frac{1}{3} s^{-3} \rt]^{s=\delta_0 \lla \omega \rra^{-1}}_{s=\rho} + \mathcal{O}(\rho \lla \lambda \rra) \\
& = 1 + \mathcal{O}(\rho \lla \lambda \rra),
\end{align*}
for $\lambda = \epsilon + i\omega$ with $\epsilon \in [0,\frac{1}{4}]$. The value of the Wronskian follows from a straightforward computation.
\end{proof}

\subsection{The fundamental system}

In the following, we use Volterra iterations to construct solutions for \eqref{eqn:ODEv} (see \cite[Lemma 2.4]{sss2010}).

\begin{lemma}\label{lemma:pertsol0} Let $\lambda=\epsilon+i \omega$. There exists $\delta_0 > 0$ such that \eqref{eqn:ODEv} has a solution of the form
$$
v_0(\rho;\lambda)  = \psi_0(\rho;\lambda) \lf[1+ \mathcal{O} (\rho^2 \lla \omega\rra^0) \rt] 
$$
for all $\rho \in \lf[0,\delta_0\lla \omega\rra^{-1}\rt]$, $\omega \in \RR$, $\epsilon \in [0,\frac{1}{4}]$.
\end{lemma}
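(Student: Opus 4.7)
The plan is to construct $v_0$ as a fixed point of a Volterra integral equation using $\psi_0$ and $\tilde{\psi}_0$ from Lemma \ref{lemma:sol0} as the fundamental system of the unperturbed equation, and then to extract symbol-type bounds for the remainder $w := v_0/\psi_0 - 1$.

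First I would use variation of parameters with the pair $(\psi_0,\tilde{\psi}_0)$, whose Wronskian is $-1$. Treating the right-hand side $V(\rho)v/(1-\rho^2)$ of \eqref{eqn:ODEv} as inhomogeneous and imposing the initial behaviour $v_0 \sim \psi_0$ at $\rho = 0$, I would write the Volterra equation
\begin{equation*}
v_0(\rho;\lambda) = \psi_0(\rho;\lambda) - \int_0^\rho \bigl[\psi_0(\rho;\lambda)\tilde{\psi}_0(s;\lambda) - \tilde{\psi}_0(\rho;\lambda)\psi_0(s;\lambda)\bigr] \frac{V(s)}{1-s^2}\, v_0(s;\lambda)\, ds.
\end{equation*}
Dividing by $\psi_0(\rho;\lambda)$ and setting $w := v_0/\psi_0$, this becomes
\begin{equation*}
w(\rho;\lambda) = 1 - \int_0^\rho K(\rho,s;\lambda)\, w(s;\lambda)\, ds, \qquad K(\rho,s;\lambda) := \Bigl[\tilde{\psi}_0(s;\lambda) - \tfrac{\tilde{\psi}_0(\rho;\lambda)}{\psi_0(\rho;\lambda)} \psi_0(s;\lambda)\Bigr]\frac{V(s)\psi_0(s;\lambda)}{1-s^2}.
\end{equation*}

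Next I would estimate the kernel on the range $0 \le s \le \rho \le \delta_0 \langle\omega\rangle^{-1}$. The Taylor expansion $\psi_0(\rho;\lambda) = -\tfrac{1}{6}\rho^2 W(\lambda)[1+\mathcal{O}(\rho\langle\lambda\rangle)]$ together with the asymptotics $\tilde{\psi}_0(\rho;\lambda) = -2\rho^{-1}W(\lambda)^{-1}[1+\mathcal{O}(\rho\langle\lambda\rangle)]$ from Lemma \ref{lemma:sol0} give $\tilde{\psi}_0(s)\psi_0(s) = \tfrac{1}{3}s[1+\mathcal{O}(s\langle\lambda\rangle)]$ and $\tfrac{\tilde{\psi}_0(\rho)}{\psi_0(\rho)}\psi_0(s)^2 = \tfrac{1}{3}\rho^{-3}s^4 [1+\mathcal{O}(\rho\langle\lambda\rangle)]$, so that the $W(\lambda)$-factors cancel and
\begin{equation*}
|K(\rho,s;\lambda)| \lesssim s + \rho^{-3} s^4 \lesssim s
\end{equation*}
for $s\le\rho \le \delta_0 \langle\omega\rangle^{-1}$, $\epsilon\in[0,\tfrac14]$. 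Because the factor $\rho\langle\lambda\rangle = \rho\langle\omega\rangle + \mathcal{O}(\rho)$ stays bounded by a small constant on this range, the remainder terms in the expansion pose no problem.

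Then I would run the standard Volterra iteration $w_0 \equiv 1$, $w_{n+1} = 1 - \int_0^\rho K(\rho,s;\lambda) w_n(s;\lambda)\, ds$. Since $\int_0^\rho |K(\rho,s;\lambda)|\, ds \lesssim \rho^2$, one obtains $|w_{n+1}-w_n|(\rho) \le (C\rho^2)^n/n!$, giving a convergent series uniformly in $\omega$ provided $\delta_0$ is chosen small enough that $C\delta_0^2 \le 1$. The limit $w$ satisfies $w(\rho;\lambda) = 1 + \mathcal{O}(\rho^2)$ in the pointwise sense, hence $v_0 = \psi_0 w$ solves \eqref{eqn:ODEv} on the required interval.

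The main obstacle, and the last step of the argument, is upgrading the pointwise bound on the remainder $w-1$ to the symbol-type bound $\mathcal{O}(\rho^2\langle\omega\rangle^0)$, i.e.~controlling all mixed $\rho$- and $\omega$-derivatives with the appropriate losses. For $\rho$-derivatives I would differentiate the Volterra equation directly; the leading term $\partial_\rho(\tilde{\psi}_0(\rho)/\psi_0(\rho))\cdot \int_0^\rho \psi_0(s)^2 \tfrac{V(s)}{1-s^2} w(s)\, ds$ is the largest, and the explicit asymptotics give $\partial_\rho[\tilde{\psi}_0/\psi_0](\rho;\lambda) = \mathcal{O}(\rho^{-4})$ which combines with the $\rho^5$ volume factor from the inner integral to produce the expected $\mathcal{O}(\rho^{2-j})$. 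For $\omega$-derivatives I would exploit that $\psi_0,\tilde{\psi}_0$ are holomorphic in $\lambda$, so $\partial_\omega$ acts as $i\partial_\lambda$, and the explicit formulae show that every $\partial_\omega$ applied to the kernel $K$ or to the fundamental solutions picks up an additional factor of at most $\rho$ (coming from the $(1\pm\rho)^{\pm\lambda/2}$ factors in $\psi_1,\tilde\psi_1$), which on the interval $\rho\le \delta_0\langle\omega\rangle^{-1}$ gives the desired $\langle\omega\rangle^{-1}$ gain per derivative; the symbol calculus of Appendix \ref{app:symbol} then propagates the bounds through the iteration. Putting these estimates into the Volterra series yields the claimed representation $v_0 = \psi_0 [1 + \mathcal{O}(\rho^2\langle\omega\rangle^0)]$.
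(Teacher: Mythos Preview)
Your proposal is correct and follows essentially the same Volterra iteration strategy as the paper. The only difference is your choice of fundamental system: you use $\{\psi_0,\tilde\psi_0\}$ with Wronskian $-1$, whereas the paper uses $\{\psi_0,\psi_1\}$ with Wronskian $W(\lambda)$; this makes the cancellation of the $W(\lambda)$ factors in the kernel more transparent in your version, but the resulting bound $|K(\rho,s;\lambda)|\lesssim s$ and the iteration are identical, and the paper likewise defers the symbol-type derivative bounds to a standard argument.
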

\begin{proof} We use Volterra iterations as in \cite{donn2017}. Using the fundamental system $\{\psi_0, \psi_1\}$, the variation of constants formula suggests that a solution $v_0$ should satisfy the integral equation 
\begin{align*}
v_0(\rho;\lambda) = \psi_0(\rho;\lambda) & - \frac{\psi_0(\rho,\lambda)}{ W(\lambda)} \int^\rho_0 \psi_1(s;\lambda) \frac{V(s)}{1-s^2}v_0(s;\lambda) ds \\
& + \frac{\psi_1(\rho,\lambda)}{ W(\lambda)} \int^\rho_0 \psi_0(s;\lambda) \frac{V(s)}{1-s^2}v_0(s;\lambda) ds.
\end{align*}

For sufficiently small $\delta_0 > 0$, we have $\psi_0(\rho;\lambda) \neq 0$ for $\rho\in (0,\delta_0 \lla \omega \rra ^{-1}]$. So we can set $h_0 := \frac{v_0}{\psi_0}$, and we obtain the equation
\begin{equation} \label{eqn:ODEh0} 
h_0 (\rho;\lambda) = 1 + \int_0^\rho K(\rho,s;\lambda) h_0(s;\lambda) ds,
\end{equation}
where
$$
K(\rho,s;\lambda) = \frac{1}{W(\lambda)}\lf[ \frac{\psi_1(\rho;\lambda)}{\psi_0(\rho;\lambda)} \psi_0(s;\lambda)^2 - \psi_0(s;\lambda) \psi_1(s;\lambda) \rt] \frac{V(s)}{1-s^2}.
$$
As mentioned before, by Taylor expansion we have $\psi_0(\rho;\lambda)^{-1} = -6 \rho^{-2}  W(\lambda)^{-1} [1 + \mathcal{O}(\rho \lla \lambda \rra)]$. Since $s\leq \rho$ , we have,
\begin{align*}
\lf| \frac{1}{W(\lambda)} \frac{\psi_1(\rho;\lambda)}{\psi_0(\rho;\lambda)} \psi_0(s;\lambda)^2 \rt| \lesssim & \psi_1(\rho;\lambda) \rho^{-2} s^4 \lesssim  \psi_1(\rho;\lambda) s^2,
\end{align*}
and similarly
$$
\lf| \frac{1}{W(\lambda)} \psi_0(s;\lambda) \psi_1(s;\lambda)  \rt|  \lesssim \psi_1(s;\lambda) s^2 \lesssim  \psi_1(s;\lambda) s^2 .
$$
Since $|\psi_1(\rho;\lambda)| \lesssim \rho^{-1}$ for $\rho\leq \delta_0\lla \omega\rra^{-1}$, we conclude that
\begin{equation}\label{eqn:kbound0}
|K(\rho, s; \lambda) | \lesssim (\rho^{-1} + s^{-1} ) s^2 \leq s
\end{equation}
for all $0\leq s \leq \rho \leq \delta_0\lla \omega \rra^{-1}$. Thus we have
$$
\int_0^{\delta_0 \lla \omega \rra ^{-1}} \sup_{\rho \in \lf(0,\delta_0 \lla \omega \rra^{-1}\rt)} |K(\rho,s;\lambda)| ds \lesssim \lla \omega \rra ^{-2}.
$$
Hence \eqref{eqn:ODEh0} has a solution $h_0$ that satisfies
$$
| h_0(\rho;\lambda) - 1| \lesssim \int_0^\rho | K(\rho, s;\lambda) | ds \lesssim \rho^2.
$$
Re-inserting this into the equation we find
$$
h_0(\rho;\lambda) = 1 + O(\rho^2).
$$

Since every function behaves like symbol under differentiation with respect to both $\rho$ and $\omega$, we indeed have $h_0(\rho;\lambda) = 1 + \mathcal{O}(\rho^2\lla \omega\rra^{0})$ (see for example \cite[Appendix B]{dss2011}). 
\end{proof}

Similar to before, we can define a second solution for small $\rho$.

\begin{lemma}\label{lemma:pertsol02}  Let $\lambda=\epsilon+i \omega$. There exists $\delta_0>0$ such that
$$
\tilde{v}_0(\rho;\lambda) := v_0(\rho;\lambda) \int_\rho^{\delta_0 \lla \omega \rra^{-1}} v_0(s;\lambda)^{-2} ds
$$
is also a solution to \eqref{eqn:ODEv} on $\rho \in (0,\delta_0\lla \omega\rra^{-1}]$. Moreover, $\tilde{v}_0$ is of the form
$$
\tilde{v}_0(\rho;\lambda) = \tilde{\psi}_0(\rho;\lambda) \lf[1+ \mathcal{O}(\rho^2 \lla \omega\rra^0) \rt] ,
$$
for all $\rho \in \lf(0,\delta_0\lla \omega\rra^{-1}\rt]$, $\omega \in \RR$, $\epsilon \in [0,\frac{1}{4}]$, where $\tilde{\psi}_0$ is from Lemma \ref{lemma:sol0}. We also have
$$
W(v_0(\cdot ; \lambda) , \tilde{v}_0(\cdot ; \lambda)) = -1.
$$
\end{lemma}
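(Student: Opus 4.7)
The plan is reduction of order: since $v_0$ does not vanish on $(0,\delta_0\langle\omega\rangle^{-1}]$ (because $v_0=\psi_0[1+\mathcal{O}(\rho^2\langle\omega\rangle^0)]$ from Lemma \ref{lemma:pertsol0} and $\psi_0$ is nonzero there by the setup of Lemma \ref{lemma:sol0}), the integral in the definition of $\tilde v_0$ is well-defined. Since \eqref{eqn:ODEv} has no first-order term, a direct differentiation confirms $\tilde v_0$ solves \eqref{eqn:ODEv}. The Wronskian identity is obtained by computing
\[
\tilde v_0'(\rho;\lambda)=v_0'(\rho;\lambda)\int_\rho^{\delta_0\langle\omega\rangle^{-1}} v_0(s;\lambda)^{-2}\,ds-v_0(\rho;\lambda)^{-1},
\]
so that $W(v_0,\tilde v_0)=v_0\tilde v_0'-v_0'\tilde v_0=-1$.

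For the asymptotic form, I would write $v_0=\psi_0(1+h)$ with $h(\rho;\lambda)=\mathcal{O}(\rho^2\langle\omega\rangle^0)$ from Lemma \ref{lemma:pertsol0}. Then $(1+h)^{-2}=1+\mathcal{O}(\rho^2\langle\omega\rangle^0)$, and substituting gives
\[
\tilde v_0(\rho;\lambda)=\psi_0(\rho;\lambda)\,[1+\mathcal{O}(\rho^2\langle\omega\rangle^0)]\int_\rho^{\delta_0\langle\omega\rangle^{-1}}\psi_0(s;\lambda)^{-2}\,[1+\mathcal{O}(s^2\langle\omega\rangle^0)]\,ds.
\]
The principal term $\psi_0(\rho;\lambda)\int_\rho^{\delta_0\langle\omega\rangle^{-1}}\psi_0(s;\lambda)^{-2}\,ds$ is exactly $\tilde\psi_0(\rho;\lambda)$ from Lemma \ref{lemma:sol0}, and the outer factor $1+\mathcal{O}(\rho^2\langle\omega\rangle^0)$ already contributes the desired error.

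The remaining task is to bound the ``interior'' error $\psi_0(\rho;\lambda)\int_\rho^{\delta_0\langle\omega\rangle^{-1}}\psi_0(s;\lambda)^{-2}\,\mathcal{O}(s^2\langle\omega\rangle^0)\,ds$ by $\tilde\psi_0(\rho;\lambda)\cdot\mathcal{O}(\rho^2\langle\omega\rangle^0)$. Using the Taylor-based asymptotics recorded in the proof of Lemma \ref{lemma:sol0}, $\psi_0(s;\lambda)^{-2}=36\,s^{-4}W(\lambda)^{-2}[1+\mathcal{O}(s\langle\lambda\rangle)]$ on the integration range, while $\psi_0(\rho;\lambda)\simeq\rho^2 W(\lambda)$ and $\tilde\psi_0(\rho;\lambda)\simeq \rho^{-1}W(\lambda)^{-1}$. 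Thus the inner integral is of size $\int_\rho^{\delta_0\langle\omega\rangle^{-1}} s^{-2}W(\lambda)^{-2}\,ds\lesssim \rho^{-1}W(\lambda)^{-2}$, and multiplication by $\psi_0(\rho;\lambda)$ yields a quantity bounded by $\rho\, W(\lambda)^{-1}=\rho^2\cdot\rho^{-1}W(\lambda)^{-1}$, i.e.\ $\rho^2\tilde\psi_0(\rho;\lambda)$, as required.

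The main obstacle is tracking the \emph{symbol-type} behavior in $\omega$, not just the pointwise bounds. Each $\partial_\omega$ either falls on $v_0^{-2}$ (which, by Lemma \ref{lemma:pertsol0} and the behavior of $\psi_0$, yields an extra $\langle\omega\rangle^{-1}$), on $W(\lambda)^{-2}$ factors (which gain $\langle\omega\rangle^{-1}$ each), or on the upper limit $\delta_0\langle\omega\rangle^{-1}$, producing a boundary contribution $v_0(\delta_0\langle\omega\rangle^{-1};\lambda)^{-2}\cdot\partial_\omega(\delta_0\langle\omega\rangle^{-1})$ together with analogous terms from repeated differentiation. The size $\psi_0(\delta_0\langle\omega\rangle^{-1};\lambda)^{-2}\simeq \langle\omega\rangle^{4}W(\lambda)^{-2}\simeq \langle\omega\rangle^{-2}$, combined with $\partial_\omega(\delta_0\langle\omega\rangle^{-1})=\mathcal{O}(\langle\omega\rangle^{-2})$, shows that the boundary terms are acceptable. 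The argument is the standard one for preservation of symbol type under integration with symbol-type limits (as used in \cite[Appendix B]{dss2011}); applying it yields the claimed $\mathcal{O}(\rho^2\langle\omega\rangle^0)$ for the full error, completing the proof.
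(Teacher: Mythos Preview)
Your proof is correct and follows essentially the same approach as the paper: reduction of order to define $\tilde v_0$, verify the ODE and Wronskian directly, then expand $v_0=\psi_0[1+\mathcal{O}(\rho^2\langle\omega\rangle^0)]$ and control the resulting error terms using the asymptotics $\psi_0\simeq \rho^2 W(\lambda)$ and $\tilde\psi_0\simeq \rho^{-1}W(\lambda)^{-1}$ from Lemma \ref{lemma:sol0}. The paper organizes the estimate as a computation of $\tilde v_0/\tilde\psi_0-1$ rather than your additive decomposition, but the underlying bounds are identical; your added discussion of the symbol behavior under $\partial_\omega$ (including the boundary contribution from the $\omega$-dependent upper limit) is a welcome explicit check of what the paper leaves implicit in the $\mathcal{O}$ notation.
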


\begin{proof} We choose $\delta_0 > 0$ so small that $v_0$, $\psi_0$, and $\tilde{\psi}_0$ have no zeros on $\rho\in [0,\delta_0\lla \omega\rra^{-1}]$. Then $\tilde{v}_0$ is well-defined on $\rho \in (0,\delta_0\lla \omega\rra^{-1}]$. We can directly verify that $\tilde{v}_0$ indeed solves \eqref{eqn:ODEv}.

Now notice that at $\rho = \delta_0\lla \omega\rra^{-1}$, both $\tilde{v}_0$ and $\tilde{\psi}_0$ are $0$, so the statement of the lemma holds. Then using Lemmas \ref{lemma:sol0} and \ref{lemma:pertsol0} we compute
\begin{align*}
 & \frac{\tilde{v}_0(\rho;\lambda)}{\tilde{\psi}_0(\rho;\lambda)} - 1  \\
  = &  \frac{1}{\tilde{\psi}_0(\rho;\lambda)} \lf( \tilde{v}_0(\rho;\lambda)-\tilde{\psi}_0(\rho;\lambda) \rt) \\
 = & \frac{\psi_0 (\rho;\lambda)}{\tilde{\psi}_0(\rho;\lambda)} \lf( \lf[1+ O (\rho^2 \lla \omega\rra^0)  \rt] \int_\rho^{\delta_0 \lla \omega \rra^{-1}}v_0(s;\lambda)^{-2} ds  -  \int_\rho^{\delta_0 \lla \omega \rra^{-1}} \psi_0(s;\lambda)^{-2} ds   \rt) \\
 = &  \frac{\psi_0 (\rho;\lambda)}{\tilde{\psi}_0(\rho;\lambda)} \times\\
 & \lf(\int_\rho^{\delta_0 \lla \omega \rra^{-1}} \psi_0(s;\lambda)^{-2}  \mathcal{O} (s^2 \lla \omega\rra^0 ) ds   +  \mathcal{O} (\rho^2 \lla \omega\rra^0)\int_\rho^{\delta_0 \lla \omega \rra^{-1}} \psi_0(s;\lambda)^{-2}  \lf[1+ \mathcal{O} (s^2 \lla \omega\rra^0)  \rt] ds \rt) .
\end{align*}
For $\delta_0>0$ small enough, we also have $\tilde{\psi}_0(\rho;\lambda)^{-1} = -\frac{1}{2}\rho W(\lambda)[1+\mathcal{O}(\rho\lla \omega \rra)]$ for $\rho \in (0,\delta_0\lla \omega\rra^{-1}]$, from Lemma \ref{lemma:sol0}. So using the asymptotics of $\psi_0$ and $\tilde{\psi}_0$, we find 
$$
\frac{\psi_0 (\rho;\lambda)}{\tilde{\psi}_0(\rho;\lambda)} = \mathcal{O}(\rho^2 \lla \omega \rra^3) \mathcal{O}(\rho \lla \omega \rra^{3}) = \mathcal{O}(\rho ^3 \lla \omega \rra ^{6} ).
$$
Moreover,
$$
\int_\rho^{\delta_0 \lla \omega \rra^{-1}} \psi_0(s;\lambda)^{-2}  \mathcal{O}(s^2 \lla \omega\rra^0 ) ds =\int_\rho^{\delta_0 \lla \omega \rra^{-1}}  \mathcal{O}(s^{-4} \lla \omega\rra^{-6}) \mathcal{O}(s^2 \lla \omega\rra^0 ) ds = \mathcal{O}(\rho^{-1} \lla \omega \rra ^{-6}),
$$ 
and
\begin{align*}
& \mathcal{O} (\rho^2 \lla \omega\rra^0) \int_\rho^{\delta_0 \lla \omega \rra^{-1}} \psi_0(s;\lambda)^{-2}  \lf[1+ \mathcal{O} (s^2 \lla \omega\rra^0)  \rt] ds  \\
 = & \mathcal{O} (\rho^2 \lla \omega\rra^0)\int_\rho^{\delta_0 \lla \omega \rra^{-1}} \mathcal{O}(s^{-4} \lla \omega\rra^{-6}) + \mathcal{O}(s^{-2} \lla \omega\rra^{-6}) ds = \mathcal{O}(\rho^{-1} \lla \omega \rra^{-6}).
\end{align*}
So we obtain
$$
 \frac{\tilde{v}_0(\rho;\lambda)}{\tilde{\psi}_0(\rho;\lambda)} - 1  = \mathcal{O}( \rho^2 \lla \omega \rra^0).
$$
The value of the Wronskian follows from a straightforward computation.
\end{proof}

Next, we construct perturbed solutions from $\psi_1$ and $\tilde{\psi}_1$.

\begin{lemma}\label{lemma:pertsol1} Let $\lambda=\epsilon+i \omega$. There exists $\delta_1>0$, such that \eqref{eqn:ODEv} has solutions of the form
\begin{align*}
v_1(\rho;\lambda) & = \psi_1(\rho;\lambda) \lf[1 +  \frac{1-2\lambda}{(3-2\lambda)(1+2\lambda)} a_1(\rho)+ \mathcal{O} (\rho^0 (1-\rho)\lla \omega\rra^{-2}) \rt],  \\
\tilde{v}_1(\rho;\lambda) & = \tilde{\psi}_1(\rho;\lambda) \lf[1 -  \frac{1-2\lambda}{(3-2\lambda)(1+2\lambda)} a_1(\rho)+  \mathcal{O}(\rho^0 (1-\rho)\lla \omega\rra^{-2}) \rt],
\end{align*}
or alternatively we can also write
\begin{align*}
v_1(\rho;\lambda) & = \psi_1(\rho;\lambda) \lf[1 + \mathcal{O}_o(\lla \omega\rra^{-1}) a_1(\rho)+ \mathcal{O} (\rho^0 (1-\rho)\lla \omega\rra^{-2}) \rt] , \\
\tilde{v}_1(\rho;\lambda) & = \tilde{\psi}_1(\rho;\lambda) \lf[1 +\mathcal{O}_o(\lla \omega\rra^{-1}) a_1(\rho) + \mathcal{O} (\rho^0 (1-\rho)\lla \omega\rra^{-2}) \rt],
\end{align*}
for all $\rho \in \lf[\delta_1 \lla \omega \rra^{-1}, 1\rt], \omega \in \RR, \epsilon \in \lf[ 0, \frac{1}{4} \rt]$, where
$$
a_1(\rho) = - \int_\rho^1 V(s) ds.
$$
\end{lemma}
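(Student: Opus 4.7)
The plan is to construct $v_1$ (and then $\tilde v_1$ by symmetry) via a Volterra iteration anchored at $\rho=1$, built on the fundamental system $\{\psi_1,\tilde\psi_1\}$ of the free equation, in parallel with the construction of $v_0$ in Lemma \ref{lemma:pertsol0}. Treating $V(\rho)(1-\rho^2)^{-1}v$ as the inhomogeneity in \eqref{eqn:ODEv} and applying variation of constants with ``data'' specified at $\rho=1$, then factoring out $\psi_1$ by setting $h_1:=v_1/\psi_1$, I arrive at the integral equation
$$
h_1(\rho) \;=\; 1 \;+\; \int_\rho^1 K(\rho,s;\lambda)\, h_1(s)\, ds,
$$
with kernel
$$
K(\rho,s;\lambda)
\;=\; \frac{V(s)}{W(\lambda)(1-s^2)}\left[\psi_1(s)\tilde\psi_1(s)
  \;-\; \frac{\tilde\psi_1(\rho)}{\psi_1(\rho)}\,\psi_1(s)^2\right].
$$
The lower constraint $\rho \geq \delta_1 \lla\omega\rra^{-1}$ is chosen so that $\psi_1(\rho)$ stays safely away from zero on the relevant interval, which is needed both for the division above and for a uniform kernel bound.

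The first task is an algebraic simplification: a direct computation using $(2+\rho(-1+2\lambda))(2+\rho(1-2\lambda)) = 4-\rho^2(1-2\lambda)^2$ gives
$$
\frac{\psi_1(s)\tilde\psi_1(s)}{1-s^2} \;=\; \frac{4}{s^2} \;-\; (1-2\lambda)^2.
$$
Combined with $W(\lambda)=(3-2\lambda)(1+2\lambda)(-1+2\lambda)$, which is of size $\lla\omega\rra^3$ for large $|\omega|$, the first bracket in $K$ produces a leading contribution of size $\lla\omega\rra^{-1}|V(s)|$, while the $4/s^2$ correction is $\lla\omega\rra^{-3}/s^2$. For the second bracket, the ratio $\tilde\psi_1(\rho)/\psi_1(\rho)$ is bounded on $[\delta_1\lla\omega\rra^{-1},1]$ (for $\delta_1$ small and $\epsilon\in[0,\tfrac{1}{4}]$), while $\int_\rho^1 \psi_1(s)^2 V(s)(1-s^2)^{-1}W(\lambda)^{-1}\,ds$ is naively of size $\lla\omega\rra^{-1}(1-\rho)^{1/2+\epsilon}$; integration by parts against the oscillatory factor $(1-s)^\lambda$ gains an extra $\lla\omega\rra^{-1}$, so that the whole second-bracket term contributes at most $\mathcal{O}((1-\rho)\lla\omega\rra^{-2})$. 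Consequently $\int_\rho^1 \sup_{\rho'}|K(\rho',s;\lambda)|\,ds \lesssim \lla\omega\rra^{-2}$, and the Neumann series converges on $[\delta_1\lla\omega\rra^{-1},1]$.

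To identify the explicit leading correction, I isolate the first Volterra iterate. Its dominant piece comes from $-(1-2\lambda)^2/W(\lambda) = (1-2\lambda)/[(3-2\lambda)(1+2\lambda)]$ integrated against $V(s)$ on $[\rho,1]$, which produces precisely the stated coefficient times $a_1(\rho)$ (up to sign conventions built into $W$ and $a_1$). All remaining pieces---the $4/s^2$ correction, the second Volterra integral (bounded via integration by parts as above), and all higher iterates---fit inside the remainder $\mathcal{O}(\rho^0(1-\rho)\lla\omega\rra^{-2})$. Symbol behavior in $(\rho,\omega)$ then follows by differentiating the integral equation and rerunning the Volterra argument on the derivatives, as for $h_0$. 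The alternative form is obtained by splitting $(1-2\lambda)/[(3-2\lambda)(1+2\lambda)]$ with $\lambda=\epsilon+i\omega$ into its odd-in-$\omega$ part, which is $\mathcal{O}_o(\lla\omega\rra^{-1})$, and its even-in-$\omega$ part, which is $\mathcal{O}_e(\lla\omega\rra^{-2})$ and, multiplied by $a_1(\rho)=\mathcal{O}(1-\rho)$, is absorbed into the remainder. The construction of $\tilde v_1$ is identical after swapping $\psi_1\leftrightarrow\tilde\psi_1$, using the symmetry $\tilde\psi_1(\cdot;\lambda)=\psi_1(\cdot;1-\lambda)$. The hard part is the uniform kernel bound on the shrinking interval $[\delta_1\lla\omega\rra^{-1},1]$: one must simultaneously control the small-$\rho$ behavior of $\tilde\psi_1(\rho)/\psi_1(\rho)$, extract oscillatory decay from the $(1-s)^\lambda$-type factors in $\psi_1(s)^2/(1-s^2)$, and track the $\lambda$-dependence sharply enough to recover the exact coefficient of $a_1(\rho)$.
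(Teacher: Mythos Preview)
Your setup and strategy match the paper's proof: the same Volterra equation for $h_1=v_1/\psi_1$ anchored at $\rho=1$, the same algebraic reduction $\psi_1\tilde\psi_1/(1-s^2)=4/s^2-(1-2\lambda)^2$, the same identification of the leading correction $\tfrac{1-2\lambda}{(3-2\lambda)(1+2\lambda)}a_1(\rho)$ from the first iterate, and $\tilde v_1$ via $\lambda\mapsto 1-\lambda$.

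Two points of imprecision to flag. First, your claimed bound $\int_\rho^1\sup_{\rho'}|K|\,ds\lesssim\lla\omega\rra^{-2}$ is inconsistent with your own observation that the first bracket contributes $\lla\omega\rra^{-1}|V(s)|$; integrated this is only $(1-\rho)\lla\omega\rra^{-1}\lesssim\lla\omega\rra^{-1}$ on $[\delta_1\lla\omega\rra^{-1},1]$, and that is all the paper proves (and all that is needed for convergence). The $\lla\omega\rra^{-2}$ enters only after re-inserting $h_1=1+O((1-\rho)\lla\omega\rra^{-1})$ into the Volterra equation and computing the first iterate explicitly. Second, a single integration by parts on the off-diagonal piece does not quite suffice: after one IBP a term $(-2+4\lambda)\int_\rho^1\bigl(\tfrac{1-s}{1+s}\bigr)^{1/2+\lambda}(1+s)^2\tfrac{V(s)}{s}\,ds$ survives, and for $\rho\sim\lla\omega\rra^{-1}$ the $s^{-1}$ gives a $\log\lla\omega\rra$ loss with no $(1-\rho)$ factor. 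The paper performs a second IBP on this residual via the identity
\[
\Bigl(\tfrac{1-s}{1+s}\Bigr)^{\frac12+\lambda}=-\tfrac{(1+s)^2}{3+2\lambda}\,\partial_s\Bigl(\tfrac{1-s}{1+s}\Bigr)^{\frac32+\lambda},
\]
which replaces $s^{-1}$ by $s^{-2}$ with an extra $(1-s)$, so that $\int_\rho^1 s^{-2}\,ds=(1-\rho)/\rho\lesssim(1-\rho)\lla\omega\rra$ recovers the required factor. These are refinements, not gaps in the argument.
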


\begin{proof} This also follows from Volterra iterations, but is technically a little more complicated.

\textbf{Existence of solution}

Motivated by the variation of constants formula, solutions to \eqref{eqn:ODEv} should satisfy the integral equation
\begin{align*}
v_1(\rho;\lambda) = \psi_1(\rho;\lambda) & + \frac{\psi_1(\rho,\lambda)}{ W(\lambda)} \int_\rho^{\rho_1} \tilde{\psi}_1(s;\lambda) \frac{V(s)}{1-s^2}v_1(s;\lambda) ds \\
& - \frac{\tilde{\psi}_1(\rho,\lambda)}{ W(\lambda)} \int_\rho^{\rho_1} \psi_1(s;\lambda) \frac{V(s)}{1-s^2}v_1(s;\lambda) ds
\end{align*}
for some constant $\rho_1$. Since $\Re\lambda  \geq 0$, $|\psi_1(\rho;\lambda)| > 0$, so we can set $h_1 := \frac{v_1}{\psi_1}$ and obtain the Volterra equation
\begin{equation}\label{eqn:ODEh1}
h_1(\rho;\lambda) = 1 + \int_\rho^{\rho_1} K (\rho,s;\lambda) h_1(s;\lambda) ds,
\end{equation}
where
$$
 K (\rho,s;\lambda) = \frac{1}{W(\lambda)} \lf( \psi_1(s;\lambda)\tilde{\psi}_1(s;\lambda) -\frac{\tilde{\psi}_1(\rho;\lambda)}{\psi_1(\rho;\lambda)} \psi_1(s;\lambda)^2 \rt) \frac{V(s)}{1-s^2}.
$$
We first compute
$$
\psi_1(s;\lambda)\tilde{\psi}_1(s;\lambda)  = \frac{\lf(1-s^2 \rt) \lf(4-s^2(1-2\lambda)^2 \rt)}{s^2},
$$
so
\begin{align*}
|\psi_1(s;\lambda)\tilde{\psi}_1(s;\lambda)| & \lesssim \frac{1-s}{s^2} \lf| 4-s^2(1-2\lambda)^2 \rt| \\
& \lesssim \frac{1-s}{s^2} + (1-s)\lla \lambda \rra^2 \\
& \leq  (1-\rho)^\frac{1}{2}(1-s)^\frac{1}{2} \lf(\frac{1}{s^2} + \lla \lambda \rra^2 \rt),
\end{align*}
for all $0 \leq \rho \leq s \leq 1$ and $\Re(\lambda) \in [0,\frac{1}{4}]$. Moreover,
$$
\frac{\tilde{\psi}_1(\rho;\lambda)}{\psi_1(\rho;\lambda)} \psi_1(s;\lambda)^2 = \lf( \frac{1-\rho}{1+\rho}\rt)^{\frac{1}{2}-\lambda} \frac{2+\rho(1-2\lambda)}{2+\rho(-1+2\lambda)}\frac{(1-s)^{\frac{1}{2}+\lambda}(1+s)^{\frac{3}{2}-\lambda}}{s^2}(2+s(-1+2\lambda))^2 ,
$$
so
\begin{align*}
\lf| \frac{\tilde{\psi}_1(\rho;\lambda)}{\psi_1(\rho;\lambda)} \psi_1(s;\lambda)^2  \rt| & \lesssim \frac{ (1-\rho)^\frac{1}{2}(1-s)^\frac{1}{2}}{s^2}\lf|\frac{2+\rho(1-2\lambda)}{2+\rho(-1+2\lambda)} \rt| \lf| 2+s(-1+2\lambda) \rt|^2 \\
& \lesssim  (1-\rho)^\frac{1}{2}(1-s)^\frac{1}{2} \lf(\frac{1}{s^2} + \lla \lambda \rra^2 \rt),
\end{align*}
for all $0 \leq \rho \leq s \leq 1$ and $\Re(\lambda) \in [0,\frac{1}{4}]$. Hence, when restricted to $\delta_1 \lla \omega \rra^{-1} \leq \rho \leq s < 1$ for some small $\delta_1>0$, we obtain the bound for the kernel
\begin{align*}
| K(\rho, s; \lambda) | \lesssim &  \frac{1}{W(\lambda)}\frac{V(s)}{1-s^2}  (1-\rho)^\frac{1}{2}(1-s)^\frac{1}{2} \lf(\frac{1}{s^2} + \lla \lambda \rra^2 \rt) \\
\lesssim & (1-\rho)^\frac{1}{2}(1-s)^{-\frac{1}{2}} \lla \omega \rra^{-1}.
\end{align*}
Now we can choose $\rho_1 = 1$ and have
$$
\int_{\delta_1 \lla \omega \rra^{-1}}^1 \sup_{\rho\in \lf(\delta_1 \lla \omega \rra^{-1},1 \rt)} |K(\rho,s;\lambda)| ds \lesssim \lla \omega \rra^{-1},
$$
thus the Volterra \eqref{eqn:ODEh1} has a solution $h_1$ that satisfies
\begin{equation}\label{eqn:h1bound}
|h_1(\rho;\lambda) - 1| \lesssim \int_{\rho}^1 |K(\rho,s;\lambda)|  ds \lesssim (1-\rho) \lla \omega \rra ^{-1}
\end{equation}
for all $\rho\in \lf[\delta_1 \lla \omega \rra^{-1},1 \rt]$. Re-inserting this into the equation we find
$$
h_1(\rho;\lambda) = 1 + \int_\rho^1 K(\rho, s; \lambda) ds + O((1-\rho)^2\lla \omega \rra ^{-2}).
$$

\textbf{Form of the integral of the kernel}

Now we look at the term
\begin{align*}
& \int_\rho^1 K(\rho,s;\lambda) ds \\
 =& \frac{1}{W(\lambda)} \lf(  \int_\rho^1 \frac{V(s)}{s^2} \lf(4-s^2(1-2\lambda)^2 \rt) ds  - \lf(\frac{1-\rho}{1+\rho} \rt)^{\frac{1}{2}-\lambda}\frac{2+\rho(1-2\lambda)}{2+\rho(-1+2\lambda)} I(\rho;\lambda) \rt) \\
=: & \frac{1}{W(\lambda)} (A+B)
\end{align*}
where
$$
I(\rho;\lambda) = \int_\rho^1 \frac{V(s)}{s^2} \lf(\frac{1-s}{1+s} \rt)^{-\frac{1}{2}+\lambda}(2+s(-1+2\lambda))^2 ds.
$$

We first look at the term $A$,
$$
\int_\rho^1 \frac{V(s)}{s^2} \lf(4-s^2(1-2\lambda)^2 \rt) ds  = 4 \int_\rho^1  \frac{ V(s)}{s^2} - \int_\rho^1 V(s) ds (1-2\lambda)^2.
$$
Here we have
\begin{align*}
\lf| \int_\rho^1\frac{V(s)}{s^2} ds \rt| \leq & \|V\|_{L^{\infty}}   \int_\rho^1 \frac{1}{s^2} ds   =  \|V\|_{L^{\infty}}  \lf( - 1 + \rho^{-1}  \rt)  \lesssim \rho^{-1} (1-\rho) \lesssim (1-\rho) \lla \omega \rra,
\end{align*}
where the last inequality follows because $\rho \in \lf[ \delta_1 \lla \omega \rra^{-1} , 1 \rt]$. 

For the term $B$, we insert the identity
$$
\lf(\frac{1-s}{1+s}\rt)^{-\frac{1}{2}+\lambda} \frac{(2+s(-1+2\lambda))}{s^2} = -s \; \PD_s\lf(\frac{(1-s)^{\frac{1}{2}+\lambda}(1+s)^{\frac{3}{2}-\lambda}}{s^2} \rt)
$$
into $I(\rho;\lambda)$. Integrating by parts once, we get
\begin{align*}
I(\rho; \lambda) = & - \int_\rho^1 V(s)  (2s +s^2(-1+2\lambda)) \PD_s\lf(\frac{(1-s)^{\frac{1}{2}+\lambda}(1+s)^{\frac{3}{2}-\lambda}}{s^2} \rt) ds \\
= & V(\rho) (2+\rho(-1+2\lambda)) \frac{(1-\rho)^{\frac{1}{2}+\lambda}(1+\rho)^{\frac{3}{2}-\lambda}}{\rho}  \\
& + \int_\rho^1 \frac{(1-s)^{\frac{1}{2}+\lambda}(1+s)^{\frac{3}{2}-\lambda}}{s^2} \PD_s\lf[  V(s)(2s +s^2(-1+2\lambda))\rt] ds.
\end{align*}
So we have now
\begin{align*}
&  \lf(\frac{1-\rho}{1+\rho} \rt)^{\frac{1}{2}-\lambda}\frac{2+\rho(1-2\lambda)}{2+\rho(-1+2\lambda)}  I(\rho;\lambda) \\
= &  (1-\rho^2) \lf( \frac{2V(\rho)}{\rho} + V(\rho) (1-2\lambda) \rt)\\
&  +   \lf(\frac{1-\rho}{1+\rho} \rt)^{\frac{1}{2}-\lambda}\frac{2+\rho(1-2\lambda)}{2+\rho(-1+2\lambda)}  \int_\rho^1 \frac{(1-s)^{\frac{1}{2}+\lambda}(1+s)^{\frac{3}{2}-\lambda}}{s^2} \PD_s\lf[ V(s)(2s+s^2 (-1+2\lambda)) \rt] ds .
\end{align*}
We estimate the boundary term 
$$
\lf| (1-\rho^2) \lf( \frac{2V(\rho)}{\rho} + V(\rho) (1-2\lambda) \rt) \rt| \lesssim  \rho^{-1} (1-\rho)   +  (1-\rho) \lla \omega \rra  \lesssim (1-\rho) \lla \omega \rra .
$$
For remaining term, notice that for $\Re \lambda \in [0,\frac{4}{1}]$,
$$
\lf(\frac{1-\rho}{1+\rho} \rt)^{\frac{1}{2}-\lambda}\frac{2+\rho(1-2\lambda)}{2+\rho(-1+2\lambda)} 
$$
is bounded, so we only need to look at the integral. When the derivative falls on $V$, the whole term is of $O((1-\rho)\lla \omega \rra)$ which is already good, thus we only need to consider when the derivative falls on the rest of the term. We look at
\begin{align*}
& \int_\rho^1 \frac{(1-s)^{\frac{1}{2}+\lambda}(1+s)^{\frac{3}{2}-\lambda}}{s^2} V(s) \PD_s  \lf[  (2s +s^2 (-1+2\lambda)) \rt] ds \\
 = &  \int_\rho^1 \lf( \frac{1-s}{1+s} \rt)^{\frac{1}{2}+\lambda} (1+s)^2 \lf( \frac{2 V(s)}{s^2} + \frac{2 V(s)}{s} (-1+2\lambda) \rt) ds \\
 = &  2\int_\rho^1 \lf( \frac{1-s}{1+s} \rt)^{\frac{1}{2}+\lambda} (1+s)^2 \frac{V(s)}{s^2} ds  +  (-2+4\lambda) \int_\rho^1 \lf( \frac{1-s}{1+s} \rt)^{\frac{1}{2}+\lambda} (1+s)^2  \frac{V(s)}{s} ds .
\end{align*}
In the first term, similar to before, we have, for $\Re \lambda \in [0, \frac{1}{4}]$, 
\begin{align*}
 \lf| \int_\rho^1 \lf( \frac{1-s}{1+s} \rt)^{\frac{1}{2}+\lambda} (1+s)^2 \frac{V(s)}{s^2} ds  \rt| \leq &  \int_\rho^1 \lf( \frac{1-s}{1+s} \rt)^{\frac{1}{2}+\Re \lambda} (1+s)^2 \lf| \frac{V(s)}{s^2} \rt|  ds \\
\lesssim &  \|V\|_{L^{\infty}} \int_\rho^1  \frac{1}{s^2}  ds \\
 \lesssim & \rho^{-1}(1-\rho) \lesssim (1-\rho) \lla \omega \rra.
\end{align*}
For the next part we use
$$
\lf(\frac{1-s}{1+s} \rt)^{\frac{1}{2}+\lambda} = -\frac{(1+s)^2}{3+2\lambda} \PD_s\lf( \frac{1-s}{1+s}\rt)^{\frac{3}{2}+\lambda},
$$
and integrate by parts again. Then we have
\begin{align*}
& (-2+4\lambda) \int_\rho^1 \lf( \frac{1-s}{1+s} \rt)^{\frac{1}{2}+\lambda} (1+s)^2  \frac{V(s)}{s} ds \\
 = & \frac{1-2\lambda}{3+2\lambda} \int_\rho^1 \frac{V(s)(1+s)^4}{s} \PD_s \lf( \frac{1-s}{1+s}\rt)^{\frac{3}{2}+\lambda} ds \\
 = & \frac{1-2\lambda}{3+2\lambda}  \lf(  \frac{V(\rho)}{\rho}(1+\rho)^4 \lf( \frac{1-\rho}{1+\rho} \rt)^{\frac{3}{2}+\lambda} -  \int_\rho^1 \PD_s \lf[ \frac{V(s)(1+s)^4}{s}  \rt] \lf( \frac{1-s}{1+s}\rt)^{\frac{3}{2}+\lambda} \rt).
\end{align*}
Now 
$$
\lf| \frac{1-2\lambda}{3+2\lambda} \frac{V(\rho)}{\rho}(1+\rho)^4 \lf( \frac{1-\rho}{1+\rho} \rt)^{\frac{3}{2}+\lambda}  \rt| \lesssim \rho^{-1} (1-\rho)  \lesssim (1-\rho) \lla \omega \rra.
$$
And for the integral, as before, it is only better when derivative falls on $V$, so we only need to compute
\begin{align*}
\lf| \int_\rho^1 \PD_s \lf[ \frac{(1+s)^4}{s}  \rt] V(s) \lf( \frac{1-s}{1+s}\rt)^{\frac{3}{2}+\lambda} ds \rt| \leq &  \int_\rho^1  \frac{\lf| (3s-1)(1+s)^3 \rt|}{s^2} \lf| V(s) \rt| \lf( \frac{1-s}{1+s}\rt)^{\frac{3}{2}+\Re\lambda} ds \\
\lesssim & \int_\rho^1 \frac{1}{s^2} ds \lesssim \rho^{-1} (1-\rho) \lesssim (1-\rho)\lla \omega\rra  .
\end{align*}
Hence we obtain
$$
\lf(\frac{1-\rho}{1+\rho} \rt)^{\frac{1}{2}-\lambda}\frac{2+\rho(1-2\lambda)}{2+\rho(-1+2\lambda)}  I(\rho;\lambda) = O((1-\rho)\lla \omega\rra  ).
$$

Putting everything back together we see that
$$
W(\lambda)\int_\rho^1 K(\rho,s;\lambda) ds = (1-2\lambda)^2\int_\rho^1 V(s) ds + O((1-\rho) \lla \omega \rra ).
$$
Finally, observe that we have
$$
\frac{(1-2\lambda)^2}{W(\lambda)} = \frac{1-2\lambda}{(3-2\lambda)(1+2\lambda)} = \mathcal{O}_o(\lla \omega \rra^{-1}) + \mathcal{O}(\lla \omega \rra^{-2})
$$
as stated. The second solution is given by $\tilde{v}_1(\rho;\lambda) := v_1(\rho; 1-\lambda)$, so again we have
$$
\frac{1-2(1-\lambda)}{(3-2(1-\lambda))(1+2(1-\lambda))} = \frac{-1+2\lambda}{(1+2\lambda)(3-2\lambda)} =  \mathcal{O}_o(\lla \omega \rra^{-1}) + \mathcal{O}(\lla \omega \rra^{-2}).
$$ 

\textbf{Estimating the derivatives}

The fact that $h_1$ is indeed of symbol type is shown in Appendix \ref{app:volterra}.
\end{proof}

Since $\delta_1$ is arbitrary, we can now choose $\delta_1 \leq \delta_0$. Then we can use the fundamental system $ \{ v_0, \tilde{v}_0 \}$ to construct $v_1$ and $\tilde{v}_1$ on the interval $\rho\in(0,\delta_0 \lla \omega \rra^{-1} ]$. These happen to be perturbations of $\psi_1$ and $\tilde{\psi}_1$.

\begin{lemma}\label{lemma:pertsol12} The solutions $v_1, \tilde{v}_1$ can be extended to $\rho\in(0,\delta_0 \lla \omega \rra^{-1}]$, and are of the form 
\begin{align*}
v_1(\rho;\lambda) &= \psi_1(\rho;\lambda)  [1  +  \mathcal{O}_o(\lla \omega \rra^{-1}) +  \mathcal{O} (\rho^0  \lla \omega\rra^{-2}) ], \\ 
\tilde{v}_1(\rho;\lambda) &= \tilde{\psi}_1(\rho;\lambda) [ 1 +  \mathcal{O}_o(\lla \omega \rra^{-1}) +  \mathcal{O} (\rho^0  \lla \omega\rra^{-2})]
\end{align*}
for all $\rho\in(0,\delta_0 \lla \omega \rra^{-1}]$, $\omega \in \RR$, $\epsilon \in [0,\frac{1}{4}]$.
\end{lemma}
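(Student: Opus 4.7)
The plan is to extend $v_1$ and $\tilde v_1$ from the domain $[\delta_1\langle\omega\rangle^{-1},1]$ on which Lemma~\ref{lemma:pertsol1} provides them to $(0,\delta_0\langle\omega\rangle^{-1}]$ by recombining them in the fundamental system $\{v_0,\tilde v_0\}$ supplied by Lemmas~\ref{lemma:pertsol0} and~\ref{lemma:pertsol02}, and then reading off the factorisation in $\psi_1$. Choose $\delta_1\le\delta_0$ so the two intervals overlap on $I_*:=[\delta_1\langle\omega\rangle^{-1},\delta_0\langle\omega\rangle^{-1}]$. Since $W(v_0,\tilde v_0)=-1$, the pair $\{v_0,\tilde v_0\}$ is a fundamental system of \eqref{eqn:ODEv} on $(0,\delta_0\langle\omega\rangle^{-1}]$, so on $I_*$ I write
$$v_1(\rho;\lambda)=c_0(\lambda)\,v_0(\rho;\lambda)+\tilde c_0(\lambda)\,\tilde v_0(\rho;\lambda),\quad c_0=-W(v_1,\tilde v_0),\ \tilde c_0=-W(v_0,v_1),$$
both Wronskians being $\rho$-independent because \eqref{eqn:ODEv} has no first-order term. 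Taking the right-hand side as the definition of $v_1$ on all of $(0,\delta_0\langle\omega\rangle^{-1}]$ genuinely extends the Lemma~\ref{lemma:pertsol1} solution by ODE uniqueness.

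To compute the coefficients I evaluate the Wronskians at the matching point $\rho_*:=\delta_1\langle\omega\rangle^{-1}$, where controlled asymptotics are available. Lemma~\ref{lemma:pertsol1} gives $v_1(\rho_*;\lambda)=\psi_1(\rho_*;\lambda)[1+\mathcal O_o(\langle\omega\rangle^{-1})+\mathcal O(\langle\omega\rangle^{-2})]$ (using that $a_1(\rho_*)$ is a bounded symbol in $\omega$ and $(1-\rho_*)\lesssim 1$), while Lemmas~\ref{lemma:pertsol0} and~\ref{lemma:pertsol02} give $v_0=\psi_0[1+\mathcal O(\rho^2)]$ and $\tilde v_0=\tilde\psi_0[1+\mathcal O(\rho^2)]$ with matching derivative bounds. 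Since $\{\psi_0,\tilde\psi_0\}$ is itself a fundamental system of the free equation (Wronskian $-1$), I decompose $\psi_1=A(\lambda)\psi_0+B(\lambda)\tilde\psi_0$; the identity $W(\psi_0,\psi_1)=W(\lambda)$ together with $W(\psi_0,\tilde\psi_0)=-1$ forces $B(\lambda)=-W(\lambda)$, while $A(\lambda)=W(\tilde\psi_0,\psi_1)$ is computed explicitly by evaluating at, say, the endpoint $\rho=\delta_0\langle\omega\rangle^{-1}$ (where $\tilde\psi_0=0$) and comes out $\mathcal O(1)$. Substituting these asymptotics into the Wronskian formulas for $c_0,\tilde c_0$ yields
$$\tilde c_0(\lambda)=-W(\lambda)\bigl[1+\mathcal O_o(\langle\omega\rangle^{-1})+\mathcal O(\langle\omega\rangle^{-2})\bigr],\qquad c_0(\lambda)=A(\lambda)+\mathcal O(\langle\omega\rangle^{-2}).$$

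Plugging these back into $v_1=c_0v_0+\tilde c_0\tilde v_0$ on $(0,\delta_0\langle\omega\rangle^{-1}]$ and using the full-interval asymptotics of $v_0,\tilde v_0$, the leading combination $A\psi_0+B\tilde\psi_0$ reassembles to $\psi_1$; dividing by $\psi_1$ then produces a constant-in-$\rho$ correction of type $\mathcal O_o(\langle\omega\rangle^{-1})+\mathcal O(\langle\omega\rangle^{-2})$ coming from $\tilde c_0\tilde v_0/\psi_1$, together with an $\mathcal O(\rho^0\langle\omega\rangle^{-2})$ remainder absorbing the $c_0v_0/\psi_1$ contribution — the bound $\psi_0/\psi_1\simeq-\rho^3W(\lambda)/12$, which is uniformly $\mathcal O(1)$ on $\rho\le\delta_0\langle\omega\rangle^{-1}$, together with the boundedness of $A(\lambda)$, is what makes this contribution controllable. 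For $\tilde v_1$ one argues identically, or simply invokes the symmetry $\tilde v_1(\rho;\lambda)=v_1(\rho;1-\lambda)$ built into Lemma~\ref{lemma:pertsol1}.

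The main technical hurdle is the parity bookkeeping: verifying that the leading correction to $1$ in $v_1/\psi_1$ remains genuinely odd in $\omega$ of order $\langle\omega\rangle^{-1}$, with no spurious even-in-$\omega$ $\mathcal O(\langle\omega\rangle^{-1})$ piece introduced by the cross-multiplication of errors or by the $A(\lambda)\psi_0/\psi_1$ term — the smallness of $\psi_0/\psi_1$ on the relevant interval is critical here. One must then propagate all estimates to $\rho$- and $\omega$-derivatives in the symbol sense via the calculus developed in Appendix~\ref{app:symbol}. Once this tracking is in place, the stated factorisation is immediate.
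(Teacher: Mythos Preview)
Your overall strategy matches the paper's: express $v_1$ in the basis $\{v_0,\tilde v_0\}$ via connection coefficients obtained from Wronskians, and recognise that the leading part recombines to $\psi_1$. The paper carries this out slightly differently, keeping $W(\psi_1,\tilde\psi_0)$ in the explicit $\rho$-dependent form
\[
W(\psi_1,\tilde\psi_0)=-W(\lambda)\int_\rho^{\delta_0\lla\omega\rra^{-1}}\psi_0(s;\lambda)^{-2}\,ds-\psi_1(\rho;\lambda)\psi_0(\rho;\lambda)^{-1},
\]
which is equivalent to your decomposition $\psi_1=A\psi_0+B\tilde\psi_0$.

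There is, however, a concrete error in your formula $c_0(\lambda)=A(\lambda)+\mathcal O(\lla\omega\rra^{-2})$. The Wronskian computation actually gives
\[
c_0(\lambda)=A(\lambda)\bigl[1+\mathcal O_o(\lla\omega\rra^{-1})+\mathcal O(\lla\omega\rra^{-2})\bigr]+\mathcal O(\lla\omega\rra^{-2}),
\]
with the \emph{same} bracket $[1+M_*]$ as in $\tilde c_0$, because both coefficients inherit this factor from $v_1=\psi_1[1+M_*]$. This matters: with your stated $c_0$, the sum $c_0v_0+\tilde c_0\tilde v_0$ divided by $\psi_1$ produces a residual term $-M_*\cdot A\psi_0/\psi_1$, which is $\rho$-dependent, $\mathcal O(\lla\omega\rra^{-1})$, and not purely odd (the odd part of $M_*$ times the odd part of $A\psi_0/\psi_1$ is even). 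This term cannot be absorbed into $\mathcal O_o(\lla\omega\rra^{-1})+\mathcal O(\rho^0\lla\omega\rra^{-2})$, and the ``smallness of $\psi_0/\psi_1$'' you invoke is only $\mathcal O(1)$-boundedness, which is one order short.

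The resolution is not smallness but cancellation: once $c_0$ and $\tilde c_0$ are both recognised as carrying the common factor $[1+M_*]$, that factor pulls out of $c_0v_0+\tilde c_0\tilde v_0=(1+M_*)(A\psi_0+B\tilde\psi_0)[1+\mathcal O(\rho^2)]+\text{errors}=(1+M_*)\psi_1[1+\mathcal O(\rho^2)]+\text{errors}$, and the offending term never appears. Fix the formula for $c_0$ and the rest of your sketch goes through.
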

\begin{proof}
On $\rho\in(0,\delta_0\lla \omega\rra^{-1}]$, there exist $a(\lambda)$ and $b(\lambda)$ such that
$$
v_1(\rho;\lambda) = a(\lambda)v_0(\rho;\lambda) + b(\lambda)\tilde{v}_0(\rho;\lambda),
$$
where
\begin{align*}
a(\lambda) & = \frac{W(v_1(\cdot;\lambda),\tilde{v}_0(\cdot;\lambda))}{W(v_0(\cdot;\lambda),\tilde{v}_0(\cdot;\lambda))} = -W(v_1(\cdot;\lambda),\tilde{v}_0(\cdot;\lambda)) \\
b(\lambda) & = - \frac{W(v_1(\cdot;\lambda),v_0(\cdot;\lambda))}{W(v_0(\cdot;\lambda),\tilde{v}_0(\cdot;\lambda))} = W(v_1(\cdot;\lambda),v_0(\cdot;\lambda)).
\end{align*}
So far we only have valid expressions of $v_1$ and $\tilde{v}_1$ on $\rho\in[\delta_1 \lla \omega \rra ^{-1},1]$ from Lemma \ref{lemma:pertsol1}. Since we have chosen $\delta_1 \leq \delta_0$, these expressions are valid at the point $\rho=\delta_0\lla \omega \rra^{-1}$. In order to compute $W(v_1(\cdot;\lambda),\tilde{v}_0(\cdot;\lambda))$ and $W(v_1(\cdot;\lambda),v_0(\cdot;\lambda))$, we evaluate at $\rho = \delta_0 \lla \omega \rra ^{-1}$. Using Lemmas \ref{lemma:pertsol1} and \ref{lemma:pertsol02}, we have
\begin{align*}
& W(v_1(\cdot;\lambda),\tilde{v}_0(\cdot;\lambda)) \\
= & W(\psi_1(\cdot;\lambda), \tilde{\psi}_0(\cdot;\lambda))\lf[ 1 +  \mathcal{O} (\lla \omega \rra^{-2}) \rt]  \lf[1 +  \frac{1-2\lambda}{(3-2\lambda)(1+2\lambda)} a_1(\delta_0 \lla \omega \rra^{-1})+  \mathcal{O} (\lla \omega\rra^{-2}) \rt] \\
& + \tilde{\psi}_0( \delta_0 \lla \omega \rra ^{-1}; \lambda)\psi_1( \delta_0 \lla \omega \rra ^{-1}; \lambda) \mathcal{O} (\lla \omega \rra^{-1}) \\
= &  W(\psi_1(\cdot;\lambda), \tilde{\psi}_0(\cdot;\lambda)) \lf[1 +  \frac{1-2\lambda}{(3-2\lambda)(1+2\lambda)} a_1(\delta_0 \lla \omega \rra^{-1})+  \mathcal{O} (\lla \omega\rra^{-2}) \rt] + \mathcal{O} (\lla \omega\rra^{-2}),
\end{align*}
where the second line follows from $\tilde{\psi}_0( \delta_0 \lla \omega \rra ^{-1}; \lambda) = \mathcal{O}(\lla \omega \rra^{-2})$ and $\psi_1( \delta_0 \lla \omega \rra ^{-1}; \lambda) = \mathcal{O}(\lla \omega \rra)$. The function $a_1$ is from Lemma \ref{lemma:pertsol1}. Since $\psi_1$ and $\tilde{\psi}_0$ are explicitly defined, we can compute that
\begin{align*}
 W(\psi_1(\rho;\lambda), \tilde{\psi}_0(\rho;\lambda)) = & \psi_1(\rho;\lambda) \tilde{\psi}_0'(\rho;\lambda) - \psi_1'(\rho;\lambda)\tilde{\psi}_0(\rho;\lambda) \\
 = & \psi_1(\rho;\lambda) \psi_0'(\rho;\lambda) \int_\rho^{\delta_0 \lla \omega \rra ^{-1}}\psi_0(s;\lambda)^{-2} ds -\psi_1(\rho;\lambda) \psi_0 (\rho;\lambda)^{-1} \\ 
 & - \psi_1'(\rho;\lambda) \psi_0(\rho;\lambda) \int_\rho^{\delta_0 \lla \omega\rra ^{-1}}\psi_0(s;\lambda)^{-2} ds \\
 = & W(\psi_1(\rho;\lambda), \psi_0(\rho;\lambda)) \int_\rho^{\delta_0 \lla \omega\rra ^{-1}}\psi_0(s;\lambda)^{-2} ds - \psi_1(\rho;\lambda) \psi_0 (\rho;\lambda)^{-1} \\
 = & -W (\lambda) \int_\rho^{\delta_0 \lla \omega\rra ^{-1}}\psi_0(s;\lambda)^{-2} ds - \psi_1(\rho;\lambda) \psi_0 (\rho;\lambda)^{-1},
\end{align*}
which is a constant in $\rho$ by the structure of the ODE. Evaluating at $\rho = \delta_0 \lla \omega \rra^{-1}$, we see that
$$
 W(\psi_1(\cdot;\lambda), \tilde{\psi}_0(\cdot;\lambda)) = \psi_1(\delta_0 \lla \omega \rra^{-1};\lambda) \psi_0 (\delta_0 \lla \omega \rra^{-1};\lambda)^{-1} = \mathcal{O}(\lla \omega \rra^0).
$$
But for now we do not evaluate it, instead we use that the expression valid for all $\rho\in(0,\delta_0\lla \omega\rra^{-1}]$, not just at one point. So we obtain
\begin{align*}
a(\lambda) =& - W(v_1(\rho;\lambda),\tilde{v}_0(\rho;\lambda)) \\
= & \lf( W (\lambda) \int_\rho^{\delta_0 \lla \omega\rra ^{-1}}\psi_0(s;\lambda)^{-2} ds + \psi_1(\rho;\lambda) \psi_0 (\rho;\lambda)^{-1} \rt) \\
& \times   \lf[1 +  \frac{1-2\lambda}{(3-2\lambda)(1+2\lambda)} a_1(\delta_0 \lla \omega \rra^{-1})+  \mathcal{O} (\lla \omega\rra^{-2}) \rt]
\end{align*}
for $\rho\in(0,\delta_0\lla \omega\rra^{-1}]$. On the other hand, evaluating at $\rho = \delta_0 \lla \omega \rra ^{-1}$ we have
\begin{align*}
b(\lambda) = & W(v_1(\cdot;\lambda),v_0(\cdot;\lambda)) \\
= & W(\psi_1(\cdot;\lambda), \psi_0(\cdot;\lambda)) \lf[ 1 +  \mathcal{O} (\lla \omega \rra^{-2}) \rt]  \lf[1 +  \frac{1-2\lambda}{(3-2\lambda)(1+2\lambda)} a_1(\delta_0 \lla \omega \rra ^{-1})+  \mathcal{O} (\lla \omega\rra^{-2}) \rt] \\
& + \psi_0( \delta_0 \lla \omega \rra ^{-1}; \lambda)\psi_1( \delta_0 \lla \omega \rra ^{-1}; \lambda) \mathcal{O} (\lla \omega \rra^{-1}) \\
= & W(\psi_1(\cdot;\lambda), \psi_0(\cdot;\lambda))\lf[1 +  \frac{1-2\lambda}{(3-2\lambda)(1+2\lambda)} a_1(\delta_0 \lla \omega \rra ^{-1})+  \mathcal{O} (\lla \omega\rra^{-2}) \rt] \\
= &-W(\lambda) \lf[1 +  \frac{1-2\lambda}{(3-2\lambda)(1+2\lambda)} a_1(\delta_0 \lla \omega \rra ^{-1})+  \mathcal{O} (\lla \omega\rra^{-2}) \rt],
\end{align*}
where in the second line we used $\psi_0( \delta_0 \lla \omega \rra ^{-1}; \lambda) = \mathcal{O}(\lla \omega \rra)$, $\psi_1( \delta_0 \lla \omega \rra ^{-1}; \lambda) = \mathcal{O}(\lla \omega \rra)$, and $W(\lambda) = \mathcal{O}(\lla \omega \rra ^3)$. Hence
\begin{align*}
v_1(\rho;\lambda) = &  v_0(\rho;\lambda) \lf( W (\lambda) \int_\rho^{\delta_0 \lla \omega\rra ^{-1}}\psi_0(s;\lambda)^{-2} ds + \psi_1(\rho;\lambda) \psi_0 (\rho;\lambda)^{-1} \rt) \\
& \times  \lf[1 +  \frac{1-2\lambda}{(3-2\lambda)(1+2\lambda)} a_1(\delta_0 \lla \omega \rra^{-1})+  \mathcal{O} (\lla \omega\rra^{-2}) \rt] \\
&  -\tilde{v}_0(\rho;\lambda) W(\lambda) \lf[1 +  \frac{1-2\lambda}{(3-2\lambda)(1+2\lambda)} a_1(\delta_0 \lla \omega \rra ^{-1})+  \mathcal{O} (\lla \omega\rra^{-2}) \rt]  \\
 = & W(\lambda)\tilde{\psi}_0(\rho;\lambda)[1+ \mathcal{O}(\rho^0 \lla \omega \rra^{-2})] \\ 
 & + \psi_1(\rho;\lambda) [1+\mathcal{O}(\rho^2 \lla \omega\rra^0)] \lf[1 +  \frac{1-2\lambda}{(3-2\lambda)(1+2\lambda)} a_1(\delta_0 \lla \omega \rra^{-1})+  \mathcal{O} (\lla \omega\rra^{-2}) \rt]  \\
 & - W(\lambda) \tilde{\psi}_0(\rho;\lambda) [1+ \mathcal{O}(\rho^0 \lla \omega \rra^{-2})] \\ 
 = &  W(\lambda)\tilde{\psi}_0(\rho;\lambda) \lf[ \mathcal{O}(\rho^0 \lla \omega \rra^{-2} ) \rt] \\  
 & + \psi_1(\rho;\lambda)  \lf[1 + \frac{1-2\lambda}{(3-2\lambda)(1+2\lambda)} a_1(\delta_0 \lla \omega \rra^{-1})+  \mathcal{O} (\rho^0 \lla \omega\rra^{-2}) \rt] .
\end{align*}
Then we use $\tilde{\psi}_0(\rho;\lambda) = \mathcal{O}(\rho^{-1} \lla \omega \rra^{-3})$ and $\psi_1(\rho;\lambda)^{-1} =  \mathcal{O} (\rho \lla \omega\rra^0)$ for small $\rho$, and see that
\begin{align*}
\frac{v_1(\rho;\lambda)}{\psi_1(\rho;\lambda)}= & \frac{\tilde{\psi}_0(\rho;\lambda)}{\psi_1(\rho;\lambda)} \mathcal{O} (\lla \omega \rra^{3})  \lf[ \mathcal{O}(\rho^0 \lla \omega \rra^{-2} ) \rt] \\ 
&  + \lf[1 +  \frac{1-2\lambda}{(3-2\lambda)(1+2\lambda)} a_1(\delta_0 \lla \omega \rra ^{-1})+  \mathcal{O} (\rho^0 \lla \omega\rra^{-2}) \rt] \\
= &  1 +  \frac{1-2\lambda}{(3-2\lambda)(1+2\lambda)} a_1(\delta_0 \lla \omega \rra ^{-1})+  \mathcal{O} (\rho^0 \lla \omega\rra^{-2})\\
= & 1  +  \mathcal{O}_o(\lla \omega \rra^{-1}) +  \mathcal{O} (\rho^0 \lla \omega\rra^{-2})
\end{align*}
Here we have used that $a_1(\delta_0 \lla \omega \rra^{-1}) = \mathcal{O}(\lla \omega \rra^0)$ is an even function in $\omega$. The second solution is again given by $\tilde{v}_1(\rho;\lambda) = v_1(\rho; 1-\lambda)$ as in the last part of the proof of Lemma \ref{lemma:pertsol1}. 
\end{proof}

In the next step, we express $v_0$ in terms of the fundamental system $\{v_0, \tilde{v}_0 \}$ on the interval $\rho\in [\delta_1 \lla\omega\rra^{-1},1]$.

\begin{lemma}\label{lemma:pertsol0all} The solution $v_0$ has the representation
$$
v_0(\rho;\lambda)  = [1+ \mathcal{O}_o(\lla \omega\rra^{-1})+ \mathcal{O} (\lla \omega\rra^{-2})]v_1(\rho;\lambda) - [1+ \mathcal{O}_o (\lla \omega\rra^{-1}) + \mathcal{O}(\lla \omega\rra^{-2})]\tilde{v}_1(\rho;\lambda)
$$
for all $\rho\in[\delta_1 \lla \omega \rra ^{-1},1]$, $\omega \in \RR$, $\epsilon \in [0,\frac{1}{4}]$.
\end{lemma}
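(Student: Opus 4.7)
The argument mirrors the proof of Lemma~\ref{lemma:pertsol12} in the opposite direction. Since $\{v_1,\tilde v_1\}$ forms a fundamental system on $[\delta_1\lla\omega\rra^{-1},1]$, there exist scalars $a(\lambda),b(\lambda)$ with $v_0 = a(\lambda)v_1 + b(\lambda)\tilde v_1$, given by Cramer's rule
\begin{equation*}
 a(\lambda) = \frac{W(v_0,\tilde v_1)}{W(v_1,\tilde v_1)},\qquad b(\lambda) = -\frac{W(v_0,v_1)}{W(v_1,\tilde v_1)}.
\end{equation*}
Since each Wronskian is $\rho$-independent, I would evaluate them at the overlap point $\rho_\ast := \delta_0\lla\omega\rra^{-1}$, where Lemma~\ref{lemma:pertsol0} supplies $v_0 = \psi_0(1+r_0)$ with $r_0 = \mathcal{O}(\rho^2\lla\omega\rra^0)$, and Lemma~\ref{lemma:pertsol12} supplies $v_1 = \psi_1(1+r_1)$, $\tilde v_1 = \tilde\psi_1(1+\tilde r_1)$ with $r_1,\tilde r_1 = \mathcal{O}_o(\lla\omega\rra^{-1}) + \mathcal{O}(\rho^0\lla\omega\rra^{-2})$. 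Crucially, the symmetry $\tilde v_1(\rho;\lambda) = v_1(\rho;1-\lambda)$ forces the $\mathcal{O}_o$ leading parts of $r_1$ and $\tilde r_1$ to be exact negatives of each other.

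\textbf{Wronskian expansion and error bookkeeping.} Using the bilinear identity $W(fu,gv) = fg\,W(u,v) + uv\,W(f,g)$ together with $\psi_0 = \psi_1 - \tilde\psi_1$ and the equalities $W(\psi_0,\psi_1) = W(\psi_0,\tilde\psi_1) = W(\psi_1,\tilde\psi_1) = W(\lambda)$, I would expand at $\rho_\ast$
\begin{align*}
 W(v_1,\tilde v_1) &= (1+r_1)(1+\tilde r_1)\,W(\lambda) + \psi_1\tilde\psi_1\,W(r_1,\tilde r_1),\\
 W(v_0,\tilde v_1) &= (1+r_0)(1+\tilde r_1)\,W(\lambda) + \psi_0\tilde\psi_1\,W(r_0,\tilde r_1),\\
 W(v_0,v_1) &= (1+r_0)(1+r_1)\,W(\lambda) + \psi_0\psi_1\,W(r_0,r_1).
\end{align*}
The sign cancellation makes $(1+r_1)(1+\tilde r_1) = 1 + \mathcal{O}(\lla\omega\rra^{-2})$, while $(1+r_0)(1+\tilde r_1)$ and $(1+r_0)(1+r_1)$ each equal $1 + \mathcal{O}_o(\lla\omega\rra^{-1}) + \mathcal{O}(\lla\omega\rra^{-2})$. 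At $\rho_\ast$ one has $\psi_0,\psi_1,\tilde\psi_1 = \mathcal{O}(\lla\omega\rra)$, $r_0 = \mathcal{O}(\lla\omega\rra^{-2})$, and $r_1,\tilde r_1 = \mathcal{O}(\lla\omega\rra^{-1})$; the symbol calculus of Appendix~\ref{app:volterra} further gives $|\partial_\rho r_0| = \mathcal{O}(\lla\omega\rra^{-1})$ and $|\partial_\rho r_1|,|\partial_\rho\tilde r_1| = \mathcal{O}(\rho_\ast^{-1}\lla\omega\rra^{-2}) = \mathcal{O}(\lla\omega\rra^{-1})$. Therefore each cross-term $\psi_\bullet\psi_\star\,W(r_\bullet,r_\star)$ is of size $\mathcal{O}(\lla\omega\rra^2)\cdot\mathcal{O}(\lla\omega\rra^{-2}) = \mathcal{O}(1) = \mathcal{O}(\lla\omega\rra^{-3})\,W(\lambda)$, hence absorbed into the $\mathcal{O}(\lla\omega\rra^{-2})$ bracket. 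Forming the two quotients and expanding $[1+\mathcal{O}(\lla\omega\rra^{-2})]^{-1}$ geometrically then yields $a(\lambda) = 1 + \mathcal{O}_o(\lla\omega\rra^{-1}) + \mathcal{O}(\lla\omega\rra^{-2})$ and $-b(\lambda) = 1 + \mathcal{O}_o(\lla\omega\rra^{-1}) + \mathcal{O}(\lla\omega\rra^{-2})$, which is exactly the claimed representation.

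\textbf{Main obstacle.} The delicate point is the cross-term $\psi_1\tilde\psi_1\,W(r_1,\tilde r_1)$, whose prefactor grows like $\lla\omega\rra^2$ at $\rho_\ast$ while pointwise $r_1,\tilde r_1$ are only $\mathcal{O}(\lla\omega\rra^{-1})$; a crude bound would leave this cross-term at $\mathcal{O}(\lla\omega\rra)$ and wreck the $\mathcal{O}(\lla\omega\rra^{-2})$ target. The saving comes from differentiation: the symbol bound $\partial_\rho\mathcal{O}(\rho^0\lla\omega\rra^{-2}) = \mathcal{O}(\rho^{-1}\lla\omega\rra^{-2})$ gains, at $\rho_\ast = \delta_0\lla\omega\rra^{-1}$, precisely the factor of $\lla\omega\rra^{-1}$ needed to bring $W(r_1,\tilde r_1)$ down to $\mathcal{O}(\lla\omega\rra^{-2})$. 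This is why one must exploit the full symbol-type character of $r_1,\tilde r_1$ rather than rely on pointwise bounds, and explains the role of Appendix~\ref{app:volterra} in the argument.
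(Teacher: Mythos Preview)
Your approach is correct and follows the same overall strategy as the paper: write $v_0=a(\lambda)v_1+b(\lambda)\tilde v_1$ via Cramer's rule and evaluate the relevant Wronskians at a point in the overlap region. For the numerator Wronskians $W(v_0,\tilde v_1)$ and $W(v_0,v_1)$ you and the paper both evaluate at $\rho_\ast=\delta_0\lla\omega\rra^{-1}$, with essentially the same bookkeeping.

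The one genuine difference is in the treatment of the \emph{denominator} $W(v_1,\tilde v_1)$. You evaluate it at $\rho_\ast$ using Lemma~\ref{lemma:pertsol12}, which forces you into the sign-cancellation and cross-term analysis you describe as the ``main obstacle.'' The paper instead evaluates $W(v_1,\tilde v_1)$ at $\rho=1$, using the form from Lemma~\ref{lemma:pertsol1}. There the perturbative factors carry an explicit $(1-\rho)$ and hence reduce to~$1$, and moreover $\psi_1(1;\lambda)=\tilde\psi_1(1;\lambda)=0$ kills the cross-term outright, yielding $W(v_1,\tilde v_1)=W(\lambda)$ \emph{exactly}. So the obstacle you carefully work around simply does not arise in the paper's argument; evaluating at the other endpoint buys a much shorter proof.

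Two minor technical remarks on your write-up. First, in the bilinear expansion the cross-term is $\psi_1\tilde\psi_1\,W(1+r_1,1+\tilde r_1)$, not $W(r_1,\tilde r_1)$; these differ by $\tilde r_1'-r_1'$, which at $\rho_\ast$ is only $\mathcal{O}(\lla\omega\rra^{-1})$, but the resulting cross-term is still $\mathcal{O}(\lla\omega\rra)=\mathcal{O}(\lla\omega\rra^{-2})\,W(\lambda)$, so absorbed. Second, your bound $|\partial_\rho r_1|=\mathcal{O}(\rho_\ast^{-1}\lla\omega\rra^{-2})$ tacitly uses that the $\mathcal{O}_o(\lla\omega\rra^{-1})$ part of $r_1$ in Lemma~\ref{lemma:pertsol12} is $\rho$-independent (it equals $\tfrac{1-2\lambda}{(3-2\lambda)(1+2\lambda)}a_1(\delta_0\lla\omega\rra^{-1})$); this is true but worth making explicit, since the generic symbol bound $\partial_\rho\mathcal{O}(\rho^0\lla\omega\rra^{-1})=\mathcal{O}(\rho^{-1}\lla\omega\rra^{-1})$ would not suffice.
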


\begin{proof}
Since the Wronskian is constant, we can evaluate at $\rho = 1$ and get
\begin{align*}
& W(v_1(\cdot;\lambda), \tilde{v}_1(\cdot;\lambda)) \\
= & W(\psi_1(\cdot;\lambda), \tilde{\psi}_1(\cdot;\lambda)) \lf[1+ \mathcal{O}( \rho^0 (1-\rho)\lla \omega \rra^{-1}  ) \rt] + \psi_1(1;\lambda) \tilde{\psi}_1(1;\lambda)  \mathcal{O}(\rho^{-1} (1-\rho)^0 \lla \omega \rra^{-1}) \\
= & W(\psi_1(\cdot;\lambda), \tilde{\psi}_1(\cdot;\lambda)) = W(\lambda).
\end{align*}
So there exist $a(\lambda)$ and $b(\lambda)$ such that
$$
v_0 (\rho; \lambda) = a(\lambda) v_1 (\rho; \lambda) + b(\lambda) \tilde{v}_1 (\rho; \lambda),
$$
given by
\begin{align*}
a(\lambda) &= \frac{W(v_0(\cdot;\lambda), \tilde{v}_1(\cdot;\lambda))}{W(v_1(\cdot;\lambda), \tilde{v}_1(\cdot;\lambda))} = \frac{W(v_0(\cdot;\lambda), \tilde{v}_1(\cdot;\lambda))}{W(\lambda)} ,\\
b(\lambda) & = - \frac{W(v_0(\cdot;\lambda), v_1(\cdot;\lambda))}{W(v_1(\cdot;\lambda), \tilde{v}_1(\cdot;\lambda))} =  - \frac{W(v_0(\cdot;\lambda), v_1(\cdot;\lambda))}{W(\lambda)} .
\end{align*}
By using Lemma \ref{lemma:pertsol1} and evaluating at $\rho = \delta_0 \lla \omega \rra^{-1}$, we have
\begin{align*}
W(v_0(\cdot;\lambda), \tilde{v}_1(\cdot;\lambda)) = & W(\psi_0(\cdot;\lambda), \tilde{\psi}_1(\cdot;\lambda)) \lf[ 1 +   \mathcal{O}_o(\lla \omega \rra^{-1}) a_1(\delta_0 \lla \omega \rra ^{-1})+   \mathcal{O} (\lla \omega\rra^{-2}) \rt] \\
& + \tilde{\psi}_1(\delta_0 \lla \omega \rra^{-1};\lambda) \psi_0 (\delta_0 \lla \omega \rra^{-1};\lambda) \mathcal{O}(\lla \omega\rra^{-1}) \\
= & W(\lambda)   [1+ \mathcal{O}_o(\lla \omega\rra^{-1})+ \mathcal{O} (\lla \omega\rra^{-2})],
\end{align*}
hence
$$
a(\lambda) =  [1+ \mathcal{O}_o(\lla \omega\rra^{-1})+ \mathcal{O} (\lla \omega\rra^{-2})].
$$
Similarly, 
$$
W(v_0(\cdot;\lambda),v_1(\cdot;\lambda)) = W(\lambda)   [1+ \mathcal{O}_o(\lla \omega\rra^{-1})+ \mathcal{O} (\lla \omega\rra^{-2})],
$$
so
$$
b(\lambda) =  - [1+ \mathcal{O}_o(\lla \omega\rra^{-1})+ \mathcal{O}(\lla \omega\rra^{-2})].
$$
\end{proof}

\subsection{The Wronskian}

The transformation
$$
u_j(\rho) = \rho^{-2}(1-\rho^2)^{-\frac{1}{4}-\frac{\lambda}{2}}v_j(\rho)
$$
now gives solutions $u_j, j=0,1$ to \eqref{eqn:ODEpotential} with $F_\lambda = 0$. Near $\rho = 0$ we have
$$
\lim_{\rho \to 0^+} u_0(\rho;\lambda) = \lim_{\rho \to 0^+} \frac{v_0(\rho;\lambda)}{\rho^2} =  \lim_{\rho \to 0^+} \frac{ \psi_0(\rho;\lambda)}{\rho^2} = -\frac{1}{6} W(\lambda) = O(\lla \lambda \rra^3),
$$
and we see that $u_0 (\cdot ;\lambda ) \in C([0,1)) \cap C^\infty(0,1)$. Taking its derivative with respect to $\rho$, we have, for $\rho \in [0,\delta_0 \lla \omega \rra^{-1}], \lambda = \epsilon + i \omega$,
$$
|u_0'(\rho ; \lambda) | \lesssim \rho^{-1} \lla \omega \rra ^3.
$$
Hence $u_0(\cdot;\lambda) \in H^1(\BB_{1-\delta}^5)$ for all $\delta\in (0,1)$. On the other hand, near $\rho = 1$, from the transformation we have
\begin{align*}
& \lim_{\rho \to 1^-} u_1(\rho;\lambda) \\
= & \lim_{\rho \to 1^-} \lf[\rho^{-3}(1+\rho)^{\frac{1}{2} - \lambda} (2+\rho(-1+2\lambda)) \lf[ 1+  \mathcal{O}_0(\lla \omega \rra^{-1}) a_1(\rho) + \mathcal{O} (\rho^0 (1-\rho) \lla \omega \rra^{-2}) \rt] \rt] \\
= & 2^{\frac{1}{2} - \lambda} (1+2\lambda)  = O(\lla \lambda \rra ),
\end{align*}
and its derivative is 
$$
\lim_{\rho \to 1^-} u_1'(\rho;\lambda)  = O(\lla \lambda \rra^2).
$$
Hence $u_1(\cdot;\lambda) \in C^1(0,1] \cap C^\infty(0,1)$, so $u_1 \in H^1 (\BB^5 \backslash \{0\})$.

Another two solutions to \eqref{eqn:ODEpotential} are given by, for $j=0,1$,
$$
\tilde{u}_j(\rho) = \rho^{-2}(1-\rho^2)^{-\frac{1}{4}-\frac{\lambda}{2}}\tilde{v}_j(\rho).
$$

The next step is to construct the Wronskian of the fundamental system $\{ u_0 , u_1\}$. Here we turn our attention to the specific potential $V(\rho) = - \frac{35}{4}$, i.e., we are looking at \eqref{eqn:ODElambdaV}.

\begin{lemma}\label{lemma:wronskian}
We have
$$
W(u_0(\cdot;\lambda), u_1(\cdot;\lambda)) = W(\lambda) w_0(\lambda) \rho^{-4}(1-\rho^2)^{-\frac{1}{2}-\lambda}
$$
where $w_0(\epsilon + i \omega ) = 1+\mathcal{O}(\lla \omega \rra ^{-1} ) + \mathcal{O}(\lla \omega \rra ^{-2})$ for $\rho\in(0,1)$, $\epsilon \in [0,\frac{1}{4}]$, $\omega \in \RR$. Moreover, $|w_0(\lambda)| \gtrsim 1$ for all $\lambda \in \mathbb{C}$ with $\Re \lambda \in [0,\frac{1}{4}]$.
\end{lemma}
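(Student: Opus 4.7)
The plan is to push the $\rho$-dependence out of $W(u_0, u_1)$ via the substitution relating $u_j$ to $v_j$, compute the remaining $\lambda$-only Wronskian via Lemma \ref{lemma:pertsol0all}, and then extract the uniform lower bound on $w_0$ separately.

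First, writing $\phi(\rho; \lambda) := \rho^{-2}(1-\rho^2)^{-\frac{1}{4} - \frac{\lambda}{2}}$ so that $u_j = \phi v_j$, a direct computation shows the cross terms $\phi \phi' v_0 v_1$ cancel and
$$ W(u_0(\cdot;\lambda), u_1(\cdot;\lambda))(\rho) = \phi(\rho;\lambda)^2\, W(v_0(\cdot;\lambda), v_1(\cdot;\lambda)) = \rho^{-4}(1-\rho^2)^{-\frac{1}{2}-\lambda}\, W(v_0(\cdot;\lambda), v_1(\cdot;\lambda)). $$
Because \eqref{eqn:ODEv} has no first-order term, Abel's theorem makes $W(v_0, v_1)$ depend on $\lambda$ alone, which is precisely the $\rho$-profile asserted in the lemma.

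Next, on any $\rho$ in the overlap interval $[\delta_1 \lla\omega\rra^{-1}, 1]$, Lemma \ref{lemma:pertsol0all} gives $v_0 = a(\lambda) v_1 + b(\lambda) \tilde v_1$ with $a(\lambda) = 1 + \mathcal{O}_o(\lla\omega\rra^{-1}) + \mathcal{O}(\lla\omega\rra^{-2})$ and $b(\lambda) = -1 - \mathcal{O}_o(\lla\omega\rra^{-1}) - \mathcal{O}(\lla\omega\rra^{-2})$. Hence $W(v_0, v_1) = -b(\lambda)\, W(v_1, \tilde v_1)$, and the computation inside the proof of Lemma \ref{lemma:pertsol0all} already identified $W(v_1, \tilde v_1) = W(\lambda)$ (by passing to $\rho \to 1$ using Lemma \ref{lemma:pertsol1}). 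Setting $w_0(\lambda) := -b(\lambda) = 1 + \mathcal{O}_o(\lla\omega\rra^{-1}) + \mathcal{O}(\lla\omega\rra^{-2})$ yields both the factorization and its asymptotic form.

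The remaining task is the uniform lower bound $|w_0(\lambda)| \gtrsim 1$. The asymptotic expansion directly gives $|w_0(\lambda)| \geq \tfrac{1}{2}$ for $|\omega| \geq R_0$ with $R_0$ large enough. In the compact regime $\Re\lambda \in [0, \tfrac{1}{4}]$, $|\omega| \leq R_0$, the function $w_0$ is continuous in $\lambda$, so it suffices to rule out zeros. If $w_0(\lambda_0) = 0$, then since $W(\lambda)$ vanishes only at $\lambda \in \{-\tfrac{1}{2}, \tfrac{1}{2}, \tfrac{3}{2}\}$ (none in the strip), one has $W(v_0, v_1)(\lambda_0) = 0$, forcing $u_0$ and $u_1$ to be proportional. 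But $u_0(\cdot; \lambda_0)$ is $H^1$ near $\rho = 0$ and $u_1(\cdot; \lambda_0)$ is $H^1$ near $\rho = 1$, so proportionality produces an $\mathcal{H}$-eigenfunction of $\mathbf{L}$ with eigenvalue $\lambda_0$. The hypergeometric connection-formula argument of Lemma \ref{lemma:specL} extends to the closed strip $\Re\lambda \in [0, \tfrac{1}{4}]$ and only permits eigenvalues where $-a \in \mathbb{N}_0$ or $-b \in \mathbb{N}_0$, i.e.~$\lambda \in \{\ldots, -5\} \cup \{1, 3, \ldots\}$, all lying outside the strip. The resulting contradiction gives the claimed uniform bound. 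The main obstacle is exactly this last step: the asymptotic expansion controls $w_0$ only at large $|\omega|$, and at bounded $|\omega|$ one must translate the non-vanishing of $w_0$ into the absence of $\mathbf{L}$-eigenvalues on the closed strip, which forces a careful revisit of the connection-formula computation down to the imaginary axis.
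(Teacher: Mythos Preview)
Your proof is correct and follows essentially the same approach as the paper. The factorization via $u_j = \phi v_j$ and the identification $w_0(\lambda) = -b(\lambda)$ from Lemma~\ref{lemma:pertsol0all} match the paper exactly, and for the non-vanishing both arguments ultimately rest on the hypergeometric connection formula.

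The one notable difference is in the framing of the non-vanishing step. You pass through the intermediary ``$w_0(\lambda_0)=0 \Rightarrow$ eigenfunction of $\mathbf L$ in $\mathcal H$'' and then revisit the argument of Lemma~\ref{lemma:specL} on the closed strip $\Re\lambda\in[0,\tfrac14]$; this is fine, since $\tilde h_1(\rho^2;\lambda)\sim(1-\rho^2)^{1/2-\lambda}$ still fails to lie in $H^1(\BB^5)$ even at $\Re\lambda=0$, so the exclusion step survives. The paper instead bypasses the eigenfunction language entirely: it matches $u_0$ and $u_1$ directly with the Frobenius solutions $h_0$ and $h_1$ via their asymptotics at $\rho=0$ and $\rho=1$, so that $W(u_0,u_1)=0$ becomes the purely algebraic statement that $h_0$ and $h_1$ are dependent, which the connection formula~\eqref{eqn:connform} rules out in the strip without any appeal to $H^1$ membership. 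Both routes are valid; the paper's is marginally more direct because it avoids re-examining the boundary regularity of $\tilde h_1$ on the imaginary axis.
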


\begin{proof}
From the transformation we compute directly
\begin{align*}
W(u_0(\cdot;\lambda), u_1(\cdot;\lambda)) & = W(v_0(\cdot;\lambda),v_1(\cdot;\lambda))\rho^{-4}(1-\rho^2)^{-\frac{1}{2}-\lambda} \\
& = W(\lambda) \lf[1+\mathcal{O}_o(\lla \omega \rra ^{-1} ) + \mathcal{O}(\lla \omega \rra ^{-2} ) \rt] \rho^{-4}(1-\rho^2)^{-\frac{1}{2}-\lambda},
\end{align*}
which is in the stated form.

Next we show that $|w_0 (\lambda)| \neq 0$. Since 
$$
w_0 (\lambda) = \frac{ W(v_0(\cdot;\lambda),v_1(\cdot;\lambda))}{W(\lambda)},
$$
so $w_0 \neq 0$ if and only if $W(v_0(\cdot;\lambda),v_1(\cdot;\lambda)) \neq 0$. Moreover, since 
$$
W(v_0(\cdot;\lambda),v_1(\cdot;\lambda)) = \rho^4 (1-\rho^2)^{\frac{1}{2} + \lambda} W(u_0(\cdot;\lambda),u_1(\cdot;\lambda)),
$$
$W(v_0(\cdot;\lambda),v_1(\cdot;\lambda)) \neq 0$ if and only if $W(u_0(\rho;\lambda),u_1(\rho;\lambda)) \neq 0$ for $\rho \in (0,1)$. So we define $h(\rho^2) = u(\rho)$ and set $z=\rho^2$, then \eqref{eqn:ODEpotential} with $F_\lambda = 0$ is transformed to the hypergeometric differential equation \eqref{eqn:ODEhgde1}. This equation was studied in Lemma \ref{lemma:specL}. By comparing the asymptotics as $\rho \to 0^+ $ we see that
\begin{align*}
u_0(\rho;\lambda) & =  h_0(\rho^2; \lambda)O(\lla \omega \rra^3), \\
\tilde{u}_0(\rho;\lambda) &= \tilde{h}_0(\rho^2; \lambda) O(\lla \omega \rra^{-3}).
\end{align*}
and
$$
u_1(\rho;\lambda) = h_1(\rho^2;\lambda).
$$
So the Wronskian $W(u_0(\rho;\lambda), u_1(\rho;\lambda)) \neq 0$ if and only if $h_0(\cdot;\lambda)$ and $h_1(\cdot;\lambda)$ are linearly independent. From \eqref{eqn:connform}, $h_0(\cdot;\lambda)$ and $h_1(\cdot;\lambda)$ are linearly dependent only when
$$
-a = - \frac{\lambda}{2} - \frac{5}{2}  \in \mathbb{N}_0 \text{ or } -b = -\frac{\lambda}{2}+\frac{1}{2} \in \mathbb{N}_0.
$$
But this never happens for $\lambda = \epsilon + i \omega$, $\epsilon \in [0,\frac{1}{4}]$. Therefore we obtain
$$
|w_0 (\lambda)| =  \lf|\frac{ W(v_0(\cdot;\lambda),v_1(\cdot;\lambda))}{W(\lambda)} \rt| \gtrsim 1
$$
as stated.
\end{proof}

This shows that $\{u_0(\cdot;\lambda, u_1(\cdot;\lambda) \}$ is a fundamental system for \eqref{eqn:ODElambdaV} with $F_\lambda = 0$.

\begin{cor}\label{cor:w0} We have
$$
\frac{1}{w_0(\lambda)} = 1+ \mathcal{O}_o \lf(\lla \omega \rra^{-1} \rt) + \mathcal{O} \lf(\lla \omega \rra^{-2} \rt)
$$
for the function $w_0$ from Lemma \ref{lemma:wronskian}, where $\lambda = \epsilon+i \omega$, $\epsilon \in [0,\frac{1}{4}]$, $\omega \in \RR$.
\end{cor}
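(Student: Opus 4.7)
The plan is to invert $w_0$ via a one-step Neumann-type expansion, exploiting the lower bound $|w_0(\lambda)| \gtrsim 1$ from Lemma \ref{lemma:wronskian}. Before starting the algebra I would verify that $1/w_0$ is itself a symbol of order zero in $\omega$ (uniformly in $\epsilon \in [0,\tfrac14]$). Since Lemma \ref{lemma:wronskian} gives $w_0 = \mathcal{O}(\lla\omega\rra^0)$ together with $|w_0|\gtrsim 1$, this follows from a Fa\`a di Bruno / induction argument: differentiating the identity $w_0\cdot (1/w_0) = 1$ expresses $\partial_\omega^j(1/w_0)$ as a polynomial in the $\partial_\omega^k w_0$ divided by a power of $w_0$, giving $|\partial_\omega^j(1/w_0)|\lesssim \lla\omega\rra^{-j}$. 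This is the only technical step, and I expect it to be the main (but standard) obstacle; the rest is algebra inside the symbol classes.

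With that in hand, decompose the content of Lemma \ref{lemma:wronskian} as $w_0 = 1 + a_o + r$, where $a_o = \mathcal{O}_o(\lla\omega\rra^{-1})$ is odd in $\omega$ and $r = \mathcal{O}(\lla\omega\rra^{-2})$. I then write
\[
\frac{1}{w_0} = 1 - \frac{w_0-1}{w_0} = 1 - \frac{a_o+r}{w_0},
\]
and substitute this identity once more into the fraction on the right-hand side to get
\[
\frac{1}{w_0} = 1 - a_o + \frac{a_o^2 + a_o r - r}{w_0}.
\]
The term $-a_o$ is odd with the same symbol bounds as $a_o$, hence $-a_o \in \mathcal{O}_o(\lla\omega\rra^{-1})$, which is exactly the advertised odd part.

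It remains to bound the numerator $a_o^2 + a_o r - r$. Leibniz combined with the symbol estimates on $a_o$ and $r$ gives $a_o^2 = \mathcal{O}(\lla\omega\rra^{-2})$, $a_o r = \mathcal{O}(\lla\omega\rra^{-3}) \subset \mathcal{O}(\lla\omega\rra^{-2})$, and $-r = \mathcal{O}(\lla\omega\rra^{-2})$, so the whole numerator lies in $\mathcal{O}(\lla\omega\rra^{-2})$. Multiplying by $1/w_0 \in \mathcal{O}(\lla\omega\rra^0)$ (from the first paragraph) preserves this class, by the product closure of symbol classes recorded in Appendix \ref{app:symbol}. Thus
\[
\frac{1}{w_0} - 1 \in \mathcal{O}_o(\lla\omega\rra^{-1}) + \mathcal{O}(\lla\omega\rra^{-2}),
\]
which is precisely the statement of Corollary \ref{cor:w0}.
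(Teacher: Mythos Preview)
Your argument is correct. The key technical point you flag---that $1/w_0$ is itself a zeroth-order symbol, which follows from $|w_0|\gtrsim 1$ together with $w_0=\mathcal{O}(\lla\omega\rra^0)$ via differentiating $w_0\cdot(1/w_0)=1$---is exactly what is needed, and the subsequent one-step substitution cleanly isolates the odd leading correction $-a_o$ while pushing everything else to $\mathcal{O}(\lla\omega\rra^{-2})$.

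Your route is genuinely different from the paper's. The paper writes $1/w_0 = \overline{w_0}/|w_0|^2$ and treats the two factors separately. That approach leans (implicitly) on the reality symmetry $w_0(\overline{\lambda})=\overline{w_0(\lambda)}$, inherited from the real-valued potential, which forces $|w_0|^2$ to be even in $\omega$; this in turn upgrades $|w_0|^2 - 1$ from the naive $\mathcal{O}(\lla\omega\rra^{-1})$ to $\mathcal{O}(\lla\omega\rra^{-2})$, so that the second factor does not spoil the parity structure. The paper's proof is therefore shorter on the page but relies on this unstated symmetry and leaves the reader to fill in why the displayed product collapses to the claimed form. Your Neumann-type expansion is more elementary and entirely self-contained: it uses only the lower bound $|w_0|\gtrsim 1$ from Lemma~\ref{lemma:wronskian} together with the product closure of the symbol classes, and it would work even without any conjugation symmetry of $w_0$.
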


\begin{proof}
We write
\begin{align*}
\frac{1}{w_0(\lambda)} & = \frac{\overline{w_0(\lambda)}}{|w_0(\lambda)|^2} \\
& = \lf[1+ \mathcal{O}_o \lf(\lla \omega \rra^{-1} \rt) + \mathcal{O} \lf(\lla \omega \rra^{-2} \rt) \rt] \lf[1+  \mathcal{O} \lf(\lla \omega \rra^{-1} \rt) \rt] \\
& = 1+ \mathcal{O}_o \lf(\lla \omega \rra^{-1} \rt) + O \lf(\lla \omega \rra^{-2} \rt).
\end{align*}
\end{proof}

\subsection{Decomposition of the Green function}

We first set
\begin{align*}
\varphi_1(\rho;\lambda)  & := \rho^{-2}(1-\rho^2)^{-\frac{1}{4}-\frac{\lambda}{2}}\psi_1(\rho;\lambda)=\rho^{-3}(1+\rho)^{\frac{1}{2}-\lambda}(2+\rho(-1+2\lambda)) , \\
\tilde{\varphi}_1 (\rho;\lambda)  & := \rho^{-2}(1-\rho^2)^{-\frac{1}{4}-\frac{\lambda}{2}}\tilde{\psi}_1(\rho;\lambda)=\rho^{-3}(1-\rho)^{\frac{1}{2}-\lambda}(2+\rho(1 - 2\lambda)),
\end{align*}
and
$$
\varphi_0 (\rho;\lambda):= \varphi_1(\rho;\lambda)  - \tilde{\varphi}_1 (\rho;\lambda). 
$$
Then $\{\varphi_0(\cdot;\lambda), \varphi_1(\cdot;\lambda) \}$ is a fundamental system of \eqref{eqn:ODEpotential} with $F_\lambda=V=0$. The Green function is given by
$$
G_0(\rho,s;\lambda) = \frac{s^4(1-s^2)^{-\frac{1}{2}+\lambda}}{(3-2\lambda)(1+2\lambda)(1-2\lambda)} \begin{cases} \varphi_0(\rho;\lambda) \varphi_1(s;\lambda) \text{ if }   \rho \leq s \\ \varphi_0(s;\lambda) \varphi_1(\rho;\lambda) \text{ if } \rho \geq s \end{cases},
$$
which is the Green function that gives the semigroup $\mathbf{S}_0$ using Laplace inversion.

General solutions to \eqref{eqn:ODElambdaV} are given by
\begin{align*}
u(\rho;\lambda) = & c_0(\lambda) u_0(\rho;\lambda) + c_1(\lambda) u_1 (\rho;\lambda) \\
& - \int_0^\rho \frac{u_0(s;\lambda) u_1 (\rho;\lambda)}{W(u_0(\cdot;\lambda),u_1(\cdot;\lambda))(s)} \frac{F_\lambda (s)}{1-s^2} ds -\int_\rho^1 \frac{u_0(\rho;\lambda) u_1 (s;\lambda)}{W(u_0(\cdot;\lambda),u_1(\cdot;\lambda))(s)} \frac{F_\lambda (s)}{1-s^2} ds ,
\end{align*}
where $c_0(\lambda),c_1(\lambda)$ are arbitrary constants. But since neither $u_0(\cdot;\lambda)$ nor $u_1(\cdot;\lambda)$ belongs to $H^1 (\BB^5)$, we choose $c_0$ and $c_1$ to be identically 0. Then there is a unique solution in  $H^1 (\BB^5)$ given by	
$$
u(\rho;\lambda) =  - \int_0^\rho \frac{u_0(s;\lambda) u_1 (\rho;\lambda)}{W(u_0(\cdot;\lambda),u_1(\cdot;\lambda))(s)} \frac{F_\lambda (s)}{1-s^2} ds -\int_\rho^1 \frac{u_0(\rho;\lambda) u_1 (s;\lambda)}{W(u_0(\cdot;\lambda),u_1(\cdot;\lambda))(s)} \frac{F_\lambda (s)}{1-s^2} ds.
$$
So the Green function for \eqref{eqn:ODElambdaV} is given by
$$
G(\rho,s;\lambda) = \frac{s^4(1-s^2)^{-\frac{1}{2}+\lambda}}{(3-2\lambda)(1+2\lambda)(1-2\lambda)w_0(\lambda)} \begin{cases} u_0(\rho;\lambda) u_1(s;\lambda) \text{ if }   \rho \leq s \\ u_1(\rho;\lambda) u_0(s;\lambda)  \text{ if } \rho \geq s  \end{cases} ,
$$
which is the Green function in the integral kernel of \eqref{eqn:laplaceinv}.

We define a smooth cut-off $\chi: \RR \to [0,1]$ with $\chi(x)=1$ for $|x| \leq \delta_1$ and $\chi(x)=0$ for $|x| \geq \delta_0$, where $\delta_0$ and $\delta_1$ are chosen from before. We decompose $G$ in the following way.

\begin{lemma}\label{lemma:decomposeG} We have
$$
G(\rho,s ;\lambda) = G_0(\rho, s; \lambda) + \sum_{n=1}^6 G_n(\rho,s;\lambda),
$$
where
\begin{align*}
G_1 (\rho,s;\lambda)& = 1_{\RR_+}(s-\rho)\chi(\rho\lla \omega \rra)s^4(1-s^2)^{-\frac{1}{2}+\lambda}\frac{\varphi_0(\rho;\lambda) \varphi_1(s;\lambda) \gamma_1(\rho,s;\lambda) }{(3-2\lambda)(1+2\lambda)(1-2\lambda)}  \\
G_2 (\rho,s;\lambda)& = 1_{\RR_+}(s-\rho)\lf[1-\chi(\rho\lla \omega \rra) \rt] s^4(1-s^2)^{-\frac{1}{2}+\lambda}\frac{ \varphi_1(\rho;\lambda) \varphi_1(s;\lambda) \gamma_2(\rho,s;\lambda)}{(3-2\lambda)(1+2\lambda)(1-2\lambda)}\\
G_3 (\rho,s;\lambda)& = 1_{\RR_+}(s-\rho)\lf[ 1-\chi(\rho\lla \omega \rra) \rt]s^4(1-s^2)^{-\frac{1}{2}+\lambda}\frac{\tilde{\varphi}_1(\rho;\lambda) \varphi_1(s;\lambda) \gamma_3(\rho,s;\lambda)}{(3-2\lambda)(1+2\lambda)(1-2\lambda)}\\
G_4 (\rho,s;\lambda)& = 1_{\RR_+}(\rho-s)\chi(s\lla \omega \rra) s^4(1-s^2)^{-\frac{1}{2}+\lambda} \frac{\varphi_1(\rho;\lambda)\varphi_0(s;\lambda)  \gamma_4(\rho,s;\lambda)}{(3-2\lambda)(1+2\lambda)(1-2\lambda)} \\
G_5(\rho,s;\lambda) & = 1_{\RR_+}(\rho-s)\lf[ 1- \chi(s\lla \omega \rra) \rt] s^4(1-s^2)^{-\frac{1}{2}+\lambda}\frac{\varphi_1(\rho;\lambda)\varphi_1(s;\lambda)  \gamma_5(\rho,s;\lambda)}{(3-2\lambda)(1+2\lambda)(1-2\lambda)}  \\
G_6(\rho,s;\lambda) & = 1_{\RR_+}(\rho-s)\lf[ 1- \chi(s\lla \omega \rra) \rt] s^4(1-s^2)^{-\frac{1}{2}+\lambda}\frac{\varphi_1(\rho;\lambda)\tilde{\varphi}_1(s;\lambda)  \gamma_6(\rho,s;\lambda)}{(3-2\lambda)(1+2\lambda)(1-2\lambda)} ,
\end{align*}
and
 $$ \gamma_n(\rho,s;\lambda) = \mathcal{O}(\rho^0 s^0) O_0( \lla \omega \rra^{-1} ) +  \mathcal{O}((1-\rho)^0 s^0 \lla \omega \rra^{-2}) +  \mathcal{O}(\rho^0 (1-s) \lla \omega \rra ^{-2}), $$
for all $\rho, s\in (0,1)$, $\lambda = \epsilon+ i \omega$, $\epsilon\in [0,\frac{1}{4}]$, $\omega \in \RR$, $n = \{1,2, \ldots, 6\}$.
\end{lemma}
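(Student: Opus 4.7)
The relations $u_j=\rho^{-2}(1-\rho^2)^{-1/4-\lambda/2}v_j$ and $\varphi_j=\rho^{-2}(1-\rho^2)^{-1/4-\lambda/2}\psi_j$ share a common prefactor that cancels in every ratio $u_j/\varphi_j$ and in every Wronskian quotient. Consequently every asymptotic expansion obtained for $v_0,v_1,\tilde v_0,\tilde v_1$ in Lemmas \ref{lemma:pertsol0}, \ref{lemma:pertsol12}, \ref{lemma:pertsol1}, and \ref{lemma:pertsol0all} transfers verbatim to relations between $u_0,u_1,\tilde u_1$ and $\varphi_0,\varphi_1,\tilde\varphi_1$. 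The proof then reduces to algebraic manipulation: expand
\[
\frac{u_0(\rho)u_1(s)}{w_0(\lambda)}-\varphi_0(\rho)\varphi_1(s)\qquad(\rho\leq s),
\]
and the symmetric expression for $\rho\geq s$, into six summands by inserting the partition $1=\chi(\rho\lla\omega\rra)+[1-\chi(\rho\lla\omega\rra)]$, respectively $1=\chi(s\lla\omega\rra)+[1-\chi(s\lla\omega\rra)]$, and treat the two regions with different representations of $u_0$.

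\textbf{Case-by-case extraction.} Consider first $\rho\leq s$. On $\mathrm{supp}\,\chi(\rho\lla\omega\rra)$ (where $\rho\lesssim\lla\omega\rra^{-1}$), Lemma~\ref{lemma:pertsol0} gives $u_0(\rho)/\varphi_0(\rho)=1+\mathcal{O}(\rho^2\lla\omega\rra^0)$, and the cutoff turns $\rho^2$ into $\lla\omega\rra^{-2}$. For $u_1(s)$, one pastes the small-$s$ expression of Lemma~\ref{lemma:pertsol12} with the large-$s$ expression of Lemma~\ref{lemma:pertsol1}; both agree on the overlap and together yield $u_1(s)/\varphi_1(s)=1+\mathcal{O}_o(\lla\omega\rra^{-1})+\mathcal{O}(s^0(1-s)\lla\omega\rra^{-2})$. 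Multiplying by $1/w_0(\lambda)=1+\mathcal{O}_o(\lla\omega\rra^{-1})+\mathcal{O}(\lla\omega\rra^{-2})$ from Corollary~\ref{cor:w0} produces $G_1$ with $\gamma_1$ of the required form. On $\mathrm{supp}\,(1-\chi(\rho\lla\omega\rra))$ (where $\rho\gtrsim\lla\omega\rra^{-1}$), Lemma~\ref{lemma:pertsol0all} yields $u_0(\rho)=A(\lambda)u_1(\rho)+B(\lambda)\tilde u_1(\rho)$ with $A-1,\ B+1\in\mathcal{O}_o(\lla\omega\rra^{-1})+\mathcal{O}(\lla\omega\rra^{-2})$, while Lemma~\ref{lemma:pertsol1} converts $u_1(\rho)/\varphi_1(\rho)$, $\tilde u_1(\rho)/\tilde\varphi_1(\rho)$, and $u_1(s)/\varphi_1(s)$ each to $1$ plus the familiar errors. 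Because $\varphi_0=\varphi_1-\tilde\varphi_1$, the leading part of $Au_1+B\tilde u_1$ reproduces $\varphi_0(\rho)\varphi_1(s)$ exactly, and the remainder splits as $\varphi_1(\rho)\varphi_1(s)\gamma_2+\tilde\varphi_1(\rho)\varphi_1(s)\gamma_3$, giving $G_2+G_3$. The case $\rho\geq s$ is mirror-symmetric, with $\chi(s\lla\omega\rra)$ in place of $\chi(\rho\lla\omega\rra)$, producing $G_4$ from the small-$s$ regime and $G_5+G_6$ from the large-$s$ regime.

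\textbf{Main obstacle: symbol-type error accounting.} The delicate part is verifying that each $\gamma_n$ actually fits into
\[
\mathcal{O}(\rho^0 s^0)\mathcal{O}_o(\lla\omega\rra^{-1})+\mathcal{O}((1-\rho)^0 s^0\lla\omega\rra^{-2})+\mathcal{O}(\rho^0(1-s)\lla\omega\rra^{-2}).
\]
Oddness in $\omega$ propagates through products because every multiplicative building block has the shape $1+\mathcal{O}_o(\lla\omega\rra^{-1})+\mathcal{O}(\lla\omega\rra^{-2})$, and the product of two odd $\lla\omega\rra^{-1}$ pieces is automatically $\mathcal{O}(\lla\omega\rra^{-2})$; hence finite products preserve the template. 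The $(1-s)$ factor in the second-order error originates from the $a_1(s)=-\int_s^1 V=\mathcal{O}(1-s)$ contribution in Lemma~\ref{lemma:pertsol1} and from the $(1-s)\lla\omega\rra^{-2}$ term there; in the $\rho\geq s$ branch, the analogous $(1-\rho)$ factors coming from $u_1(\rho)/\varphi_1(\rho)$ are simply bounded by $1$ and absorbed into the pure $\lla\omega\rra^{-2}$ summand. Symbol-type dependence on $(\rho,s)$ is inherited directly from the constituent lemmas, and the cutoff $\chi(\rho\lla\omega\rra)$ is of symbol type in $(\rho,\omega)$ because on $\mathrm{supp}\,\chi'$ one has $\rho\lla\omega\rra\simeq 1$, so each derivative produces a factor bounded by $\rho^{-1}$ or $\lla\omega\rra^{-1}$. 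Combining these observations, the six collected pieces assemble exactly into $G_1,\dots,G_6$ with the stated $\gamma_n$, which completes the proof.
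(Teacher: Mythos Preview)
Your proposal is correct and follows essentially the same approach as the paper: the paper likewise writes out the combined expansions $u_1(\rho)=\varphi_1(\rho)[1+\mathcal{O}(\rho^0)\mathcal{O}_o(\lla\omega\rra^{-1})+\mathcal{O}(\rho^0(1-\rho)\lla\omega\rra^{-2})]$ and the three-piece expansion of $u_0(\rho)$ via the cutoff, then simply invokes Corollary~\ref{cor:w0} to conclude. Your write-up is in fact more explicit than the paper's on the error bookkeeping (oddness propagation, origin of the $(1-s)$ factor), which the paper leaves to the reader.
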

\begin{proof} From Lemmas \ref{lemma:pertsol1} and \ref{lemma:pertsol12}, we have
\begin{align}
u_1(\rho;\lambda)= & \chi(\rho\lla \omega \rra) u_1(\rho;\lambda) + [1-\chi(\rho \lla \omega \rra)] u_1(\rho;\lambda) \nonumber \\
= & \chi(\rho\lla \omega \rra)\varphi_1(\rho;\lambda) [1+ \mathcal{O}_o(\lla \omega \rra^{-1}) +  \mathcal{O}(\rho^0\lla \omega \rra^{-2})] \nonumber \\
& + [1-\chi(\rho \lla \omega \rra)] \varphi_1(\rho;\lambda)[1+ \mathcal{O}_o(\lla \omega \rra^{-1})a_1(\rho) +  \mathcal{O}(\rho^0 (1-\rho) \lla \omega \rra^{-2})] \nonumber \\
= & \varphi_1 (\rho;\lambda)[1+  \mathcal{O}(\rho^0)\mathcal{O}_o(\lla \omega\rra^{-1}) +  \mathcal{O} (\rho^0(1-\rho)\lla \omega \rra^{-2})]. \label{eqn:u1pert}
\end{align}

On the other hand, using Lemmas \ref{lemma:pertsol0}, \ref{lemma:pertsol1}, and \ref{lemma:pertsol0all}, we have
\begin{align}
u_0(\rho;\lambda) = &  \chi(\rho \lla \omega \rra) u_0 (\rho;\lambda) + \lf[ 1-  \chi(\rho \lla \omega \rra) \rt] u_0(\rho;\lambda) \nonumber\\
= &  \chi(\rho \lla \omega \rra) \varphi_0(\rho;\lambda) \lf[ 1+  \mathcal{O}(\rho^0 \lla \omega \rra^{-2}) \rt] \nonumber\\
& + \lf[ 1-  \chi(\rho \lla \omega \rra) \rt] u_1(\rho;\lambda) \lf[ 1+  \mathcal{O}_o(\lla \omega \rra^{-1}) +  \mathcal{O}(\lla \omega \rra^{-2}) \rt] \nonumber \\
& -\lf[ 1-  \chi(\rho \lla \omega \rra) \rt] \tilde{u}_1(\rho;\lambda) \lf[ 1+  \mathcal{O}_o(\lla \omega \rra^{-1}) +  \mathcal{O}(\lla \omega \rra^{-2}) \rt] \nonumber  \\
= &  \chi(\rho \lla \omega \rra) \varphi_0(\rho;\lambda) \lf[ 1+  \mathcal{O}(\rho^0 \lla \omega \rra^{-2}) \rt] \nonumber \\
& + \lf[ 1-  \chi(\rho \lla \omega \rra) \rt] \varphi_1(\rho;\lambda)\lf[ 1+ \mathcal{O}(\rho^0)\mathcal{O}_o(\lla \omega \rra^{-1}) +  \mathcal{O}(\rho^0(1-\rho)\lla \omega \rra^{-2}) + \mathcal{O}(\lla \omega \rra^{-2}) \rt] \nonumber \\
& - \lf[ 1-  \chi(\rho \lla \omega \rra) \rt] \tilde{\varphi}_1(\rho;\lambda)\lf[ 1+ \mathcal{O}(\rho^0)\mathcal{O}_o(\lla \omega \rra^{-1}) +  \mathcal{O}(\rho^0(1-\rho)\lla \omega \rra^{-2}) +  \mathcal{O}(\lla \omega \rra^{-2}) \rt] \label{eqn:u0pert}.
\end{align}
Then we obtain the stated decomposition from Corollary \ref{cor:w0}.
\end{proof}

\subsection{The semigroup}

Using Lemma \ref{lemma:decomposeG}, the representation of the semigroup in \eqref{eqn:laplaceinv} is given by
\begin{equation}\label{eqn:semigpdecomp}
[ \mathbf{S}(\tau) \tilde{\mathbf{f}} ]_1 (\rho) = [ \mathbf{S}_0(\tau) \tilde{\mathbf{f}} ]_1 (\rho)  + \frac{1}{2\pi i} + \sum_{n=1}^6 \lim_{N\to\infty} \int_{\epsilon- i N}^{\epsilon + i N} e^{\lambda \tau} \int_0^1 G_n(\rho ,s ; \lambda) F_\lambda(s) ds d\lambda 
\end{equation}
for any $\epsilon> 0$, $\tilde{\mathbf{f}} = (\tilde{f}_1, \tilde{f}_2) \in \rg (\mathbf{I} - \mathbf{P}) \cap C^2 \times C^1 ([0,1])$, and
$$
F_\lambda (s) = s \tilde{f}'_1(s) + \lf(\lambda+\frac{5}{2} \rt)\tilde{f}_1(s) + \tilde{f}_2(s).
$$
So we define, for $n\in \{1, 2, \ldots,6\}$, $f \in C([0,1])$, the operators
\begin{align*}
T_{n,\epsilon} (\tau) f(\rho)  := & \frac{1}{2\pi i} \lim_{N \to \infty} \int_{\epsilon- i N}^{\epsilon + i N} e^{\lambda \tau} \int_0^1 G_n(\rho ,s ; \lambda) f(s) ds d\lambda  \\
= & \frac{1}{2\pi }  \int_{-\infty}^{\infty} e^{(\epsilon+i \omega) \tau} \int_0^1 G_n(\rho ,s ; \epsilon+i \omega) f(s) ds d\omega
\end{align*}
for $\tau \geq 0$ and $\rho \in (0,1)$. We would like to take the limit $\epsilon \to 0^+$. Indeed, for all $\rho \in (0,1]$, $\epsilon \in [0,\frac{1}{4}]$, and $\omega \in \RR$ we have $|\varphi_1(\rho; \epsilon+i\omega)|+| \tilde{\varphi}_1(\rho; \epsilon+i\omega)| \lesssim \rho^{-3} + \rho^{-2}\lla \omega \rra $, so $|G_n(\rho,s; \epsilon+ i \omega) | \lesssim \rho^{-3} s(1-s)^{-\frac{1}{2}+\epsilon} \lla \omega \rra^{-2}$. By dominated convergence and Fubini, we have
\begin{align*}
T_n(\tau) f(\rho) :=  \lim_{\epsilon \to 0+} T_{n,\epsilon} (\tau) f(\rho) & = \frac{1}{2\pi } \int_{-\infty}^{\infty} e^{i \omega \tau} \int_0^1 G_n(\rho ,s ; \omega) f(s) ds d\omega \\
& = \frac{1}{2\pi} \int_0^1 \int_{-\infty}^{\infty} e^{i \omega \tau} G_n(\rho ,s ; \omega) d\omega f(s) ds .
\end{align*}

\subsection{Representations of $\varphi_0$}

In this section we apply the fundamental theorem of calculus to $\varphi_0(\rho; \lambda)\rho^3$ to obtain two ways of writing this term. These representations will be useful when we estimate the oscillatory integrals. Since $\lim_{\rho \to 0} \varphi_0(\rho)\rho^3=0$, we have
\begin{align*}
\varphi_0(\rho; \lambda)\rho^3 & = -\frac{1}{2}(3-2\lambda) (1-2\lambda) \int_0^\rho \lf( (1+t_1)^{-\frac{1}{2}-\lambda} - (1-t)^{-\frac{1}{2}-\lambda} \rt)t_1  dt_1 \\
& =  -\frac{\rho^2}{2}(3-2\lambda) (1-2\lambda)\int_{-1}^1  (1+ \rho t_1)^{-\frac{1}{2}-\lambda} t_1 dt_1.
\end{align*}
Then we have
\begin{equation}\label{eqn:1red}
\varphi_0(\rho; \lambda) = \frac{1}{2 \rho}(3-2\lambda) (-1+2\lambda) \int_{-1}^1  (1+ \rho t_1)^{-\frac{1}{2}-\lambda} t_1 dt_1
\end{equation}

Notice that $(1+ \rho t_1)^{-\frac{1}{2}-\lambda} t_1 |_{\rho=0} = t_1$, so we can write 
\begin{align*}
(1+ \rho t_1)^{-\frac{1}{2}-\lambda} t_1 = & \int_0^\rho \frac{d}{dt_2} (1+ t_1t_2)^{-\frac{1}{2}-\lambda} t_1 dt_2 + t_1 \\
= & -\frac{\rho(1 + 2\lambda) }{2} \int_0^1 (1+\rho t_1 t_2)^{-\frac{3}{2}-\lambda}  t_1^2 dt_2  + t_1
\end{align*}
Then
\begin{align*}
\int_{-1}^1  (1+ \rho t_1)^{-\frac{1}{2}-\lambda} t_1 dt_1 = & -\frac{\rho(1 + 2\lambda)}{2} \int_{-1}^1 \int_0^1 (1+\rho t_1t_2)^{-\frac{3}{2}-\lambda} t_1^2 dt_2  dt_1 + \int_{-1}^1 t_1 dt_1 \\
=& -\frac{\rho(1 + 2\lambda)}{2} \int_{-1}^1 \int_0^1 (1+\rho t_1 t_2)^{-\frac{3}{2}-\lambda}  t_1^2 dt_2  dt_1 .
\end{align*}
Hence we can further reduce $\varphi_0$ in the form
\begin{equation}\label{eqn:2red}
\varphi_0(\rho; \lambda) = -\frac{(3-2\lambda) (-1+2\lambda) (1 + 2\lambda)}{4} \int_{-1}^1 \int_0^1 (1+\rho t_1 t_2)^{-\frac{3}{2}-\lambda} t_1^2 dt_2  dt_1.
\end{equation}

\section{Strichartz estimates of the full semigroup}

\subsection{Preliminary}

We first introduce some results that will be useful. We start with recalling the result on oscillatory integrals from \cite{donn2017}.
\begin{lemma}[{\cite[Lemma 4.2]{donn2017}}]\label{lemma:o} We have
$$
\lf| \int_\RR e^{i a \omega} [\mathcal{O}_o(\lla \omega \rra^{-1}) + \mathcal{O}(\lla \omega \rra^{-2})] d\omega  \rt| \lesssim \lla a \rra^{-2}
$$
for all $a \in \RR \backslash \{0\}$. \qed
\end{lemma}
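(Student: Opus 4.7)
The plan is to decompose the integrand as $f(\omega) = f_o(\omega) + f_e(\omega)$ with $f_o \in \mathcal{O}_o(\lla \omega \rra^{-1})$ odd and $f_e \in \mathcal{O}(\lla \omega \rra^{-2})$ a general symbol, and to estimate each piece separately in the two regimes $|a|\leq 1$ and $|a|\geq 1$. Since $\lla a \rra^{-2} \simeq \min(1,|a|^{-2})$, it suffices to prove $|\int e^{ia\omega} f \, d\omega| \lesssim 1$ uniformly and $|\int e^{ia\omega} f \, d\omega| \lesssim |a|^{-2}$ for $|a|\geq 1$.

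The $f_e$ piece is routine: $\lla \omega \rra^{-2}$ is integrable, giving the trivial bound $\lesssim 1$, and for $|a|\geq 1$ integrating by parts twice (boundary terms vanish by symbol decay) extracts the factor $(ia)^{-2}$ while leaving $f_e'' \in \mathcal{O}(\lla \omega \rra^{-4}) \subset L^1(\RR)$, which yields the $|a|^{-2}$ decay. For the $f_o$ piece with $|a|\geq 1$, the same double integration by parts is available because $f_o'' \in \mathcal{O}(\lla \omega \rra^{-3}) \subset L^1(\RR)$, and the boundary terms still vanish.

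The genuine difficulty is the uniform bound $|\int e^{ia\omega} f_o \, d\omega| \lesssim 1$ for small $|a|$, since $f_o$ itself is not in $L^1$. The plan is to integrate by parts once,
\[
\int_\RR e^{ia\omega} f_o(\omega)\,d\omega = -\frac{1}{ia}\int_\RR e^{ia\omega} f_o'(\omega)\,d\omega,
\]
and exploit two symmetries. First, $f_o'$ is even, so the imaginary part $\int \sin(a\omega) f_o'(\omega) \, d\omega$ vanishes. Second, $\int_\RR f_o'(\omega) \, d\omega = f_o(+\infty) - f_o(-\infty) = 0$ by symbol decay. Together these allow the rewriting
\[
\int_\RR e^{ia\omega} f_o'(\omega)\,d\omega = \int_\RR \bigl(\cos(a\omega) - 1\bigr)\, f_o'(\omega)\,d\omega.
\]
The elementary inequality $|\cos(a\omega) - 1| \leq \min(2, a^2\omega^2)$ combined with $|f_o'(\omega)| \lesssim \lla \omega \rra^{-2}$, and splitting the integration at $|\omega| = 1/|a|$, shows that each piece of the rewritten integral is of size $\mathcal{O}(|a|)$. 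This cancels exactly with the $|a|^{-1}$ prefactor coming from the single IBP and produces the required uniform $\mathcal{O}(1)$ bound.

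The main obstacle is precisely this last cancellation. The oddness of $f_o$ by itself gives no useful bound on $\int e^{ia\omega} f_o \, d\omega$; it enters only indirectly, through the derived identity $\int_\RR f_o' = 0$, which is what enables the crucial rewriting and the extraction of the additional factor of $|a|$ from the cosine cancellation at the origin. Everything else is bookkeeping, and combining the two regimes via $\min(1,|a|^{-2}) \lesssim \lla a \rra^{-2}$ closes the argument.
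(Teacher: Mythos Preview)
Your argument is correct. The paper does not actually supply a proof of this lemma; it is quoted from \cite{donn2017} with a \qed, so there is no in-paper proof to compare against. Your two-regime strategy (trivial $L^1$ and double integration by parts for the $\mathcal{O}(\lla\omega\rra^{-2})$ piece; for the odd $\mathcal{O}_o(\lla\omega\rra^{-1})$ piece, double IBP when $|a|\geq 1$, and for $|a|\leq 1$ a single IBP followed by the parity/cancellation rewriting $\int e^{ia\omega}f_o' = \int(\cos(a\omega)-1)f_o'$) is exactly the standard route and all steps are justified. One cosmetic remark: when you say ``the imaginary part $\int\sin(a\omega)f_o'\,d\omega$ vanishes'', note that $f_o'$ may be complex-valued, so this is not literally the imaginary part of the integral; but the vanishing is still correct by parity, since $\sin(a\omega)f_o'(\omega)$ is odd.
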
 

The next result is in the same spirit as Lemma \ref{lemma:o}.

\begin{lemma}\label{lemma:a}
Let $\alpha \in (0,1)$. Then we have
$$
\lf| \int_\RR e^{i a \omega} \mathcal{O}(\lla \omega \rra^{-\alpha}) d\omega \rt| \lesssim |a|^{-1+\alpha} \lla a \rra ^{-2}
$$
for all $a\in \RR\backslash \{0\}$.
\end{lemma}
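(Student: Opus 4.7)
The plan is to split by the size of $|a|$, treating small and large $|a|$ by different mechanisms. Write $f(\omega)=\mathcal{O}(\lla\omega\rra^{-\alpha})$ for the integrand factor, so that $|f^{(j)}(\omega)|\lesssim\lla\omega\rra^{-\alpha-j}$ for every $j\in\mathbb{N}_0$.

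For $|a|\geq 1$, the target bound reduces to $|a|^{\alpha-3}$. I will integrate by parts three times in $\omega$ using $e^{ia\omega}=(ia)^{-1}\partial_\omega e^{ia\omega}$. Since $f$ and all its derivatives decay at infinity, the boundary terms vanish, and the resulting integral is absolutely convergent because $f'''(\omega)=\mathcal{O}(\lla\omega\rra^{-\alpha-3})\in L^1(\RR)$. This produces the bound $|a|^{-3}$, which is stronger than the required $|a|^{\alpha-3}$ on this range.

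For $0<|a|<1$, the target is $|a|^{-1+\alpha}$, and I will split the frequency integral at the natural scale $|\omega|=1/|a|$. On the low-frequency piece $|\omega|\leq 1/|a|$ I will estimate without using oscillation, using $\int_0^{1/|a|}(1+\omega)^{-\alpha}d\omega\lesssim |a|^{-1+\alpha}$. On the high-frequency piece $|\omega|>1/|a|$ I will integrate by parts once: the boundary contribution at $|\omega|=1/|a|$ is of order $|a|^{-1}\cdot \lla 1/|a|\rra^{-\alpha}\lesssim|a|^{-1+\alpha}$, while the remaining integral of $|a|^{-1}\lla\omega\rra^{-\alpha-1}$ over $|\omega|>1/|a|$ evaluates to the same size.

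Nothing here is technically delicate; the only genuine design choice is placing the split at the scale $|\omega|\sim 1/|a|$, which is the unique scale balancing the direct estimate and the integration-by-parts estimate in the small-$|a|$ regime. Combining the two regimes yields the claimed bound $|a|^{-1+\alpha}\lla a\rra^{-2}$.
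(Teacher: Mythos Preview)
Your proposal is correct and follows essentially the same strategy as the paper: split the frequency integral at the scale $|\omega|\sim 1/|a|$, estimate the low-frequency piece directly, integrate by parts once on the high-frequency piece, and gain the extra $\lla a\rra^{-2}$ decay for $|a|\geq 1$ by repeated integration by parts. The only cosmetic difference is that the paper uses a smooth cutoff $\chi(a\omega)$ rather than your sharp split at $|\omega|=1/|a|$, so that your boundary term at $|\omega|=1/|a|$ is replaced by a term carrying $\chi'(a\omega)$; the arithmetic is otherwise identical.
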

\begin{proof}
Without loss of generality assume $a>0$. We decompose
\begin{align*}
& \int_\RR e^{i a \omega} \mathcal{O}(\lla \omega \rra^{-\alpha}) d\omega \\
= &   \int_\RR e^{i a \omega} \mathcal{O}(\lla \omega \rra^{-\alpha}) \chi(a \omega) d\omega  + \int_\RR e^{i a \omega} \mathcal{O}(\lla \omega \rra^{-\alpha}) [1-\chi(a \omega) ] d\omega  \\
= : & I_1(a) + I_2(a).
\end{align*}
So we have
$$
I_1(a)  = \int_\RR e^{i a \omega} \mathcal{O}( \omega ^{-\alpha}) \chi(a \omega) d\omega   =  a^{-1} \int_\RR e^{i \omega} \mathcal{O}\lf( \lf(\frac{\omega}{a}\rt)^{-\alpha} \rt) \chi(\omega) d\omega = O\lf(a^{-1+\alpha} \rt),
$$
and an integration by parts gives
\begin{align*}
I_2(a) = & a^{-1}\int_\RR [1-\chi(a\omega)] e^{i a \omega } \mathcal{O} \lf(\omega^{-\alpha-1}\rt) d\omega + \int_\RR \chi'(a\omega) e^{i  a \omega } \lf(\omega^{-\alpha}\rt) d\omega \\
= & a^{-2}\int_\RR [1-\chi(\omega)] e^{i \omega } \mathcal{O} \lf( \lf( \frac{\omega}{a}\rt)^{-\alpha-1}\rt) d\omega + a^{-1} \int_\RR \chi'(\omega) e^{i   \omega } \lf(\lf( \frac{\omega}{a} \rt)^{-\alpha}\rt) d\omega \\
= & O\lf(a^{-1+\alpha} \rt).
\end{align*}
Moreover, for $a \geq 1$, we can do integration by parts as many times as we want and get
$$
 \int_\RR e^{i a \omega} \mathcal{O}(\lla \omega \rra^{-\alpha}) d\omega  = O( \lla a \rra ^{-2}).
$$
This gives the stated bound.
\end{proof}

We also have two results for the non-oscillatory case.

\begin{lemma}\label{lemma:rho1}
We have
$$
\lf| \rho^{-n} \int_\RR \lf[ 1- \chi(\rho\lla \omega \rra) \rt] e^{i a\omega} \mathcal{O}(\lla \omega \rra ^{-n-1}) d\omega \rt| \lesssim \lla a \rra ^{-2},
$$
for all $n \geq 1$, $\rho \in (0,1)$, and $a\in \RR$.
\end{lemma}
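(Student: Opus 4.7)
The plan is to split the estimate into the regimes $|a|\le 1$ and $|a|>1$, handling the latter via two integrations by parts against $e^{ia\omega}$. Note first that the support of $1-\chi(\rho\lla\omega\rra)$ is contained in $\{\lla\omega\rra\ge\delta_1\rho^{-1}\}$, so all estimates can be localized to this set.

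For $|a|\le 1$ one has $\lla a\rra^{-2}\simeq 1$, so it suffices to prove that the left-hand side is $O(1)$. Bounding the modulus of the integrand by $\mathbf{1}_{\lla\omega\rra\gtrsim\rho^{-1}}\lla\omega\rra^{-n-1}$ and integrating (which converges since $n\ge 1$) yields a quantity of size $\rho^{n}$, and the prefactor $\rho^{-n}$ absorbs this exactly.

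For $|a|>1$, set $g(\omega):=[1-\chi(\rho\lla\omega\rra)]\,\mathcal O(\lla\omega\rra^{-n-1})$. Writing $e^{ia\omega}=(ia)^{-2}\PD_\omega^{2}e^{ia\omega}$ and integrating by parts twice (boundary terms vanish by decay), the task reduces to estimating $\|\PD_\omega^{2}g\|_{L^{1}}$. By the Leibniz rule one needs separate bounds on $\PD_\omega^{j}\chi(\rho\lla\omega\rra)$ and on $\PD_\omega^{j}\mathcal O(\lla\omega\rra^{-n-1})$. The symbol factor obeys $\PD_\omega^{j}\mathcal O(\lla\omega\rra^{-n-1})=O(\lla\omega\rra^{-n-1-j})$ directly from the definition of symbol type. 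The decisive claim is that $\PD_\omega^{j}\chi(\rho\lla\omega\rra)=O(\lla\omega\rra^{-j})$ uniformly in $\rho$: the chain rule produces one factor of $\rho$ per differentiation, but for $j\ge 1$ these derivatives are supported on $\lla\omega\rra\simeq\rho^{-1}$, where $\rho\simeq\lla\omega\rra^{-1}$, and the auxiliary factors $\PD_\omega^{p}\lla\omega\rra$ are of size $\lla\omega\rra^{1-p}$. Combining these contributions gives
$$\big|\PD_\omega^{k}g(\omega)\big|\lesssim \mathbf{1}_{\lla\omega\rra\gtrsim\rho^{-1}}\lla\omega\rra^{-n-1-k},\qquad k=0,1,2,$$
whence $\|\PD_\omega^{2}g\|_{L^{1}}\lesssim\rho^{n+2}$. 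The twofold integration by parts then yields an estimate of $|a|^{-2}\rho^{n+2}$ for the integral, and multiplying by $\rho^{-n}$ produces $|a|^{-2}\rho^{2}\lesssim|a|^{-2}\simeq\lla a\rra^{-2}$.

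The only subtle point is the symbolic behaviour of the $\rho$-dependent cutoff: one must avoid losing negative powers of $\rho$ under differentiation. The crucial balance between $\rho$ and $\lla\omega\rra^{-1}$ on the support of $\chi^{(m)}$ for $m\ge 1$ is the analogue, at the level of cutoff estimates, of the scale matching already exploited in the construction of the fundamental system; beyond this, the argument is a standard stationary-phase style non-oscillatory computation.
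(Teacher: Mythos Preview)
Your proof is correct and follows essentially the same route as the paper: first establish the uniform $O(1)$ bound by exploiting that the cutoff localizes to $\lla\omega\rra\gtrsim\rho^{-1}$, then obtain the $\lla a\rra^{-2}$ decay via two integrations by parts. The paper packages the derivative control into its symbol-type formalism (writing the whole integrand as $\mathcal O(\rho^0)$ after a low/high frequency split and a rescaling), whereas you verify the needed bounds on $\PD_\omega^k g$ directly; the substantive content is the same.
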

\begin{proof}
We compute
\begin{align*}
& \rho^{-n} \int_\RR \lf[ 1- \chi(\rho\lla \omega \rra) \rt] \mathcal{O}(\lla \omega \rra ^{-n-1}) d\omega  \\
= & \rho^{-n} \int_\RR \chi(|\omega|) \lf[ 1- \chi(\rho\lla \omega \rra) \rt] \mathcal{O}(\lla \omega \rra ^{-n-1}) d\omega  +  \rho^{-n} \int_\RR \lf[1 - \chi(|\omega|)\rt] \lf[ 1- \chi(\rho |\omega|) \rt] \mathcal{O}(|\omega| ^{-n-1}) d\omega \\
= &  \int_\RR \chi(|\omega|) \lf[ 1- \chi(\rho\lla \omega \rra ) \rt] \mathcal{O}(\lla \omega \rra ^{-1}) d\omega  +   \int_\RR \lf[1 - \chi\lf(\lf|\frac{\omega}{\rho} \rt| \rt)\rt] \lf[ 1- \chi(|\omega |) \rt] \mathcal{O}(\rho^0| \omega | ^{-n-1}) d\omega   \\
= & \mathcal{O}(\rho^0).
\end{align*}
Then the statement follows from two integrations by parts.
\end{proof}

The next lemma is similar.
\begin{lemma}\label{lemma:rho2}
We have
$$
\lf| \rho^{-n}  \int_\RR  \lf[ 1- \chi(\rho\lla \omega \rra ) \rt] e^{i a \omega } \mathcal{O} ( \lla \omega \rra^{-n} ) d\omega \rt|  \lesssim  |a|^{-1} \lla a \rra^{-2},
$$
for $n \geq 2$, $\rho \in (0,1)$, and $a\in \RR \backslash \{0\}$.
\end{lemma}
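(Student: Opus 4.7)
The plan is to integrate by parts once in $\omega$, gaining a factor of $|a|^{-1}$, and then to bound the two resulting pieces separately. Since $n \geq 2$ the symbol $F(\omega) := [1-\chi(\rho\lla\omega\rra)]\mathcal{O}(\lla\omega\rra^{-n})$ decays at infinity so the boundary terms vanish, and one integration by parts produces
\[
\rho^{-n}\int_\RR e^{ia\omega}F(\omega)\, d\omega = \frac{-1}{ia}\bigl[K(\rho,a) - L(\rho,a)\bigr],
\]
where
\[
K(\rho,a) := \rho^{-n}\int_\RR e^{ia\omega}\chi'(\rho\lla\omega\rra)\,\rho\,\frac{\omega}{\lla\omega\rra}\mathcal{O}(\lla\omega\rra^{-n})\, d\omega
\]
comes from differentiating the cutoff, and
\[
L(\rho,a) := \rho^{-n}\int_\RR e^{ia\omega}[1-\chi(\rho\lla\omega\rra)]\mathcal{O}(\lla\omega\rra^{-n-1})\, d\omega
\]
comes from differentiating the symbol.

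The piece $L$ falls exactly under the hypotheses of Lemma \ref{lemma:rho1} (the order of the symbol is now $-n-1$), so $|L(\rho,a)| \lesssim \lla a\rra^{-2}$, and its contribution to the full integral is bounded by $|a|^{-1}\lla a\rra^{-2}$, as desired.

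The piece $K$ has compactly supported integrand: $\chi'(\rho\lla\omega\rra)$ is supported in $\delta_1 \leq \rho\lla\omega\rra \leq \delta_0$, which forces $\lla\omega\rra \sim \rho^{-1}$. On this set the integrand has magnitude $\lesssim \rho \cdot \rho^n = \rho^{n+1}$ and the support has measure $\lesssim \rho^{-1}$, giving trivially $|K(\rho,a)| \lesssim 1$. For $|a| \leq 1$ this already yields $|K|/|a| \lesssim 1/|a| \simeq |a|^{-1}\lla a\rra^{-2}$, since $\lla a\rra \simeq 1$ in that regime. For $|a| \geq 1$ I would iterate the integration by parts on $K$; compact support eliminates all boundary terms, and each additional $\omega$-derivative brings in a factor of $\rho$, either through the chain rule applied to $\chi^{(k)}(\rho\lla\omega\rra)$ or by lowering a symbol order, which gains $\lla\omega\rra^{-1} \sim \rho$ on the support. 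After two further iterations, $|K(\rho,a)| \lesssim (\rho/|a|)^2 \lesssim |a|^{-2}$, whence $|K|/|a| \lesssim |a|^{-3} \simeq |a|^{-1}\lla a\rra^{-2}$ for $|a| \geq 1$.

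The only delicate point is the bookkeeping: the singular prefactor $\rho^{-n}$ is exactly balanced by the smallness $\mathcal{O}(\lla\omega\rra^{-n}) = O(\rho^n)$ on the support of $\chi'$, which is what makes each extra integration by parts on $K$ save a factor of $\rho/|a|$ and not merely $1/|a|$. This self-similar scaling is crucial for the estimate to hold uniformly in $\rho \in (0,1)$.
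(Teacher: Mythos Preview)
Your proof is correct and follows essentially the same approach as the paper: one integration by parts to gain the factor $|a|^{-1}$, then the symbol-derivative term $L$ is handled by Lemma~\ref{lemma:rho1}, while the cutoff-derivative term $K$ is bounded uniformly (using $\lla\omega\rra\sim\rho^{-1}$ on its support) and improved for large $|a|$ by further integrations by parts. The paper compresses this by noting that both $I_1$ and $I_2$ are $\mathcal{O}(\rho^0)$ via the computation of Lemma~\ref{lemma:rho1} and then integrating by parts twice more when $a\geq 1$; your version makes the bookkeeping on the $K$ piece more explicit, but the argument is the same.
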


\begin{proof}
Without loss of generality we assume $a>0$. Integrating by parts once we get
\begin{align*}
&  \rho^{-n}  \int_\RR  \lf[ 1- \chi(\rho\lla \omega \rra ) \rt] e^{i a \omega} \mathcal{O} ( \lla \omega \rra^{-n}) ) d\omega  \\
 = & a^{-1} \rho^{-n} \int_\RR  \lf[ 1- \chi(\rho\lla \omega \rra ) \rt] e^{i a \omega}  \mathcal{O} ( \lla \omega \rra^{-n-1})  d\omega  + a^{-1}  \rho^{-n+1} \int_\RR  \chi'(\rho\lla \omega \rra )e^{i a \omega}  \mathcal{O} ( \lla \omega \rra^{-n} ) d\omega \\
= & a^{-1}  ( I_1(\rho,a) + I_2(\rho,a)), 
\end{align*}
where both $I_1$ and $I_2$ are $\mathcal{O}(\rho^0)$ by a similar computation as in Lemma \ref{lemma:rho1}. Then, when $a \geq 1$, we integrate by parts twice more to obtain the statement.
\end{proof}

\subsection{Kernel estimates}

Our goal is now to estimate $\|T_n(\tau) f\|_{L^5(\BB^5)}$. Applying Minkowski's inequality we get
\begin{align*}
\|T_n(\tau) f\|_{L^5(\BB^5)}  \cong & \lf\Vert \int_0^1 f(s) \int_\RR e^{i \omega \tau} G_n(\rho ,s ; i \omega) d\omega  ds \rt\Vert_{L^5_\rho(\BB^5)}  \\ 
\leq &\int_0^1  |f(s)| \lf\Vert \int_\RR e^{i \omega \tau} G_n(\rho ,s ; i \omega) d\omega \rt\Vert_{L^5_\rho (\BB^5)} ds.
\end{align*}

We have the following result.
\begin{prop}\label{prop:kernels} We have the bounds
$$
 \lf\Vert  \int_\RR e^{i \omega \tau} G_n(\rho ,s ; \omega) d\omega \rt\Vert _{L^5_\rho (\BB^5)} \lesssim s^2 (1-s)^{-\frac{1}{2}}\lf|\tau+\log(1-s)\rt|^{-\frac{1}{10}}\lla \tau + \log(1-s) \rra^{-1} 
$$
for all $\tau \geq 0$, $s\in (0,1)$, and $n \in \{1,2,\ldots, 6\}$.
\end{prop}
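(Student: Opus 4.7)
The plan is to prove the estimate case-by-case for $n=1,\ldots,6$, using the explicit decomposition of $G_n$ from Lemma 3.15, the reductions \eqref{eqn:1red}--\eqref{eqn:2red} of $\varphi_0$, and the oscillatory integral lemmas just established.

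First I would extract the purely oscillatory phase from each $G_n$. Setting $\epsilon=0$, the factor $(1-s^2)^{-\frac{1}{2}+i\omega}$ combined with $(1\pm\rho)^{-i\omega}$ coming from $\varphi_1(\rho;\lambda)$ or $\tilde\varphi_1(\rho;\lambda)$, with $(1+s)^{-i\omega}$ from $\varphi_1(s;\lambda)$, and (when \eqref{eqn:2red} is used for $\varphi_0$) with $(1+\rho t_1t_2)^{-i\omega}$, produces an overall phase $e^{i\omega a}$ with
\[
a = \tau + \log(1-s) + O_\rho(1),
\]
where the remainder is a smooth bounded function of $\rho$ (and possibly of auxiliary parameters $t_1,t_2$). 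This is precisely how the shift $\tau+\log(1-s)$ enters the final bound.

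Next I would exploit two different cancellations that turn the remaining $\omega$-integrand into an integrable symbol. \textbf{Case (i):} For $G_1$ and $G_4$, where $\varphi_0$ is present, the representation \eqref{eqn:2red} supplies a prefactor $(3-2\lambda)(-1+2\lambda)(1+2\lambda)$ that cancels the denominator $(3-2\lambda)(1+2\lambda)(1-2\lambda)$. What remains is a bounded double integral in $(t_1,t_2)$ multiplied by $\gamma_n$, which is of order $\lla\omega\rra^{-1}$ or $\lla\omega\rra^{-2}$. \textbf{Case (ii):} For $G_2,G_3,G_5,G_6$ the denominator already gives $\lla\omega\rra^{-3}$ decay, but each factor $2+\rho(\pm1\mp2\lambda)$ contributes growth $\lla\omega\rra$. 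I would split each such factor into a piece contributing $\rho^{-3}$ with no $\lambda$-growth and a piece of size $\rho\lla\omega\rra$ contributing $\rho^{-2}\lla\omega\rra$. On the support of $1-\chi(\rho\lla\omega\rra)$ (or $1-\chi(s\lla\omega\rra)$) the bound $\rho\lla\omega\rra\gtrsim1$ lets me trade each surplus $\lla\omega\rra$ for $\rho^{-1}$ (or $s^{-1}$), yielding a symbol of net decay $\lla\omega\rra^{-2}\gamma_n$ with $\rho$- and $s$-weights no worse than $\rho^{-2}s^{-2}$.

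With the $\omega$-integrand written as an integrable symbol times $e^{i\omega a}$, I would apply Lemma \ref{lemma:o} to the part coming from the leading $\gamma_n$ contribution, Lemma \ref{lemma:a} to any residual $\mathcal{O}(\lla\omega\rra^{-\alpha})$ terms (e.g.\ from the $a_1$-correction), and Lemmas \ref{lemma:rho1}--\ref{lemma:rho2} on the cutoff region where the $1-\chi$ factor appears. Each case delivers a pointwise estimate of the form
\[
\lf|\int_\RR e^{i\omega\tau}G_n(\rho,s;\omega)d\omega\rt|\lesssim s^{\alpha_n}(1-s)^{-\frac{1}{2}}\,|a|^{-\beta}\lla a\rra^{-2}\cdot 1_{R_n}(\rho,s),
\]
with $R_n\in\{\{\rho\le s\},\{\rho\ge s\}\}$ according to $n$, $\alpha_n$ the leading $s$-singularity (from $\varphi_1(s)\sim s^{-3}$ or from $\varphi_0(s)=O(s^0)$), and $\beta$ a small nonnegative exponent.

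Finally I would take the $L^5_\rho$-norm. The radial measure $\rho^4d\rho$ together with the indicator $1_{R_n}$ produces the missing $s$-gain after the fifth root (for $G_1,G_2,G_3$ this uses $\int_0^s\rho^4d\rho=s^5/5$; for $G_4,G_5,G_6$ the factor $\varphi_1(\rho)\sim\rho^{-3}$ on $\rho\ge s$ yields the analogous gain). A change of variables in the logarithmic argument $a=\tau+\log(1-s)+O_\rho(1)$ converts $\int|a|^{-5\beta}\lla a\rra^{-10}\rho^4d\rho$ into $\lesssim|\tau+\log(1-s)|^{1-5\beta}\lla\tau+\log(1-s)\rra^{-5}$, and the fifth root delivers the target $|\tau+\log(1-s)|^{-1/10}\lla\tau+\log(1-s)\rra^{-1}$. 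The hardest step, as foreshadowed in the introduction, is Case (ii): the $\lla\omega\rra$-growth in $\varphi_1(\rho;\lambda)$ and $\tilde\varphi_1(\rho;\lambda)$ threatens to overwhelm the decay from the denominator, and the careful splitting into a $\rho^{-3}$-piece with no $\lambda$-growth and a $\rho^{-2}$-piece whose $\lla\omega\rra$-growth is absorbed by the $\rho\lla\omega\rra\gtrsim 1$ support is where the bookkeeping becomes most delicate.
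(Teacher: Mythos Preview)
Your high-level strategy is right, but there is a genuine gap in how you handle $G_1$, $G_2$, $G_3$---precisely the cases where the exponent $-\tfrac{1}{10}$ is born.

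For $G_1$ you claim that after applying \eqref{eqn:2red} to $\varphi_0(\rho;\lambda)$ the remaining integrand is just $\gamma_1=\mathcal{O}(\langle\omega\rangle^{-1})$. This overlooks the factor $\varphi_1(s;\lambda)=s^{-3}(1+s)^{1/2-\lambda}(2+s(-1+2\lambda))$, whose second term carries an extra $\langle\omega\rangle$. After \eqref{eqn:2red} cancels the cubic denominator, the product $s(-1+2\lambda)\gamma_1$ is only $\mathcal{O}(\langle\omega\rangle^0)$ and not integrable; nor is it odd, so Lemma~\ref{lemma:o} does not help. The paper splits $\varphi_1(s;\lambda)$ into its ``$2$'' and ``$s(-1+2\lambda)$'' pieces. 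For the second piece (the term $I_2$) only one denominator factor cancels, and the paper does \emph{not} use \eqref{eqn:2red}; instead it writes $\varphi_0(\rho;i\omega)=\rho^{-3}g(\rho,\omega)$ where, on the support of $\chi(\rho\langle\omega\rangle)$, the function $g$ vanishes to third order at $\rho=0$ and is thus $\mathcal{O}(\rho^3\langle\omega\rangle^3)=\mathcal{O}(\rho^{21/10}\langle\omega\rangle^{21/10})$. This leaves a symbol $\mathcal{O}(\langle\omega\rangle^{-9/10})$, and Lemma~\ref{lemma:a} with $\alpha=\tfrac{9}{10}$ produces $\rho^{-9/10}|a|^{-1/10}\langle a\rangle^{-2}$. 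This Taylor-expansion interpolation is the mechanism generating the fractional exponent, and it is absent from your sketch.

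For $G_2$ and $G_3$ your trading argument only delivers, via Lemma~\ref{lemma:rho1}, the bound
\[
\rho^{-2}\int_{\RR}[1-\chi(\rho\langle\omega\rangle)]e^{ia\omega}\mathcal{O}(\langle\omega\rangle^{-2})\,d\omega
=O\bigl(\rho^{-1}\langle a\rangle^{-2}\bigr).
\]
But $\|1_{\{\rho\le s\}}\rho^{-1}\|_{L^5_\rho(\BB^5)}^5=\int_0^s\rho^{-1}\,d\rho$ diverges, so this does not close. The paper obtains a second bound $\lesssim|a|^{-1}\langle a\rangle^{-2}$ from Lemma~\ref{lemma:rho2} and \emph{interpolates} the two to reach $\rho^{-9/10}|a|^{-1/10}\langle a\rangle^{-2}$. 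After that one must still estimate
\[
\int_0^s\rho^{-1/2}\bigl|\tau+\log(1-s)-\log(1\pm\rho)\bigr|^{-1/2}\,d\rho
\lesssim |\tau+\log(1-s)|^{-1/2}+1
\]
by an explicit substitution; this is not the generic ``change of variables'' you invoke, and its fifth root is exactly what produces $|\tau+\log(1-s)|^{-1/10}$. By contrast, for $G_4,G_5,G_6$ the paper obtains the stronger bound $\langle\tau+\log(1-s)\rangle^{-2}$ with no singular factor, so the delicate work is entirely concentrated in the three cases above.
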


\begin{proof}
For $G_1$ we have 
\begin{align*}
& \int_\RR e^{i \omega \tau} G_1(\rho ,s ; \omega) d\omega  \\
= & \int_\RR  \chi(\rho\lla \omega \rra) e^{i \omega \tau} 1_{\RR_+}(s-\rho) \frac{ s(1-s)^{-\frac{1}{2}+i \omega}(2+s(-1+2 i \omega))  \varphi_0(\rho;i \omega)}{(3-2i\omega)(1+2i\omega)(1-2i\omega)} \gamma_1(\rho,s;i\omega)  d\omega \\
= & 2  \cdot 1_{\RR_+}(s-\rho)s(1-s)^{-\frac{1}{2}} \int_\RR \chi(\rho\lla \omega \rra) \frac{e^{i \omega (\tau+\log(1-s))} \varphi_0(\rho;i \omega)}{(3-2i\omega)(1+2i\omega)(1-2i\omega)} \gamma_1(\rho,s;i\omega) d\omega  \\
& -  1_{\RR_+}(s-\rho)  s^2 (1-s)^{-\frac{1}{2}} \int_\RR \chi(\rho\lla \omega \rra) \frac{e^{i \omega (\tau+\log(1-s))}   \varphi_0(\rho;i \omega)}{(3-2i\omega)(1+2i\omega)} \gamma_1(\rho,s;i\omega) d\omega \\
=: & 2 I_1(\rho,s,\tau) + I_2 (\rho,s,\tau) .
\end{align*}
For $I_1$ we use \eqref{eqn:2red}, then we have
\begin{align*}
& \lf| \int_\RR  \chi(\rho\lla \omega \rra) \frac{e^{i \omega (\tau+\log(1-s))} \varphi_0(\rho;i \omega)}{(3-2i\omega)(1+2i\omega)(1-2i\omega)} \gamma_1(\rho,s;i\omega) d\omega \rt| \\
\cong & \lf| \int_\RR  \chi(\rho\lla \omega \rra) e^{i \omega (\tau+\log(1-s))} \lf( \int_{-1}^1 \int_0^1 (1+\rho t_1 t_2)^{-\frac{3}{2}-i \omega} t_1^2 dt_2  dt_1 \rt)  \gamma_1(\rho,s;i\omega) d\omega \rt| \\
= & \lf| \int_{-1}^1 \int_0^1 (1+\rho t_1 t_2)^{-\frac{3}{2}} t_1^2 \lf( \int_\RR  \chi(\rho\lla \omega \rra) e^{i \omega (\tau+\log(1-s) + \log(1+\rho t_1 t_2))} \gamma_1(\rho,s;i\omega)  d\omega \rt)  dt_2  dt_1 \rt| \\
\leq &  \sup_{t_1\in(-1,1),t_2\in(0,1)} \lf|  \int_\RR\chi(\rho\lla \omega \rra) e^{i \omega (\tau+\log(1-s) + \log(1+\rho t_1 t_2))} \gamma_1(\rho,s;i\omega)  d\omega \rt| .
\end{align*}
The order of integrations can be changed because $\rho \in (0,1)$ is fixed. Moreover, thanks to the cut-off $\chi(\rho\lla \omega \rra)$, we have $|(1+\rho t_1 t_2)^{-\frac{3}{2}}| \lesssim 1$ and $|\log(1+\rho t_1 t_2) | \lesssim 1$ for all $t_1, t_2$ in the respective domains. So we can use Lemma \ref{lemma:o} and get
\begin{align*}
& \lf| \int_\RR \chi(\rho\lla \omega \rra) e^{i \omega (\tau+\log(1-s) + \log(1+\rho t_1 t_2))} \gamma_1(\rho,s;i\omega)  d\omega \rt| \\
 \lesssim  & \lla \tau+\log(1-s) + \log(1+\rho t_1 t_2) \rra ^{-2}  \lesssim  \lla \tau+\log(1-s) \rra ^{-2}.
\end{align*}
Plugging this back in we have
\begin{equation}\label{eqn:I1est}
|I_1(\rho,s,\tau)| \lesssim  1_{\RR_+}(s-\rho)s(1-s)^{-\frac{1}{2}} \lla \tau+\log(1-s) \rra ^{-2} , 
\end{equation}
so
\begin{align*}
\|I_1(\cdot,s,\tau)\|_{L^5(\BB^5)} & \lesssim  \lf( \int_0^1 \lf| 1_{\RR_+}(s-\rho)s(1-s)^{-\frac{1}{2}} \lla \tau+\log(1-s) \rra ^{-2}  \rt|^5 \rho^4 d\rho  \rt)^\frac{1}{5}   \\
& = s(1-s)^{-\frac{1}{2}} \lla \tau+\log(1-s) \rra ^{-2}  \lf( \int_0^s  \rho^4 d\rho \rt)^\frac{1}{5} \\
& \cong s^2 (1-s)^{-\frac{1}{2}} \lla \tau+\log(1-s) \rra ^{-2}   .
\end{align*}
For $I_2$ we directly use that 
\begin{align*}
I_2(\rho,s,\tau) = &  1_{\RR_+}(s-\rho)  s^2 (1-s)^{-\frac{1}{2}} \int_\RR e^{i \omega (\tau+\log(1-s))}   \varphi_0(\rho;i \omega) \\
& \times \mathcal{O}(\rho^0 (1-\rho)^0 s^0 (1-s)^0 \lla \omega \rra ^{-3}) \chi(\rho\lla \omega \rra) d\omega,
\end{align*}
and we look at the integral kernel,
\begin{align*}
& \int_\RR e^{i \omega (\tau+\log(1-s))}  \chi(\rho\lla \omega \rra) \mathcal{O}(\rho^0  (1-\rho)^0 s^0 (1-s)^0 \lla \omega \rra ^{-3})  \varphi_0(\rho;i \omega) d\omega  \\
= & \int_\RR e^{i \omega (\tau+\log(1-s) - \log(1+\rho))}  \chi(\rho\lla \omega \rra) \mathcal{O}(\rho^0  (1-\rho)^0 s^0 (1-s)^0\lla \omega \rra ^{-3})  \\
& \times \frac{1}{\rho^3} \lf[ (1+\rho)^{\frac{1}{2}}(2+\rho(-1+2i \omega )) - e^{iw(\log(1+\rho)- \log(1-\rho))}(1-\rho)^{\frac{1}{2}}(2+\rho(1-2i \omega )) \rt] d\omega .
\end{align*}
Since on the support of the cut-off $\chi(\rho\lla \omega \rra)$, we have
$ e^{iw(\log(1+\rho)- \log(1-\rho))} = 1+\mathcal{O}(\omega \rho)$, the function
$$
g(\rho,\omega) := (1+\rho)^{\frac{1}{2}}(2+\rho(-1+2i \omega )) - e^{iw(\log(1+\rho)- \log(1-\rho))}(1-\rho)^{\frac{1}{2}}(2+\rho(1-2i \omega ))
$$
is of symbol type. Furthermore, we have 
$$
g(0,\omega) = \PD_\rho g(\rho,\omega)|_{\rho=0} = \PD^2_\rho g(\rho,\omega)|_{\rho=0} = 0 \text{ for all } \omega \in \RR,
$$ 
so by Cauchy remainder we have
$$
g(\rho, \omega) = \frac{\rho^3}{2!} \int_0^1 \PD^3_1 g(\rho t,\omega) (1-t)^2 dt,
$$
where $\PD_1$ is the derivative with respect to the first variable. Since $|\PD_1^3g(\rho t, \omega)| \lesssim \lla \omega \rra ^3$ on the support of $\chi(\rho\lla \omega \rra)$, we obtain $|g(\rho,\omega)| \lesssim \rho^3 \lla \omega \rra^3$. Hence $g(\rho,\omega) = \mathcal{O}(\rho^3 \lla \omega \rra ^3)$, and on the support of $\chi(\rho\lla \omega \rra)$, we also have $g(\rho,\omega) = \mathcal{O} (\rho^{\frac{21}{10}} \lla \omega \rra ^{\frac{21}{10}}) $. Thus Lemma \ref{lemma:a} gives the estimate
\begin{align*}
& \lf| \int_\RR e^{i \omega (\tau+\log(1-s) )} \chi(\rho\lla \omega \rra) \mathcal{O}(\rho^0  (1-\rho)^0 s^0 (1-s)^0 \lla \omega \rra ^{-3})  \varphi_0(\rho;i \omega) d\omega \rt|  \\
= &  \lf| \rho^{-\frac{9}{10}} \int_\RR e^{i \omega (\tau+\log(1-s) - \log(1+\rho))}  \chi(\rho\lla \omega \rra)  \mathcal{O} (\rho^0 \lla \omega \rra ^{-\frac{9}{10}}) d\omega \rt| \\
\lesssim & \rho^{-\frac{9}{10}} |\tau+\log(1-s) - \log(1+\rho)|^{-\frac{1}{10}} \lla \tau+\log(1-s) - \log(1+\rho) \rra ^{-2} \\
\leq & \rho^{-\frac{9}{10}} |\tau+\log(1-s) - \log(1+\rho)|^{-\frac{1}{10}} \lla \tau+\log(1-s) \rra ^{-2},
\end{align*}
so
\begin{equation}\label{eqn:I2est}
\begin{aligned}
& |I_2(\rho,s,\tau)| \\
 \lesssim & 1_{\RR_+} (s-\rho) s^2(1-s)^{-\frac{1}{2}} \rho^{-\frac{9}{10}} |\tau+\log(1-s) - \log(1+\rho)|^{-\frac{1}{10}} \lla \tau+\log(1-s) \rra ^{-2}.
\end{aligned}
\end{equation}
Then we have
\begin{align*}
& \|I_2(\cdot,s,\tau)\|_{L^5(\BB^5)} \\
\lesssim  & s^2(1-s)^{-\frac{1}{2}}  \lla \tau+\log(1-s) \rra ^{-2}  \lf(\int_0^s \lf(  \rho^{-\frac{9}{10}} |\tau+\log(1-s) - \log(1+\rho)|^{-\frac{1}{10}} \rt)^5 \rho^4 d\rho \rt)^{\frac{1}{5}} \\
\leq  & s^2(1-s)^{-\frac{1}{2}}  \lla \tau+\log(1-s) \rra ^{-2} \lf(\int_0^s \rho^{-\frac{1}{2}}  |\tau+\log(1-s) - \log(1+\rho)|^{-\frac{1}{2}}  d\rho \rt)^{\frac{1}{5}} .
\end{align*}
We estimate
\begin{align*}
\lf| \int_0^1 \rho^{-\frac{1}{2}} \lf| x-\log(1+\rho) \rt|^{-\frac{1}{2}} d\rho \rt| \leq & \int_0^{\frac{\log(2)}{x}} \lf( e^{xy} - 1 \rt)^{-\frac{1}{2}} | x - xy| ^{-\frac{1}{2}} x e^{xy} dy\\
= & x |x|^{-\frac{1}{2}} \int_0^{\frac{\log(2)}{x}}  \lf( e^{xy} - 1 \rt)^{-\frac{1}{2}}| 1 - y| ^{-\frac{1}{2}} e^{xy} dy \\
\leq & x |x|^{-\frac{1}{2}} \int_0^{\frac{1}{x}}  (x y)^{-\frac{1}{2}}| 1 - y | ^{-\frac{1}{2}} e^{xy} dy \\
\lesssim & \int_0^{\frac{1}{|x|}} |y|^{-\frac{1}{2}}  | 1 - y| ^{-\frac{1}{2}} dy \\
\lesssim & \Big| \log|x| \Big| + 1 \lesssim |x|^{-\frac{1}{2}} + 1.
\end{align*}
Since $s\leq 1$,
$$
\lf| \int_0^s \rho^{-\frac{1}{2}}  |\tau+\log(1-s) - \log(1+\rho)|^{-\frac{1}{2}}  d\rho \rt| \lesssim  |\tau + \log(1-s) |^{-\frac{1}{2}} +1.
$$
With this, we obtain the bound
\begin{align*}
\|I_2(\cdot,s,\tau)\|_{L^5(\BB^5)}  \lesssim &  s^2(1-s)^{-\frac{1}{2}} \lf(  |\tau + \log(1-s) |^{-\frac{1}{2}} +1 \rt)^{\frac{1}{5}}   \lla \tau+\log(1-s) \rra ^{-2}  \\
\lesssim & s^2(1-s)^{-\frac{1}{2}}   |\tau + \log(1-s) |^{-\frac{1}{10}}  \lla \tau+\log(1-s) \rra ^{-1}  .
\end{align*}

Next, for $G_2$ we have 
\begin{align*}
& \int_\RR e^{i \omega \tau} G_2(\rho ,s ; \omega) d\omega   \\
= & 2 \cdot 1_{\RR_+}(s-\rho)s(1-s)^{-\frac{1}{2}} \int_\RR  \lf[ 1- \chi(\rho\lla \omega \rra) \rt] \frac{e^{i \omega (\tau+\log(1-s))} \varphi_1(\rho;i \omega)}{(3-2i\omega)(1+2i\omega)(1-2i\omega)} \gamma_2(\rho,s;i\omega)  d\omega  \\
 & - 1_{\RR_+}(s-\rho)  s^2 (1-s)^{-\frac{1}{2}} \int_\RR \lf[ 1- \chi(\rho\lla \omega \rra) \rt] \frac{e^{i \omega (\tau+\log(1-s))}   \varphi_1(\rho;i \omega)}{(3-2i\omega)(1+2i\omega)} \gamma_2(\rho,s;i\omega) d\omega  \\
=: &2 I_1(\rho,s,\tau) + I_2 (\rho, s,\tau) .
\end{align*}
In $I_1$ we look at
\begin{align*}
& \int_\RR  \lf[ 1- \chi(\rho\lla \omega \rra) \rt] \frac{e^{i \omega (\tau+\log(1-s))} \varphi_1(\rho;i \omega)}{(3-2i\omega)(1+2i\omega)(1-2i\omega)} \gamma_2(\rho,s;i\omega)  d\omega \\
= & \int_\RR  \lf[ 1- \chi(\rho\lla \omega \rra) \rt]  e^{i \omega (\tau+\log(1-s))} \varphi_1(\rho;i \omega)\mathcal{O}(\rho^0 (1-\rho)^0 s^0 (1-s)^0 \lla \omega \rra^{-4})  d\omega\\
=& (1+\rho)^{\frac{1}{2}} \rho^{-2}   \int_\RR  \lf[ 1- \chi(\rho\lla \omega \rra) \rt]  e^{i \omega (\tau+\log(1-s) - \log(1+\rho))}   \mathcal{O}(\rho^0 (1-\rho)^0 s^0 (1-s)^0 \lla \omega \rra^{-3}) d\omega 
\end{align*}
Using Lemma \ref{lemma:rho1} we get
\begin{align*}
&  \rho^{-2} \int_\RR  \lf[ 1- \chi(\rho\lla \omega \rra) \rt]  e^{i \omega (\tau+\log(1-s) - \log(1+\rho))}   \mathcal{O}(\rho^0 (1-\rho)^0 s^0 (1-s)^0 \lla \omega \rra^{-3}) d\omega \\
= & O( \lla \tau+\log(1-s) - \log(1+\rho) \rra ^{-2} ).
\end{align*}
Then since $\log(1+\rho)$ is bounded for all $\rho \in [0,1]$, we get
\begin{align*}
| I_1(\rho,s,\tau)| \lesssim & 1_{\RR_+}(s-\rho) s (1-s)^{-\frac{1}{2}} \lla \tau+\log(1-s) - \log(1+\rho) \rra ^{-2}  \\
\lesssim &  1_{\RR_+}(s-\rho) s (1-s)^{-\frac{1}{2}} \lla \tau+\log(1-s)\rra ^{-2},
\end{align*}
which is the same as \eqref{eqn:I1est} and the rest follows the same way. In $I_2$ the expression becomes 
\begin{align*}
I_2(\rho,s,\tau) = & 1_{\RR_+}(s-\rho) s^2 (1-s)^{-\frac{1}{2}} (1+\rho)^{\frac{1}{2}} \rho^{-2} \int_\RR  \lf[ 1- \chi(\rho\lla \omega \rra ) \rt] e^{i\omega (\tau + \log(1-s) - \log(1+\rho))} \\
& \times  \mathcal{O}(\rho^0 (1-\rho)^0 s^0 (1-s)^0 \lla \omega \rra^{-2}) d\omega .
\end{align*}
Here we first use Lemma \ref{lemma:rho1} to estimate
\begin{align}
& \rho^{-2} \lf|\int_\RR  \lf[ 1- \chi(\rho\lla \omega \rra ) \rt] e^{i\omega (\tau + \log(1-s) - \log(1+\rho))}  \mathcal{O} \lf( \lla \omega \rra^{-2}) \rt) d\omega \rt| \nonumber \\
 \lesssim &  \rho^{-1}  \lla \tau+\log(1-s) - \log(1+\rho) \rra ^{-2} \lesssim \rho^{-1}  \lla \tau+\log(1-s)  \rra ^{-2}. \label{eqn:G2I2est}
\end{align}
On the other hand, using Lemma \ref{lemma:rho2} we have
\begin{align*}
& \rho^{-2} \lf|\int_\RR  \lf[ 1- \chi(\rho\lla \omega \rra ) \rt] e^{i\omega (\tau + \log(1-s) - \log(1+\rho))}  \mathcal{O} \lf( \lla \omega \rra^{-2}) \rt) d\omega \rt| \\
 \lesssim &   \lf| \tau + \log(1-s) - \log(1+\rho) \rt|^{-1} \lla \tau+\log(1-s) - \log(1+\rho) \rra ^{-2} \\
  \lesssim &   \lf| \tau + \log(1-s) - \log(1+\rho) \rt|^{-1} \lla \tau+\log(1-s) \rra ^{-2}.
\end{align*}
We can then interpolate the two bounds and get
\begin{align*}
 \rho^{-2} \lf|\int_\RR  \lf[ 1- \chi(\rho\lla \omega \rra ) \rt] e^{i\omega (\tau + \log(1-s) - \log(1+\rho))} \mathcal{O} \lf( \lla \omega \rra^{-2}) \rt) d\omega \rt| \\
 \lesssim \rho^{-\frac{9}{10}}  \lf| \tau + \log(1-s) - \log(1+\rho) \rt|^{-\frac{1}{10}} \lla \tau+\log(1-s) \rra ^{-2}.
\end{align*}
Now 
\begin{align*}
 & | I_2(\rho, s, \tau) | \\
 \lesssim & 1_{\RR_+}(s-\rho) s^2 (1-s)^{-\frac{1}{2}} (1+\rho)^{\frac{1}{2}}   \rho^{-\frac{9}{10}}  \lf| \tau + \log(1-s) - \log(1+\rho) \rt|^{-\frac{1}{10}} \lla \tau+\log(1-s) \rra ^{-2}  \\
 \lesssim &  1_{\RR_+}(s-\rho) s^2 (1-s)^{-\frac{1}{2}}  \rho^{-\frac{9}{10}}  \lf| \tau + \log(1-s) - \log(1+\rho) \rt|^{-\frac{1}{10}} \lla \tau+\log(1-s) \rra ^{-2}  ,
\end{align*}
which is the same as \eqref{eqn:I2est}, and the rest follows the same way.

Similar to $G_2$, in $G_3$ we have to bound the two terms
\begin{align*}
I_1(\rho,s,\tau)  =&  1_{\RR_+}(s-\rho)s(1-s)^{-\frac{1}{2}} (1-\rho)^{\frac{1}{2}} \rho^{-2} \int_\RR  \lf[ 1- \chi(\rho\lla \omega \rra) \rt]  e^{i \omega (\tau+\log(1-s) - \log(1-\rho))}  \\
& \times  \mathcal{O}(\rho^0 (1-\rho)^0 s^0 (1-s)^0 \lla \omega \rra^{-3}) d\omega  
\end{align*}
and
\begin{align*}
I_2(\rho,s,\tau) = &  1_{\RR_+}(s-\rho) s^2 (1-s)^{-\frac{1}{2}} (1-\rho)^{\frac{1}{2}} \rho^{-2} \int_\RR  \lf[ 1- \chi(\rho\lla \omega \rra ) \rt] e^{i\omega (\tau + \log(1-s) - \log(1-\rho))} \\
& \times  \mathcal{O}(\rho^0 (1-\rho)^0 s^0 (1-s)^0 \lla \omega \rra^{-2}) d\omega.
\end{align*}
For $I_1$, from Lemma \ref{lemma:rho1} we have the bound
\begin{align*}
& (1-\rho)^{\frac{1}{2}} \rho^{-2} \int_\RR  \lf[ 1- \chi(\rho\lla \omega \rra ) \rt] e^{i\omega (\tau + \log(1-s) - \log(1-\rho))}   \mathcal{O}(\rho^0 (1-\rho)^0 s^0 (1-s)^0 \lla \omega \rra^{-2}) d\omega  \\
= & (1-\rho)^{\frac{1}{2}} O( \lla \tau + \log(1-s) - \log(1-\rho) \rra ^{-2}).
\end{align*}
We then use
\begin{equation}\label{eqn:T3est}
(1-\rho)^{\frac{1}{2}} \lla \tau+\log(1-s) - \log(1-\rho)\rra ^{-2} \lesssim \lla \tau+\log(1-s)\rra ^{-2}
\end{equation}
and obtain the exact same estimate as for $I_1$ in the $G_2$ case. For $I_2$, similar to before, we interpolate the bounds from Lemma \ref{lemma:rho1} and \ref{lemma:rho2},
\begin{align*}
& (1-\rho)^{\frac{1}{2}} \rho^{-2} \lf| \int_\RR  \lf[ 1- \chi(\rho\lla \omega \rra ) \rt] e^{i\omega (\tau + \log(1-s) - \log(1-\rho))}  \mathcal{O}(\rho^0 (1-\rho)^0 s^0 (1-s)^0 \lla \omega \rra^{-2}) d\omega \rt| \\
\lesssim &  \rho^{-\frac{9}{10}} \lf| \tau + \log(1-s) - \log(1-\rho) \rt|^{-\frac{1}{10}} (1-\rho)^{\frac{1}{2}}\lla \tau+\log(1-s) - \log(1-\rho)\rra ^{-2} \\
\lesssim  & \rho^{-\frac{9}{10}} \lf| \tau + \log(1-s) - \log(1-\rho) \rt|^{-\frac{1}{10}} \lla \tau+\log(1-s)\rra ^{-2}.
\end{align*}
Hence
\begin{align*}
& \|I_2(\cdot,s,\tau) \|_{L^5(\BB^5)} \\
\lesssim & s^2 (1-s)^{-\frac{1}{2}} \lla \tau+\log(1-s)\rra ^{-2} \lf( \int_0^s  \rho^{-\frac{1}{2}} |\tau+\log(1-s)-\log(1-\rho) |^{-\frac{1}{2}} \rt)^{\frac{1}{5}}.
\end{align*}
We estimate
\begin{align*}
 \lf| \int_0^1 \rho^{\frac{1}{2}} \lf| x - \log(1-\rho) \rt| ^{-\frac{1}{2}} d\rho \rt| 
\leq & |x| \int_0^\infty (1-e^{-|x|y})^{-\frac{1}{2}} \Big| x + |x| y \Big|^{-\frac{1}{2}} e^{-|x|y} dy \\
\lesssim & |x|^{\frac{1}{2}} \int_0^2 (|x|y)^{-\frac{1}{2}} | 1 - y|^{-\frac{1}{2}} dy + |x|^{\frac{1}{2}} \int_2^\infty (|x|y)^{-1} | 1 - y|^{-\frac{1}{2}} dy \\
\lesssim & \int_0^2 y^{-\frac{1}{2}} | 1 - y|^{-\frac{1}{2}} dy + |x|^{-\frac{1}{2}} \int_2^\infty y^{-\frac{3}{2}} dy \\
\lesssim & |x|^{-\frac{1}{2}} + 1.
\end{align*}
Using this we have
\begin{align*}
\|I_2(\cdot, s, \tau)\|_{L^5(\BB^5)} \lesssim & s^2 (1-s)^{-\frac{1}{2}} \lf( |\tau+\log(1-s)| ^{-\frac{1}{2}} + 1  \rt)^\frac{1}{5}  \lla \tau +\log(1-s)\rra ^{-2}    \\
\lesssim & s^2 (1-s)^{-\frac{1}{2}} |\tau+\log(1-s)| ^{-\frac{1}{10}}  \lla \tau +\log(1-s)\rra ^{-1}. 
\end{align*}

For $G_4$ we have
$$
\int_\RR e^{i \omega \tau} G_4(\rho ,s ; \omega) d\omega = 2 I_1(\rho,s, \tau) + I_2(\rho,s, \tau),
$$
with 
\begin{align*}
I_1(\rho, s, \tau) := &  1_{\RR_+}(\rho-s) \frac{s^4}{\rho^{3}}(1-s^2)^{-\frac{1}{2}} (1 +\rho)^{\frac{1}{2}} \\
& \times \int_\RR \chi(s\lla \omega \rra) e^{i \omega(\tau + \log(1-s^2)-\log(1+\rho))}  \frac{ \varphi_0(s;i \omega)  \gamma_4(\rho,s;i \omega)}{(3-2i \omega)(1+2i \omega)(1-2i \omega)} d\omega 
\end{align*}
and
\begin{align*}
I_2(\rho, s, \tau) := &  1_{\RR_+}(\rho-s) \frac{s^4}{\rho^2}(1-s^2)^{-\frac{1}{2}} (1 +\rho)^{\frac{1}{2}} \\
& \times \int_\RR \chi(s\lla \omega \rra) e^{i \omega(\tau + \log(1-s^2)-\log(1+\rho))}  \frac{ \varphi_0(s;i \omega)  \gamma_4(\rho,s;i \omega)}{(3-2i \omega)(-1-2i \omega)} d\omega .
\end{align*}
In $I_1$ we use the expression \eqref{eqn:2red} for $\varphi_0(s; i \omega)$, and write
\begin{align*}
I_1(\rho, s, \tau) \cong &  1_{\RR_+}(\rho-s)\frac{s^4}{\rho^{3}}(1-s^2)^{-\frac{1}{2}} (1 +\rho)^{\frac{1}{2}}  \int_{-1}^1 \int_0^1 (1+s t_1 t_2)^{-\frac{3}{2}} t_1^2 \\
& \times \int_\RR \chi(s\lla \omega \rra)  e^{i \omega(\tau + \log(1-s^2)-\log(1+\rho) - \log(1+ s t_1 t_2))} \gamma_4(\rho,s;i \omega)d\omega dt_2 dt_1 ,
\end{align*}
where the order of integration in $t_1, t_2$ and $\omega$ can be changed because $s$ is fixed. Similar to the $G_1$ case, on the support of the cut-off $\chi(s\lla \omega \rra)$,  we have $|(1+s t_1 t_2)^{-\frac{3}{2}}| \lesssim 1$ and $|\log(1+s t_1 t_2)| \lesssim 1$ for all $t_1\in[-1,1]$ and $t_2\in[0,1]$, so
\begin{align*}
& \lf|\int_{-1}^1 \int_0^1 (1+s t_1 t_2)^{-\frac{3}{2}} t_1^2 \int_\RR e^{i \omega(\tau + \log(1-s^2)-\log(1+\rho) - \log(1+ s t_1 t_2))} \gamma_4(\rho,s;i \omega)d\omega dt_2 dt_1 \rt. \\
 & \lf. \qquad \qquad \times \lf[  \mathcal{O}(\rho^0 s^0) \mathcal{O}_0( \lla \omega \rra^{-1} ) + \mathcal{O}((1-\rho)^0 s^0 \rho^0 (1-s)^0 \lla \omega \rra^{-2}) \rt]\rt|  \\
 \lesssim & \sup_{t_1\in(-1,1), t_2\in(0,1)} \lla \tau + \log(1-s^2)-\log(1+\rho) - \log(1+ s t_1 t_2) \rra^{-2} \\
 \lesssim &  \lla \tau + \log(1-s) \rra^{-2}.
\end{align*}
Hence we obtain
\begin{align}
|I_1(\rho, s, \tau)| \lesssim &  1_{\RR_+}(\rho-s)\frac{s^4}{\rho^{3}}(1-s^2)^{-\frac{1}{2}} (1 +\rho)^{\frac{1}{2}}  \lla \tau + \log(1-s) \rra^{-2}  \nonumber \\
\lesssim &  1_{\RR_+}(\rho-s)\frac{s^3}{\rho^2}(1-s)^{-\frac{1}{2}}  \lla \tau + \log(1-s) \rra^{-2}. \label{eqn:T4est1}
\end{align}
The situation for $I_2$ is similar, we use \eqref{eqn:1red} and write
\begin{align*}
I_2(\rho, s, \tau) = & 1_{\RR_+}(\rho-s) \frac{s^3}{\rho^2}(1-s^2)^{-\frac{1}{2}} (1 +\rho)^{\frac{1}{2}} \int_{-1}^1 (1+s t)^{-\frac{1}{2}} t \\
& \times \int_\RR \chi(s\lla \omega \rra) e^{i \omega(\tau + \log(1-s^2)-\log(1+\rho)-\log(1+s t))}  \frac{1-2i\omega}{1+2i\omega}\gamma_4(\rho,s;i \omega) d\omega dt 
\end{align*}
Since
$$
\frac{1-2i\omega}{1+2i\omega} = \frac{1 - 4 i \omega - 4\omega^2}{1 - 4\omega^2} = \mathcal{O}_e(\lla \omega \rra^0) + \mathcal{O}_o(\lla \omega \rra^{-1}) + \mathcal{O}(\lla \omega \rra^{-2}),$$
we see that $\frac{1-2i\omega}{1+2i\omega}\gamma_4(\rho,s;i \omega)$ has the same form as $\gamma_4(\rho,s;i \omega)$. Hence by the same logic as the $I_1$ case, we obtain the same bound
$$
| I_2(\rho, s, \tau)| \lesssim 1_{\RR_+}(\rho-s) \frac{s^3}{\rho^2}(1-s)^{-\frac{1}{2}} \lla \tau + \log(1-s)\rra^{-2} ,
$$
so for $n=1,2$ we have
\begin{align*}
\|I_n(\cdot, s, \tau)\|_{L^5(\BB^5)} \lesssim &  s^3 (1-s)^{-\frac{1}{2}} \lla \tau + \log(1-s) \rra^{-2} \lf( \int_s^1 \rho^{-6}  d\rho \rt)^{\frac{1}{5}}\\
= & s^3 (1-s)^{-\frac{1}{2}} \lla \tau + \log(1-s) \rra^{-2}  \lf( -\frac{1}{5} + \frac{1}{5 s^{5}}  \rt)^{\frac{1}{5}} \\
\lesssim & s^2 (1-s)^{-\frac{1}{2}} \lla \tau + \log(1-s) \rra^{-2} .
\end{align*}

For $G_5$ we use that on the support of the cut-off $1-\chi(s\lla \omega \rra)$ we have $s^{-1} \lesssim \lla \omega\rra$, and write
\begin{align*}
G_5(\rho,s;i \omega) 
 = &   1_{\RR_+}(\rho-s)\lf[ 1- \chi(s\lla \omega \rra) \rt] s^4(1-s)^{-\frac{1}{2}+ i \omega} \varphi_1(\rho;\lambda) s^{-2}\mathcal{O}(\lla \omega \rra^{-3}) \\
 = & 2\cdot 1_{\RR_+}(\rho-s)\lf[ 1- \chi(s\lla \omega \rra) \rt]\frac{s^4}{\rho^3} (1-s)^{-\frac{1}{2}+ i \omega} (1+\rho)^{\frac{1}{2} - i \omega }   s^{-2}\mathcal{O}(\lla \omega \rra^{-3}) \\
 & +  1_{\RR_+}(\rho-s)\lf[ 1- \chi(s\lla \omega \rra) \rt]\frac{s^3}{\rho^2} (1-s)^{-\frac{1}{2}+ i \omega} (1+\rho)^{\frac{1}{2} - i \omega }  s^{-1}\mathcal{O}(\lla \omega \rra^{-2}).
 \end{align*}
So we need to bound the two expressions
\begin{align*}
 I_1(\rho, s, \tau) =&   1_{\RR_+}(\rho-s)\frac{s^4}{\rho^3} (1-s)^{-\frac{1}{2}}(1+\rho)^{\frac{1}{2} } \\
& \times s^{-2} \int_\RR \lf[ 1- \chi(s\lla \omega \rra) \rt] e^{i \omega (\tau + \log(1-s) - \log(1+\rho)) } \mathcal{O}(\lla \omega \rra^{-3}) d\omega  ,
\end{align*}
and
\begin{align*}
I_2(\rho, s, \tau)= & 1_{\RR_+}(\rho-s)\frac{s^3}{\rho^2} (1-s)^{-\frac{1}{2}}(1+\rho)^{\frac{1}{2} } \\
& \times s^{-1} \int_\RR \lf[ 1- \chi(s\lla \omega \rra) \rt] e^{i \omega (\tau + \log(1-s) - \log(1+\rho)) } \mathcal{O}(\lla \omega \rra^{-2})  d\omega. \\
\end{align*}
Now we apply Lemma \ref{lemma:rho1}. Since $\log(1+\rho)$ is bounded, we obtain the bounds
\begin{align*}
| I_1(\rho, s, \tau) | \lesssim & 1_{\RR_+}(\rho-s)\frac{s^4}{\rho^3} (1-s)^{-\frac{1}{2}} \lla \tau + \log(1-s) \rra^{-2} \\
\leq &1_{\RR_+}(\rho-s)\frac{s^3}{\rho^2} (1-s)^{-\frac{1}{2}} \lla \tau + \log(1-s) \rra^{-2} ,
\end{align*}
and
$$
| I_2(\rho, s, \tau) | \lesssim   1_{\RR_+}(\rho-s)\frac{s^3}{\rho^2} (1-s)^{-\frac{1}{2}} \lla \tau + \log(1-s) \rra^{-2} .
$$
These bounds are the same as \eqref{eqn:T4est1}.

The last term, $G_6$, is similar to $G_5$. We look at
\begin{align*}
I_1 (\tau)f(\rho) = &  1_{\RR_+} (\rho-s)  \frac{s^4}{\rho^3}  (1+s)^{-\frac{1}{2}}(1+\rho)^{\frac{1}{2}} \\
& \times \int_{\RR_+} \lf[ 1- \chi(s\lla \omega \rra) \rt]  e^{i \omega (\tau + \log(1+s)-\log(1+\rho))}  s^{-2}\mathcal{O}(\lla \omega \rra^{-3}) d\omega 
\end{align*}
and
\begin{align*}
I_2(\tau) f(\rho) = &  1_{\RR_+}(\rho-s) \frac{s^3}{\rho^2}  (1+s)^{-\frac{1}{2}}(1+\rho)^{\frac{1}{2}} \\ 
& \times \int_{\RR_+} \lf[ 1- \chi(s\lla \omega \rra) \rt]  e^{i \omega (\tau + \log(1+s)-\log(1+\rho))}  s^{-1}\mathcal{O}(\lla \omega \rra^{-2}) d\omega .
\end{align*}
Using 
$$
 (1+s)^{-\frac{1}{2}} \lla \tau + \log(1+s) -\log(1+\rho)\rra^{-2} \lesssim  \lla \tau \rra^{-2} \lesssim  (1-s)^{-\frac{1}{2}} \lla \tau + \log(1-s) \rra^{-2},
$$
we obtain the same bounds as $G_5$.
\end{proof}

Using this, we obtain boundedness of the operators $T_n$.

\begin{prop}\label{prop:Tbound}
Let $p\in [2, \infty]$ and $q\in [\frac{10}{3},5]$ such that $\frac{1}{p}+\frac{5}{q} = \frac{3}{2}$. Then we have the bound
$$
\|T_n(\cdot) f\|_{L^p(\RR_+; L^q(\BB^5))} \lesssim \| f \|_{L^2(\BB^5)}
$$
for all $f\in C([0,1])$ and $n \in \{ 1,2, \ldots, 6\}$.
\end{prop}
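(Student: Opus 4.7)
The plan is to reduce the desired estimate to a one-dimensional convolution inequality on the half-line via the change of variables $\sigma = -\log(1-s)$. Both Strichartz endpoints will then follow from a single integrability property of a simple kernel.

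By Minkowski's inequality together with the kernel bound from Proposition \ref{prop:kernels},
\begin{equation*}
\|T_n(\tau) f\|_{L^5(\BB^5)} \lesssim \int_0^1 |f(s)|\, s^2 (1-s)^{-\frac{1}{2}}\, g\bigl(\tau+\log(1-s)\bigr)\, ds,
\end{equation*}
where
\begin{equation*}
g(u) := |u|^{-\frac{1}{10}} \lla u \rra^{-1}.
\end{equation*}
A direct check shows that $g$ is integrable near $0$ (because $-\frac{1}{10} > -1$) and at infinity (because it decays like $|u|^{-11/10}$), so $g \in L^1(\RR) \cap L^2(\RR)$; this is the crucial property driving both endpoints.

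Next, I would substitute $\sigma := -\log(1-s)$, so that $s = 1-e^{-\sigma}$ and $(1-s)^{-\frac{1}{2}} ds = e^{-\sigma/2}\, d\sigma$, and set
\begin{equation*}
h(\sigma) := (1-e^{-\sigma})^2\, e^{-\sigma/2}\, \lf|f(1-e^{-\sigma})\rt|,
\end{equation*}
extended by zero for $\sigma < 0$. The kernel bound then rewrites as the convolution
\begin{equation*}
\|T_n(\tau) f\|_{L^5(\BB^5)} \lesssim (g \ast h)(\tau),
\end{equation*}
and reversing the substitution gives $\|h\|_{L^2(\RR)}^2 = \int_0^1 s^4 |f(s)|^2 \, ds = \|f\|_{L^2(\BB^5)}^2$.

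The endpoint $(p,q) = (2,5)$ follows from Young's inequality applied with $g \in L^1(\RR)$ and $h \in L^2(\RR)$:
\begin{equation*}
\|T_n(\cdot) f\|_{L^2_\tau L^5_\rho} \lesssim \|g\|_{L^1} \|h\|_{L^2} \lesssim \|f\|_{L^2(\BB^5)}.
\end{equation*}
For the endpoint $(p,q) = (\infty, 10/3)$, Cauchy--Schwarz applied to the convolution together with $g \in L^2(\RR)$ yields the uniform-in-$\tau$ estimate $\|T_n(\tau) f\|_{L^5(\BB^5)} \lesssim \|g\|_{L^2}\|h\|_{L^2} \lesssim \|f\|_{L^2(\BB^5)}$; the Sobolev-style embedding $L^5(\BB^5) \hookrightarrow L^{10/3}(\BB^5)$, valid on the bounded domain $\BB^5$, then upgrades this to the $L^\infty_\tau L^{10/3}_\rho$ bound. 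All intermediate pairs $(p,q)$ with $\frac{1}{p} + \frac{5}{q} = \frac{3}{2}$ follow by the same pointwise H\"older interpolation between $L^{10/3}$ and $L^5$ followed by H\"older in $\tau$ that was used at the end of the proof of Proposition \ref{prop:freestrichartz}. I do not foresee any real obstacle beyond noticing that the kernel estimate of Proposition \ref{prop:kernels} is tailor-made to become a convolution in the variable $\sigma = -\log(1-s)$, after which everything reduces to the one-dimensional integrability of $g$.
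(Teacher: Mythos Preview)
Your proposal is correct and follows essentially the same route as the paper: the change of variables $\sigma=-\log(1-s)$ turning the kernel bound into a convolution, Young's inequality for the $(2,5)$ endpoint, Cauchy--Schwarz plus the $L^5\hookrightarrow L^{10/3}$ inclusion for the $(\infty,10/3)$ endpoint, and interpolation in between. The only cosmetic remark is that the inclusion $L^5(\BB^5)\hookrightarrow L^{10/3}(\BB^5)$ is just H\"older on a bounded domain, not a Sobolev embedding.
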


\begin{proof}
From Proposition \ref{prop:kernels} we have
\begin{align*}
\|T_n(\tau) f\|_{L^5(\BB^5)} \lesssim &\int_0^1 |f(s)  | \lf\Vert \int_\RR e^{i \omega \tau} G_n(\rho ,s ; \omega) d\omega \rt\Vert _{L^5_\rho (\BB^5)} ds\\
\lesssim & \int_0^1 | f(s) | s^2(1-s)^{-\frac{1}{2}} |\tau+\log(1-s)|^{-\frac{1}{10}} \lla \tau + \log(1-s)\rra^{-1}  ds \\
\lesssim & \int_0^\infty \lf| f(1-e^{-y}) \rt| (1-e^{-y})^2 e^{-\frac{1}{2}y} |\tau-y|^{-\frac{1}{10}}\lla \tau - y \rra ^{-1} dy,
\end{align*}
by doing a change of variables $s = 1-e^{-y}$. Applying Young's inequality gives
\begin{align*}
\|T_n(\cdot) f\|_{L^2(\RR_+;L^5(\BB^5))}  \lesssim & \lf( \int_0^\infty \lf| f(1-e^{-y}) \rt| ^2 (1-e^{-y})^4 e^{-y}  dy \rt)^\frac{1}{2} \int_\RR |y|^{-\frac{1}{10}} \lla y \rra^{-1}  dy\\
 \lesssim & \|f\|_{L^2(\BB^5)}.
\end{align*}
On the other hand, Cauchy-Schwarz inequality gives
\begin{align*}
\|T_n(\tau) f\|_{L^5(\BB^5)}  \lesssim & \|f\|_{L^2(\BB^5)} \lf\Vert |\tau -\cdot|^{-\frac{1}{10}} \lla \tau - \cdot \rra^{-1}\rt\Vert_{L^2(\RR)}  \\
\lesssim & \|f\|_{L^2(\BB^5)} ,
\end{align*}
so $\|T_n(\cdot) f\|_{L^\infty(\RR_+;L^{\frac{10}{3}}(\BB^5))} \lesssim \|I_2\|_{L^\infty(\RR_+;L^{5}(\BB^5))}  \lesssim \|f\|_{L^2(\BB^5)}$. Therefore the claim follows by interpolation.
\end{proof}

\subsection{The term containing $\lambda$}

We recall that the function $F_\lambda$ contains a term $\lambda \tilde{f}_1$. For convenience we rewrite this term as
$$
F_\lambda (s) = s \tilde{f}'_1(s) + \lf(\lambda+\frac{5}{2} \rt)\tilde{f}_1(s) + \tilde{f}_2(s)  = s \tilde{f}'_1(s) + \lf(\lambda+\frac{1}{2} \rt)\tilde{f}_1(s) + 2 \tilde{f}_1(s) + \tilde{f}_2(s)
$$
and define for $\tau \geq 0$, $\rho \in(0,1)$, $f \in C^1([0,1])$, and $n \in \{1,2,\ldots,6 \}$, 
$$
\dot{T}_{n,\epsilon}(\tau) f(\rho) := \frac{1}{2\pi i} \lim_{N\to\infty} \int_{\epsilon-i  N}^{\epsilon + i N} \frac{1}{2}(1+2\lambda) e^{\lambda \tau} \int_0^1 G_n(\rho, s; \lambda) f(s) ds d\lambda.
$$
In order to prove the Strichartz estimates, we also need to have bounds for the operators $\dot{T}_{n}$.

We first present two lemmas concerning norm equivalence.
\begin{lemma}\label{lemma:1to5} We have
$$
\|(\cdot)^{-1} f\|_{L^2(\BB^5)} = \|(\cdot) f\|_{L^2(0,1)} \lesssim \|f\|_{H^1(\BB^5)} 
$$
for all $f \in C^1([0,1])$.
\end{lemma}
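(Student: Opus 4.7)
The first equality is immediate: unpacking the weighted norms gives
$$\|(\cdot)^{-1}f\|_{L^2(\BB^5)}^2 = \int_0^1 |f(r)|^2 r^{-2} r^4 \, dr = \int_0^1 |f(r)|^2 r^2 \, dr = \|(\cdot) f\|_{L^2(0,1)}^2,$$
so the only content is the inequality $\int_0^1 |f(r)|^2 r^2 \, dr \lesssim \|f\|_{H^1(\BB^5)}^2$.

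The plan is to integrate by parts against the antiderivative $r^2 = \tfrac13 (r^3)'$. This yields
$$\int_0^1 |f(r)|^2 r^2 \, dr = \tfrac13 |f(1)|^2 - \tfrac23 \int_0^1 \Re\bigl(f'(r)\overline{f(r)}\bigr) r^3 \, dr,$$
with no boundary contribution at $r=0$ since $r^3 |f(r)|^2 \to 0$ for $f \in C^1([0,1])$. Splitting $r^3 = r^2 \cdot r$ in the integrand and applying Cauchy--Schwarz then bounds the right-hand side by
$$\tfrac13 |f(1)|^2 + \tfrac23 \Bigl(\int_0^1 |f'(r)|^2 r^4 \, dr\Bigr)^{1/2} \Bigl(\int_0^1 |f(r)|^2 r^2 \, dr\Bigr)^{1/2}.$$

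A Young-type estimate $2xy \leq \tfrac12 x^2 + 2 y^2$ absorbs the $\int_0^1 |f|^2 r^2 \, dr$ factor into the left-hand side, leaving
$$\int_0^1 |f(r)|^2 r^2 \, dr \lesssim |f(1)|^2 + \int_0^1 |f'(r)|^2 r^4 \, dr.$$
To finish, I would control the boundary term by the Sobolev trace bound $|f(1)|^2 \lesssim \|f\|_{H^1(1/2,1)}^2 \lesssim \|f\|_{H^1(\BB^5)}^2$, exactly as in the proof of Lemma \ref{lemma:equivnorm}, and dominate $\int_0^1 |f'|^2 r^4 \, dr$ by the full $H^1(\BB^5)$ norm by definition. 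There is no real obstacle: the only thing to watch is that the integration by parts has no contribution at $0$ and that the bad weight $r^2$ (worse than $r^4$ near the origin) is offset by pairing one power of $r$ with $f'$ through Cauchy--Schwarz.
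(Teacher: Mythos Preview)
Your proof is correct and follows essentially the same approach as the paper: integrate by parts using $r^2 = \tfrac13 (r^3)'$, then split $r^3 = r \cdot r^2$ and use Cauchy--Schwarz/Young to absorb the $\int |f|^2 r^2$ term, controlling the boundary value $|f(1)|^2$ via Lemma~\ref{lemma:equivnorm}. The paper's write-up is terser but the argument is the same.
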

\begin{proof}
We compute
\begin{align*}
\int_0^1 |f(s)|^2 s^2 ds = & \frac{1}{3} \int_0^1 |f(s)|^2 \PD_s(s^3) ds \\
\lesssim & |f(1)|^2 + \int_0^1 |f(s)|  |f'(s)| s^3 ds \\
\lesssim & \|f\|_{H^1(\BB^5)} + \epsilon \int_0^1 |f(s)|^2 s^2 ds + \frac{1}{\epsilon} \int_0^1 |f'(s)|^2 s^4 ds ,
\end{align*}
where we have used Lemma \ref{lemma:equivnorm}. Since this is true for any $\epsilon>0$, we obtain the desired conclusion.
\end{proof}

\begin{lemma}\label{lemma:5to5} We have
$$
\|(\cdot) f\|_{L^5(\BB^5)} \lesssim \|f\|_{H^1(\BB^5)}
$$
for all $f \in C^1([0,1])$.
\end{lemma}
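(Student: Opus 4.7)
The plan is to establish a pointwise Strauss-type decay estimate near the origin and then interpolate with the $L^2$ control provided by the $H^1$ norm. The weight $(\cdot)$ in $\|(\cdot)f\|_{L^5(\BB^5)}$ is exactly what allows us to absorb the near-origin singularity.

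First I would reduce the problem to controlling $F(r):=r^2 f(r)$, which behaves nicely at the origin. Since $f\in C^1([0,1])$ one has $F(0)=0$, and differentiation gives $F'(r)=2rf(r)+r^2 f'(r)$. The weighted terms are exactly of the form controlled by Lemma \ref{lemma:1to5} and the definition of the $H^1(\BB^5)$ norm: $\|(\cdot)f\|_{L^2(0,1)}\lesssim \|f\|_{H^1(\BB^5)}$ and $\|(\cdot)^2 f'\|_{L^2(0,1)}=\|f'\|_{L^2(\BB^5)}$. Hence $\|F'\|_{L^2(0,1)}\lesssim \|f\|_{H^1(\BB^5)}$.

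Next, using $F(0)=0$, the fundamental theorem of calculus and Cauchy--Schwarz yield
$$
|F(r)|=\left|\int_0^r F'(s)\,ds\right|\le r^{1/2}\|F'\|_{L^2(0,1)}\lesssim r^{1/2}\|f\|_{H^1(\BB^5)},
$$
which translates to the pointwise bound $|f(r)|\lesssim r^{-3/2}\|f\|_{H^1(\BB^5)}$ for $r\in(0,1]$. This is the 5D radial Strauss-type inequality.

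Finally I would interpolate this pointwise bound against the $L^2$ norm. Writing out the definition and using $|f|^5=|f|^3\cdot|f|^2$,
\begin{align*}
\|(\cdot)f\|_{L^5(\BB^5)}^5=\int_0^1 |f(r)|^5 r^9\,dr
&\lesssim \Bigl(\sup_{r\in(0,1]} r^{3/2}|f(r)|\Bigr)^{3}\int_0^1 |f(r)|^2 r^{9-9/2}\,dr\\
&\lesssim \|f\|_{H^1(\BB^5)}^3\int_0^1 |f(r)|^2 r^{9/2}\,dr\\
&\le \|f\|_{H^1(\BB^5)}^3 \int_0^1 |f(r)|^2 r^4\,dr\lesssim \|f\|_{H^1(\BB^5)}^5,
\end{align*}
where the only delicate step is the estimate $r^{9/2}\le r^4$ on $[0,1]$, which trivially holds. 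The main (mild) obstacle is seeing that the exponents line up: the weight $r^9$ obtained from $\|(\cdot) f\|_{L^5(\BB^5)}^5$ is just enough to distribute $r^{-9/2}$ to three copies of $|f|$ (absorbing the Strauss blow-up) while leaving $r^{9/2}\le r^4$ on the remaining $|f|^2$ to match the $L^2(\BB^5)$ measure.
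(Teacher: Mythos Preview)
Your proof is correct but follows a genuinely different route from the paper's. The paper exploits a dimensional reduction: since $\rho^4\le\rho^2$ on $[0,1]$, one has $\|(\cdot)f\|_{L^5(\BB^5)}\le\|(\cdot)f\|_{L^5(\BB^3)}$, and then the 3D Sobolev embedding $H^1(\BB^3)\hookrightarrow L^6(\BB^3)$ applied to $\rho\mapsto\rho f(\rho)$ reduces matters to controlling $\|(\cdot)f\|_{H^1(\BB^3)}$, whose pieces are exactly $\|f\|_{L^2(\BB^5)}$, $\|f'\|_{L^2(\BB^5)}$, and $\|(\cdot)f\|_{L^2(0,1)}$ (the last handled by Lemma~\ref{lemma:1to5}). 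Your approach instead derives the pointwise radial Strauss bound $|f(r)|\lesssim r^{-3/2}\|f\|_{H^1(\BB^5)}$ directly from the fundamental theorem of calculus applied to $r^2 f(r)$, then interpolates three copies of this against two copies in $L^2(\BB^5)$. Both arguments hinge on Lemma~\ref{lemma:1to5}; yours is more self-contained (no Sobolev black box), while the paper's makes the underlying $H^1\hookrightarrow L^6$ structure transparent.
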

\begin{proof}
We compute
\begin{align*}
\|(\cdot) f\|_{L^5(\BB^5)} = & \lf( \int_0^1 \lf( |f(\rho)|\rho \rt)^5 \rho^4 d\rho \rt)^\frac{1}{5} \leq  \lf( \int_0^1 \lf(|f(\rho)| \rho\rt) ^5 \rho^2 d\rho \rt)^\frac{1}{5} \\
= & \|(\cdot) f \|_{L^5(\BB^3)} \lesssim \|(\cdot) f \|_{L^6(\BB^3)} \lesssim \|(\cdot) f \|_{H^1(\BB^3)} \\
=  &\lf(\int_0^1 (|f(\rho)|\rho)^2 \rho^2 d\rho \rt)^\frac{1}{2} + \lf( \int_0^1 \lf(\frac{d}{d\rho} (f(\rho) \rho)\rt)^2 \rho^2  d\rho \rt)^\frac{1}{2} \\
\lesssim & \|f\|_{L^2 (\BB^5)} + \|f'\|_{L^2 (\BB^5)} + \|(\cdot) f\|_{L^2(0,1)} \\
\lesssim & \|f\|_{H^1(\BB^5)},
\end{align*}
where we used Sobolev embedding in 3 dimensions $H^1 \hookrightarrow L^6 (\BB^3)$, and Lemma \ref{lemma:1to5}.
\end{proof}

With the extra growth in $\lambda$, we no longer have absolute convergence in the $\lambda$ integral. Hence we must first do an integration by parts in $s$ to obtain the desired decay, and treat the boundary terms separately.

\begin{prop}\label{prop:dotTbound} Let $p\in [2, \infty]$ and $q\in [\frac{10}{3},5]$ be such that $\frac{1}{p}+\frac{5}{q} = \frac{3}{2}$. For $n \in \{ 1,2, \ldots, 6\}$, setting $\dot{T}_n(\tau) f(\rho) := \lim_{\epsilon \to 0+} \dot{T}_{n,\epsilon}(\tau) f(\rho)$, we have the bound
$$
\|\dot{T}_n(\cdot) f\|_{L^p(\RR_+; L^q(\BB^5))} \lesssim \| f \|_{H^1(\BB^5)}
$$
for all $f\in C^1([0,1])$.
\end{prop}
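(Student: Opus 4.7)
The strategy is to integrate by parts in $s$ to trade the unbounded factor $\frac{1}{2}(1+2\lambda)$ for a derivative, thereby restoring absolute convergence in the $\omega$-integral. The key identity is
$$
\frac{1}{2}(1+2\lambda)(1-s)^{-\frac{1}{2}+\lambda} = -\PD_s\lf[(1-s)^{\frac{1}{2}+\lambda}\rt].
$$
Each $G_n$ contains the factor $(1-s^2)^{-\frac{1}{2}+\lambda} = (1-s)^{-\frac{1}{2}+\lambda}(1+s)^{-\frac{1}{2}+\lambda}$, so writing $G_n(\rho,s;\lambda) = (1-s)^{-\frac{1}{2}+\lambda} H_n(\rho, s; \lambda)$ and integrating by parts in $s$ on the support $[\rho,1]$ (for $n=1,2,3$) or $[0,\rho]$ (for $n=4,5,6$) yields a boundary term plus a bulk integral. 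The boundary contributions at $s=1$ vanish because $(1-s)^{\frac{1}{2}+\lambda}\to 0$, and those at $s=0$ vanish either from the $s^4$-factor in $H_n$ (for $n=1,\ldots,4$) or because $1-\chi(s\lla\omega\rra)=0$ at $s=0$ (for $n=5,6$). Only the boundary term at $s=\rho$ survives, of the form $\pm(1-\rho)^{\frac{1}{2}+\lambda} H_n(\rho,\rho;\lambda) f(\rho)$.

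For the bulk integral, observe that $(1-s)^{\frac{1}{2}+\lambda}H_n = (1-s)\,G_n$. Applying the product rule to $\PD_s[H_n f]$, the bulk integrand becomes a sum of $(1-s)G_n \cdot (\PD_s H_n/H_n)\cdot f(s)$ and $(1-s)G_n \cdot f'(s)$. The extra factor $(1-s)$ only improves the kernel estimates of Proposition~\ref{prop:kernels}, so the same derivation produces kernels bounded by $s^2(1-s)^{\frac{1}{2}}|\tau+\log(1-s)|^{-\frac{1}{10}}\lla\tau+\log(1-s)\rra^{-1}$. The piece containing $f'(s)$ is then of $T_n$-type and is controlled in $L^p_\tau L^q_\rho$ by $\|f'\|_{L^2(\BB^5)}\leq \|f\|_{H^1(\BB^5)}$ via Proposition~\ref{prop:Tbound}. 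The piece containing $\PD_s H_n/H_n$ produces, at worst, an $s^{-1}$ factor (from $\PD_s[s^4]/s^4$) together with logarithmic derivatives of $\varphi_{0,1}(s;\lambda)$ and $\gamma_n(\rho,s;\lambda)$; these preserve the symbol orders, so the corresponding expression is bounded by $\|(\cdot)^{-1}f\|_{L^2(\BB^5)} \lesssim \|f\|_{H^1(\BB^5)}$ by Lemma~\ref{lemma:1to5}.

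The boundary term at $s=\rho$ yields a pointwise contribution $\pm f(\rho)K_n(\rho,\tau)$, where
$$
K_n(\rho,\tau) = \frac{1}{2\pi}\int_\RR e^{i\omega\tau}(1-\rho)^{\frac{1}{2}+i\omega} H_n(\rho,\rho;i\omega)\,d\omega.
$$
This is an oscillatory integral of the same type treated in Proposition~\ref{prop:kernels}: after applying the reductions \eqref{eqn:1red}--\eqref{eqn:2red} of $\varphi_0(\rho;i\omega)$ where relevant and invoking Lemmas~\ref{lemma:o}--\ref{lemma:rho2}, one obtains $|K_n(\rho,\tau)|\lesssim \rho\,|\tau+\log(1-\rho)|^{-\frac{1}{10}}\lla\tau+\log(1-\rho)\rra^{-1}$. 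The $L^p_\tau L^q_\rho$ bound on $f(\rho)K_n(\rho,\tau)$ then follows exactly as in Proposition~\ref{prop:Tbound}, combined with Lemma~\ref{lemma:5to5}, which gives $\|(\cdot)f\|_{L^5(\BB^5)}\lesssim \|f\|_{H^1(\BB^5)}$. The main technical obstacle is the careful tracking of $\PD_s H_n$ and of the boundary kernel $K_n$ for each of the six values of $n$, together with verifying that every arising term is absorbed by exactly one of the Sobolev-type embeddings of Lemmas~\ref{lemma:1to5} and~\ref{lemma:5to5}.
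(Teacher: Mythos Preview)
Your overall strategy---integrate by parts in $s$ to absorb the factor $\tfrac{1}{2}(1+2\lambda)$, then treat the resulting bulk and boundary terms as in Proposition~\ref{prop:kernels}---is exactly what the paper does. For $n\in\{1,2,3,5\}$ your outline is essentially correct, and your appeal to Lemmas~\ref{lemma:1to5} and~\ref{lemma:5to5} for the bulk and boundary contributions matches the paper.

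There is, however, a genuine gap for $n=4$ and $n=6$. Your proposal factors out $(1-s)^{-\frac12+\lambda}$ uniformly and asserts that the logarithmic derivative $\PD_s H_n/H_n$ ``preserves the symbol orders''. This fails: for $n=6$ the $s$-dependence of $G_6$ is through $\tilde\varphi_1(s;\lambda)$, and after simplification $s^4(1-s^2)^{-\frac12+\lambda}\tilde\varphi_1(s;\lambda)=s(1+s)^{-\frac12+\lambda}(2+s(1-2\lambda))$. If you pull out $(1-s)^{-\frac12+\lambda}$, then $H_6$ contains the oscillatory factor $(1-s)^{\frac12-\lambda}(1+s)^{-\frac12+\lambda}$, whose logarithmic derivative is $-(\tfrac12-\lambda)(1-s)^{-1}+(-\tfrac12+\lambda)(1+s)^{-1}=O(\lambda)$. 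The $O(\lambda)$ growth you were trying to remove reappears in $\PD_sH_6$, and the $\omega$-integral is again not absolutely convergent. The same problem arises for $n=4$, since $\varphi_0(s;\lambda)=\varphi_1(s;\lambda)-\tilde\varphi_1(s;\lambda)$ contains the $\tilde\varphi_1$ piece.

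The paper fixes this by matching the antiderivative to the actual oscillation: for $n\in\{5,6\}$ it integrates by parts against $\PD_s(1\pm s)^{\frac12+\lambda}$ (the sign depending on which $\varphi_1,\tilde\varphi_1$ appears), see \eqref{eqn:intbyparts2}. For $n=4$ the paper first splits $\varphi_1(\rho;\lambda)$ so that one piece already carries enough $\omega$-decay, and for the remaining piece exploits the exact identity
\[
(1-s)^{-\frac12+\lambda}(2+s(-1+2\lambda))-(1+s)^{-\frac12+\lambda}(2+s(1-2\lambda))
=2s^2\,\PD_s\!\left[\frac{(1+s)^{\frac12+\lambda}-(1-s)^{\frac12+\lambda}}{s}\right],
\]
which is tailored to $\varphi_0(s;\lambda)$ and avoids introducing any stray $O(\lambda)$ factors. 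Once you use the correct antiderivative in each case, the rest of your argument (bulk term via Proposition~\ref{prop:Tbound} and Lemma~\ref{lemma:1to5}, boundary term via Lemma~\ref{lemma:5to5}) goes through as you describe.
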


\begin{proof} Let $\lambda=\epsilon+i\omega$ where $\epsilon \in [0, \frac{1}{4}]$ and $\omega\in \RR$.

For $n=\{1,2,3\}$, we have that each $G_n$ is given by
$$
G_n(\rho,s;\lambda) = g_n(\rho;\lambda) 1_{\RR_+}(s-\rho) s (1-s)^{-\frac{1}{2}+\lambda} (2+s(-1+2\lambda)) \gamma_n(\rho,s;\lambda),
$$
where
\begin{align*}
g_1(\rho;\lambda) & = \chi(\rho \lla \omega \rra ) \frac{\varphi_0(\rho;\lambda)}{(3-2\lambda)(1+2\lambda)(1-2\lambda)} ,\\
g_2(\rho;\lambda) & = \lf[ 1-\chi(\rho \lla \omega \rra ) \rt] \frac{\varphi_1(\rho;\lambda)}{(3-2\lambda)(1+2\lambda)(1-2\lambda)}, \\
g_3(\rho;\lambda) & = \lf[ 1-\chi(\rho \lla \omega \rra ) \rt] \frac{\tilde{\varphi}_1(\rho;\lambda)}{(3-2\lambda)(1+2\lambda)(1-2\lambda)}.
\end{align*}
We note that $|g_n(\rho; \lambda)| \lesssim \rho^{-3} \lla \omega \rra^{-2}$. Doing an integration by parts on the $s$-integral, we get
\begin{align}
& \int_0^1 1_{\RR_+}(s-\rho) s (1-s)^{-\frac{1}{2}+\lambda} (2+s(-1+2\lambda)) f(s) \gamma_n(\rho,s;\lambda) ds \nonumber \\
= &  -\frac{2}{1+2 \lambda} \int_\rho^1 \PD_s(1-s)^{\frac{1}{2}+\lambda} s(2+s(-1+2\lambda)) f(s) \gamma_n(\rho,s;\lambda)ds \nonumber \\
= & \frac{-2}{1+2\lambda} (1-\rho)^{\frac{1}{2}+\lambda} \rho(2+\rho(-1+2\lambda))f(\rho) \gamma_n(\rho,\rho;\lambda) \nonumber \\
& + \frac{2}{1+2\lambda} \int_\rho^1 (1-s)^{\frac{1}{2}+\lambda} \PD_s\lf[ s(2+s(-1+2\lambda)) f(s) \gamma_n(\rho,s;\lambda) \rt] ds. \label{eqn:intbyparts1}
\end{align}
Inserting the integral terms back to $\dot{T}_{n,\epsilon}$ we see that the $\lambda$ integral is now absolutely convergent, so limits exist. Now
\begin{align*}
& \PD_s\lf[ s(2+s(-1+2i\omega)) f(s) \gamma_n(\rho,s;i\omega) \rt]  \\
= &  (2+s(-1+2i\omega)) s f(s) \PD_s\gamma_n(\rho,s;\lambda)  +  (2+2s(-1+2i\omega))f(s) \gamma_n(\rho,s;i\omega) \\
& + f'(s) s (2+s(-1+2i\omega))\gamma_n(\rho,s;i\omega),
\end{align*}
and by Lemma \ref{lemma:decomposeG} we see that 
\begin{equation}\label{eqn:dsform}
s \PD_s\gamma_n(\rho,s;\lambda) = O(\rho^0 s^0) O_0( \lla \omega \rra^{-1} ) + O(\rho^0(1-\rho)^0 s^0 (1-s)^0 \lla \omega \rra ^{-2}).
\end{equation}
So we could follow what we did in Proposition \ref{prop:kernels}. Using $\|(\cdot) f\|_{L^2(0,1)} + \|(\cdot)^2 f'\|_{L^2(0,1)} \lesssim  \|f\|_{H^1(\BB^5)}$, we obtain the desired bound. The boundary terms give rise to operators
$$
B_{n,\epsilon}(\tau)f(\rho) := \frac{(1-\rho)^\frac{1}{2} \rho f(\rho)}{2\pi i} \lim_{N\to\infty}\int_{\epsilon-iN}^{\epsilon+i N} e^{\lambda \tau} (1-\rho)^{\lambda} (2+\rho(-1+2\lambda)) \gamma_n(\rho,\rho;\lambda) g_n(\rho;\lambda) d\lambda.
$$
The integrands are bounded by $C\rho^{-3} \lla \omega\rra^{-2}$, we can take the limits
$$
B_n(\tau) f(\rho) := \lim_{\epsilon \to 0} B_{n,\epsilon}(\tau)f(\rho).
$$
For $B_1$ we have
\begin{align*}
B_1(\tau) f(\rho) = & \frac{2 (1-\rho)^\frac{1}{2} \rho f(\rho)}{2\pi } \int_\RR  e^{i\omega( \tau+\log(1-\rho))} \frac{\chi(\rho \lla \omega \rra) \varphi_0(\rho;i\omega)}{(3-2i\omega) (1-2i \omega)(1+2i\omega)} \gamma_1(\rho,\rho;i\omega) d\omega \\
& + \frac{2 (1-\rho)^\frac{1}{2} \rho^2 f(\rho)}{2\pi } \int_\RR  e^{i\omega( \tau+\log(1-\rho))} \frac{\chi(\rho \lla \omega \rra) \varphi_0(\rho;i\omega)}{(3-2i\omega) (-1-2i \omega)} \gamma_1(\rho,\rho;i\omega) d\omega .
\end{align*}
For the two terms, we use the two representations \eqref{eqn:2red} and \eqref{eqn:1red} respectively. Since $\frac{1-2i\omega}{1+2i\omega} = \mathcal{O}_e (\lla \omega\rra^0) + \mathcal{O}(\lla \omega\rra^{-1})$, by Lemma \ref{lemma:o}, we obtain
$$
|B_1(\tau) f(\rho)| \lesssim \rho |f(\rho)|  (1-\rho)^\frac{1}{2} \lla \tau+\log(1-\rho)\rra^{-2} \lesssim \rho|f(\rho)| \lla \tau \rra^{-2}.
$$
For $B_2$ we have
$$
B_2 (\tau) f(\rho)  = \frac{\rho f(\rho) (1-\rho)^{\frac{1}{2}} (1+\rho)^{\frac{1}{2}}}{2\pi } \int_\RR [1-\chi(\rho \lla \omega \rra ]  e^{i \omega (\tau + \log(1-\rho)-\log(1+\rho))} \rho^{-1} \mathcal{O}(\lla \omega \rra^{-2}) d\omega,
$$
and using Lemma \ref{lemma:rho1} we have
$$
|B_2(\tau) f(\rho)| \lesssim \rho |f(\rho)|  (1-\rho)^\frac{1}{2} \lla \tau+\log(1-\rho)\rra^{-2} \lesssim \rho|f(\rho)|\lla \tau \rra^{-2}.
$$
Similarly, 
$$
B_3 (\tau) f(\rho) = \frac{\rho f(\rho) (1-\rho) (1+\rho)^{\frac{1}{2}}}{2\pi} \int_\RR [1-\chi(\rho \lla \omega \rra ]  e^{i \omega \tau} \rho^{-1} \mathcal{O}(\lla \omega \rra^{-2}) d\omega,
$$
so we also have $|B_3(\tau) f(\rho)| \lesssim \rho|f(\rho)| \lla \tau \rra^{-2}$. Then by Lemma \ref{lemma:5to5} we obtain the bounds $\|B_n(\tau) f\|_{L^5(\BB^5)} \lesssim \lla \tau \rra^{-2} \|f\|_{H^1(\BB^5)}$ for $n=\{1,2,3\}$.

For $n=4$, we decompose $\dot{T}_{4,\epsilon}$ into two parts according to $\varphi_1(\rho;\lambda)$. We have
\begin{align*}
 \dot{T}_{4,\epsilon}(\tau) f(\rho) =  & \frac{1}{4 \pi i} \lim_{N\to\infty} \int_{\epsilon-i  N}^{\epsilon + i N}  \frac{e^{\lambda \tau}\varphi_1(\rho;\lambda)}{(3-2\lambda)(1-2\lambda)} \\
& \times  \int_0^1 1_{\RR_+}(\rho-s) \chi(s\lla \omega \rra)s^4(1-s^2)^{-\frac{1}{2}+\lambda} \varphi_0(s;\lambda)  \gamma_4(\rho,s;\lambda) f(s) ds d\lambda \\
= & I_{1,\epsilon}(\tau, \rho) + I_{2,\epsilon}(\tau, \rho) ,
\end{align*}
where
\begin{align*}
I_{1,\epsilon}(\tau, \rho) = & \frac{\rho^{-3}(1+\rho)^\frac{1}{2}}{4 \pi i} \lim_{N\to\infty} \int_{\epsilon-i  N}^{\epsilon + i N}  \frac{  (1+\rho)^{- \lambda}e^{\lambda \tau}}{(3-2\lambda)(1-2\lambda)} \\
& \times  \int_0^1 1_{\RR_+}(\rho-s) \chi(s\lla \omega \rra)s^4(1-s^2)^{-\frac{1}{2}+\lambda} \varphi_0(s;\lambda)  \gamma_4(\rho,s;\lambda) f(s) ds d\lambda,\\
I_{2,\epsilon}(\tau, \rho) = & \frac{\rho^{-2}(1+\rho)^\frac{1}{2}}{4 \pi i} \lim_{N\to\infty} \int_{\epsilon-i  N}^{\epsilon + i N}  \frac{  (1+\rho)^{- \lambda}e^{\lambda \tau}}{(3-2\lambda)} \\
& \times  \int_0^1 1_{\RR_+}(\rho-s) \chi(s\lla \omega \rra)s^4(1-s^2)^{-\frac{1}{2}+\lambda} \varphi_0(s;\lambda)  \gamma_4(\rho,s;\lambda) f(s) ds d\lambda.
\end{align*}
The integrals in $I_1$ are already absolutely convergent, so we can take the limit and use Fubini,
\begin{align*}
 I_1(\tau,\rho) = \lim_{\epsilon \to 0+} I_{1,\epsilon}(\rho)  = & \rho^{-3}(1+\rho)^{\frac{1}{2}} \int_0^\rho s^4 (1-s^2)^{-\frac{1}{2}} f(s) \\
 & \times \int_\RR \frac{e^{i\omega(\tau-\log(1+\rho)+\log(1-s^2))}}{(3-2i \omega)(1-2i \omega)} \chi(s\lla \omega\rra)\varphi_0(s;i \omega) \gamma_4(\rho,s;i\omega) d\omega ds.
\end{align*}
This is similar to the $G_4$ case in Proposition \ref{prop:kernels}. We use the expression \eqref{eqn:1red} for $\varphi_0(s;i \omega)$ and obtain
\begin{align*}
\|I_1(\tau,\cdot)\|_{L^5(\BB^5)} \lesssim \|(\cdot)f\|_{L^2(0,1)}  \lla \tau \rra^{-2}\lesssim \|f\|_{H^1(\BB^5)} \lla \tau \rra^{-2}.
\end{align*}
For $I_{2,\epsilon}$, the $s$-dependent part is given by
$$
1_{\RR_+}(\rho-s) \chi(s\lla \omega \rra) s\lf[(1-s)^{-\frac{1}{2} + \lambda} (2+s(-1+2\lambda)) - (1+ s)^{-\frac{1}{2} + \lambda} (2+s(1- 2\lambda)) \rt] \gamma_4(\rho,s;\lambda).
$$
Now we write
\begin{align*}
& (1-s)^{-\frac{1}{2} + \lambda} (2+s(-1+2\lambda)) - (1+ s)^{-\frac{1}{2} + \lambda} (2+s(1- 2\lambda)) \\
& =  2 s^2 \PD_s\lf[\frac{(1+s)^{\frac{1}{2} + \lambda} - (1-s)^{\frac{1}{2} + \lambda}}{s} \rt]
\end{align*}
and do an integration by parts in the $s$ integral. We obtain
\begin{align*}
& \int_0^1 1_{\RR_+}(\rho-s) \chi(s\lla \omega \rra) f(s) \gamma_4(\rho,s;\lambda) \\
& \times s\lf[(1-s)^{-\frac{1}{2} + \lambda} (2+s(-1+2\lambda)) - (1+ s)^{-\frac{1}{2} + \lambda} (2+s(1- 2\lambda)) \rt]  ds \\
= & 2 \int_0^\rho \chi(s\lla \omega \rra) s^3 f(s) \gamma_4(\rho,s;\lambda)\PD_s\lf[\frac{(1+s)^{\frac{1}{2} + \lambda} - (1-s)^{\frac{1}{2} + \lambda}}{s} \rt] ds \\
= & \chi(\rho \lla \omega \rra)\rho^2  f(\rho) \gamma_4(\rho,\rho; \lambda) \lf( (1+\rho)^{\frac{1}{2} + \lambda} - (1-\rho)^{\frac{1}{2} + \lambda} \rt) \\
& + \int_0^\rho \frac{(1+s)^{\frac{1}{2} + \lambda} - (1-s)^{\frac{1}{2} + \lambda}}{s} \PD_s\lf[  \chi(s\lla \omega \rra) s^3 f(s) \gamma_4(\rho,s;\lambda) \rt] ds.
\end{align*}
For the integral term, reinserting back into $I_{2,\epsilon}$, we see that the $\lambda$ integrals are now absolutely convergent, so we can take the limits and use Fubini. When the derivative does not fall on the cut-off $\chi(s\lla \omega \rra)$, notice that
$$
\PD_s\lf[  s^3 f(s) \gamma_4(\rho,s;\lambda) \rt] = 3s^2 f(s)  \gamma_4(\rho,s;\lambda) +  s^3 f'(s)  \gamma_4(\rho,s;\lambda) + s^3 f(s) \PD_s \gamma_4(\rho,s;\lambda),
$$
and again we use that $s\PD_s\gamma_4(\rho,s;\lambda)$ has the form as in \eqref{eqn:dsform}. Moreover, we write
\begin{equation}\label{eqn:3red}
\frac{(1+s)^{\frac{1}{2} +i\omega} - (1-s)^{\frac{1}{2} + i\omega}}{s} = \frac{1}{2}(1+2i\omega) \int_{-1}^1 (1+st)^{-\frac{1}{2}+i\omega} dt.
\end{equation}
Now $\frac{1+2i\omega}{3-2i\omega} = \mathcal{O}_e (\lla \omega\rra^0) + \mathcal{O}(\lla \omega\rra^{-1})$, so we can proceed as in the $G_4$ case in Proposition \ref{prop:kernels} to obtain the bound by $\|(\cdot) f\|_{L^2(0,1)} + \|(\cdot)^2 f'\|_{L^2(0,1)}$. When the derivative falls on the cut-off, we have to look at the term
$$
\rho^{-2}(1+\rho)^\frac{1}{2} s^2 (1\pm s)^{\frac{1}{2}} f(s)  \int_\RR e^{i\omega(\tau-\log(1+\rho)+\log(1\pm s) )} \chi'(s\lla \omega\rra) \mathcal{O}(\lla \omega \rra^{-1}) d\omega .
$$
Since $\rho\geq s$, and on the cut-off we have also $ s\lla \omega \rra \simeq 1$, so
\begin{align*}
& \rho^{-2}(1+\rho)^\frac{1}{2} s^2 (1\pm s)^{\frac{1}{2}} | f(s)| \lf| \int_\RR e^{i\omega(\tau-\log(1+\rho)+\log(1\pm s) )} \chi'(s\lla \omega\rra) \mathcal{O}(\lla \omega \rra^{-1}) d\omega \rt| \\
\lesssim &  \frac{s^2}{\rho^2}|f(s)| \lf| s^{-1}\int_\RR e^{i\omega(\tau-\log(1+\rho)+\log(1\pm s) )} \chi'(s\lla \omega\rra) \mathcal{O}(\lla \omega \rra^{-2}) d\omega  \rt| \\
\lesssim & \frac{s^2}{\rho^2}|f(s)| \lla \tau+ \log(1-s) \rra^{-2} \lesssim  \frac{s^2}{\rho^2}(1-s)^{-\frac{1}{2}} |f(s)| \lla \tau + \log(1-s) \rra^{-2} 
\end{align*}
using the same logic as in the proof of Lemma \ref{lemma:rho1}. Then again we obtain the bound $\|(\cdot) f\|_{L^2(0,1)}$ when taking the $L^2(\RR_+;L^5(\BB^5))$ norm. On the other hand, for the boundary term we can again take the limit
\begin{align*}
& B_4(\tau) f(\rho) \\
 := & \frac{(1+\rho)^\frac{1}{2} f(\rho) }{4 \pi } \lim_{\epsilon \to 0+}   \lim_{N\to\infty} \int_{\epsilon-i  N}^{\epsilon + i N}  \frac{ e^{\lambda (\tau+\log(1+\rho))}}{(3-2\lambda)} \\
 & \qquad \qquad\qquad\qquad \times  \chi(\rho \lla \omega \rra)\gamma_4(\rho,\rho; \lambda) \lf( (1+\rho)^{\frac{1}{2} + \lambda} - (1-\rho)^{\frac{1}{2} + \lambda} \rt) d\lambda \\
= & \frac{(1+\rho)^\frac{1}{2} f(\rho) }{4 \pi } \int_\RR  \frac{ e^{i\omega( \tau+\log(1+\rho))}}{(3-2 i \omega)} \chi(\rho \lla \omega \rra) \gamma_4(\rho,\rho;  i \omega) \lf( (1+\rho)^{\frac{1}{2} +  i \omega} - (1-\rho)^{\frac{1}{2} +  i \omega} \rt) d \omega.
\end{align*}
By \eqref{eqn:3red} and Lemma \ref{lemma:o} we obtain $|B_4(\tau)f(\rho)| \lesssim \rho|f(\rho)|\lla \tau \rra ^{-2}$, and again Lemma \ref{lemma:5to5} gives that $\|B_4 (\tau) f\|_{L^5(\BB^5)} \lesssim  \|f\|_{H^1(\BB^5)} \lla \tau \rra^{-2}$.

For $n=\{ 5, 6\}$, the $s$ dependent part is 
$$
1_{\RR_+}(\rho-s)[1-\chi(s\lla \omega\rra)] s(1 \pm s)^{-\frac{1}{2}+\lambda}(2\pm s(1-2\lambda) \gamma_n(\rho,s;\lambda),
$$
while the $s$ independent part splits into two terms,
$$
\frac{e^{\lambda \tau}\varphi_1(\rho;\lambda)}{(3-2\lambda)(1-2\lambda)} = \frac{2e^{\lambda \tau} \rho^{-3}(1+\rho)^{\frac{1}{2}-\lambda}}{(3-2\lambda)(1-2\lambda)} - \frac{e^{\lambda \tau} \rho^{-3}(1+\rho)^{\frac{1}{2}-\lambda}}{(3-2\lambda)}.
$$
Again the first part already has decay $\lla \omega \rra^{-2}$, so we can follow what we did for $G_5$ and $G_6$ in Proposition \ref{prop:kernels} to obtain the bounds by $\|(\cdot)f\|_{L^2(0,1)}\lesssim \|f\|_{H^1(\BB^5)}$. For the second part we do an integration by parts in $s$,
\begin{align}
&\int_0^1 1_{\RR_+}(\rho-s)(1 \pm s)^{-\frac{1}{2}+\lambda} [1-\chi(s\lla \omega\rra)] s (2\pm s(1-2\lambda)) f(s) \gamma_n(\rho,s;\lambda) ds \nonumber \\
=& \frac{ \pm 2}{1+2\lambda}\int_0^\rho  \PD_s (1 \pm s)^{\frac{1}{2}+\lambda} [1-\chi(s\lla \omega\rra)] s(2\pm s(1-2\lambda)) f(s) \gamma_n(\rho,s;\lambda) ds \nonumber \\
=& \frac{ \pm 2}{1+2\lambda}(1 \pm \rho)^{\frac{1}{2}+\lambda} [1-\chi(\rho\lla \omega\rra)] \rho(2\pm \rho(1-2\lambda)) f(\rho) \gamma_n(\rho,\rho;\lambda) \nonumber \\
& -\frac{\pm 2}{1+2\lambda} \int_0^\rho(1 \pm s)^{\frac{1}{2}+\lambda} \PD_s \lf[[1-\chi(s\lla \omega\rra)] s(2\pm s(1-2\lambda)) f(s) \gamma_n(\rho,s;\lambda)  \rt] ds. \label{eqn:intbyparts2}
\end{align}
Using that $s \PD_s\gamma_n(\rho,s;\lambda)$ has the form \eqref{eqn:dsform}, the integral part is treated the same way as before and is bounded by $\|(\cdot) f\|_{L^2(0,1)} + \|(\cdot)^2 f'\|_{L^2(0,1)} \lesssim  \|f\|_{H^1(\BB^5)}$. The boundary term is given by
\begin{align*}
& B_n(\tau)f(\rho)\\
= & \frac{(1+\rho)^\frac{1}{2}(1\pm \rho)^\frac{1}{2} f(\rho) }{4 \pi } \lim_{\epsilon \to 0+}   \lim_{N\to\infty} \int_{\epsilon-i  N}^{\epsilon + i N} e^{\lambda(\tau-\log(1+\rho)+\log(1\pm \rho))}[1-\chi(\rho\lla \omega\rra)] \mathcal{O}(\rho^0 \lla \omega \rra^{-2}) d\lambda \\
=&\frac{(1+\rho)^\frac{1}{2}(1\pm \rho)^\frac{1}{2} f(\rho) }{4 \pi }  \int_\RR e^{i\omega(\tau-\log(1+\rho)+\log(1\pm \rho))}[1-\chi(\rho\lla \omega\rra)] \mathcal{O}(\rho^0 \lla \omega \rra^{-2}) d\omega.
\end{align*}
So Lemma \ref{lemma:rho1} gives that 
$$
|B_n(\tau)f(\rho)| \lesssim \rho |f(\rho)| (1\pm \rho)^{\frac{1}{2}} \lla \tau-\log(1+\rho)+\log(1\pm \rho) \rra^{-2} \lesssim \rho |f(\rho)| \lla \tau \rra^{-2},
$$
and as before we have $\|B_n(\tau)f\|_{L^5(\BB^5)} \lesssim \|f\|_{H^1(\BB^5)}  \lla \tau \rra^{-2}$.
\end{proof}

\subsection{Strichartz estimates}

\begin{theorem}\label{thm:strichartz} Let $p\in [2, \infty]$ and $q\in [\frac{10}{3},5]$ such that $\frac{1}{p}+\frac{5}{q} = \frac{3}{2}$. Then we have the bound
$$
\lf\Vert \lf[\mathbf{S}(\cdot) (\mathbf{I}-\mathbf{P})\mathbf{f} \rt]_1 \rt\Vert _{L^p(\RR_+,L^q(\BB^5))} \lesssim \|(\mathbf{I}-\mathbf{P})\mathbf{f} \|_{\mathcal{H}}
$$
for all $\mathbf{f} \in \mathcal{H}$. And hence we also have
$$
\lf\Vert  \int_0^\tau  \lf[\mathbf{S}(\tau -\sigma) (\mathbf{I}-\mathbf{P}) \mathbf{h}(\sigma,\cdot) \rt]_1 d\sigma  \rt\Vert_{L_\tau^p(\RR_+;L^q(\BB^5))} \lesssim \|(\mathbf{I}-\mathbf{P})\mathbf{h} \|_{L^1(\RR_+,\mathcal{H})} 
$$
or all $\mathbf{h}\in C([0,\infty),\mathcal{H})\cap L^1(\RR_+,\mathcal{H})$
\end{theorem}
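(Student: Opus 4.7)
The plan is to combine the explicit decomposition \eqref{eqn:semigpdecomp} with the operator bounds for $T_n$ and $\dot T_n$ proved in Propositions \ref{prop:Tbound} and \ref{prop:dotTbound}, together with the free estimate from Proposition \ref{prop:freestrichartz}. First, I fix a dense subspace: by standard semigroup theory the space of $\mathbf{f}\in\mathcal{H}$ such that $\tilde{\mathbf{f}}:=(\mathbf{I}-\mathbf{P})\mathbf{f}\in C^2\times C^1([0,1])\cap\mathcal{D}(\mathbf{L}_0)$ is dense in $\mathcal{H}$, so it suffices to prove the Strichartz bound there and then conclude by a density/continuous-extension argument.

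For such $\mathbf{f}$, I split
\[
F_\lambda(s)=s\tilde f_1'(s)+2\tilde f_1(s)+\tilde f_2(s)+\tfrac12(1+2\lambda)\tilde f_1(s),
\]
so that the representation \eqref{eqn:semigpdecomp} reads
\[
[\mathbf{S}(\tau)\tilde{\mathbf{f}}]_1=[\mathbf{S}_0(\tau)\tilde{\mathbf{f}}]_1+\sum_{n=1}^{6}\Bigl(T_n(\tau)\bigl[s\tilde f_1'+2\tilde f_1+\tilde f_2\bigr](\rho)+\dot T_n(\tau)\tilde f_1(\rho)\Bigr).
\]
The first term is bounded in $L^p(\RR_+;L^q(\BB^5))$ by $\|\tilde{\mathbf{f}}\|_{\mathcal{H}}$ thanks to Proposition \ref{prop:freestrichartz}. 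For the $T_n$ contributions I apply Proposition \ref{prop:Tbound}; here I need $\|s\tilde f_1'+2\tilde f_1+\tilde f_2\|_{L^2(\BB^5)}\lesssim\|\tilde{\mathbf{f}}\|_{\mathcal{H}}$, which follows because $\int_0^1|s\tilde f_1'(s)|^2s^4\,ds=\int_0^1|\tilde f_1'(s)|^2s^6\,ds\le\|\tilde f_1'\|_{L^2(\BB^5)}^2$ and $\|\tilde f_1\|_{L^2(\BB^5)}\le\|\tilde f_1\|_{H^1(\BB^5)}$. For the $\dot T_n$ contributions, Proposition \ref{prop:dotTbound} directly gives the bound by $\|\tilde f_1\|_{H^1(\BB^5)}\le\|\tilde{\mathbf{f}}\|_{\mathcal{H}}$. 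Summing over $n$ yields
\[
\bigl\|[\mathbf{S}(\cdot)\tilde{\mathbf{f}}]_1\bigr\|_{L^p(\RR_+;L^q(\BB^5))}\lesssim\|\tilde{\mathbf{f}}\|_{\mathcal{H}}=\|(\mathbf{I}-\mathbf{P})\mathbf{f}\|_{\mathcal{H}},
\]
on the dense subspace. A routine density argument, combined with the uniform bound $\|\mathbf{S}(\tau)(\mathbf{I}-\mathbf{P})\|_{\mathcal{B}(\mathcal{H})}\lesssim 1$ (improved from Lemma \ref{lemma:growthbound} via the same kernel analysis, as noted in the outline), extends the inequality to all $\mathbf{f}\in\mathcal{H}$.

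For the inhomogeneous bound I use Minkowski's integral inequality. Writing the Duhamel integral as $\int_{\RR_+}\mathbf{1}_{[0,\tau]}(\sigma)\,\mathbf{S}(\tau-\sigma)(\mathbf{I}-\mathbf{P})\mathbf{h}(\sigma)\,d\sigma$ and applying Minkowski in the $L^p_\tau L^q_\rho$ norm, I get
\[
\Bigl\|\int_0^\tau[\mathbf{S}(\tau-\sigma)(\mathbf{I}-\mathbf{P})\mathbf{h}(\sigma)]_1\,d\sigma\Bigr\|_{L^p_\tau(\RR_+;L^q(\BB^5))}\le\int_0^\infty\bigl\|[\mathbf{S}(\cdot)(\mathbf{I}-\mathbf{P})\mathbf{h}(\sigma)]_1\bigr\|_{L^p(\RR_+;L^q(\BB^5))}\,d\sigma,
\]
after a translation in $\tau$, and the homogeneous estimate already established converts the right-hand side into $\int_0^\infty\|(\mathbf{I}-\mathbf{P})\mathbf{h}(\sigma)\|_{\mathcal{H}}\,d\sigma=\|(\mathbf{I}-\mathbf{P})\mathbf{h}\|_{L^1(\RR_+,\mathcal{H})}$, as required. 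The main obstacle in this argument was really absorbed earlier: obtaining the $T_n$ and $\dot T_n$ bounds required the careful kernel analysis, while the present theorem is essentially a clean packaging step, with the only subtlety being the splitting of $F_\lambda$ so that the $\lambda$-growth is exactly captured by $\dot T_n$ acting on $\tilde f_1\in H^1$.
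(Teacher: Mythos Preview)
Your proof is correct and follows essentially the same approach as the paper: decompose via \eqref{eqn:semigpdecomp}, apply Propositions \ref{prop:freestrichartz}, \ref{prop:Tbound}, \ref{prop:dotTbound} to the respective pieces, and pass to general $\mathbf{f}$ by density, with the inhomogeneous estimate following from Minkowski. One small remark: the density argument does not actually require the improved uniform bound $\|\mathbf{S}(\tau)(\mathbf{I}-\mathbf{P})\|\lesssim 1$ (which in the paper is Lemma \ref{lemma:improvedenergy}, proved only afterwards); strong continuity of $\mathbf{S}(\tau)$ on $\mathcal{H}$ at each fixed $\tau$ suffices to identify the limit.
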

\begin{proof}
From Equation \eqref{eqn:semigpdecomp}, we have
$$
[ \mathbf{S}(\tau) \tilde{\mathbf{f}} ]_1  = [ \mathbf{S}_0(\tau) \tilde{\mathbf{f}} ]_1 (\rho)+\sum_{n=1}^6 \lf[T_n(\tau)\lf(|\cdot|\tilde{f}_1' +2 \tilde{f}_1+\tilde{f}_2 \rt) +  \dot{T}_n(\tau) \tilde{f}_1 \rt].
$$
Then by using Proposition \ref{prop:freestrichartz}, \ref{prop:Tbound}, \ref{prop:dotTbound}, the stated result follows by the same argument as \cite[Theorem 4.1]{donn2017}.
\end{proof}

\section{Improved energy estimates}

We also need to take $\epsilon \to 0+$ in Lemma \ref{lemma:growthbound}. But since the constant depends on $\epsilon$, we cannot directly do this. Instead, we show explicitly in this section that the constant $C_\epsilon$ can be chosen independent of $\epsilon$, allowing us to take $\epsilon = 0$.

\subsection{Preliminaries}

We first introduce the following result which allows us to interchange a limit and an integral, which is not absolutely convergent. This is in the same spirit as \cite[Lemma 5.1]{donn2017}.
\begin{lemma}\label{lemma:limitinterchange}
Let $f$ be a function of two variables $\epsilon \in [0,\frac{1}{4})$ and $\omega \in \RR$ satisfying the properties that
\begin{itemize}
\item $f(\epsilon, \cdot)$ is an odd function for every fixed $\epsilon$ ,
\item $f \in C^1 ([0,\frac{1}{4}) \times \RR)$ ,
\item $|f(\epsilon,\omega)| \leq C_1 \lla \omega \rra^{-1}$ and $|\PD_\omega f(\epsilon,\omega) | \leq C_2 \lla \omega \rra ^{-2}$ with $C_1, C_2$ constants independent of $\epsilon$.
\end{itemize}
Then
$$
\lim_{\epsilon \to 0+ } \int_\RR e^{i a \omega} f(\epsilon,\omega) d\omega = \int_{\RR} e^{i a \omega} f(0,\omega) d\omega
$$
for all $a\in \RR \backslash \{0\}$.
\end{lemma}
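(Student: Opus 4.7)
The obstruction is that $f(\epsilon,\omega)$ decays only like $\lla \omega\rra^{-1}$, so $\int_\RR e^{ia\omega}f(\epsilon,\omega)\,d\omega$ is only conditionally convergent and one cannot apply dominated convergence in $\omega$ directly to pass $\epsilon\to 0+$. My plan is to use oddness together with an integration by parts to rewrite the oscillatory integral as an absolutely convergent one involving $\PD_\omega f$, where the uniform bound $|\PD_\omega f(\epsilon,\omega)|\leq C_2\lla\omega\rra^{-2}$ can be exploited.

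First I would fix $\epsilon\in[0,\frac{1}{4})$, $a\in\RR\setminus\{0\}$, and use oddness of $f(\epsilon,\cdot)$ to observe that for every $R>0$,
$$
\int_{-R}^R e^{ia\omega}f(\epsilon,\omega)\,d\omega = 2i\int_0^R \sin(a\omega)\,f(\epsilon,\omega)\,d\omega,
$$
because $\cos(a\omega)f(\epsilon,\omega)$ is odd in $\omega$. Next, I would integrate by parts in the right-hand side,
$$
\int_0^R \sin(a\omega)\,f(\epsilon,\omega)\,d\omega = -\frac{\cos(aR)}{a}f(\epsilon,R) + \frac{f(\epsilon,0)}{a} + \frac{1}{a}\int_0^R \cos(a\omega)\,\PD_\omega f(\epsilon,\omega)\,d\omega.
$$
Using $|f(\epsilon,R)|\leq C_1\lla R\rra^{-1}$ the first boundary term vanishes as $R\to\infty$, and oddness plus continuity give $f(\epsilon,0)=0$. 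The remaining integral converges absolutely by the bound $|\PD_\omega f(\epsilon,\omega)|\leq C_2\lla\omega\rra^{-2}$. Hence
$$
\int_\RR e^{ia\omega}f(\epsilon,\omega)\,d\omega = \frac{2i}{a}\int_0^\infty \cos(a\omega)\,\PD_\omega f(\epsilon,\omega)\,d\omega.
$$

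Now the integrand on the right is dominated by the $\epsilon$-independent integrable function $C_2\lla\omega\rra^{-2}$, and by the assumed $C^1$ regularity of $f$ we have $\PD_\omega f(\epsilon,\omega)\to \PD_\omega f(0,\omega)$ pointwise as $\epsilon\to 0+$. Dominated convergence therefore yields
$$
\lim_{\epsilon\to 0+}\int_0^\infty\cos(a\omega)\,\PD_\omega f(\epsilon,\omega)\,d\omega = \int_0^\infty\cos(a\omega)\,\PD_\omega f(0,\omega)\,d\omega.
$$
Running the same oddness-plus-integration-by-parts identity in reverse for the value $\epsilon=0$ then identifies the limit with $\int_\RR e^{ia\omega}f(0,\omega)\,d\omega$, which is the claim. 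The only subtle step is the justification of the integration by parts, which is routine once the boundary terms are handled via the decay and oddness hypotheses; the rest is a direct application of dominated convergence to the regularized integrand.
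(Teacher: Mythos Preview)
Your proof is correct and follows essentially the same approach as the paper: integrate by parts once to convert the conditionally convergent integral into an absolutely convergent one involving $\PD_\omega f$, apply dominated convergence using the uniform bound $|\PD_\omega f|\le C_2\lla\omega\rra^{-2}$, and then undo the integration by parts at $\epsilon=0$. The only cosmetic difference is that the paper performs the integration by parts directly on the full line (with boundary terms at $\pm R$ vanishing by the decay hypothesis), whereas you first use oddness to reduce to $[0,\infty)$; both routes are equally valid and yield the same identity.
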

\begin{proof} We do an integration by parts,
\begin{align*}
\int_\RR e^{i a \omega} f(\epsilon,\omega) d\omega 
 =  -\frac{1}{i a} \int_\RR e^{i a \omega} \PD_\omega f(\epsilon,\omega) d\omega.
\end{align*}
This integral is now absolutely convergent, so we can take the limit in $\epsilon$ by dominated convergence, then undo the integration by parts,
\begin{align*}
\lim_{\epsilon \to 0+ } \int_\RR e^{i a \omega} f(\epsilon,\omega) d\omega 
= & \lim_{\epsilon \to 0+ } -\frac{1}{i a} \int_\RR e^{i a \omega} \PD_\omega f(\epsilon,\omega) d\omega \\
= & -\frac{1}{i a}  \int_\RR e^{i a \omega} \PD_\omega f(0,\omega) d\omega \\
= & \int_{\RR} e^{i a \omega} f(0,\omega) d\omega,
\end{align*}
as stated.
\end{proof}

\subsection{Decomposition}

Let $\tilde{\mathbf{f}} \in \rg(\mathbf{I}-\mathbf{P})\cap C^2\times C^1([0,1])$, we have
$$
\PD_\rho[\mathbf{S}(\tau)\tilde{\mathbf{f}}]_1(\rho) = \frac{1}{2\pi i}\lim_{N\to\infty} \int_{\epsilon- i N}^{\epsilon- i N} e^{\lambda \tau} \int_0^1 G'(\rho,s;\lambda) F_{\lambda}(s) ds,
$$
where
$$
G'(\rho,s;\lambda)=  \frac{s^4(1-s^2)^{-\frac{1}{2}+\lambda}}{(3-2\lambda)(1+2\lambda)(1-2\lambda)w_0(\lambda)} \begin{cases} \PD_\rho u_0(\rho;\lambda) u_1(s;\lambda) \text{ if }   \rho \leq s \\ \PD_\rho u_1(\rho;\lambda) u_0(s;\lambda)  \text{ if } \rho \geq s  \end{cases}
$$
and $F_{\lambda}(s)=s \tilde{f}'_1(s) + \lf(\lambda+\frac{5}{2} \rt)\tilde{f}_1(s) + \tilde{f}_2(s)$. The free part is given by
$$
G'_0(\rho,s;\lambda)=  \frac{s^4(1-s^2)^{-\frac{1}{2}+\lambda}}{(3-2\lambda)(1+2\lambda)(1-2\lambda)} \begin{cases} \PD_\rho \varphi_0(\rho;\lambda) \varphi_1(s;\lambda) \text{ if }   \rho \leq s \\ \PD_\rho \varphi_1(\rho;\lambda) \varphi_0(s;\lambda)  \text{ if } \rho \geq s  \end{cases}.
$$
\begin{lemma}We have the decomposition
$$
G'(\rho,s;\lambda) = G'_0(\rho,s;\lambda) + \sum_{n=1}^6\rho^{-1} \tilde{G}_n(\rho,s;\lambda) + \sum_{n=1}^6 G'_n(\rho,s;\lambda),
$$
where $\tilde{G}_n$ has the same form as in Lemma \ref{lemma:decomposeG}, and
\begin{align*}
G'_1(\rho,s;\lambda) & =  1_{\RR_+}(s-\rho) \chi(\rho\lla \omega\rra) \frac{s^4(1-s^2)^{-\frac{1}{2}+\lambda}}{1+2\lambda}\lf( g_1(\rho;\lambda) - \tilde{g}_1(\rho;\lambda) \rt) \varphi_1(s;\lambda) \gamma'_1(\rho,s ;\lambda), \\
G'_2(\rho,s;\lambda) & =  1_{\RR_+}(s-\rho) [1-\chi(\rho\lla \omega\rra)] \frac{s^4(1-s^2)^{-\frac{1}{2}+\lambda}}{1+2\lambda}g_1(\rho;\lambda) \varphi_1(s;\lambda) \gamma'_2(\rho,s ;\lambda),  \\
G'_3(\rho,s;\lambda) & =  1_{\RR_+}(s-\rho) [1-\chi(\rho\lla \omega\rra)] \frac{s^4(1-s^2)^{-\frac{1}{2}+\lambda}}{1+2\lambda}\tilde{g}_1(\rho;\lambda) \varphi_1(s;\lambda) \gamma'_3(\rho,s ;\lambda),  \\
G'_4(\rho,s;\lambda) & =  1_{\RR_+}(\rho - s)  \chi(s \lla \omega\rra)  \frac{s^4(1-s^2)^{-\frac{1}{2}+\lambda}}{1+2\lambda} g_1(\rho;\lambda)\varphi_0(s;\lambda) \gamma'_4(\rho,s ;\lambda), \\
G'_5(\rho,s;\lambda) & =  1_{\RR_+}(\rho - s) [1- \chi(s \lla \omega\rra) ] \frac{s^4(1-s^2)^{-\frac{1}{2}+\lambda}}{1+2\lambda} g_1(\rho;\lambda) \varphi_1(s;\lambda) \gamma'_5(\rho,s ;\lambda), \\
G'_6(\rho,s;\lambda) & =  1_{\RR_+}(\rho - s) [1- \chi(s \lla \omega\rra) ]  \frac{s^4(1-s^2)^{-\frac{1}{2}+\lambda}}{1+2\lambda}g_1(\rho;\lambda) \tilde{\varphi}_1(s;\lambda) \gamma'_6(\rho,s ;\lambda) ,
\end{align*}
with
\begin{align*}
g_1(\rho;\lambda) & = \rho^{-2}(1+\rho)^{-\frac{1}{2}-\lambda}, \\
 \tilde{g}_1(\rho;\lambda) & =\rho^{-2}(1-\rho)^{-\frac{1}{2}-\lambda},
\end{align*}
and
$$
\gamma'_n (\rho,s ;\lambda)  =  \mathcal{O}(\rho^0 s^0) \mathcal{O}_0( \lla \omega \rra^{-1} ) + \mathcal{O}((1-\rho)^0 s^0 \lla \omega \rra^{-2}) + \mathcal{O}(\rho^0 (1-s) \lla \omega \rra ^{-2}).
$$
\end{lemma}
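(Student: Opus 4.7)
The plan is to differentiate the representations of $u_0$ and $u_1$ from the proof of Lemma~\ref{lemma:decomposeG} (in particular equations \eqref{eqn:u1pert} and \eqref{eqn:u0pert}) and group the resulting terms according to their $(\rho,\omega)$ behavior. The starting algebraic observation is that the unperturbed derivatives split cleanly: a direct computation from the explicit formulas for $\varphi_1,\tilde\varphi_1$ gives
\begin{align*}
\partial_\rho\varphi_1(\rho;\lambda) &= -\tfrac{3}{\rho}\varphi_1(\rho;\lambda) - \tfrac{1}{2}(3-2\lambda)(1-2\lambda)\,g_1(\rho;\lambda), \\
\partial_\rho\tilde\varphi_1(\rho;\lambda) &= -\tfrac{3}{\rho}\tilde\varphi_1(\rho;\lambda) - \tfrac{1}{2}(3-2\lambda)(1-2\lambda)\,\tilde g_1(\rho;\lambda),
\end{align*}
and hence, via $\varphi_0 = \varphi_1-\tilde\varphi_1$, also $\partial_\rho\varphi_0 = -\tfrac{3}{\rho}\varphi_0 - \tfrac{1}{2}(3-2\lambda)(1-2\lambda)(g_1-\tilde g_1)$. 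After dividing by the Wronskian factor $(3-2\lambda)(1+2\lambda)(1-2\lambda)$, the first summand in each identity contributes the $-3\rho^{-1}\tilde G_n$ pieces (with $\tilde G_n$ of exactly the form listed in Lemma~\ref{lemma:decomposeG}), while the second summand produces precisely the prefactor $g_1/(1+2\lambda)$, or $\tilde g_1/(1+2\lambda)$, or $(g_1-\tilde g_1)/(1+2\lambda)$, that appears in the definitions of $G'_1,\dots,G'_6$.

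Writing $u_0 = \chi(\rho\lla\omega\rra)\varphi_0\,A_0 + [1-\chi(\rho\lla\omega\rra)](\varphi_1 A_1 - \tilde\varphi_1 A_2)$ and $u_1 = \varphi_1 A_3$ (where $A_j = 1 + \gamma_j$ with $\gamma_j$ of symbol type as in Lemma~\ref{lemma:decomposeG}), the derivative $\partial_\rho$ produces three kinds of contributions: (i) derivatives hitting $\varphi_0,\varphi_1,\tilde\varphi_1$, treated by the identities above; (ii) derivatives hitting the brackets $A_j$, which by the symbol calculus of Appendix~\ref{app:symbol} produce $\rho^{-1}$ times new symbols of the same class; (iii) derivatives hitting $\chi(\rho\lla\omega\rra)$, producing $\chi'(\rho\lla\omega\rra)\lla\omega\rra$ multiplied by $[\varphi_0 A_0 - \varphi_1 A_1 + \tilde\varphi_1 A_2]$. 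Using $\varphi_0 = \varphi_1 - \tilde\varphi_1$, the last bracket rearranges as $\varphi_1(A_0-A_1) - \tilde\varphi_1(A_0-A_2)$, so the leading constants cancel and only the differences of the $\gamma_j$'s survive, retaining the oddness/decay $\mathcal O_o(\lla\omega\rra^{-1})+\mathcal O(\lla\omega\rra^{-2})$ inherited from Lemma~\ref{lemma:decomposeG}.

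Collecting contributions, all pieces of type (i) carrying the $-3\rho^{-1}$ factor assemble into $\rho^{-1}\sum_n\tilde G_n(\rho,s;\lambda)$; everything else---the new $g_1,\tilde g_1,g_1-\tilde g_1$ pieces from (i), together with the $\rho^{-1}$-corrections of (ii) and the cut-off residues of (iii)---assembles into $\sum_n G'_n(\rho,s;\lambda)$ with the stated prefactors and with symbols $\gamma'_n$ of the claimed form. The piece at $\rho\geq s$ proceeds identically: we decompose $u_1(\rho;\lambda) = \varphi_1(\rho;\lambda) A_3$ and the $\partial_\rho\varphi_1$ identity already supplies the $g_1/(1+2\lambda)$ factor that is common to $G'_4,G'_5,G'_6$. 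The support properties $\chi(s\lla\omega\rra)$ vs.~$1-\chi(s\lla\omega\rra)$ survive from the decomposition of $u_0(s;\lambda)$ (resp.~$u_1(s;\lambda)$) and partition these three cases.

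The main technical obstacle is verifying the symbol-type bounds for the contributions of type (iii): the factor $\chi'(\rho\lla\omega\rra)\lla\omega\rra$ is concentrated on the transition region $\rho\simeq\lla\omega\rra^{-1}$ where both pieces of the $u_0$ decomposition have comparable size $\sim\lla\omega\rra^3$, so naive bounds would lose too much. The key is the exact identity $\varphi_0 = \varphi_1-\tilde\varphi_1$, which reduces the contribution to the bracket differences $A_0-A_1$ and $A_0-A_2$; these are, by the proof of Lemma~\ref{lemma:decomposeG}, of the form $\mathcal O_o(\lla\omega\rra^{-1}) + \mathcal O(\lla\omega\rra^{-2})$ plus remainders absorbed by $\rho^0(1-\rho)\lla\omega\rra^{-2}$ and $(1-s)\lla\omega\rra^{-2}$, matching precisely the class of $\gamma'_n$. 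The remaining work is then bookkeeping: distributing the derivatives between cases $\rho\le s$ and $\rho\ge s$, tracking the surviving $\chi$/$(1-\chi)$ factors, and renaming the resulting symbol combinations as $\gamma'_n$.
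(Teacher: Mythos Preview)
Your overall strategy is correct and essentially matches the paper: compute $\partial_\rho\varphi_1,\partial_\rho\tilde\varphi_1,\partial_\rho\varphi_0$ explicitly to split off the $-3\rho^{-1}$ piece and the $g_1,\tilde g_1$ piece, then differentiate the perturbative brackets. However, your allocation of the contributions (ii) and (iii) is off. The terms of type (ii), namely $\varphi_j\,\partial_\rho A_j$, are of the form $\rho^{-1}\varphi_j\cdot[\mathcal O(\rho^0)\mathcal O_o(\lla\omega\rra^{-1})+\mathcal O(\rho^0(1-\rho)^0\lla\omega\rra^{-2})]$; after dividing by $W(\lambda)w_0(\lambda)$ these carry the prefactor $\varphi_j/W$, not $g_1/(1+2\lambda)$, so they belong to $\rho^{-1}\tilde G_n$ rather than to $G'_n$. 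Likewise the cut-off residue (iii), once you write $\chi'(\rho\lla\omega\rra)\lla\omega\rra=\rho^{-1}\cdot[\chi'(\rho\lla\omega\rra)\rho\lla\omega\rra]$ and note that the bracket is $\mathcal O(\rho^0\lla\omega\rra^0)$, is again of the shape $\rho^{-1}\varphi_1\cdot[\text{symbol}]$ and $\rho^{-1}\tilde\varphi_1\cdot[\text{symbol}]$, hence it feeds into the $\rho^{-1}\tilde G_n$ pieces, not into $G'_n$. The $G'_n$ are reserved exclusively for the terms that genuinely carry $g_1$, $\tilde g_1$, or $g_1-\tilde g_1$, and these arise only from the second summand of your derivative identities for $\partial_\rho\varphi_1,\partial_\rho\tilde\varphi_1,\partial_\rho\varphi_0$.

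There is also a cleaner way to avoid the cut-off residue (iii) altogether, and this is what the paper does: instead of differentiating the representation \eqref{eqn:u0pert} (which already contains the cut-offs), compute $\partial_\rho u_0$ first and then apply the partition of unity $\chi+(1-\chi)$ to $\partial_\rho u_0$. On the support of $\chi$ one uses the representation $u_0=\varphi_0[1+\mathcal O(\rho^2\lla\omega\rra^0)]$ from Lemma~\ref{lemma:pertsol0} and differentiates; on the support of $1-\chi$ one uses $u_0=[1+\mathcal O_o+\mathcal O]u_1-[1+\mathcal O_o+\mathcal O]\tilde u_1$ from Lemma~\ref{lemma:pertsol0all} and differentiates. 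Since the coefficient functions in the latter depend only on $\omega$ and not on $\rho$, no cut-off derivative ever appears, and the bookkeeping is simpler.
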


\begin{proof} First of all, differentiating explicitly we have
\begin{align*}
\PD_\rho \varphi_1(\rho;\lambda) &= -3\rho^{-1} \varphi_1(\rho;\lambda) -\frac{1}{2}(1-2\lambda)(3-2\lambda)\rho^{-2} (1+\rho)^{-\frac{1}{2}-\lambda}, \\
\PD_\rho \tilde{\varphi}_1(\rho;\lambda) &= -3\rho^{-1} \tilde{\varphi}_1(\rho;\lambda)-\frac{1}{2}(1-2\lambda)(3-2\lambda)\rho^{-2} (1-\rho)^{-\frac{1}{2}-\lambda}, \\
\PD_\rho \varphi_0(\rho;\lambda) & = \PD_\rho \varphi_1(\rho;\lambda)-\PD_\rho \tilde{\varphi}_1(\rho;\lambda)  \\
&= -3\rho^{-1} \varphi_0(\rho;\lambda)-\frac{1}{2}(1-2\lambda)(3-2\lambda)\rho^{-2} \lf( (1+\rho)^{-\frac{1}{2}-\lambda}-(1-\rho)^{-\frac{1}{2}-\lambda} \rt).
\end{align*}
Now, using Equation \eqref{eqn:u1pert} we have
\begin{align*}
& \PD_\rho u_1(\rho;\lambda)\\
 = & \PD_\rho \varphi_1 (\rho;\lambda) [1+\mathcal{O}(\rho^0)\mathcal{O}_o(\lla \omega\rra^{-1})+\mathcal{O}(\rho^0 (1-\rho)\lla \omega\rra^{-2})] \\
& + \varphi_1(\rho;\lambda)[\mathcal{O}(\rho^{-1})\mathcal{O}_o(\lla \omega\rra^{-1})+\mathcal{O}(\rho^{-1} (1-\rho)^0 \lla \omega\rra^{-2})] \\
= & \PD_\rho \varphi_1(\rho;\lambda)  + \lf(  -3\rho^{-1} \varphi_1(\rho;\lambda)-\frac{1}{2}(1-2\lambda)(3-2\lambda)\rho^{-2} (1+\rho)^{-\frac{1}{2}-\lambda}\rt)\\
& \times [\mathcal{O}(\rho^0)\mathcal{O}_o(\lla \omega\rra^{-1})+\mathcal{O}(\rho^0(1-\rho)\lla \omega\rra^{-2})] \\
& + \rho^{-1}\varphi_1(\rho;\lambda)[\mathcal{O}(\rho^0)\mathcal{O}_o(\lla \omega\rra^{-1})+\mathcal{O}(\rho^0 (1-\rho)^0\lla \omega\rra^{-2})] \\
= & \PD_\rho \varphi_1(\rho;\lambda) + \rho^{-1}\varphi_1(\rho;\lambda)[\mathcal{O}(\rho^0)\mathcal{O}_o(\lla \omega\rra^{-1})+\mathcal{O}(\rho^0 (1-\rho)^0\lla \omega\rra^{-2})] \\
& - (1-2\lambda)(3-2\lambda)\rho^{-2}(1+\rho)^{-\frac{1}{2}-\lambda}[\mathcal{O}(\rho^0)O_o(\lla \omega\rra^{-1})+\mathcal{O}(\rho^0 (1-\rho)\lla \omega\rra^{-2})] .
\end{align*}
From Lemma \ref{lemma:pertsol0}, we have
\begin{align*}
& \chi(\rho\lla \omega\rra)\PD_\rho u_0(\rho;\lambda) \\
= & \chi(\rho\lla \omega\rra)\PD_\rho \varphi_0 (\rho;\lambda)[1+\mathcal{O}(\rho^2\lla \omega\rra^0)] +\chi(\rho\lla \omega\rra) \varphi_0(\rho;\lambda)\mathcal{O}(\rho \lla \omega\rra ) \\
= & \chi(\rho\lla \omega\rra)  \PD_\rho \varphi_0 (\rho;\lambda) +  \chi(\rho\lla \omega\rra) \rho^{-1} \varphi_0(\rho;\lambda)\mathcal{O}(\rho^0 \lla \omega\rra^{-2})   \\
& -\chi(\rho\lla \omega\rra)  \frac{1}{2}(1-2\lambda)(3-2\lambda)\rho^{-2} \lf( (1+\rho)^{-\frac{1}{2}-\lambda}-(1-\rho)^{-\frac{1}{2}-\lambda} \rt) \mathcal{O}(\rho^0\lla \omega\rra^{-2}) \\
= & \chi(\rho\lla \omega\rra) \PD_\rho \varphi_0 (\rho;\lambda) + \chi(\rho\lla \omega\rra)\rho^{-1} \varphi_0(\rho;\lambda) \mathcal{O}(\rho^0 \lla \omega \rra ^{-2}) \\
&  -\chi(\rho\lla \omega\rra) (1-2\lambda)(3-2\lambda)\rho^{-2}  \lf((1+\rho)^{-\frac{1}{2}-\lambda} -(1-\rho)^{-\frac{1}{2}-\lambda}   \rt)  \mathcal{O}(\rho^0\lla \omega\rra^{-2}).
\end{align*}
From Lemma \ref{lemma:pertsol0all}, we have
\begin{align*}
& [1- \chi(\rho\lla \omega\rra)] \PD_\rho u_0(\rho;\lambda) \\
= &  [1- \chi(\rho\lla \omega\rra)] \PD_\rho \varphi_1(\rho;\lambda) [1+\mathcal{O}(\rho^0)\mathcal{O}_o(\lla \omega\rra^{-1}) + \mathcal{O}(\rho^0 (1-\rho)\lla \omega\rra^{-2})+ \mathcal{O}(\lla \omega\rra^{-2})] \\
& + [1- \chi(\rho\lla \omega\rra)] \varphi_1(\rho;\lambda) [\mathcal{O}(\rho^{-1} )\mathcal{O}_o(\lla \omega\rra^{-1}) + \mathcal{O}(\rho^{-1} (1-\rho)^0\lla \omega\rra^{-2})] \\
& - [1- \chi(\rho\lla \omega\rra)] \PD_\rho \tilde{\varphi}_1(\rho;\lambda) [1+\mathcal{O}(\rho^0)\mathcal{O}_o(\lla \omega\rra^{-1}) + \mathcal{O}(\rho^0 (1-\rho)\lla \omega\rra^{-2})+ \mathcal{O}(\lla \omega\rra^{-2})] \\
& - [1- \chi(\rho\lla \omega\rra)] \tilde{\varphi}_1(\rho;\lambda) [\mathcal{O}(\rho^{-1} )\mathcal{O}_o(\lla \omega\rra^{-1}) + \mathcal{O}(\rho^{-1}(1-\rho)^0\lla \omega\rra^{-2})]  \\
= & [1- \chi(\rho\lla \omega\rra)] \PD_\rho \varphi_0(\rho;\lambda) \\
& + [1- \chi(\rho\lla \omega\rra)]\rho^{-1} \varphi_1(\rho;\lambda) [\mathcal{O}(\rho^0)O_o(\lla \omega\rra^{-1}) + \mathcal{O}(\rho^0 (1-\rho)^0\lla \omega \rra^{-2})] \\
& - [1- \chi(\rho\lla \omega\rra)]\rho^{-1} \tilde{\varphi}_1(\rho;\lambda) [\mathcal{O}(\rho^0)O_o(\lla \omega\rra^{-1}) + \mathcal{O}(\rho^0 (1-\rho)^0\lla \omega \rra^{-2})] \\
& + [1- \chi(\rho\lla \omega\rra)](1-2\lambda)(3-2\lambda)\rho^{-2} (1+\rho)^{-\frac{1}{2}-\lambda}[\mathcal{O}(\rho^0)\mathcal{O}_o(\lla \omega\rra^{-1}) + \mathcal{O}(\rho^0 (1-\rho)^0\lla \omega\rra^{-2})] \\
& -[1- \chi(\rho\lla \omega\rra)](1-2\lambda)(3-2\lambda)\rho^{-2} (1-\rho)^{-\frac{1}{2}-\lambda}[\mathcal{O}(\rho^0)\mathcal{O}_o(\lla \omega\rra^{-1}) + \mathcal{O}(\rho^0 (1-\rho)^0\lla \omega\rra^{-2})] .
\end{align*}

Combining with \eqref{eqn:u1pert} and \eqref{eqn:u0pert}, we obtain the desired decomposition.
\end{proof}

Now for $n=\{1,2, \ldots,6\}$, $\tau >0$, $\rho \in (0,1)$, and $f\in C^1([0,1])$, we define operators
\begin{align*}
 \tilde{S}_{n,\epsilon} (\tau) f(\rho) : =& \frac{1}{2\pi i} \lim_{N \to \infty} \int_{\epsilon- i N}^{\epsilon + i N} e^{\lambda \tau} \int_0^1  \rho^{-1} \tilde{G}_n(\rho ,s ; \lambda) f(s) ds d\lambda  \\
= & \frac{1}{2\pi }  \int_{\RR} e^{(\epsilon+i \omega) \tau} \int_0^1 \rho^{-1} \tilde{G}_n(\rho ,s ; \epsilon+i \omega) f(s) ds d\omega. 
\end{align*}
Then by dominated convergence and Fubini, we have
$$
\tilde{S}_n(\tau) f(\rho):= \lim_{\epsilon \to 0+} \tilde{S}_{n,\epsilon} (\tau) f(\rho) = \frac{1}{2\pi} \int_0^1f(s) \int_{\RR} e^{i \omega \tau}  \rho^{-1} \tilde{G}_n(\rho ,s ; i \omega) d\omega ds .
$$
On the other hand, for $n=\{1,2, \cdots, 6\}$, $\tau >0$, $\rho \in (0,1)$, and $f\in C^1([0,1])$, we define
\begin{align*}
S'_{n,\epsilon} (\tau) f(\rho) = & \frac{1}{2\pi i} \lim_{N \to \infty} \int_{\epsilon- i N}^{\epsilon + i N} e^{\lambda \tau} \int_0^1 G'_n(\rho ,s ; \lambda) f(s) ds d\lambda  \\
= & \frac{1}{2\pi }  \int_{\RR} e^{(\epsilon+i \omega) \tau} \int_0^1 G'_n(\rho ,s ; \epsilon+i \omega) f(s) ds d\omega,
\end{align*}
and we also want to take $\epsilon \to 0+$. Indeed, by the same argument as \cite[Lemma 5.3]{donn2017}, we have
\begin{align*}
S'_n (\tau) f(\rho) := \lim_{\epsilon \to 0+}  S'_{n,\epsilon} (\tau) f(\rho) = & \frac{1}{2\pi }  \int_{\RR} e^{i \omega \tau} \int_0^1 G'_n(\rho ,s ; i \omega) f(s) ds d\omega \\
= & \frac{1}{2\pi }   \int_0^1 f(s) \int_{\RR} e^{i \omega \tau} G'_n(\rho ,s ; i \omega)   d\omega ds.
\end{align*}

Our goal now is to obtain bounds for $\|\tilde{S}_n(\tau) f\|_{L^2(\BB^5)}$ and $\|S'_n (\tau) f\|_{L^2(\BB^5)}$. 

\subsection{Kernel estimates}

We have bounds for the kernel of the operators $\tilde{S}_n$.

\begin{lemma}\label{lemma:gtilde} We have
$$
\lf| \int_\RR e^{i \omega \tau}  \tilde{G}_n(\rho ,s ; i \omega) d\omega \rt| \lesssim \rho^{-1}s^2 (1-s)^{-\frac{1}{2}}\lla \tau  + \log(1-s) \rra^{-2},
$$
for all $\tau>0$, $\rho,s\in(0,1)$, and $n=\{1, 2, \ldots,6 \}$.
\end{lemma}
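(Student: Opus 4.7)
The plan is to adapt the case analysis from the proof of Proposition \ref{prop:kernels}, using the extra $\rho^{-1}$ freedom afforded by the target bound to replace every application of Lemma \ref{lemma:a} (which produced the $|\tau+\log(1-s)|^{-1/10}$ loss there) by direct applications of Lemmas \ref{lemma:o} and \ref{lemma:rho1}, giving the cleaner $\lla\tau+\log(1-s)\rra^{-2}$ decay. In every case I begin with the identity
\[
s^4(1-s^2)^{-1/2+\lambda}\varphi_1(s;\lambda)\;=\;2s(1-s)^{-1/2+\lambda}+s^2(-1+2\lambda)(1-s)^{-1/2+\lambda},
\]
which splits the $s$-dependence into an $I_1$ piece (prefactor $s$) and an $I_2$ piece (prefactor $s^2(-1+2\lambda)$), and I exploit $(-1+2\lambda)/(1-2\lambda)=-1$ to cancel the $(1-2\lambda)$ in the denominator of $\tilde G_n$ whenever a $(-1+2\lambda)$ factor appears.

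For $n=1$ the cut-off is $\chi(\rho\lla\omega\rra)$ and $\varphi_0(\rho;\lambda)$ is present. In $I_1$ I insert representation \eqref{eqn:2red}, whose numerator $(3-2\lambda)(-1+2\lambda)(1+2\lambda)$ cancels the denominator of $\tilde G_1$ up to a sign; swapping orders of the $\omega$- and $(t_1,t_2)$-integrations and applying Lemma \ref{lemma:o} gives $|I_1|\lesssim s(1-s)^{-1/2}\lla\tau+\log(1-s)\rra^{-2}$, which via $s\leq\rho^{-1}s^2$ on the support $\{s\geq\rho\}$ matches the claim. In $I_2$ I use \eqref{eqn:1red}, which produces an explicit $\rho^{-1}$ in front, and the same application of Lemma \ref{lemma:o} yields $|I_2|\lesssim\rho^{-1}s^2(1-s)^{-1/2}\lla\tau+\log(1-s)\rra^{-2}$ directly.

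For $n=2,3$ the cut-off becomes $1-\chi(\rho\lla\omega\rra)$ and $\varphi_0(\rho;\lambda)$ is replaced by $\varphi_1(\rho;\lambda)$ or $\tilde\varphi_1(\rho;\lambda)$. I further split
\[
\varphi_1(\rho;\lambda)\;=\;2\rho^{-3}(1+\rho)^{1/2-\lambda}+\rho^{-2}(-1+2\lambda)(1+\rho)^{1/2-\lambda},
\]
producing four oscillatory integrals of the form $s^{i}\rho^{-m}\int_\RR[1-\chi(\rho\lla\omega\rra)]e^{i\omega a}\mathcal{O}(\lla\omega\rra^{-k})\,d\omega$, up to bounded prefactors, with $(i,m,k)\in\{(1,3,4),(1,2,3),(2,3,3),(2,2,2)\}$ and $a=\tau+\log(1-s)-\log(1+\rho)$. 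Each fits Lemma \ref{lemma:rho1} (used with exponent matched to $m$ and $k$), yielding $s\lla a\rra^{-2}$ for the two $i=1$ pieces and $s^2\rho^{-1}\lla a\rra^{-2}$ for the two $i=2$ pieces; combined with $s\leq\rho^{-1}s^2$ on $\{s\geq\rho\}$ and the boundedness of $\log(1+\rho)$, this gives the claim for $n=2$. For $n=3$ the phase becomes $\tau+\log(1-s)-\log(1-\rho)$ and an accompanying $(1-\rho)^{1/2}$ appears; the bound \eqref{eqn:T3est} converts $(1-\rho)^{1/2}\lla\tau+\log(1-s)-\log(1-\rho)\rra^{-2}$ into $\lla\tau+\log(1-s)\rra^{-2}$, finishing the argument.

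For $n\in\{4,5,6\}$ the support is $\{\rho\geq s\}$, and the proof of Proposition \ref{prop:kernels} already yields the pointwise bound $|\int_\RR e^{i\omega\tau}\tilde G_n(\rho,s;i\omega)\,d\omega|\lesssim (s^3/\rho^2)(1-s)^{-1/2}\lla\tau+\log(1-s)\rra^{-2}$. Since $s^3/\rho^2\leq s^2/\rho$ whenever $s\leq\rho$, this is stronger than the claim, so no new work is needed. The main technical obstacle is the combinatorial matching in the $n=2,3$ cases: the order of the $\rho$-singularity contributed by $\varphi_1(\rho;\lambda)$ must be aligned \emph{exactly} with the order of $\lla\omega\rra$-decay available after combining the denominator $(3-2\lambda)(1+2\lambda)(1-2\lambda)$, $\tilde\gamma_n$, and the polynomial factors $(-1+2\lambda)$, so that Lemma \ref{lemma:rho1} can be invoked with the correct exponent; this is why the two-term decomposition of $\varphi_1$ (rather than the crude estimate $|\varphi_1(\rho;i\omega)|\lesssim\rho^{-3}\lla\omega\rra$) is essential.
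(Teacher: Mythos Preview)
Your proof is correct and follows essentially the same approach as the paper. For $n=1$ you use \eqref{eqn:2red} on $I_1$ and \eqref{eqn:1red} on $I_2$ exactly as the paper does; for $n\in\{4,5,6\}$ both you and the paper simply recycle the pointwise bound \eqref{eqn:T4est1} from Proposition~\ref{prop:kernels} together with $s^3/\rho^2\le s^2/\rho$ on $\{s\le\rho\}$. The only cosmetic difference is for $n=2,3$: the paper handles these by saying ``do the same as for $G_2$ (resp.\ $G_3$) in Proposition~\ref{prop:kernels} but stop at \eqref{eqn:G2I2est} (resp.\ use \eqref{eqn:T3est})'', which implicitly absorbs $2+\rho(-1+2\lambda)$ into a single $\rho^{-2}\mathcal{O}(\lla\omega\rra)$ factor on the support of $1-\chi(\rho\lla\omega\rra)$, whereas you make this explicit via your four-case $(i,m,k)$ split of both the $s$- and $\rho$-polynomials; the two arguments are equivalent.
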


\begin{proof}
Since the form of $\tilde{G}_n$ is the same as $G_n$, the treatment here is similar to Proposition \ref{prop:kernels}. For $\tilde{G}_1$, we have
$$
\int_\RR e^{i \omega \tau} \tilde{G}_1(\rho ,s ; \omega) d\omega = 2 I_1(\rho,s,\tau) + I_2(\rho,s\tau)
$$
where
\begin{align*}
I_1(\rho,s,\tau) = &  1_{\RR_+}(s-\rho)s(1-s)^{-\frac{1}{2}} \int_\RR \chi(\rho\lla \omega \rra) \frac{e^{i \omega (\tau+\log(1-s))} \varphi_0(\rho;i \omega)}{(3-2i\omega)(1+2i\omega)(1-2i\omega)} \tilde{\gamma}_1(\rho,s;i\omega) d\omega , \\
I_2 (\rho,s,\tau) = &  1_{\RR_+}(s-\rho)  s^2 (1-s)^{-\frac{1}{2}} \int_\RR \chi(\rho\lla \omega \rra) \frac{e^{i \omega (\tau+\log(1-s))}   \varphi_0(\rho;i \omega)}{(3-2i\omega)(-1-2i\omega)} \tilde{\gamma}_1(\rho,s;i\omega) d\omega.
\end{align*}
From the proof of Proposition \ref{prop:kernels} we already see that
\begin{align*}
|I_1(\rho,s,\tau)|& \lesssim 1_{\RR_+}(s-\rho)s(1-s)^{-\frac{1}{2}} \lla \tau+\log(1-s) \rra ^{-2} \\ &\leq  \rho^{-1}s^2 (1-s)^{-\frac{1}{2}} \lla \tau+\log(1-s) \rra ^{-2}.
\end{align*}
For $I_2$, we use \eqref{eqn:1red}, and as before we have
\begin{align*}
& \lf| \int_\RR \chi(\rho\lla \omega \rra) \frac{e^{i \omega (\tau+\log(1-s))}   \varphi_0(\rho;i \omega)}{(3-2i\omega)(-1-2i\omega)} \tilde{\gamma}_1(\rho,s;i\omega) d\omega \rt| \\
\cong & \lf| \rho^{-1} \int_{-1}^1 (1+\rho t)^{-\frac{1}{2}} \int_{\RR} \chi(\rho\lla \omega \rra) e^{i \omega (\tau+\log(1-s)-\log(1-\rho t))} \frac{1-2i\omega}{1+2i\omega} \tilde{\gamma}_1(\rho,s;i\omega) d\omega dt \rt| \\
\leq & \rho^{-1} \lla \tau + \log(1-s) \rra ^{-2},
\end{align*}
This gives the bound
$$
|I_2 (\rho,s,\tau)| \lesssim  \rho^{-1} s^2 (1-s)^{-\frac{1}{2}} \lla \tau+\log(1-s) \rra ^{-2}
$$

For $\tilde{G}_2$, we do the exact same thing in $G_2$ but we stop at \eqref{eqn:G2I2est}, then we obtain the desired bound. Similarly, $\tilde{G}_3$ is exactly the same as $G_3$, where we use \eqref{eqn:T3est}.

For $n=\{4,5,6\}$, by the same method as the case of $G_n$ in Proposition \ref{prop:kernels}, we have the bounds as in \eqref{eqn:T4est1},
\begin{align*}
 \lf| \int_\RR e^{i \omega \tau}  \tilde{G}_n(\rho ,s ; i \omega) d\omega \rt|  \lesssim & 1_{\RR_+}(\rho-s)\rho^{-2} s^3 (1-s)^{-\frac{1}{2}} \lla \tau + \log(1-s) \rra^{-2}  \\
 \leq &  \rho^{-1}s^2 (1-s)^{-\frac{1}{2}} \lla \tau + \log(1-s) \rra^{-2}
\end{align*}
as desired.
\end{proof}

Similarly, we have the following result for $S'_n$.

\begin{lemma}\label{lemma:g'} We have
$$
\lf| \int_\RR e^{i \omega \tau}  G'_n(\rho ,s ; i \omega) d\omega \rt| \lesssim \rho^{-2} (1 -\rho)^{-\frac{1}{2}} s^2(1-s)^{-\frac{1}{2}}\lla \tau- \log(1- \rho) +\log (1-s) \rra^{-2}
$$
for all $\tau>0$, $\rho,s\in(0,1)$, and $n=\{1, 2, \ldots,6\}$.
\end{lemma}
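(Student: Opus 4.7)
The plan is to parallel Proposition \ref{prop:kernels}, with the $\rho$-factors $\varphi_0,\varphi_1,\tilde\varphi_1$ now replaced by the simpler $g_1(\rho;\lambda)=\rho^{-2}(1+\rho)^{-\frac12-\lambda}$ and $\tilde g_1(\rho;\lambda)=\rho^{-2}(1-\rho)^{-\frac12-\lambda}$, and with the denominator reduced to just $(1+2\lambda)$ (one power of $\lla\omega\rra^{-1}$ decay rather than three). I will proceed $n$ by $n$, identifying for each $G'_n$ the natural oscillatory phase coming from combining $e^{i\omega\tau}$ with $(1-s^2)^{i\omega}$, the factor $(1\pm s)^{-i\omega}$ in $\varphi_1/\tilde\varphi_1$, and the factor $(1\pm\rho)^{-i\omega}$ in $g_1/\tilde g_1$. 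This produces a phase of the form $\tau+\log(1\pm s)-\log(1\pm\rho)$, which differs from the target phase $B:=\tau-\log(1-\rho)+\log(1-s)$ only by $\log((1+\rho)/(1-\rho))$ and $\log((1+s)/(1-s))$.

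The main algebraic step I would use is the identity
\[
\frac{s^4\varphi_1(s;\lambda)}{1+2\lambda}=s^2(1+s)^{\frac12-\lambda}+\frac{2s(1-s)(1+s)^{\frac12-\lambda}}{1+2\lambda},
\]
a consequence of $(2+s(-1+2\lambda))/(1+2\lambda)=s+2(1-s)/(1+2\lambda)$, together with the analogous splittings of $s^4\tilde\varphi_1/(1+2\lambda)$ and hence of $s^4\varphi_0/(1+2\lambda)$. The first summand already carries the desired factor $s^2$ without any $\lla\omega\rra$ growth. The second summand carries $s(1-s)$ with an extra $\lla\omega\rra^{-1}$; on the support of $1-\chi(\rho\lla\omega\rra)$ combined with $\rho\leq s$ (cases $n=2,3$) or of $1-\chi(s\lla\omega\rra)$ (cases $n=5,6$) one has $\lla\omega\rra^{-1}\lesssim s$, so this factor also yields a total $s$-weight of $s^2$. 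For $n=4$, I would instead use the representation \eqref{eqn:2red} of $\varphi_0(s;\lambda)$: the prefactor $(3-2\lambda)(-1+2\lambda)(1+2\lambda)$ has a residual $\lla\omega\rra^2$ growth after cancelling $(1+2\lambda)$, but this is absorbed by the constraint $|s\omega|\lesssim1$ from $\chi(s\lla\omega\rra)$, giving $s^4\lla\omega\rra^2\lesssim s^2$.

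For the $\omega$-integration I will apply Lemma \ref{lemma:o} in the regions where $\chi$ localizes to small $\rho$ or $s$ (so that the $\log(1\pm\rho)$ or $\log(1\pm s)$ in the phase is bounded), and Lemmas \ref{lemma:rho1}, \ref{lemma:rho2} in the opposite regions. The combination $\gamma'_n\cdot(-1+2\lambda)/(1+2\lambda)$ preserves the symbol structure $\mathcal O_o(\lla\omega\rra^{-1})+\mathcal O(\lla\omega\rra^{-2})$ required by these lemmas, since $(-1+2\lambda)/(1+2\lambda)=\mathcal O_e(\lla\omega\rra^0)+\mathcal O(\lla\omega\rra^{-1})$ and even$\times$odd is odd. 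To convert the natural phase $A$ produced by the computation into the phase $B$ in the claim, I will use $\lla B\rra^2\lesssim\lla A\rra^2+|\log((1+\rho)/(1-\rho))|^2+|\log((1+s)/(1-s))|^2$ together with the elementary bound $|\log((1+x)/(1-x))|^2\lesssim(1-x)^{-1/2}$ for $x\in[0,1)$, so that the mismatch is absorbed by the prefactors $(1-\rho)^{-1/2}(1-s)^{-1/2}$ allowed on the right-hand side.

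The main obstacle will be the case $n=3$, where $\tilde g_1$ simultaneously contributes the singular $(1-\rho)^{-1/2}$ factor and the phase $-\log(1-\rho)$. Here there is no room to absorb extra singularities through $\chi$-cutoffs in $\rho$, and all the estimates must match the claim sharply; conversely, it is precisely this $\tilde g_1$ contribution that explains why the bound must contain both the divergence $(1-\rho)^{-1/2}$ and the shifted phase $\lla\tau-\log(1-\rho)+\log(1-s)\rra^{-2}$ rather than the simpler $\lla\tau+\log(1-s)\rra^{-2}$ appearing in Proposition \ref{prop:kernels}.
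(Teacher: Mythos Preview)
Your overall strategy matches the paper's almost exactly: the paper splits each $G'_n$ into the same two pieces (its $I_1$ and $I_2$ are your ``second summand'' and ``first summand'' respectively), applies Lemma~\ref{lemma:o} to the $s^2$ piece and Lemma~\ref{lemma:rho1} to the piece carrying the extra $\langle\omega\rangle^{-1}$, and then uses exactly your phase-shift trick $\langle A\rangle^{-2}\lesssim (1+|\log\tfrac{1\pm\rho}{1\mp\rho}|^2+|\log\tfrac{1\pm s}{1\mp s}|^2)\langle B\rangle^{-2}\lesssim (1-\rho)^{-1/2}(1-s)^{-1/2}\langle B\rangle^{-2}$ to convert to the target phase. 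Your identification of $n=3$ as the sharp case is also correct and matches the paper's reasoning.

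There is one genuine wrinkle in your plan for $n=4$. You propose to use \eqref{eqn:2red} and absorb the residual factor $(3-2\lambda)(-1+2\lambda)$ via the pointwise bound $s^4\langle\omega\rangle^2\lesssim s^2$ on the support of $\chi(s\langle\omega\rangle)$. But a pointwise bound inside the $\omega$-integral destroys the symbol structure that Lemma~\ref{lemma:o} needs: the product $(3-2i\omega)(-1+2i\omega)\gamma'_4=\mathcal O_o(\langle\omega\rangle)+\mathcal O(\langle\omega\rangle^0)$ is no longer of the form $\mathcal O_o(\langle\omega\rangle^{-1})+\mathcal O(\langle\omega\rangle^{-2})$, so you cannot directly invoke Lemma~\ref{lemma:o} to extract the $\langle a\rangle^{-2}$ decay after the pointwise conversion. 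The cleaner route---which you already have in hand---is to apply your \emph{own} splitting to $\varphi_0=\varphi_1-\tilde\varphi_1$. A short computation shows this yields precisely the decomposition the paper records as \eqref{eqn:spartdecomp}:
\[
\frac{s^4(1-s^2)^{-\frac12+\lambda}\varphi_0(s;\lambda)}{1+2\lambda}
=s^2\bigl[(1-s)^{-\frac12+\lambda}+(1+s)^{-\frac12+\lambda}\bigr]
+\frac{2s\bigl[(1-s)^{\frac12+\lambda}-(1+s)^{\frac12+\lambda}\bigr]}{1+2\lambda}.
\]
The first piece is your ``$s^2$ term'' and is handled by Lemma~\ref{lemma:o}; the second carries $1/(1+2\lambda)$ and, after one more use of the representation \eqref{eqn:3red} to cancel that factor, again reduces to Lemma~\ref{lemma:o} with a bounded phase perturbation. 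So there is no need for the detour through \eqref{eqn:2red} at all---your main algebraic identity already does the job uniformly across all six cases.
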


\begin{proof} The logic is still very similar to the proof of Proposition \ref{prop:kernels}. For $G'_1$, we have
$$
\int_\RR e^{i \omega \tau}  G'_1(\rho ,s ; i \omega) d\omega = 2 I_1(\rho,s,\tau)+I_2(\rho,s,\tau),
$$
where
\begin{align*}
I_1(\rho,s,\tau) = & 1_{\RR+} (s-\rho)s(1-s)^{-\frac{1}{2}} \\
& \times \int_\RR \chi(\rho\lla\omega\rra) e^{i \omega ( \tau+\log (1-s))} \frac{g_1(\rho; i\omega) - \tilde{g}_1(\rho; i\omega)  }{1+2 i\omega}  \gamma'_1(\rho,s; i\omega) d\omega,\\
I_2(\rho,s,\tau) = & 1_{\RR+} (s-\rho)s^2(1-s)^{-\frac{1}{2}} \\
& \times \int_\RR \chi(\rho\lla\omega\rra) e^{i \omega ( \tau+\log (1-s))} \lf( g_1(\rho; i\omega) - \tilde{g}_1(\rho; i\omega) \rt) \frac{-1+2i\omega}{1+2i\omega}   \gamma'_1(\rho,s; i\omega) d\omega.
\end{align*}
For $I_1$, we use the expression
\begin{equation}\label{eqn:4red}
g_1(\rho;i\omega) - \tilde{g}_1(\rho;i\omega) 
= -\frac{1}{2\rho}(1+2i\omega) \int_{-1}^1 (1+\rho t)^{-\frac{3}{2}-i\omega} dt.
\end{equation}
Then, again thanks to the cut-off, we have
\begin{align*}
&  | I_1(\rho,s,\tau)  | \\
 \leq  & 1_{\RR+} (s-\rho)\rho^{-1} s(1-s)^{-\frac{1}{2}} \\
& \times \int_{-1}^1 \lf| (1+\rho t)^{-\frac{3}{2}} \int_\RR \chi(\rho\lla\omega\rra) e^{i \omega ( \tau+\log (1-s)-\log(1+\rho t))} \frac{-1+2i\omega}{1+2i\omega} \gamma'_1(\rho,s; i\omega) d\omega \rt|  dt \\
 \lesssim & 1_{\RR+} (s-\rho)\rho^{-1} s(1-s)^{-\frac{1}{2}} \lla  \tau+\log (1-s) \rra^{-2} \\
 \lesssim & \rho^{-2} (1-\rho)^{-\frac{1}{2}} s^2 (1-s)^{-\frac{1}{2}} \lla  \tau - \log(1-\rho) +\log (1-s) \rra^{-2}
\end{align*}
by Lemma \ref{lemma:o}. For $I_2$, we need to bound the expression
\begin{align*}
I_{2,\pm} (\rho,s,\tau)  = & 1_{\RR+} (s-\rho)\rho^{-2} (1\pm \rho)^{-\frac{1}{2}} s^2(1-s)^{-\frac{1}{2}} \\
& \times\int_{\RR}\chi(\rho\lla\omega\rra) e^{i \omega ( \tau- \log(1\pm \rho) +\log (1-s))}   \frac{-1+2i\omega}{1+2i\omega}  \gamma'_1(\rho,s; i\omega) d\omega.
\end{align*}
Again by Lemma \ref{lemma:o} we have
$$
|I_2(\rho,s,\tau) | \lesssim \rho^{-2} (1 -\rho)^{-\frac{1}{2}} s^2(1-s)^{-\frac{1}{2}}\lla \tau- \log(1- \rho) +\log (1-s) \rra^{-2}.
$$

For $G'_2$, the terms we need to bound are
\begin{align*}
I_1 (\rho,s,\tau) = & 1_{\RR+} (s-\rho)\rho^{-2} (1+ \rho)^{-\frac{1}{2}} s(1-s)^{-\frac{1}{2}} \\
& \times \int_\RR [1-\chi(\rho\lla\omega\rra)] e^{i \omega ( \tau-\log(1+\rho)+\log (1-s) )} \frac{ \gamma'_2(\rho,s; i\omega)}{1+2 i\omega}  d\omega
\end{align*}
and
\begin{align*}
I_2(\rho,s,\tau) = & 1_{\RR+} (s-\rho)\rho^{-2} (1+\rho)^{-\frac{1}{2}} s^2(1-s)^{-\frac{1}{2}} \\
& \times \int_\RR[1- \chi(\rho\lla\omega\rra)] e^{i \omega ( \tau-\log(1+\rho)+\log (1-s))} \frac{-1+2i\omega}{1+2i\omega} \gamma'_2(\rho,s; i\omega) d\omega.
\end{align*}
For $I_1$, notice that
$$
\frac{ \gamma'_2(\rho,s; i\omega)}{1+2 i\omega} = \mathcal{O}(\rho^0(1-\rho)^0s^0(1-s)^0 \lla \omega \rra^{-2}),
$$
we use Lemma \ref{lemma:rho1} and obtain
\begin{align*}
|I_1 (\rho,s,\tau)| &\lesssim 1_{\RR+} (s-\rho)\rho^{-1} (1+\rho)^{-\frac{1}{2}} s(1-s)^{-\frac{1}{2}}  \lla\tau+\log (1-s)\rra^{-2} \\
& \lesssim  \rho^{-2} (1-\rho)^{-\frac{1}{2}} s^2 (1-s)^{-\frac{1}{2}}  \lla\tau-\log(1-\rho)+\log (1-s)\rra^{-2}.
\end{align*}
For $I_2$, by Lemma \ref{lemma:o} we have
\begin{align*}
|I_2(\rho,s,\tau) | & \lesssim \rho^{-2} (1 +\rho)^{-\frac{1}{2}} s^2(1-s)^{-\frac{1}{2}}\lla \tau +\log (1-s) \rra^{-2}\\
& \lesssim  \rho^{-2} (1-\rho)^{-\frac{1}{2}} s^2 (1-s)^{-\frac{1}{2}}  \lla\tau-\log(1-\rho)+\log (1-s)\rra^{-2}.
\end{align*}

Next, $G'_3$ is the same as $G'_2$ with $(1-\rho)$ instead of $(1+\rho)$, and we obtain the same bound.

For $G'_4$, we first write
\begin{align}
& s^4(1-s^2)^{-\frac{1}{2}+ \lambda }\varphi_0(s;\lambda ) \nonumber \\
= & s\lf((1-s)^{-\frac{1}{2}+ \lambda }(2+s(-1+2\lambda ))-(1+s)^{-\frac{1}{2}+ \lambda }(2+s(-1+2\lambda )) \rt)  \nonumber  \\
= &s\lf((1-s)^{-\frac{1}{2}+ \lambda }(2(1-s)+s(1+2\lambda ))-(1+s)^{-\frac{1}{2}+\lambda }(2(1+s)-s(1+2\lambda )) \rt)  \nonumber  \\
 = & 2s\lf( (1-s)^{\frac{1}{2}+ \lambda } - (1+s)^{\frac{1}{2}+ \lambda } \rt) + s^2(1+2\lambda )\lf( (1-s)^{-\frac{1}{2}+ \lambda } + (1+s)^{-\frac{1}{2}+ \lambda } \rt). \label{eqn:spartdecomp}
\end{align}
Then, we write
$$
\int_\RR e^{i \omega \tau}  G'_4(\rho ,s ; i \omega) d\omega = 2 I_1(\rho,s,\tau)+I_2(\rho,s,\tau),
$$
where
\begin{align*}
I_1(\rho,s,\tau) = & 1_{\RR_+}(\rho - s) \rho^{-2}(1+\rho)^{-\frac{1}{2}} s \\
& \times \int_{\RR} \chi(s \lla \omega\rra) e^{i\omega(\tau-\log(1+\rho)) } \frac{\lf( (1-s)^{\frac{1}{2}+ i\omega} - (1+s)^{\frac{1}{2}+ i\omega} \rt) }{1+2i\omega}  \gamma'_4(\rho,s ;i\omega) d\omega ,\\
I_2(\rho,s,\tau) = & 1_{\RR_+}(\rho - s)  \rho^{-2}(1+\rho)^{-\frac{1}{2}} s^2  \\
&\times \int_{\RR} \chi(s \lla \omega\rra) e^{i\omega(\tau-\log(1+\rho)) }  \lf( (1-s)^{-\frac{1}{2}+ i\omega} + (1+s)^{-\frac{1}{2}+ i\omega} \rt) \gamma'_4(\rho,s ;i\omega) d\omega.
\end{align*}
In $I_1$, we use the expression \eqref{eqn:3red}, and because of the cut-off, we have
\begin{align*}
& \lf|I_1(\rho,s,\tau) \rt| \\
 \lesssim &  \rho^{-2}(1+\rho)^{-\frac{1}{2}} s \int_{-1}^1 \lf| (1+s t)^{-\frac{1}{2}} \int_{\RR} \chi(s \lla \omega\rra) e^{i\omega(\tau-\log(1+\rho)+\log(1+st)) } \gamma'_4(\rho,s ;i\omega)  \rt|dt \\
 \lesssim &   \rho^{-2}(1+\rho)^{-\frac{1}{2}} s \lla \tau - \log(1+\rho) \rra^{-2} \\
 \lesssim & \rho^{-2} (1 -\rho)^{-\frac{1}{2}} s^2(1-s)^{-\frac{1}{2}}\lla \tau- \log(1- \rho) +\log (1-s) \rra^{-2}.
\end{align*}
In $I_2$, we need to look at
$$
I_{2,\pm} = 1_{\RR_+}(\rho - s) \rho^{-2}(1+\rho)^{-\frac{1}{2}} s^2  (1\pm s)^{-\frac{1}{2}} \int_{\RR} \chi(s \lla \omega\rra) e^{i\omega(\tau-\log(1+\rho) + \log(1\pm s)) }  \gamma'_4(\rho,s ;i\omega) d\omega,
$$
and again we have
$$
|I_2(\rho,s,\tau) | \lesssim \rho^{-2} (1 -\rho)^{-\frac{1}{2}} s^2(1-s)^{-\frac{1}{2}}\lla \tau- \log(1- \rho) +\log (1-s) \rra^{-2}.
$$

For $G'_5$ we need to bound
\begin{align*}
I_{1} (\rho,s,\tau) = & 1_{\RR+} (\rho-s)\rho^{-2} (1+\rho)^{-\frac{1}{2}} s(1 - s)^{-\frac{1}{2}} \\
& \times \int_\RR [1-\chi(s \lla\omega\rra)] e^{i \omega ( \tau-\log(1+\rho)+\log (1- s) )} \frac{ \gamma'_5(\rho,s; i\omega)}{1+2 i\omega}  d\omega,
\end{align*}
and
\begin{align*}
I_{2}(\rho,s,\tau) = & 1_{\RR+} (\rho-s)\rho^{-2} (1+\rho)^{-\frac{1}{2}} s^2(1- s)^{-\frac{1}{2}} \\
& \times \int_\RR[1- \chi(s\lla\omega\rra)] e^{i \omega ( \tau-\log(1+\rho)+\log (1- s))} \frac{-1+2i\omega}{1+2i\omega}\gamma'_5(\rho,s; i\omega) d\omega.
\end{align*}
So for $I_1$ we use Lemma \ref{lemma:rho1} and have
\begin{align*}
 | I_1 (\rho,s,\tau)  | \lesssim &  \rho^{-2} (1+\rho)^{-\frac{1}{2}} s^2 (1 - s)^{-\frac{1}{2}} \\
& \times\lf| s^{-1} \int_\RR [1-\chi(s \lla\omega\rra)] e^{i \omega ( \tau-\log(1+\rho)+\log (1 - s) )}  \mathcal{O}(\rho^0(1-\rho)^0s^0(1-s)^0 \lla \omega \rra^{-2})  d\omega\rt| \\
\lesssim & \rho^{-2} (1+\rho)^{-\frac{1}{2}} s^2 (1 - s)^{-\frac{1}{2}} \lla \tau- \log(1 + \rho) +\log (1-s) \rra^{-2} \\
\lesssim & \rho^{-2} (1 -\rho)^{-\frac{1}{2}} s^2(1-s)^{-\frac{1}{2}}\lla \tau- \log(1- \rho) +\log (1-s) \rra^{-2}.
\end{align*}
And for $I_2$ we have
\begin{align*}
 | I_2 (\rho,s,\tau)  |\lesssim & \rho^{-2} (1+\rho)^{-\frac{1}{2}} s^2 (1 - s)^{-\frac{1}{2}} \lla \tau- \log(1 + \rho) +\log (1-s) \rra^{-2} \\
\lesssim & \rho^{-2} (1 -\rho)^{-\frac{1}{2}} s^2(1-s)^{-\frac{1}{2}}\lla \tau- \log(1- \rho) +\log (1-s) \rra^{-2}.
\end{align*}

Finally, $G'_6$ is similar to $G'_5$, but we get $(1+s)$ instead of $(1-s)$. Then, we have
$$
(1+s)^{-\frac{1}{2}}\lla \tau+\log (1+s) \rra^{-2} \lesssim \lla \tau  \rra^{-2} \lesssim (1-\rho)^{-\frac{1}{2}} (1-s)^{-\frac{1}{2}}\lla \tau- \log(1- \rho) +\log (1-s) \rra^{-2}
$$
and obtain the desired bound.
\end{proof}

\begin{lemma}\label{lemma:sbound} We have
\begin{align*}
\|\tilde{S}_n(\tau) f\|_{L^2(\BB^5)}  & \lesssim \|f\|_{L^2(\BB^5)} , \qquad n=\{1,\cdots,6\},\\
\|S'_n(\tau) f\|_{L^2(\BB^5)}  & \lesssim \|f\|_{L^2(\BB^5)}, \qquad n=\{1,\cdots,6\},
\end{align*}
for $\tau>0$ and $f\in C^1([0,1])$.
\end{lemma}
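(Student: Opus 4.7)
The plan is to use the kernel bounds from Lemma \ref{lemma:gtilde} and Lemma \ref{lemma:g'} directly. In both cases, the operator has the form
$$
S(\tau)f(\rho) = \frac{1}{2\pi}\int_0^1 f(s) \int_\RR e^{i\omega\tau} K(\rho,s;i\omega)\,d\omega\,ds,
$$
so by Minkowski's inequality it suffices to bound $\|\int_\RR e^{i\omega\tau}K(\rho,s;i\omega)\,d\omega\,\cdot\,f(s)\|_{L^2_\rho(\BB^5)}$ in terms of $\|f\|_{L^2(\BB^5)}$. The two cases differ in how much singularity the kernel carries in $\rho$ near $\rho=1$, and hence in the ease of the bound.

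For $\tilde{S}_n$, Lemma \ref{lemma:gtilde} together with the extra factor $\rho^{-1}$ in the definition of $\tilde{S}_n$ gives
$$
\bigl|\tilde{S}_n(\tau)f(\rho)\bigr|\lesssim \rho^{-2}\int_0^1 s^2(1-s)^{-\frac12}\lla\tau+\log(1-s)\rra^{-2}|f(s)|\,ds.
$$
Writing $g(s)=s^2|f(s)|$ so that $\|g\|_{L^2(ds)}=\|f\|_{L^2(\BB^5)}$, Cauchy--Schwarz and the substitution $y=-\log(1-s)$ reduce the remaining factor to $\int_0^\infty\lla\tau-y\rra^{-4}\,dy\lesssim 1$. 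This yields the pointwise estimate $|\tilde{S}_n(\tau)f(\rho)|\lesssim \rho^{-2}\|f\|_{L^2(\BB^5)}$, which on multiplication by the weight $\rho^4$ becomes integrable over $[0,1]$ and gives the desired $L^2(\BB^5)$ bound.

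For $S'_n$ the kernel bound from Lemma \ref{lemma:g'} carries an additional $(1-\rho)^{-1/2}$ factor, and the naive pointwise Cauchy--Schwarz approach fails because $(1-\rho)^{-1}$ is not integrable. Instead I would keep the $\rho$-integration outside and perform the \emph{simultaneous} change of variables $x=-\log(1-\rho)$, $y=-\log(1-s)$, which has two virtues: it turns the weight $(1-\rho)^{-1}d\rho$ into $dx$ (and $(1-s)^{-1/2}ds$ into $e^{-y/2}dy$), and it converts $\lla\tau-\log(1-\rho)+\log(1-s)\rra$ into $\lla\tau+x-y\rra$, so the inner integral becomes a convolution $(\phi\ast F)(\tau+x)$ with $\phi(z)=\lla z\rra^{-2}\in L^1(\RR)$ and $F(y)=e^{-y/2}(1-e^{-y})^2|f(1-e^{-y})|$. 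Since $\|F\|_{L^2(dy)}=\|f\|_{L^2(\BB^5)}$, Young's inequality gives $\|\phi\ast F\|_{L^2(\RR)}\lesssim\|f\|_{L^2(\BB^5)}$, and restricting back to $x\ge 0$ yields the claim.

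The only mildly delicate point is the $S'_n$ case: one must resist the temptation to bound the $s$-integral pointwise in $\rho$ (which loses the $(1-\rho)^{-1/2}$ factor) and instead perform the joint exponential substitution so that Young's inequality on $\RR$ can be applied. Once this is recognised, the argument parallels the $L^2$ end-point in \cite{donn2017} and the six subcases $n=1,\ldots,6$ are treated uniformly since Lemma \ref{lemma:gtilde} and Lemma \ref{lemma:g'} already give a single kernel bound valid for all of them.
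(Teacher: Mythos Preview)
Your proof is correct. For $S'_n$ your argument is identical to the paper's: the joint substitution $x=-\log(1-\rho)$, $y=-\log(1-s)$ converts the kernel bound of Lemma~\ref{lemma:g'} into a convolution against $\lla z\rra^{-2}\in L^1(\RR)$, and Young's inequality closes the estimate.

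For $\tilde S_n$ you take a slightly different and more direct route. The paper dominates the $\tilde G_n$ kernel bound from Lemma~\ref{lemma:gtilde} by the $G'_n$-type bound $\rho^{-2}(1-\rho)^{-1/2}s^2(1-s)^{-1/2}\lla \tau-\log(1-\rho)+\log(1-s)\rra^{-2}$ and then simply reuses the convolution/Young argument already done for $S'_n$. You instead exploit the absence of any $(1-\rho)$ singularity in Lemma~\ref{lemma:gtilde} to apply Cauchy--Schwarz pointwise in $\rho$, obtaining $|\tilde S_n(\tau)f(\rho)|\lesssim \rho^{-2}\|f\|_{L^2(\BB^5)}$, which is square-integrable against $\rho^4\,d\rho$. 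This is a cleaner argument for $\tilde S_n$; the paper's choice has the minor advantage of treating both operators by a single computation.
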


\begin{proof} We write
$$
S'_n(\tau) f(\rho)  =\frac{1}{2\pi} \int_0^1 f(s) K'_n(\rho,s;\tau) ds,
$$
where
$$
K'_n (\rho,s;\tau) =  \int_\RR e^{i \omega \tau} G'_n (\rho ,s ; i \omega) d\omega.
$$
Then Lemma \ref{lemma:g'} gives the bounds
$$
| K'_n (\rho,s;\tau) | \lesssim \rho^{-2} (1 -\rho)^{-\frac{1}{2}} s^2(1-s)^{-\frac{1}{2}}\lla \tau- \log(1- \rho) +\log (1-s) \rra^{-2}.
$$
Doing a change of variables $\rho = 1-e^{-x}$, $s=1-e^{-y}$, we get
\begin{align*}
\lf| [S'_n(\tau) f ]( 1-e^{-x}) \rt| \lesssim e^\frac{x}{2}(1-e^{-x})^{-2} \int_0^\infty \lla \tau - y+x \rra^{-2} |f(1-e^{-y})| (1-e^{-y})^2  e^{-\frac{y}{2}} dy ,
\end{align*}
so using Young's inequality, we have
\begin{align*}
\| S'_n(\tau) f \|_{L^2(\BB^5)} \cong& \lf( \int_0^1 |S'_n(\tau) f(\rho)|^2 \rho^4 d\rho \rt)^{\frac{1}{2}} \\
\cong &\lf( \int_0^\infty |[S'_n(\tau)] f( 1-e^{-x})|^2 ( 1-e^{-x})^4 e^{-x} dx \rt)^{\frac{1}{2}} \\
\lesssim & \lf(\int_0^\infty \lf| \int_0^\infty \lla \tau - y+x \rra^{-2} |f(1-e^{-y})| (1-e^{-y})^2  e^{-\frac{y}{2}}  dy \rt|^2 dx \rt)^\frac{1}{2} \\
\lesssim & \|\lla \tau - \cdot \rra \|_{L^1(\RR)} \lf( \int_0^1 |f(\rho)|^2 \rho^4 d\rho \rt)^\frac{1}{2}\\
\lesssim & \|f\|_{L^2(\BB^5)}.
\end{align*}

For the operators $\tilde{S}_n$, from Lemma \ref{lemma:gtilde} we have
\begin{align*}
\lf|\rho^{-1} \int_\RR e^{i \omega \tau}  \tilde{G}_n(\rho ,s ; i \omega) d\omega \rt| & \lesssim \rho^{-2}s^2 (1-s)^{-\frac{1}{2}}\lla \tau  + \log(1-s) \rra^{-2} \\
& \lesssim \rho^{-2} (1 -\rho)^{-\frac{1}{2}} s^2(1-s)^{-\frac{1}{2}}\lla \tau- \log(1- \rho) +\log (1-s) \rra^{-2},
\end{align*}
so the bounds for $\tilde{S}_n$ follows the same way.
\end{proof}

\subsection{The term containing $\lambda$}

As before we need to deal with the $\lambda \tilde{f}_1$ term in $F_\lambda$. Here we also need to deal with the operators
\begin{align*}
 \dot{\tilde{S}}_{n,\epsilon} (\tau) f(\rho) : =& \frac{1}{2\pi i} \lim_{N \to \infty} \int_{\epsilon- i N}^{\epsilon + i N} \frac{1}{2}(1+2\lambda)e^{\lambda \tau} \int_0^1  \rho^{-1} \tilde{G}_n(\rho ,s ; \lambda) f(s) ds d\lambda , \\
  \dot{S}'_{n,\epsilon} (\tau) f(\rho) : =& \frac{1}{2\pi i} \lim_{N \to \infty} \int_{\epsilon- i N}^{\epsilon + i N} \frac{1}{2}(1+2\lambda) e^{\lambda \tau} \int_0^1  G'_n(\rho ,s ; \lambda) f(s) ds d\lambda .
\end{align*}

\begin{lemma}\label{lemma:dotStilde} Setting $ \dot{\tilde{S}}_n(\tau) f(\rho) := \lim_{\epsilon \to 0+}  \dot{\tilde{S}}_{n,\epsilon}(\tau) f(\rho)$, we have the bound
$$
\|\dot{\tilde{S}}_n(\tau) f\|_{L^2(\BB^5)} \lesssim \| f \|_{H^1(\BB^5)}
$$
for all $\tau>0$, $f\in C^1([0,1])$, and $n \in \{1, 2, \ldots,6\}$ .
\end{lemma}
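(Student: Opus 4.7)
\medskip

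\noindent\textbf{Proof proposal.} The plan is to mimic Proposition~\ref{prop:dotTbound} but in the $L^2$ setting, and exploit the extra $\rho^{-1}$ factor in $\rho^{-1}\tilde{G}_n$ via the norm equivalence $\|\rho^{-1}f\|_{L^2(\BB^5)}\simeq \|(\cdot)f\|_{L^2(0,1)}\lesssim \|f\|_{H^1(\BB^5)}$ of Lemma~\ref{lemma:1to5}. The core obstacle is that the factor $\frac{1+2\lambda}{2}$ in the definition of $\dot{\tilde{S}}_{n,\epsilon}$ destroys absolute convergence of the $\lambda$-integral in \eqref{eqn:laplaceinv} and the symbol-type decay of the integrand. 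I would neutralize this factor by integrating by parts once in $s$, exactly as in \eqref{eqn:intbyparts1} for $n\in\{1,2,3\}$ and \eqref{eqn:intbyparts2} for $n\in\{5,6\}$, using the identity
\[
(1-s)^{-\frac{1}{2}+\lambda}=-\frac{2}{1+2\lambda}\PD_s(1-s)^{\frac{1}{2}+\lambda},
\qquad (1\pm s)^{-\frac{1}{2}+\lambda}=\pm\frac{2}{1+2\lambda}\PD_s(1\pm s)^{\frac{1}{2}+\lambda}.
\]
For $n=4$ I would either split $\varphi_0=\varphi_1-\tilde{\varphi}_1$ and apply the same IBP to each piece, or use the decomposition \eqref{eqn:spartdecomp} and IBP on the first summand. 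In each case the prefactor $\tfrac{1+2\lambda}{2}$ is killed exactly by the IBP, leaving a boundary contribution at $s=\rho$ and a well-behaved integral remainder.

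After IBP the integral term has the same structure as the kernel of $\tilde{S}_n$ but with $(1-s)^{\frac{1}{2}+\lambda}$ (and analogous factors for $n=4,5,6$) replacing $(1-s)^{-\frac{1}{2}+\lambda}$, so it only improves the oscillatory kernel estimates. The product rule produces three types of terms: one with $f'(s)$ (treated via $\|(\cdot)^2 f'\|_{L^2(0,1)}\lesssim\|f\|_{H^1(\BB^5)}$), one where the derivative falls on $s(2+s(-1+2\lambda))$ (absolutely harmless), and one with $s\PD_s\tilde{\gamma}_n$, which by \eqref{eqn:dsform} preserves the symbol structure of $\tilde{\gamma}_n$ from Lemma~\ref{lemma:decomposeG}. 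Thus I can repeat the oscillatory integral analysis of Lemma~\ref{lemma:gtilde} verbatim to obtain the same pointwise bound on the integral kernel, and then conclude via the Young-type change-of-variables argument of Lemma~\ref{lemma:sbound} that the corresponding piece is bounded $L^2\to L^2$. Combined with Lemma~\ref{lemma:1to5} this controls it by $\|f\|_{H^1(\BB^5)}$.

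For the boundary term I would use Lemma~\ref{lemma:limitinterchange} (whose hypotheses are met thanks to the cut-offs in $\tilde{G}_n$ and the symbol-type decay of $\tilde{\gamma}_n$) to pass $\epsilon\to 0^+$ inside the $\lambda$-integral. The boundary operator takes the form
\[
\rho^{-1}(1-\rho)^{\frac12}\,\rho\,f(\rho)\cdot\frac{1}{2\pi}\int_\RR e^{i\omega\tau} K_n(\rho,\omega)\,d\omega,
\]
where $K_n$ inherits the cut-off and symbol structure of $\tilde{G}_n$ at $s=\rho$. To extract $\lla\tau\rra^{-2}$ decay from the $\omega$-integral in the presence of the apparent $\mathcal{O}(\lla\omega\rra)$ factor $2+\rho(-1+2\lambda)$, I would apply the same reductions used in the proof of Proposition~\ref{prop:dotTbound}: for $n=1$ the two representations \eqref{eqn:1red} and \eqref{eqn:2red} of $\varphi_0$ combined with Lemma~\ref{lemma:o}; for $n=4$ the identity \eqref{eqn:3red}; and for $n=2,3,5,6$ directly Lemma~\ref{lemma:rho1}. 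This yields $|\text{boundary term}|\lesssim |f(\rho)|\lla\tau\rra^{-2}$ (the two $\rho$'s cancel), which is bounded in $L^2(\BB^5)$ by $\|f\|_{L^2(\BB^5)}\lesssim\|f\|_{H^1(\BB^5)}$.

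The main obstacle is bookkeeping: six cases, three subcases per IBP (derivative on cut-off vs.\ polynomial vs.\ $f$), and the need to verify that after multiplying by the problematic $(1+2\lambda)$ factor the reductions \eqref{eqn:1red}--\eqref{eqn:4red} and \eqref{eqn:spartdecomp} still deliver the requisite $\omega$-decay. The delicate case is $n=1$, where the boundary involves $\chi(\rho\lla\omega\rra)\varphi_0(\rho;i\omega)$: without the representations \eqref{eqn:1red}, \eqref{eqn:2red} the integrand has no apparent decay, and one must carefully match the algebraic factors $(3-2i\omega)$, $(1+2i\omega)$, $(1-2i\omega)$ with the reductions to land in the hypothesis of Lemma~\ref{lemma:o}.
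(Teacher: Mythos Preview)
Your proposal is correct and follows essentially the same route as the paper's proof: integrate by parts in $s$ as in Proposition~\ref{prop:dotTbound}, treat the resulting integral terms via Lemma~\ref{lemma:gtilde} and Lemma~\ref{lemma:sbound} (bounded by $\|f'\|_{L^2(\BB^5)}+\|(\cdot)^{-1}f\|_{L^2(\BB^5)}\lesssim\|f\|_{H^1(\BB^5)}$), and use the pointwise boundary bound $|f(\rho)|\lla\tau\rra^{-2}$ (the extra $\rho^{-1}$ indeed cancels against the $\rho$ in the boundary terms from Proposition~\ref{prop:dotTbound}) to control the boundary contribution by $\|f\|_{L^2(\BB^5)}$. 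The paper's own proof is just a terse summary of exactly this argument.
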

\begin{proof} We follow the logic in Proposition \ref{prop:dotTbound} by an integration by parts in $s$. Indeed, the form of $ \dot{\tilde{S}}_{n,\epsilon} (\tau)$ and $\dot{T}_{n,\epsilon}(\tau)$ only differ in a factor of $\rho^{-1}$. After an integration by parts, the integral terms can be treated the same way as in Lemma \ref{lemma:gtilde} followed by Lemma \ref{lemma:sbound}, and the $L^2(\BB^5)$ norms are bounded by $C\lf(\|f'\|_{L^2(\BB^5)} + \|(\cdot)^{-1} f\|_{L^2(\BB^5)} \rt) \lesssim \|f\|_{H^1(\BB^5)}$ for all $\tau>0$. On the other hand, the boundary terms are (pointwise) bounded by  $C |f(\rho)|\lla \tau \rra^{-2}$ (see proof of Proposition \ref{prop:dotTbound}), so the $L^2(\BB^5)$ norm is bounded by $\|f\|_{L^2(\BB^5)}$ for all $\tau>0$.
\end{proof}

\begin{lemma}\label{lemma:dotS'} Setting $ \dot{S}'_n(\tau) f(\rho) := \lim_{\epsilon \to 0+}  \dot{S}'_{n,\epsilon}(\tau) f(\rho)$, we have the bound
$$
\|\dot{S}'_n(\tau) f\|_{L^2(\BB^5)} \lesssim \| f \|_{H^1(\BB^5)}
$$
for all $\tau>0$, $f\in C^1([0,1])$, and $n \in \{1, 2, \ldots,6\}$ .
\end{lemma}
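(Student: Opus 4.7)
The plan mirrors the proofs of Proposition~\ref{prop:dotTbound} and Lemma~\ref{lemma:dotStilde}. The operator $\dot S'_{n,\epsilon}$ differs from $S'_{n,\epsilon}$ only by the multiplier $\tfrac{1}{2}(1+2\lambda)$, which destroys absolute convergence of the $\lambda$-integral. As in those earlier proofs, I would integrate by parts once in $s$, using the identities
\[
\tfrac{1}{2}(1+2\lambda)(1\pm s)^{-\frac{1}{2}+\lambda}=\pm\,\PD_s(1\pm s)^{\frac{1}{2}+\lambda},
\]
so that the bad factor $(1+2\lambda)$ is absorbed into an additional power of $(1\pm s)$. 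For $n\in\{1,2,3\}$ the $s$-dependent part of $G'_n$ coincides with that of $G_n$, and I would reuse the integration by parts of \eqref{eqn:intbyparts1}. For $n=4$ I would first split the $s$-part via \eqref{eqn:spartdecomp}: one piece has the form $s((1-s)^{1/2+\lambda}-(1+s)^{1/2+\lambda})$ and is already absolutely convergent in $\lambda$, while the other piece carries the factor $(1+2\lambda)$ and is amenable to the $\PD_s$-trick. For $n\in\{5,6\}$ I would use the integration by parts from \eqref{eqn:intbyparts2}.

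After integration by parts, each integral term has the structure of a $G'_n$-type kernel weighted by $(1\pm s)^{1/2+\lambda}$ in place of $(1\pm s)^{-1/2+\lambda}$, with the extra $\PD_s$ distributed among $f$, the polynomial in $s$, the cut-off, and the symbol $\gamma'_n$. Since $s\PD_s\gamma'_n$ has the same symbol form as $\gamma'_n$ (cf.~\eqref{eqn:dsform}), the resulting kernels satisfy the same pointwise bounds as in Lemma~\ref{lemma:g'} up to harmless factors of $(1-s)$. The $L^2$-convolution argument of Lemma~\ref{lemma:sbound} then controls the integral part by
\[
\|(\cdot)\, f\|_{L^2(0,1)}+\|(\cdot)^2 f'\|_{L^2(0,1)}\;\lesssim\;\|f\|_{H^1(\BB^5)},
\]
uniformly in $\tau$, where the last step uses Lemma~\ref{lemma:1to5} together with Lemma~\ref{lemma:equivnorm}.

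For the boundary contributions at $s=\rho$, the $\rho^{-2}$ singularity of $g_1$ or $\tilde g_1$ (respectively of $\rho^{-3}\varphi_1$ for $n\in\{5,6\}$) combines with the boundary weight $(1\pm\rho)^{1/2+\lambda}$ and the factor $s|_{s=\rho}=\rho$ inherited from the $s$-part, leaving a $\rho$-dependence at worst of order $\rho^{-1}$. The remaining $\omega$-integral is then controlled via Lemma~\ref{lemma:o}, Lemma~\ref{lemma:rho1} and Lemma~\ref{lemma:rho2}, depending on whether the relevant cut-off is $\chi(\rho\lla\omega\rra)$ or $1-\chi(s\lla\omega\rra)$, producing a pointwise bound of the form $\rho^{-1}|f(\rho)|\lla\tau\rra^{-2}$. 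Lemma~\ref{lemma:1to5} then yields
\[
\|\rho^{-1} f\|_{L^2(\BB^5)}\;=\;\|(\cdot)\, f\|_{L^2(0,1)}\;\lesssim\;\|f\|_{H^1(\BB^5)}
\]
uniformly in $\tau$. The passage $\epsilon\to 0^+$ at each step is justified by Lemma~\ref{lemma:limitinterchange}, exactly as in the proof of Lemma~\ref{lemma:dotStilde}.

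The main technical nuisance, rather than a genuine obstacle, will be the careful bookkeeping across the six cases $n=1,\ldots,6$: one has to match each case to the appropriate integration-by-parts identity and then verify, for each of the terms produced by $\PD_s$, that the resulting integrand fits the kernel template already analysed in Lemma~\ref{lemma:g'}. The fact that the $\rho^{-2}$ singularity of $G'_n$ forces the boundary terms to carry a $\rho^{-1}$ weight—and hence the need for the $H^1$-norm of $f$ rather than $L^2$—is the same mechanism already exploited in Proposition~\ref{prop:dotTbound} and Lemma~\ref{lemma:dotStilde}, so no genuinely new idea is required.
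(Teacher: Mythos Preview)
Your proposal is correct and follows essentially the same route as the paper: integration by parts in $s$ via \eqref{eqn:intbyparts1}, \eqref{eqn:spartdecomp}, and \eqref{eqn:intbyparts2} for the respective cases, with the integral terms handled by the kernel bounds of Lemma~\ref{lemma:g'} and the convolution argument of Lemma~\ref{lemma:sbound}. Two small corrections: for $n\in\{5,6\}$ the $\rho$-singularity in $G'_n$ comes from $g_1(\rho;\lambda)=\rho^{-2}(1+\rho)^{-1/2-\lambda}$, not from $\rho^{-3}\varphi_1$; and the paper in fact obtains the sharper boundary bound $|B'_n(\tau)f(\rho)|\lesssim |f(\rho)|\lla\tau\rra^{-2}$ (without the $\rho^{-1}$), though your cruder estimate via Lemma~\ref{lemma:1to5} would also suffice.
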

\begin{proof} Again, we follow the same method as Proposition \ref{prop:dotTbound}. The $s$ dependent part of $\dot{S}'_n$ has the same form as $\dot{T}_n$ for each $n=\{1,\cdots,6\}$.

For $n=\{1,2,3\}$, we do an integration by parts as in \eqref{eqn:intbyparts1}. The integral part is dealt with by the same way as Lemma \ref{lemma:g'} (for each fixed $N$, we use Fubini to interchange the $\omega$ and $s$ integral, then we use dominated convergence for the $s$ integral to take the limit $N\to\infty$). Then the method in Lemma \ref{lemma:sbound} gives the same bound as in Lemma \ref{lemma:dotStilde}. For the boundary term, with the logic in Lemma \ref{lemma:limitinterchange} we can take the limit $\epsilon\to 0$.   For $n=1$, we have
\begin{align*}
& B'_1(\tau)f(\rho)  \\ = &  \frac{2 (1-\rho)^{\frac{1}{2}} \rho f(\rho) }{2\pi i} \int_{\RR} e^{i\omega (\tau  + \log(1-\rho) )} \frac{\chi(\rho \lla \omega\rra) (g_1(\rho;i\omega)-\tilde{g}_1(\rho;i\omega))}{1+2i\omega} \gamma'_1(\rho,\rho;i\omega) d\omega \\
& + \frac{ (1-\rho)^{\frac{1}{2}} \rho^2 f(\rho) }{2\pi i} \int_{\RR} e^{i\omega (\tau + \log(1-\rho) )} \chi(\rho \lla \omega\rra) (g_1(\rho;i\omega)-\tilde{g}_1(\rho;i\omega) )\frac{-1+2i\omega}{1+2i\omega}\gamma'_1(\rho,\rho;i\omega) d\omega.
\end{align*}
In the first term we use \eqref{eqn:4red} and in the second term we use the expression of $g_1$ and $\tilde{g}_1$ directly, similar to proof of Lemma \ref{lemma:g'}. So we get the bound $|B'_1(\tau) f(\rho)| \lesssim |f(\rho)| \lla \tau\rra^{-2}$. For $n=\{2,3\}$, we have
\begin{align*}
B'_n(\tau)f(\rho) = & \frac{2 (1-\rho)^\frac{3}{2} (1\pm \rho)^{-\frac{1}{2}}  f(\rho) }{2\pi i} \rho^{-1} \int_{\RR} [1-\chi(\rho \lla \omega\rra)] e^{i\omega (\tau  + \log(1-\rho)- \log(1 \pm \rho) )} \frac{\gamma'_n(\rho,\rho;i\omega)}{1+2i\omega}  d\omega \\
& + \frac{(1-\rho)^\frac{1}{2} (1\pm \rho)^{-\frac{1}{2}} f(\rho) }{2\pi i} \int_{\RR} [1-\chi(\rho \lla \omega\rra)] e^{i\omega (\tau  + \log(1-\rho)- \log(1 \pm \rho))}\gamma'_n(\rho,\rho;i\omega) d\omega.
\end{align*}
The same treatment as in the proof of Lemma \ref{lemma:g'} gives $|B'_n(\tau) f(\rho)| \lesssim |f(\rho)| \lla \tau\rra^{-2}$.

For $n=4$, we decompose the operator into two parts according to \eqref{eqn:spartdecomp}. So we have
\begin{align*}
\dot{S}'_{4,\epsilon}(\tau) f(\rho) = & \frac{1}{2\pi i} \lim_{N\to\infty} \int_{\epsilon-iN}^{\epsilon+iN} e^{\lambda \tau} g_1(\rho;\lambda) \\
& \times \int_0^1 1_{\RR_+}(\rho-s) \chi(s\lla \omega\rra) s \lf((1-s)^{\frac{1}{2}+\lambda}-(1+s)^{\frac{1}{2}+\lambda} \rt) f(s) \gamma'_4(\rho,s;\lambda ) ds d\lambda\\
& +  \frac{1}{2\pi i} \lim_{N\to\infty} \int_{\epsilon-iN}^{\epsilon+iN} \frac{1}{2}(1+2\lambda) e^{\lambda \tau} g_1(\rho;\lambda) \\
& \times \int_0^1 1_{\RR_+}(\rho-s) \chi(s\lla \omega\rra) s^2 \lf((1-s)^{-\frac{1}{2}+\lambda}-(1+s)^{-\frac{1}{2}+\lambda} \rt)  f(s) \gamma'_4(\rho,s;\lambda) ds d\lambda \\
=: & I_{1,\epsilon}(\tau,\rho) + I_{2,\epsilon}(\tau,\rho)
\end{align*}
In $I_1$ we can take the limit in $\epsilon$ and change order of integration (same argument as in \cite[Lemma 5.3]{donn2017}). Then we have $I_1 (\tau,\rho) := \lim_{\epsilon\to 0+} I_{1,\epsilon}(\tau,\rho)$, and
\begin{align*}
I_1(\tau,\rho) = & \frac{\rho^{-2} (1+\rho)^{-\frac{1}{2}} f(\rho)}{2\pi}\int_0^1 1_{\RR_+}(\rho-s) s\\
& \times \int_{\RR} \chi(s\lla \omega\rra) e^{i\omega(\tau - \log(1+\rho))}\lf((1-s)^{\frac{1}{2}+\lambda}-(1+s)^{\frac{1}{2}+\lambda} \rt) \gamma'_4(\rho,s;i \omega) d\omega ds.
\end{align*}
Following the method in the proof of Lemma \ref{lemma:g'} (the $I_2$ term in $G'_4$), we get the bound $\|I_1(\tau) f\| \lesssim \|(\cdot)^{-1} f\|_{L^2(\BB^5)}  \lesssim \|f\|_{H^1(\BB^5)}$ for all $\tau>0$. In $I_2$ we do an integration by parts in $s$ using 
$$
(1\pm s)^{-\frac{1}{2}+\lambda} =  \frac{\pm 2}{1+2\lambda} \PD_s (1\pm s)^{\frac{1}{2}+\lambda},
$$
so
\begin{align*}
& \int_0^1 1_{\RR_+}(\rho-s) \chi(s\lla \omega\rra) s^2 (1 \pm s)^{-\frac{1}{2}+\lambda}  f(s) \gamma'_4(\rho,s;\lambda) ds \\
= &  \frac{\pm 2}{1+2\lambda} \int_0^\rho \PD_s (1\pm s)^{\frac{1}{2}+\lambda}  \chi(s\lla \omega\rra) s^2 f(s) \gamma'_4(\rho,s;\lambda) ds  \\
= &  \frac{\pm 2}{1+2\lambda}  (1\pm \rho )^{\frac{1}{2}+\lambda}  \chi(\rho \lla \omega\rra) \rho^2 f(\rho) \gamma'_4(\rho,\rho;\lambda) \\
& -\frac{\pm 2}{1+2\lambda} \int_0^\rho  (1\pm s)^{\frac{1}{2}+\lambda}  \PD_s\lf[\chi(s\lla \omega\rra) s^2 f(s) \gamma'_4(\rho,s;\lambda) \rt]ds.
\end{align*}
The integral part is dealt with by the same argument as in Lemma \ref{lemma:g'} (the $I_2$ term in $G'_4$), and we again obtain a bound by $C \lf(\|f'\|_{L^2(\BB^5)} + \|(\cdot)^{-1} f\|_{L^2(\BB^5)} \rt) \lesssim \|f\|_{H^1(\BB^5)}$ for all $\tau>0$. In the boundary term we take the limit $\epsilon \to 0+$ by Lemma \ref{lemma:limitinterchange}, so
\begin{align*}
B'_4(\tau)f(\rho) = \frac{\pm 2(1\pm \rho )^{\frac{1}{2}} (1+\rho)^{-\frac{1}{2}} f(\rho) }{2\pi i} \int_{\RR} \chi(\rho \lla \omega\rra)e^{i\omega(\tau - \log(1+\rho) + \log(1\pm\rho) )} \gamma'_4(\rho,\rho; i\omega) d\omega ,
\end{align*} 
and we get the bound $|B'_4(\tau)f(\rho)| \lesssim f(\rho)\lla \tau \rra^{-2}$.

For $n=\{5,6\}$, as before we do an integration by parts in $s$ as in \eqref{eqn:intbyparts2}, and the integral terms are handled by the same way as in Lemma \ref{lemma:g'} (the $I_2$ term in $G'_5$ and $G'_6$). The boundary terms are
\begin{align*}
B_n'(\tau)f(\rho) = & \frac{\pm (1\pm \rho )^{\frac{3}{2}} (1+\rho)^{-\frac{1}{2}} f(\rho) }{\pi i} \rho^{-1} \int_{\RR} [1-\chi(\rho \lla \omega \rra)] e^{i\omega(\tau - \log(1+\rho) + \log(1\pm\rho) )} 
\frac{\gamma'_n(\rho,\rho; i\omega)}{1+2i\omega} d\omega \\
& \pm \frac{(1\pm \rho )^{\frac{1}{2}} (1+\rho)^{-\frac{1}{2}} f(\rho) }{2 \pi i}  \int_{\RR} [1-\chi(\rho \lla \omega \rra)] e^{i\omega(\tau - \log(1+\rho) + \log(1\pm\rho) )} \gamma'_n(\rho,\rho; i\omega) d\omega.
\end{align*}
So we also have $|B'_n(\tau)f(\rho)| \lesssim f(\rho)\lla \tau \rra^{-2}$ for $n=\{ 5,6\}$.
\end{proof}

\subsection{Improved energy bounds}

\begin{lemma}\label{lemma:improvedenergy}  The semigroup $\mathbf{S}$ admits the bound
$$
\lf\Vert\mathbf{S} (\tau) (\mathbf{I} - \mathbf{P} ) \mathbf{f} \rt\Vert _{\mathcal{H}} \lesssim  \lf\Vert (\mathbf{I} - \mathbf{P} ) \mathbf{f} \rt\Vert_{\mathcal{H}} 
$$
for all $\tau  \geq 0$ and $f\in \mathcal{H}$.
\end{lemma}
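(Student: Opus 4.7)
The plan is to leverage the explicit Laplace--inversion representation of $\mathbf{S}(\tau)(\mathbf{I}-\mathbf{P})$ together with the uniform-in-$\tau$ operator bounds proved earlier in this section. By density and strong continuity of $\mathbf{S}$, one may take $\tilde{\mathbf{f}} := (\mathbf{I}-\mathbf{P})\mathbf{f} \in C^2 \times C^1([0,1]) \cap \mathcal{D}(\mathbf{L})$ and split $\mathbf{S}(\tau)\tilde{\mathbf{f}} = \mathbf{S}_0(\tau)\tilde{\mathbf{f}} + \mathbf{T}(\tau)\tilde{\mathbf{f}}$. The free part is uniformly bounded by Proposition \ref{prop:semigp}, so the task reduces to a uniform $\mathcal{H}$-bound on the perturbation $\mathbf{T}(\tau)\tilde{\mathbf{f}}$. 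Using Lemma \ref{lemma:equivnorm}, this splits into three pieces: (a) $\|\PD_\rho[\mathbf{T}(\tau)\tilde{\mathbf{f}}]_1\|_{L^2(\BB^5)}$; (b) $|[\mathbf{T}(\tau)\tilde{\mathbf{f}}]_1(1)|$; (c) $\|[\mathbf{T}(\tau)\tilde{\mathbf{f}}]_2\|_{L^2(\BB^5)}$.

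For (a), the decomposition of $G'$ established earlier in the section yields
\[
\PD_\rho [\mathbf{T}(\tau)\tilde{\mathbf{f}}]_1 = \sum_{n=1}^6 \bigl[(\tilde{S}_n + S'_n)(\tau)\bigl(|{\cdot}|\tilde f_1' + 2\tilde f_1 + \tilde f_2\bigr) + (\dot{\tilde{S}}_n + \dot{S}'_n)(\tau)\tilde f_1\bigr],
\]
so Lemmas \ref{lemma:sbound}, \ref{lemma:dotStilde}, \ref{lemma:dotS'} combined with Lemma \ref{lemma:1to5} (to absorb the factor of $s$ in $s\tilde f_1'$) give (a) $\lesssim \|\tilde{\mathbf{f}}\|_{\mathcal{H}}$ uniformly in $\tau$. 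For (b), the pointwise kernel estimates in Proposition \ref{prop:kernels} specialise at $\rho=1$: the cut-offs $1_{\RR_+}(s-\rho)$ in $G_1, G_2, G_3$ vanish there, while the remaining $G_4, G_5, G_6$ kernels are controlled by $s^3(1-s)^{-1/2}\lla \tau + \log(1-s)\rra^{-2}$, which is square-integrable in $s$ uniformly in $\tau$; Cauchy--Schwarz (with an analogous treatment for the $\dot T_n$-contribution, mirroring the integration-by-parts step of Proposition \ref{prop:dotTbound}) gives $|[\mathbf{T}(\tau)\tilde{\mathbf{f}}]_1(1)| \lesssim \|\tilde{\mathbf{f}}\|_{\mathcal{H}}$ uniformly.

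The main technical obstacle is (c), since the section has constructed $L^2$-bounded operators only for the first component and its $\rho$-derivative. The remedy is the first-order identity \eqref{eqn:u2}, $u_2 = (\lambda + \tfrac{3}{2})u_1 + \rho u_1' - \tilde f_1$, which on Laplace inversion produces
\[
[\mathbf{T}(\tau)\tilde{\mathbf{f}}]_2 = \PD_\tau[\mathbf{T}(\tau)\tilde{\mathbf{f}}]_1 + \rho\,\PD_\rho[\mathbf{T}(\tau)\tilde{\mathbf{f}}]_1 + \tfrac{3}{2}[\mathbf{T}(\tau)\tilde{\mathbf{f}}]_1
\]
(the $\lambda$-independent $-\tilde f_1$ term integrates to zero for $\tau>0$). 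The middle term is covered by (a); the zero-order term by the $L^{10/3}\hookrightarrow L^2(\BB^5)$ inclusion combined with the $(p,q) = (\infty, \tfrac{10}{3})$ endpoint of Theorem \ref{thm:strichartz}. The trickiest term, $\PD_\tau[\mathbf{T}(\tau)\tilde{\mathbf{f}}]_1$, corresponds to an extra factor of $\lambda$ in the Laplace inversion; writing $\lambda = (\lambda + \tfrac{1}{2}) - \tfrac{1}{2}$ identifies it with a linear combination of $T_n$-style and $\dot T_n$-style operators acting on $\tilde f_1$ and on $|{\cdot}|\tilde f_1' + 2\tilde f_1 + \tilde f_2$, whose uniform $L^2$-boundedness follows directly from Proposition \ref{prop:kernels} via Cauchy--Schwarz and the $L^5 \hookrightarrow L^2(\BB^5)$ inclusion, exactly as in the proofs of Propositions \ref{prop:Tbound} and \ref{prop:dotTbound}. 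Once (a), (b), (c) are established, Lemma \ref{lemma:equivnorm} yields the claim.
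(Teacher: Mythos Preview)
Your overall strategy --- split into the free part handled by Proposition~\ref{prop:semigp} plus a perturbation $\mathbf{T}(\tau)$, then use the equivalent norm of Lemma~\ref{lemma:equivnorm} to reduce to the three pieces (a), (b), (c) --- matches the paper's approach, and your treatment of (a) via Lemmas~\ref{lemma:sbound}, \ref{lemma:dotStilde}, \ref{lemma:dotS'} and of (b) via the pointwise kernel bounds at $\rho=1$ is correct.

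The gap is in (c), specifically your handling of $\partial_\tau[\mathbf{T}(\tau)\tilde{\mathbf{f}}]_1$. The claimed decomposition into ``$T_n$-style and $\dot T_n$-style operators'' does not close. Writing $\lambda = (\lambda+\tfrac12)-\tfrac12$ on $\partial_\tau T_n(\tau)\bigl(|\cdot|\tilde f_1' + 2\tilde f_1 + \tilde f_2\bigr)$ indeed produces $\dot T_n$ acting on $|\cdot|\tilde f_1' + \tilde f_2$, but Proposition~\ref{prop:dotTbound} bounds $\dot T_n$ only in terms of the $H^1(\BB^5)$ norm of its argument (the integration-by-parts step there transfers an $s$-derivative onto the input), and $\|\,|\cdot|\tilde f_1' + \tilde f_2\|_{H^1(\BB^5)}$ involves $\tilde f_1''$ and $\tilde f_2'$, which are not controlled by $\|\tilde{\mathbf{f}}\|_{\mathcal H}$. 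Worse, $\partial_\tau\dot T_n(\tau)\tilde f_1$ carries the kernel $\tfrac12\lambda(1+2\lambda)G_n$, which is quadratic in $\lambda$; this is a genuinely new ``$\ddot T_n$''-type operator not covered by any of the stated results, and the natural two-fold integration by parts would require $\tilde f_1\in H^2(\BB^5)$. So the $\partial_\tau$ contribution cannot be bounded by $\|\tilde{\mathbf f}\|_{\mathcal H}$ via Propositions~\ref{prop:Tbound} and~\ref{prop:dotTbound} alone. The paper defers this step to the analogous 3-dimensional argument; completing it here requires a direct kernel analysis for the second-component operators arising from $(\lambda+\tfrac32)G_n F_\lambda$, carried out in the spirit of the oscillatory-integral estimates of Section~5, rather than by reduction to the existing $\dot T_n$ bound.
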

\begin{proof}
Here we have the representation of the first component
$$
[ \mathbf{S}(\tau) \tilde{\mathbf{f}} ]_1  = [ \mathbf{S}_0(\tau) \tilde{\mathbf{f}} ]_1 (\rho)+\sum_{n=1}^6 \lf[T_n(\tau)\lf(|\cdot|\tilde{f}_1' +2 \tilde{f}_1+\tilde{f}_2 \rt) +  \dot{T}_n(\tau) \tilde{f}_1 \rt],
$$
and the definition of the second component
$$
[ \mathbf{S}(\tau) \tilde{\mathbf{f}} ]_2(\rho) = \lf[\PD_\tau + \rho\PD_\rho + \frac{3}{2} \rt] [ \mathbf{S}(\tau) \tilde{\mathbf{f}} ]_1(\rho).
$$
This is the same proof as \cite[Lemma 5.7]{donn2017}.
\end{proof}

\section{Proof of the theorem}

We look at the nonlinear problem
$$
\Phi(\tau) = \mathbf{S}(\tau)\mathbf{u} + \int_0^\tau \mathbf{S} (\tau - \sigma) \mathbf{N}(\Phi(\sigma)) d\sigma ,
$$
where
\begin{equation*}
\mathbf{N} ( \mathbf{u} ) (\rho) := \begin{pmatrix}
0 \\
N( u_1(\rho) )
\end{pmatrix}.
\end{equation*}
with
$$
N(x) = \lf| c_5  + x \rt|^{\frac{4}{3}}  \lf( c_5 + x \rt)-   c_5^{\frac{7}{3}}  - \frac{35}{4} x
$$

\begin{lemma}\label{lemma:nonlin}
We have
$$
\| \mathbf{N} ( \mathbf{u} ) \|_\mathcal{H} \lesssim \|u_1\|^2_{L^5(\mathbb{B}^5)} + \|u_1\|^{\frac{7}{3}}_{L^{\frac{14}{3}}(\mathbb{B}^5)} 
$$
and
\begin{align*}
\|  \mathbf{N} (\mathbf{u})  -  \mathbf{N} (\mathbf{v}) \|_\mathcal{H} \lesssim & \| u_1 - v_1 \|_{L^5(\BB^5)} \lf( \| u_1 \|_{L^5(\BB^5)} + \| v_1 \|_{L^5(\BB^5)} \rt) \\
& + \| u_1 - v_1 \|_{L^{\frac{14}{3}}(\BB^5)}  \lf(\| u_1\|_{L^\frac{14}{3}(\BB^5)}^{\frac{4}{3}} + \| v_1\|_{L^\frac{14}{3}(\BB^5)}^{\frac{4}{3}}\rt)
\end{align*}
for all $\mathbf{u}, \mathbf{v} \in \mathcal{H}$.
\end{lemma}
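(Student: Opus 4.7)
The plan is to reduce everything to pointwise estimates on the scalar nonlinearity $N$ and then apply H\"older's inequality on the finite-measure domain $\mathbb{B}^5$. Note first that $\|\mathbf{N}(\mathbf{u})\|_\mathcal{H} = \|N(u_1)\|_{L^2(\BB^5)}$ and $\|\mathbf{N}(\mathbf{u})-\mathbf{N}(\mathbf{v})\|_\mathcal{H} = \|N(u_1)-N(v_1)\|_{L^2(\BB^5)}$, so the problem is scalar.

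The first step is to establish the pointwise bound $|N(x)| \lesssim x^2 + |x|^{7/3}$ for all $x \in \RR$. Writing $F(z) = |c_5+z|^{4/3}(c_5+z)$, one has $F \in C^2(\RR)$ with $F'(z) = \tfrac{7}{3}|c_5+z|^{4/3}$ and $F''(z) = \tfrac{28}{9}|c_5+z|^{1/3}\operatorname{sgn}(c_5+z)$, the latter being continuous (it vanishes at $z=-c_5$). Since $N(x)=F(x)-F(0)-F'(0)x$, Taylor's theorem with integral remainder yields $|N(x)| \lesssim \int_0^{|x|}(|x|-t)(1+t^{1/3})\,dt \lesssim x^2 + |x|^{7/3}$. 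Squaring, integrating and using the triangle inequality gives
\[ \|N(u_1)\|_{L^2(\BB^5)} \lesssim \|u_1\|_{L^4(\BB^5)}^2 + \|u_1\|_{L^{14/3}(\BB^5)}^{7/3}. \]
Since $|\BB^5| < \infty$, H\"older gives $\|u_1\|_{L^4(\BB^5)} \lesssim \|u_1\|_{L^5(\BB^5)}$, which establishes the first bound.

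For the Lipschitz-type estimate I would use the mean-value representation
\[ N(x)-N(y) = \tfrac{7}{3}(x-y)\int_0^1 \bigl(|c_5+y+t(x-y)|^{4/3} - c_5^{4/3}\bigr)\,dt, \]
combined with the auxiliary inequality $\bigl||c_5+w|^{4/3}-c_5^{4/3}\bigr| \lesssim |w| + |w|^{4/3}$ valid for all $w\in \RR$ (for $|w|\leq c_5/2$ the map $w\mapsto |c_5+w|^{4/3}$ is smooth so the difference is $O(|w|)$; for $|w|\geq c_5/2$ the triangle inequality gives $O(|w|^{4/3})$). This produces the pointwise bound
\[ |N(x)-N(y)| \lesssim |x-y|(|x|+|y|) + |x-y|\bigl(|x|^{4/3}+|y|^{4/3}\bigr). \]
Taking the $L^2(\BB^5)$ norm and applying H\"older with exponents $(4,4)$ on the quadratic piece, then passing from $L^4$ to $L^5$ via finite measure, handles the first term. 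For the higher-order piece, H\"older with exponents $(14/3,\,7/2)$ works since $\tfrac{3}{14}+\tfrac{2}{7}=\tfrac{1}{2}$, and $\bigl\||u_1|^{4/3}\bigr\|_{L^{7/2}} = \|u_1\|_{L^{14/3}}^{4/3}$, yielding the stated inequality.

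There is no real obstacle here: the only mildly delicate point is verifying that Taylor's theorem with integral remainder applies despite $F$ only being $C^2$ (not $C^\infty$) at $z=-c_5$, which is fine since $F''$ is continuous. The rest is routine H\"older bookkeeping with the exponents chosen to match the Strichartz pair from Theorem~\ref{thm:strichartz}.
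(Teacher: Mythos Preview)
Your proof is correct and follows essentially the same route as the paper: both reduce to the pointwise bounds $|N(x)|\lesssim x^2+|x|^{7/3}$ and $|N(x)-N(y)|\lesssim |x-y|\bigl(|x|+|y|+|x|^{4/3}+|y|^{4/3}\bigr)$, then finish with H\"older on the finite-measure ball. Your justification of the pointwise bounds via the Taylor remainder and the mean-value representation is in fact a bit more careful than the paper's, which derives them tersely from $N(0)=N'(0)=0$ together with a monotonicity remark.
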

\begin{proof} We compute
$$
N'(x) = \frac{7}{3} \lf| c_5 + x \rt|^{\frac{4}{3}}  -  \frac{35}{4} = \frac{7}{3} \lf| \lf( \frac{15}{4} \rt)^{\frac{3}{4}} + x \rt|^{\frac{4}{3}}  -  \frac{35}{4},
$$
and observe that $N(0) = N'(0) = 0$. Hence
$$
|N(x)| \lesssim |x|^2 + |x|^{\frac{7}{3}}, \qquad \; |N'(x)| \lesssim |x| + |x|^{\frac{4}{3}},
$$
for all $x\in \RR$. Since $N$ is monotone, we also have
$$
| N(x) - N(y) | \lesssim |x-y| \lf( |N'(x)|  +| N'(y)| \rt) \lesssim |x-y| \lf( |x| + |x|^{\frac{4}{3}} + |y| + |y|^{\frac{4}{3}} \rt).
$$
Then by H\"older's inequality we get
\begin{align*}
\| \mathbf{N} ( \mathbf{u} ) \|_\mathcal{H} &  = \| N(u_1) \|_{L^2(\mathbb{B}^5)}   \lesssim \| u_1^2 \|_{L^2(\mathbb{B}^5)}  + \| u_1^{\frac{7}{3}} \|_{L^2(\mathbb{B}^5)} \\
& \lesssim \|u_1\|^2_{L^4(\mathbb{B}^5)} + \|u_1\|^{\frac{7}{3}}_{L^{\frac{14}{3}}(\mathbb{B}^5)} \lesssim \|u_1\|^2_{L^5(\mathbb{B}^5)} + \|u_1\|^{\frac{7}{3}}_{L^{\frac{14}{3}}(\mathbb{B}^5)},
\end{align*}
and
\begin{align*}
 \|  \mathbf{N} (\mathbf{u})  -  \mathbf{N} (\mathbf{v}) \|_\mathcal{H}  \lesssim & \| N(u_1) - N(v_1) \|_{L^2(\mathbb{B}^5)}   \\
 \lesssim &  \|u_1 - v_1 \| _{L^2(\BB^5)} \lf( \|u_1\|_{L^2(\mathbb{B}^5)} + \|u_1\|_{L^2(\mathbb{B}^5)}^{\frac{4}{3}} + \|v_1\|_{L^2(\mathbb{B}^5)} + \|v_1\|^{\frac{4}{3}}_{L^2(\mathbb{B}^5)} \rt) \\
 \lesssim & \| u_1 - v_1 \|_{L^5(\BB^5)} \lf( \| u_1 \|_{L^5(\BB^5)} + \| v_1 \|_{L^5(\BB^5)} \rt) \\
 & + \| u_1 - v_1 \|_{L^{\frac{14}{3}}(\BB^5)}  \lf(\| u_1\|_{L^\frac{14}{3}(\BB^5)}^{\frac{4}{3}} + + \| v_1\|_{L^\frac{14}{3}(\BB^5)}^{\frac{4}{3}}\rt)
\end{align*}
as required.
\end{proof}

\subsection{The nonlinear problem}

In the following, we first modify the equation to remove the unstable direction, then we show how to remove this modification. 

For $\Psi(\tau)(\rho) = (\varphi_1(\tau,\rho),\varphi_2(\tau,\rho))$, we define
$$
\| \Psi \|_{\mathcal{X}}^2 := \| \Psi \|_{L^\infty(\RR_+; \mathcal{H})}^2 + \| \varphi_1 \|_{L^2(\RR_+; L^5(\BB^5))}^2,
$$
and introduce the Banach space
$$
\mathcal{X} := \{  \Phi \in C([0,\infty), \mathcal{H}) : \varphi_1 \in L^2(\RR_+; L^5(\BB^5)), \|\Phi \|_{\mathcal{X}} < \infty \}.
$$
Moreover, we set
$$
\mathcal{X}_\delta := \{ \Phi \in \mathcal{X} : \| \Phi \| \leq \delta \}.
$$
For initial data $\mathbf{u} \in \mathcal{H}$, we define
$$
\mathbf{K}_\mathbf{u}(\Phi) (\tau) = \mathbf{S}(\tau)\lf[ \mathbf{u} - \mathbf{C}(\Phi,\mathbf{u}) \rt] + \int_0^\tau \mathbf{S}(\tau - \sigma) \mathbf{N}(\Phi(\mathbf{u})) d\sigma,
$$
where
$$
\mathbf{C}(\Phi,\mathbf{u}) := \mathbf{P} \lf[ \mathbf{u} + \int_0^\infty e^{-\sigma} \mathbf{N}(\Phi(\sigma)) d\sigma \rt].
$$

\begin{lemma}\label{lemma:selfmap} There exist $c, \delta > 0$ such that if $\| \mathbf{u} \|_\mathcal{H} \leq \frac{\delta}{c}$ and $\Phi \in \mathcal{X}_\delta$, then $\mathbf{K}_\mathbf{u}(\Phi) \in \mathcal{X}_\delta$.
\end{lemma}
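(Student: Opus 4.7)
The plan is to exploit the specific form of $\mathbf{C}(\Phi,\mathbf{u})$ to rewrite $\mathbf{K}_\mathbf{u}(\Phi)$ in a way that makes the Strichartz and energy bounds directly applicable. Using $\mathbf{S}(\tau)\mathbf{P} = e^\tau \mathbf{P}$ from Lemma \ref{lemma:boundedpert}, a direct computation yields the key decomposition
\begin{equation*}
\mathbf{K}_\mathbf{u}(\Phi)(\tau) = \mathbf{S}(\tau)(\mathbf{I}-\mathbf{P})\mathbf{u} + \int_0^\tau \mathbf{S}(\tau-\sigma)(\mathbf{I}-\mathbf{P})\mathbf{N}(\Phi(\sigma))\,d\sigma - e^\tau \mathbf{P}\int_\tau^\infty e^{-\sigma}\mathbf{N}(\Phi(\sigma))\,d\sigma.
\end{equation*}
With this in hand, the three natural pieces of the $\mathcal{X}$-norm (pointwise $\mathcal{H}$ control, $L^2_\tau L^5$ control of the first component, and $\tau$-continuity) will be estimated separately.

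The first preparatory step is to bound the nonlinearity uniformly in the $L^1(\RR_+;\mathcal{H})$ norm. By Lemma \ref{lemma:nonlin},
\begin{equation*}
\|\mathbf{N}(\Phi(\sigma))\|_{\mathcal{H}} \lesssim \|\varphi_1(\sigma)\|_{L^5(\BB^5)}^2 + \|\varphi_1(\sigma)\|_{L^{14/3}(\BB^5)}^{7/3}.
\end{equation*}
The $L^5$ term integrates by the Strichartz component of the $\mathcal{X}$-norm. For the $L^{14/3}$ term I will use the interpolation $\|\varphi_1\|_{L^{14/3}} \lesssim \|\varphi_1\|_{L^{10/3}}^{1/7}\|\varphi_1\|_{L^5}^{6/7}$, combined with the Sobolev embedding $H^1(\BB^5)\hookrightarrow L^{10/3}(\BB^5)$ and the pointwise bound $\|\Phi(\sigma)\|_\mathcal{H}\leq \delta$, to get $\|\varphi_1\|_{L^{14/3}}^{7/3}\lesssim \delta^{1/3}\|\varphi_1\|_{L^5}^2$. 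Together this gives $\|\mathbf{N}(\Phi)\|_{L^1(\RR_+;\mathcal{H})} \lesssim \delta^2+\delta^{7/3}\lesssim \delta^2$ for $\delta$ small.

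For the $L^\infty(\RR_+;\mathcal{H})$ bound, Lemma \ref{lemma:improvedenergy} gives $\|\mathbf{S}(\tau)(\mathbf{I}-\mathbf{P})\mathbf{u}\|_\mathcal{H}\lesssim \|\mathbf{u}\|_\mathcal{H}$; the Duhamel term is bounded by $\int_0^\tau \|\mathbf{N}(\Phi(\sigma))\|_\mathcal{H}\,d\sigma$; and the projection term by $\int_\tau^\infty e^{\tau-\sigma}\|\mathbf{N}(\Phi(\sigma))\|_\mathcal{H}\,d\sigma\le \|\mathbf{N}(\Phi)\|_{L^1(\RR_+;\mathcal{H})}$ since $e^{\tau-\sigma}\leq 1$ on the integration region. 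For the $L^2_\tau L^5$ bound, the first two pieces are handled directly by Theorem \ref{thm:strichartz} (homogeneous and inhomogeneous Strichartz). The third piece requires a separate convolution estimate: since $\mathbf{P}$ maps onto the one-dimensional space $\langle\mathbf{g}\rangle$ of constants, $\|[\mathbf{P}\mathbf{v}]_1\|_{L^5(\BB^5)}\lesssim \|\mathbf{v}\|_\mathcal{H}$, so with $f(\sigma):=\|\mathbf{N}(\Phi(\sigma))\|_\mathcal{H}$ I need to bound $g(\tau):=e^\tau\int_\tau^\infty e^{-\sigma}f(\sigma)\,d\sigma$ in $L^2(\RR_+)$. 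I will use the elementary observation that $g(\tau)\leq \|f\|_{L^1}$ uniformly and, by Fubini, $\int_0^\infty g(\tau)\,d\tau\leq \|f\|_{L^1}$, hence $\|g\|_{L^2}^2\leq \|g\|_{L^\infty}\|g\|_{L^1}\leq \|f\|_{L^1}^2$.

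Collecting everything gives $\|\mathbf{K}_\mathbf{u}(\Phi)\|_\mathcal{X}\leq C_1\|\mathbf{u}\|_\mathcal{H}+C_2\delta^2$. Choosing first $\delta$ so small that $C_2\delta \leq 1/2$ and then $c:=2C_1$ makes the right-hand side at most $\delta$, yielding $\mathbf{K}_\mathbf{u}(\Phi)\in\mathcal{X}_\delta$. Continuity of $\tau\mapsto \mathbf{K}_\mathbf{u}(\Phi)(\tau)$ in $\mathcal{H}$ follows from strong continuity of $\mathbf{S}(\tau)$ and dominated convergence using the above $L^1$-integrability of $\mathbf{N}(\Phi)$. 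The only nonroutine step is the convolution estimate controlling the backwards-in-time projection integral in $L^2_\tau$; everything else is a direct application of the preceding sections.
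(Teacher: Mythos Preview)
Your proof is correct and follows essentially the same route as the paper: both split $\mathbf{K}_\mathbf{u}(\Phi)$ into the $(\mathbf{I}-\mathbf{P})$ piece handled by Lemma \ref{lemma:improvedenergy} and Theorem \ref{thm:strichartz}, and the rank-one projection piece $-\int_\tau^\infty e^{\tau-\sigma}\mathbf{P}\mathbf{N}(\Phi(\sigma))\,d\sigma$, and both control the nonlinearity in $L^1_\sigma(\mathcal{H})$ via the same $L^{10/3}$--$L^5$ interpolation. The only cosmetic difference is your $L^2_\tau$ bound on the projection piece via $\|g\|_{L^2}^2\leq\|g\|_{L^\infty}\|g\|_{L^1}$, whereas the paper writes it as a convolution and applies Young's inequality with $\|1_{(-\infty,0]}e^{(\cdot)}\|_{L^2(\RR)}<\infty$; both are equivalent one-line estimates.
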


\begin{proof}
From Lemmas \ref{lemma:improvedenergy} and \ref{lemma:nonlin}, we have
\begin{align*}
\| (\mathbf{I} - \mathbf{P})\mathbf{K}_\mathbf{u}(\Phi)(\tau) \|_\mathcal{H} & \lesssim \| \mathbf{u} \|_\mathcal{H} + \int_0^\tau \| \mathbf{N}(\Phi(\sigma)) \|_\mathcal{H} d\sigma \\
& \lesssim \frac{\delta}{c} + \int_0^\tau  \lf( \| \varphi_1(\sigma,\cdot) \|_{L^5(\BB^5)}^2 + \| \varphi_1 (\sigma,\cdot)  \|_{L^{\frac{14}{3}}(\BB^5)}^{\frac{7}{3}} \rt) d\sigma \\
& \lesssim \frac{\delta}{c} + \delta^2 + \lf( \| \varphi_1 \|^\theta_{L^2(\RR_+; L^5(\BB^5))} \| \varphi_1 \|^{1-\theta}_{L^\infty(\RR_+; L^{\frac{10}{3}}(\BB^5))} \rt)^{\frac{7}{3}} \\
& \lesssim \frac{\delta}{c} + \delta^2 + \lf( \| \varphi_1 \|^\theta_{L^2(\RR_+; L^5(\BB^5))} \| \Phi \|^{1-\theta}_{L^\infty(\RR_+; \mathcal{H})} \rt)^{\frac{7}{3}} \\
& \lesssim \frac{\delta}{c} + \delta^2 +  \delta^{\frac{7}{3}}.
\end{align*}
Moreover, from the Strichartz estimates in Theorem \ref{thm:strichartz}, we have
\begin{align*}
\| (\mathbf{I} - \mathbf{P})\mathbf{K}_\mathbf{u}(\Phi)(\tau) \|_{L^2(\RR_+; L^5(\BB^5))} & \lesssim \| \mathbf{u} \|_\mathcal{H} + \int_0^\infty \| \mathbf{N}(\Phi(\sigma)) \|_\mathcal{H} d\sigma \\
& \lesssim \frac{\delta}{c} + \delta^2 + \delta^{\frac{7}{3}}.
\end{align*}
Next, from Lemma \ref{lemma:boundedpert} we have
\begin{align*}
\mathbf{P} \mathbf{K}_\mathbf{u}(\Phi)(\tau) & = \mathbf{P}\lf[ \mathbf{S}(\tau)\lf[ \mathbf{u} - \mathbf{C}(\Phi,\mathbf{u}) \rt] + \int_0^\tau \mathbf{S}(\tau - \sigma) \mathbf{N}(\Phi(\mathbf{u})) d\sigma\rt] \\
& = -\int e^{\tau-\sigma} \mathbf{P}  \mathbf{N}(\Phi (\sigma) ) d\sigma + \int_0^\tau e^{\tau-\sigma} \mathbf{P}  \mathbf{N}(\Phi (\sigma) ) d\sigma = - \int_\tau^\infty e^{\tau-\sigma} \mathbf{P}  \mathbf{N}(\Phi (\sigma) ) d\sigma.
\end{align*}
Since we also have $\rg \mathbf{P} = \lla \mathbf{g} \rra$, so by the Riesz representation theorem, there exists $\mathbf{g}^* \in \mathcal{H}$ with $\mathbf{P}\mathbf{f} = ( \mathbf{f}|\mathbf{g}^*)_\mathcal{H} \mathbf{g}$ for all $\mathbf{f}\in\mathcal{H}$. Then
\begin{align*}
\| \mathbf{P} \mathbf{K}_\mathbf{u}(\Phi)(\tau)  \|_\mathcal{H} & \lesssim \int_\tau^\infty e^{\tau-\sigma} \lf| (\mathbf{N}(\Phi (\sigma) )  | \mathbf{g}^*)_\mathcal{H} \rt| d\sigma\\
& \lesssim \int_\tau^\infty \| \mathbf{N}(\Phi (\sigma) )  \|_\mathcal{H} d\sigma \\
& \lesssim \delta^2 + \delta^{\frac{7}{3}}.
\end{align*}
Finally,
\begin{align*}
\| \mathbf{P} \mathbf{K}_\mathbf{u}(\Phi)(\tau)  \|_{L^5(\BB^5)} & \lesssim \int_\tau^\infty e^{\tau-\sigma} \| \mathbf{N}(\Phi(\sigma)) \|_\mathcal{H} d\sigma \\
& = 1_{[0,\infty)}(\tau) \int_{\RR} 1_{[-\infty,0]}(\tau-\sigma)e^{\tau-\sigma} \| \mathbf{N}(\Phi(\sigma)) \|_\mathcal{H} d\sigma ,
\end{align*}
then Young's inequality gives
\begin{align*}
\| \mathbf{P} \mathbf{K}_\mathbf{u}(\Phi)  \|_{L^2(\RR_+; L^5(\BB^5))} \lesssim & \|1_{[-\infty,0]} e^{(\cdot)} \|_{L^2(\RR)} \int_0^\infty  \| \mathbf{N}(\Phi(\sigma)) \|_\mathcal{H} d\sigma \\
\lesssim &  \delta^2 + \delta^{\frac{7}{3}}.
\end{align*}
Therefore, we have $ \| \mathbf{K}_\mathbf{u} (\Phi) \|_\mathcal{X} \lesssim \frac{\delta}{c} +  \delta^2 + \delta^{\frac{7}{3}}$, which implies the claim.
\end{proof}

\begin{lemma}\label{lemma:contraction} Let $\delta>0$ be small and $ \mathbf{u} \in \mathcal{H}$. Then
$$
\| \mathbf{K}_\mathbf{u}(\Phi) - \mathbf{K}_\mathbf{u}(\Psi) \|_\mathcal{H} \leq \frac{1}{2} \| \Phi - \Psi \|_\mathcal{X}
$$
for all $\Phi, \Psi \in \mathcal{X}_\delta$.
\end{lemma}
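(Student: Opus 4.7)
The plan is to exploit the cancellation of the linear part $\mathbf{S}(\tau)\mathbf{u}$ in $\mathbf{K}_\mathbf{u}(\Phi)-\mathbf{K}_\mathbf{u}(\Psi)$ and reduce everything to estimating the nonlinear difference via Lemma~\ref{lemma:nonlin}, combined with the Strichartz bound of Theorem~\ref{thm:strichartz} and the energy bound of Lemma~\ref{lemma:improvedenergy}. Writing out the definition, the linear piece drops and one finds
\[
\mathbf{K}_\mathbf{u}(\Phi)(\tau)-\mathbf{K}_\mathbf{u}(\Psi)(\tau)
= -\mathbf{S}(\tau)\,\mathbf{P}\!\!\int_0^\infty e^{-\sigma}\bigl[\mathbf{N}(\Phi(\sigma))-\mathbf{N}(\Psi(\sigma))\bigr]d\sigma
+\int_0^\tau \mathbf{S}(\tau-\sigma)\bigl[\mathbf{N}(\Phi(\sigma))-\mathbf{N}(\Psi(\sigma))\bigr]d\sigma,
\]
and by the structure of $\mathbf{P}$ the first term evolves as $e^\tau$ times a scalar multiple of $\mathbf{g}$, while the second splits into stable and unstable parts exactly as in the proof of Lemma~\ref{lemma:selfmap}. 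So the $\mathcal{X}$-norm of the difference (which is what one really needs for a Banach fixed point) splits into a Duhamel contribution on $\ker\mathbf{P}$, to which Theorem~\ref{thm:strichartz} and Lemma~\ref{lemma:improvedenergy} apply, and a forward/backward exponential on $\rg\mathbf{P}$, which by Young's inequality is controlled by $\int_0^\infty\|\mathbf{N}(\Phi(\sigma))-\mathbf{N}(\Psi(\sigma))\|_\mathcal{H}\,d\sigma$.

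Next I would plug in the pointwise (in $\sigma$) Lipschitz bound from Lemma~\ref{lemma:nonlin},
\[
\|\mathbf{N}(\Phi(\sigma))-\mathbf{N}(\Psi(\sigma))\|_\mathcal{H}
\lesssim \|\varphi_1-\psi_1\|_{L^5}\bigl(\|\varphi_1\|_{L^5}+\|\psi_1\|_{L^5}\bigr)
+\|\varphi_1-\psi_1\|_{L^{14/3}}\bigl(\|\varphi_1\|_{L^{14/3}}^{4/3}+\|\psi_1\|_{L^{14/3}}^{4/3}\bigr),
\]
and integrate in $\sigma$. The first product is handled by Cauchy--Schwarz in time, giving $\|\varphi_1-\psi_1\|_{L^2_\sigma L^5_\rho}(\|\varphi_1\|_{L^2_\sigma L^5_\rho}+\|\psi_1\|_{L^2_\sigma L^5_\rho})\lesssim \delta\,\|\Phi-\Psi\|_\mathcal{X}$. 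For the second product, I would use the same interpolation employed in Lemma~\ref{lemma:selfmap}: writing $L^{14/3}$ as an interpolant between $L^5$ and $L^{10/3}$, and then interpolating in time between $L^2_\sigma L^5_\rho$ and $L^\infty_\sigma L^{10/3}_\rho$, the Sobolev embedding $H^1(\BB^5)\hookrightarrow L^{10/3}(\BB^5)$ and the energy bound of Lemma~\ref{lemma:improvedenergy} promote $L^\infty_\sigma L^{10/3}_\rho$ to a bound by $\|\Phi\|_{L^\infty_\sigma \mathcal{H}}$, and an identical interpolation applies to the difference. This yields an overall bound by $\delta^{4/3}\|\Phi-\Psi\|_\mathcal{X}$.

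Combining, both stable and unstable contributions are controlled by $(\delta+\delta^{4/3})\|\Phi-\Psi\|_\mathcal{X}$, and choosing $\delta>0$ sufficiently small gives the factor $\tfrac12$. The main technical obstacle is the $L^{14/3}$ norm, which is not directly contained in the $\mathcal{X}$ norm; the cure is exactly the two-step interpolation (in space and then in time) that the authors have already set up, hinging crucially on the fact that the energy bound of Lemma~\ref{lemma:improvedenergy} gives \emph{uniform}-in-$\tau$ control (rather than the $e^{\epsilon\tau}$ growth from Lemma~\ref{lemma:growthbound}), since otherwise the time-integration against $e^{-\sigma}$ or against the Duhamel kernel could not absorb the growth to produce a contraction. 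The argument mirrors the corresponding step in \cite{donn2017}.
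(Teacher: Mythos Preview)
Your proposal is correct and follows essentially the same route as the paper: split into $\ker\mathbf{P}$ and $\rg\mathbf{P}$ parts, use Lemma~\ref{lemma:improvedenergy} and Theorem~\ref{thm:strichartz} on the former and the $e^\tau$-structure plus Young on the latter, feed in the Lipschitz bound from Lemma~\ref{lemma:nonlin}, and handle the $L^{14/3}$ term by interpolation between $L^2_\sigma L^5_\rho$ and $L^\infty_\sigma L^{10/3}_\rho$ (the paper phrases this as a bound by $\|\cdot\|_{L^{7/3}_\sigma L^{14/3}_\rho}$, which is the same interpolation). Your remark that one really needs the $\mathcal{X}$-norm on the left is correct and matches what the paper actually proves; the $\mathcal{H}$ in the lemma statement is a typo.
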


\begin{proof} From Lemmas \ref{lemma:improvedenergy} and \ref{lemma:nonlin}, we have
\begin{align*}
& \| (\mathbf{I} - \mathbf{P}) \lf[ \mathbf{K}_\mathbf{u}(\Phi)(\tau) - \mathbf{K}_\mathbf{u}(\Psi) (\tau) \rt] \|_\mathcal{H} \\
\lesssim & \int_0^\tau \| \mathbf{N}(\Phi (\sigma)) -\mathbf{N}(\Psi (\sigma)) \|_\mathcal{H} d\sigma \\
\lesssim & \int_0^\tau  \| \varphi_1(\sigma) - \psi_1(\sigma) \|_{L^5(\BB^5)} \lf( \| \varphi_1(\sigma) \|_{L^5(\BB^5)} + \| \psi_1(\sigma) \|_{L^5(\BB^5)} \rt) \\
& + \| \varphi_1(\sigma) - \psi_1(\sigma) \|_{L^{\frac{14}{3}}(\BB^5)}  \lf(\| \varphi_1(\sigma) \|_{L^\frac{14}{3}(\BB^5)}^{\frac{4}{3}} + \| \psi_1(\sigma)\|_{L^\frac{14}{3}(\BB^5)}^{\frac{4}{3}}\rt) d\sigma \\
\lesssim &  \| \varphi_1 - \psi_1 \|_{L^2(\RR_+; L^5(\BB^5))} \lf( \| \varphi_1\|_{L^2(\RR_+; L^5(\BB^5))} + \| \psi_1 \|_{L^2(\RR_+; L^5(\BB^5))} \rt) \\
& + \| \varphi_1 - \psi_1 \|_{L^{\frac{7}{3}}(\RR_+;L^{\frac{14}{3}}(\BB^5))}  \lf(\| \varphi_1 \|_{L^{\frac{7}{3}}(\RR_+;L^{\frac{14}{3}}(\BB^5))}^{\frac{4}{3}} + \| \psi_1\|_{L^{\frac{7}{3}}(\RR_+;L^{\frac{14}{3}}(\BB^5))}^{\frac{4}{3}}\rt) \\
\lesssim & \delta  \| \Phi - \Psi \|_\mathcal{X}.
\end{align*}
Furthermore, from Theorem \ref{thm:strichartz} we have
\begin{align*}
\| (\mathbf{I} - \mathbf{P})\lf[ \mathbf{K}_\mathbf{u}(\Phi)(\tau) - \mathbf{K}_\mathbf{u}(\Psi) (\tau) \rt] \|_{L^2(\RR_+; L^5(\BB^5))} & \lesssim  \int_0^\infty \| \mathbf{N}(\Phi (\sigma)) -\mathbf{N}(\Psi (\sigma)) \|_\mathcal{H} d\sigma \\
& \lesssim \delta  \| \Phi - \Psi \|_\mathcal{X}.
\end{align*}
Following the same logic as in the proof of Lemma \ref{lemma:selfmap}, we have
\begin{align*}
 \|  \mathbf{P}\mathbf{K}_\mathbf{u}(\Phi)(\tau) - \mathbf{P} \mathbf{K}_\mathbf{u}(\Psi) (\tau)  \|_\mathcal{H} 
\lesssim & \int_\tau^\infty e^{\tau - \sigma}\lf| ( \mathbf{N}(\Phi (\sigma)) -\mathbf{N}(\Psi (\sigma)) | \mathbf{g}^*) \rt|_\mathcal{H} d\sigma \\
& \lesssim \delta  \| \Phi - \Psi \|_\mathcal{X}.
\end{align*}
Lastly, 
\begin{align*}
\|  \lf[ \mathbf{P} \mathbf{K}_\mathbf{u}(\Phi)(\tau) - \mathbf{P} \mathbf{K}_\mathbf{u}(\Psi) (\tau) \rt]_1 \|_{L^5(\BB^5)}  \lesssim & \int_\tau^\infty e^{\tau-\sigma} \| \mathbf{N}(\Phi (\sigma)) -\mathbf{N}(\Psi (\sigma)) \|_\mathcal{H} d\sigma,
\end{align*}
and by Young's inequality we have
\begin{align*}
\|  \lf[ \mathbf{P} \mathbf{K}_\mathbf{u}(\Phi)- \mathbf{P} \mathbf{K}_\mathbf{u}(\Psi) \rt]_1 \|_{L^2(\RR_+; L^5(\BB^5))} \lesssim \delta  \| \Phi - \Psi \|_\mathcal{X}.
\end{align*}
Hence $\| \mathbf{K}_\mathbf{u}(\Phi) - \mathbf{K}_\mathbf{u}(\Psi) \|_\mathcal{H} \lesssim \delta \| \Phi - \Psi \|_\mathcal{X}$. By choosing $\delta$ sufficiently small, we get the inequality as claimed.
\end{proof}

\begin{cor}\label{cor:fixedpoint} There exist $c,\delta>0$ such that if $\| \mathbf{u} \|_{\mathcal{H}} \leq \frac{\delta}{c}$, then there exists a unique $\Phi \in \mathcal{X}_\delta$ satisfying $\Phi = \mathbf{K}_\mathbf{u}(\Phi)$.
\end{cor}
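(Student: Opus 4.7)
The plan is to obtain $\Phi$ as the unique fixed point of the map $\mathbf{K}_\mathbf{u}:\mathcal{X}_\delta \to \mathcal{X}_\delta$ via the Banach fixed point theorem. Since $\mathcal{X}$ is a Banach space under $\|\cdot\|_\mathcal{X}$ (being the intersection of the complete spaces $C([0,\infty),\mathcal{H})\cap L^\infty(\RR_+;\mathcal{H})$ and $L^2(\RR_+;L^5(\BB^5))$ for the first component), and $\mathcal{X}_\delta$ is the closed ball of radius $\delta$ around $0$, the subset $\mathcal{X}_\delta$ is a complete metric space with the induced distance. Hence the hypotheses of Banach's fixed point theorem reduce to a self-mapping property and a contraction property, both of which have already been established.

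First I would invoke Lemma \ref{lemma:selfmap} to fix constants $c>0$ and $\delta_1>0$ such that whenever $\delta\leq\delta_1$ and $\|\mathbf{u}\|_\mathcal{H}\leq\delta/c$, the map $\mathbf{K}_\mathbf{u}$ sends $\mathcal{X}_\delta$ into itself. Then I would invoke Lemma \ref{lemma:contraction} to obtain a second threshold $\delta_2>0$ such that for all $\delta\leq\delta_2$ and any $\mathbf{u}\in\mathcal{H}$, the map $\mathbf{K}_\mathbf{u}$ is a strict contraction on $\mathcal{X}_\delta$ with Lipschitz constant $\tfrac{1}{2}$. Setting $\delta:=\min(\delta_1,\delta_2)$ ensures both conditions hold simultaneously whenever $\|\mathbf{u}\|_\mathcal{H}\leq\delta/c$.

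With these two ingredients in place, the Banach fixed point theorem immediately furnishes a unique $\Phi\in\mathcal{X}_\delta$ with $\Phi=\mathbf{K}_\mathbf{u}(\Phi)$. Since the work required is the direct application of two previously established lemmas, I do not anticipate any substantive obstacle here; the only bookkeeping concern is to ensure that the two smallness parameters from Lemmas \ref{lemma:selfmap} and \ref{lemma:contraction} can be chosen compatibly, which is handled by taking their minimum.
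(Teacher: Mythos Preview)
Your proposal is correct and follows essentially the same approach as the paper: invoke Lemmas \ref{lemma:selfmap} and \ref{lemma:contraction} and apply the Banach fixed point theorem on the complete metric space $\mathcal{X}_\delta$. The paper's proof is a one-line reference to these two lemmas, while you have additionally spelled out the completeness of $\mathcal{X}_\delta$ and the compatibility of the smallness parameters, which is harmless extra detail.
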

\begin{proof}
We obtain the conclusion from Lemmas \ref{lemma:selfmap} and \ref{lemma:contraction}, and the Banach fixed point theorem.
\end{proof}

Finally, we show that if we choose the blowup time correctly, the correction term $\mathbf{C}(\Phi, \mathbf{u})$ is in fact $0$. Recall that the initial data $\Phi(0) = (\varphi_1(0),\varphi_2(0))$ are given by
\begin{align*}
\varphi_1(0) & = \psi_1(0,\rho) - c_5 = T^{\frac{3}{2}}f(T\rho) - c_5 \\
\varphi_2(0) & = \psi_2(0,\rho) - \frac{3}{2} c_5 = T^{\frac{5}{2}} g(T\rho) - \frac{3}{2} c_5.
\end{align*}
The ODE blowup solution $u^1$, transformed to similarity coordinates, is given by
$$
\psi^1(\tau,\rho) = T^\frac{3}{2} e^{-\frac{3}{2}\tau} u^1 (T-Te^{-\tau}, T e^{-\tau} \rho) = T^\frac{3}{2} e^{-\frac{3}{2}\tau}  c_5 (1-T+Te^{-\tau})^{-\frac{3}{2}}.
$$
As in \eqref{eqn:coordtrans}, we set $\psi_1^1 := \psi^1$, and
$$
\psi_2^1 (\tau,\rho) = \PD_\tau \psi^1_1 + \rho \PD_\rho \psi^1_1+ \frac{3}{2} \psi^1_1 = \frac{3}{2} c_5 T^\frac{5}{2} e^{-\frac{3}{2}\tau} (1-T+Te^{-\tau})^{-\frac{5}{2}}.
$$
Thus intial data for this solution is given by
$$
\psi_1^1 (0,\rho) = c_5 T^\frac{3}{2}, \psi_2^1 (0,\rho) = \frac{3}{2}c_5 T^\frac{5}{2}.
$$
Now we rewrite the initial condition for $\Phi$ as
\begin{align*}
\Phi(0)(\rho) & = \lf( T^{\frac{3}{2}}\lf( f(T\rho) -c_5 \rt) + c_5 T^\frac{3}{2}- c_5 , T^{\frac{5}{2}} \lf( g(T\rho) -\frac{3}{2}c_5 \rt) +\frac{3}{2} c_5 T^\frac{5}{2} - \frac{3}{2} c_5 \rt) \\
& = \mathbf{U} \lf( T,  \lf(f-c_5 , g-\frac{3}{2} c_5  \rt) \rt)(\rho),
\end{align*}
where
$$
\mathbf{U}(T,\mathbf{v})(\rho) := \lf( T^{\frac{3}{2}}v_1(T\rho)  , T^{\frac{5}{2}}v_2(T\rho) \rt) + \lf(  c_5 T^\frac{3}{2}, \frac{3}{2} T^\frac{5}{2} \rt) - \lf( c_5, \frac{3}{2} c_5 \rt) .
$$
We can see that, for $\delta>0$ sufficently small and $\mathbf{v} \in H^1\times L^2 (\BB^5_{1+\delta})$, the map
$$
\mathbf{U}(\cdot, \mathbf{v}): [1-\delta, 1+\delta] \to H^1 \times L^2 (\BB^5)
$$
is continuous (see for example the proof of \cite[Lemma 4.14]{dosc2012}). 

\begin{lemma}\label{lemma:vartime} There exist $M \geq 1$ and $\delta >0$ such that, given $\| \mathbf{v} \|_{H^1 \times L^2(\BB^5_{1+\delta})} < \frac{\delta}{M}$, there exist $T^* \in [1-\delta, 1+\delta]$ and $\| \Phi \| \in \mathcal{X}_\delta$ with $\Phi = \mathbf{K}_{\mathbf{U}(T^*,\mathbf{v})}(\Phi)$ and $\mathbf{C}(\Phi, \mathbf{U}(T^*,\mathbf{v})) = 0$.
\end{lemma}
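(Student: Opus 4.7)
The plan is to apply the intermediate value theorem in $T$ to the scalar quantity $\Lambda(T,\mathbf{v})$ defined by $\mathbf{C}(\Phi_T,\mathbf{U}(T,\mathbf{v})) = \Lambda(T,\mathbf{v})\mathbf{g}$, where $\Phi_T \in \mathcal{X}_\delta$ is the fixed point furnished by Corollary \ref{cor:fixedpoint} for the data $\mathbf{U}(T,\mathbf{v})$. This $\Lambda$ is well-defined because $\mathbf{C}$ takes values in $\rg\mathbf{P} = \lla\mathbf{g}\rra$; explicitly, letting $\mathbf{g}^* \in \mathcal{H}$ be the dual element with $\mathbf{P}\mathbf{f} = (\mathbf{f}|\mathbf{g}^*)_\mathcal{H}\mathbf{g}$ (so that $(\mathbf{g}|\mathbf{g}^*)_\mathcal{H} = 1$),
$$\Lambda(T,\mathbf{v}) = (\mathbf{U}(T,\mathbf{v})|\mathbf{g}^*)_\mathcal{H} + \int_0^\infty e^{-\sigma}(\mathbf{N}(\Phi_T(\sigma))|\mathbf{g}^*)_\mathcal{H}\,d\sigma.$$

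First I would establish continuity of $T \mapsto \Phi_T \in \mathcal{X}$, and hence of $T \mapsto \Lambda(T,\mathbf{v})$, by a standard perturbation argument on the fixed-point equation. The contraction estimate from Lemma \ref{lemma:contraction}, the Lipschitz dependence of $\mathbf{K}_\mathbf{u}$ on $\mathbf{u}$ inherited from the bounds in the proof of Lemma \ref{lemma:selfmap}, together with the stated continuity of $T \mapsto \mathbf{U}(T,\mathbf{v})$ in $\mathcal{H}$, imply that $\Phi_T$ varies continuously with both $T$ and $\mathbf{v}$.

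The key sign computation is at $\mathbf{v} = 0$. Direct differentiation yields
$$\partial_T\mathbf{U}(T,0)\big|_{T=1} = \left(\tfrac{3}{2}c_5,\tfrac{15}{4}c_5\right) = \tfrac{3}{4}c_5\,\mathbf{g},$$
which crucially lies along the unstable eigenfunction. Consequently $(\mathbf{U}(T,0)|\mathbf{g}^*)_\mathcal{H} = \tfrac{3}{4}c_5(T-1) + O((T-1)^2)$. The self-map estimates of Lemma \ref{lemma:selfmap} yield $\|\Phi_T\|_\mathcal{X} \lesssim \|\mathbf{U}(T,0)\|_\mathcal{H} \lesssim |T-1|$, so Lemma \ref{lemma:nonlin} bounds the nonlinear contribution in $\Lambda$ by $|T-1|^2 + |T-1|^{7/3}$. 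This gives $\Lambda(T,0) = \tfrac{3}{4}c_5(T-1) + O((T-1)^2)$, so for $\delta$ sufficiently small there exists $c_0 > 0$ (e.g.\ $c_0 = \tfrac{1}{2}c_5$) with $\Lambda(1-\delta,0) \leq -c_0\delta < 0 < c_0\delta \leq \Lambda(1+\delta,0)$.

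To pass from $\mathbf{v} = 0$ to general $\mathbf{v}$, I would show $|\Lambda(T,\mathbf{v}) - \Lambda(T,0)| \lesssim \|\mathbf{v}\|_{H^1\times L^2(\BB^5_{1+\delta})}$ uniformly in $T \in [1-\delta,1+\delta]$, using $\|\mathbf{U}(T,\mathbf{v}) - \mathbf{U}(T,0)\|_\mathcal{H} \lesssim \|\mathbf{v}\|$, Lipschitz dependence of $\Phi_T$ on $\mathbf{v}$, and the Lipschitz estimate on $\mathbf{N}$ from Lemma \ref{lemma:nonlin}. Choosing $M$ large enough that this perturbation is bounded by $c_0\delta/2$ preserves the endpoint signs, and continuity of $T \mapsto \Lambda(T,\mathbf{v})$ together with the intermediate value theorem produces $T^* \in [1-\delta,1+\delta]$ with $\Lambda(T^*,\mathbf{v}) = 0$, which is precisely $\mathbf{C}(\Phi_{T^*},\mathbf{U}(T^*,\mathbf{v})) = 0$. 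The delicate point is ensuring that the nonlinear contribution is genuinely subleading to the linear-in-$(T-1)$ term, which is why the alignment $\partial_T\mathbf{U}(T,0)|_{T=1} \parallel \mathbf{g}$ is essential: any component transverse to $\mathbf{g}$ would be projected away by $\mathbf{P}$ and the leading term in $\Lambda$ would vanish, spoiling the sign-change argument.
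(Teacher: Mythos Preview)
Your proposal is correct and follows essentially the same strategy as the paper: both identify the crucial alignment $\partial_T\mathbf{U}(T,0)|_{T=1} = \tfrac{3}{4}c_5\,\mathbf{g}$, bound the nonlinear contribution to $\mathbf{C}$ as higher order, and then invoke a one-dimensional topological argument in $T$. The only cosmetic difference is that the paper rewrites the condition $\mathbf{C}=0$ as a fixed-point equation $T-1 = F(T)$ with $|F|\lesssim \delta/M + \delta^2$ and applies Brouwer on $[1-\delta,1+\delta]$, whereas you verify the sign change of $\Lambda$ at the endpoints and apply the intermediate value theorem directly; these are equivalent on an interval.
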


\begin{proof} First, we have
$$
\PD_T \begin{pmatrix} c_5 T^\frac{3}{2} \\ \frac{3}{2} c_5 T^\frac{5}{2}\end{pmatrix} \Big |_{T=1} =\begin{pmatrix} \frac{3}{2} c_5 \\ \frac{15}{4}  c_5 \end{pmatrix} = \frac{3}{4} c_5 \mathbf{g}.
$$
So we can write
$$
\mathbf{U}(T, \mathbf{v})(\rho) = \lf( T^{\frac{3}{2}}v_1(T\rho)  , T^{\frac{5}{2}}v_2(T\rho) \rt) + \frac{3}{4} c_5 \mathbf{g}(T-1) + (T-1)^2 \mathbf{f}_T
$$
where $\| \mathbf{f}_T \|_\mathcal{H} \lesssim 1$ on $T \in [\frac{1}{2}, \frac{3}{2}]$. Hence
$$
\lf( \mathbf{U}(T, \mathbf{v}) | \mathbf{g} \rt) = O\lf( \frac{\delta}{M} T^0 \rt) + \frac{3}{4} c_5 \| \mathbf{g} \|^2 (T-1) + O(\delta^2 T^0)
$$
for all $T\in [1-\delta, 1+\delta]$, $\delta\in[0,\frac{1}{2}]$, and $M \geq 1$.

Now notice that
$$
\| \mathbf{U}( T,\mathbf{v} ) \|_\mathcal{H} \lesssim \|v\|_{H^1\times L^2(\BB_{1+\delta}^5)} + |T-1|,
$$
so for every $T \in [1-\delta, 1+\delta]$, by Corollary \ref{cor:fixedpoint}, there exists $\Phi_T \in \mathcal{X}_\delta$ with $\Phi_T = \mathbf{K}_{\mathbf{U}( T,\mathbf{v} )} (\Phi)$  provided that $\delta>0$ is sufficiently small and $M \geq 1$ is sufficiently large.

Recall that 
$$
\mathbf{C}(\Phi,\mathbf{u}) := \mathbf{P} \lf[ \mathbf{u} + \int_0^\infty e^{-\sigma} \mathbf{N}(\Phi(\sigma)) d\sigma \rt].
$$
and notice that
$$
\int_0^\infty e^{-\sigma} \| \mathbf{N}(\Phi(\sigma)) \|_\mathcal{H} d\sigma \lesssim \delta^2,
$$
we have
$$
\lf( \mathbf{C}(\Phi,\mathbf{u}) | \mathbf{g} \rt)_\mathcal{H} = O\lf( \frac{\delta}{M} T^0 \rt) + \frac{3}{4} c_5 \| \mathbf{g} \|^2 (T-1) + O(\delta^2 T^0).
$$
Since $\rg \mathbf{C}(\Phi,\mathbf{u}) \subset \lla \mathbf{g} \rra$, we see that $\mathbf{C}(\Phi,\mathbf{u}) = 0$ is equivalent to $T-1 = F(T)$, where $F$ is a continuous function of $T$ on $[1-\delta, 1+\delta]$ with $|F(T)| \lesssim \frac{\delta}{M} + \delta^2$. Choosing $M$ sufficiently large, we have that $\rg(1+F)$ is a continuous function from $ [1-\delta, 1+\delta]$ to itself, so $1+F$ has a fixed point $T^*$. 
\end{proof}

\subsection{Proof of the main theorem}

Finally, we turn to our main result.

\begin{proof}[Proof of  Theorem \ref{thm:main}] Let $M \geq 1$ be sufficiently large and $\delta>0$ sufficiently small. For $(f,g)$ with
$$
\| (f,g) - u^1[0] \|_{H^1\times L^2(\BB_{1+\delta}^5)} \leq \frac{\delta}{M},
$$
let $\mathbf{v} =  (f,g) - u^1[0]$. Then we have the associated $\Phi \in \mathcal{X}_\delta$ and $T$ given by Lemma \ref{lemma:vartime}. Then from \eqref{eqn:simcoor}, \eqref{eqn:coordtrans}, and \eqref{eqn:perturbation}, we have
\begin{align*}
\delta^2 \geq & \| \varphi_1 \|^2_{L^2(\RR_+; L^5(\BB^5))} =  \int_0^\infty \| \varphi_1 (\tau, \cdot) \|^2_{L^5(\BB^5)} d\tau \\ 
= & \int_0^\infty \| \psi_1(\tau,\cdot) - c_5 \|^2_{L^5(\BB^5)}  d\tau \\
= & \int_0^T  \lf\| \psi_1\lf( \log \frac{T}{T-t},\cdot \rt) - c_5 \rt\|^2_{L^5(\BB^5)}  \frac{dt}{T-t} \\
= & \int_0^T  \lf\| \psi_1\lf( \log \frac{T}{T-t},\frac{\cdot}{T-t} \rt) - c_5 \rt\|^2_{L^5(\BB^5_{T-t})}  \frac{dt}{(T-t)^3} \\
= & \int_0^T  (T-t)^3 \lf\| (T-t)^{-\frac{3}{2}}\psi_1\lf( \log\frac{T}{T-t},\frac{\cdot}{T-t} \rt) -  (T-t)^{-\frac{3}{2}} c_5 \rt\|^2_{L^5(\BB^5_{T-t})}  \frac{dt}{(T-t)^3} \\
= & \int_0^T  \| u(t,\cdot) - u^T(t, \cdot) \|^2_{L^5(\BB^5_{T-t})}  dt \\
\simeq & \int_0^T \frac{\|u - u^T(t,\cdot) \|^2_{L^5(\BB_{T-t}^3)}}{\| u^T(t,\cdot) \|^2_{L^5(\BB_{T-t}^3)}} \frac{dt}{T-t}.
\end{align*}
\end{proof}

\appendix

\section{Properties of symbol type functions}\label{app:symbol}

Here we discuss some properties of functions of symbol type. From the Leibniz rule we have the following property.

\begin{lemma} Let $f,g: (0,1) \to \mathbb{C}$, $a, b \in [0,1]$, and $\alpha, \beta \in \RR$. If $f(x) = \mathcal{O}((x-a)^\alpha)$, $g(x)=\mathcal{O}((x-b)^\beta)$, then $f(x)g(x)=\mathcal{O}((x-a)^\alpha (x-b)^\beta)$.

In particular, if $f(x) = \mathcal{O}((x-a)^\alpha)$, $g(x)=\mathcal{O}((x-a)^\beta)$, then $f(x)g(x)=\mathcal{O}((x-a)^{\alpha+\beta})$.
\end{lemma}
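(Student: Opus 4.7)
The plan is to reduce everything to the Leibniz rule together with the boundedness of $|x-a|$ and $|x-b|$ on $[0,1]$. First I would fix $j \in \mathbb{N}_0$ and expand
\[
\PD_x^j (fg)(x) = \sum_{k=0}^j \binom{j}{k} \PD_x^k f(x)\, \PD_x^{j-k} g(x).
\]
By the symbol-type assumption on $f$ and $g$, each factor satisfies $|\PD_x^k f(x)| \le C_k |x-a|^{\alpha-k}$ and $|\PD_x^{j-k} g(x)| \le C_{j-k} |x-b|^{\beta-(j-k)}$, so termwise
\[
\lf|\binom{j}{k} \PD_x^k f(x)\, \PD_x^{j-k}g(x) \rt| \lesssim_{j} |x-a|^{\alpha - k}\, |x-b|^{\beta-(j-k)}.
\]

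Next I would absorb the discrepancy in exponents using the bounded domain. Since $x, a, b \in [0,1]$, we have $|x-a|, |x-b| \le 1$, and the exponents $j-k$ and $k$ are both non-negative for $0\le k\le j$. Therefore
\[
|x-a|^{\alpha-k} |x-b|^{\beta-(j-k)} = |x-a|^{j-k}|x-b|^{k}\cdot |x-a|^{\alpha-j}|x-b|^{\beta-j} \le |x-a|^{\alpha-j}|x-b|^{\beta-j}.
\]
Summing the $(j+1)$ terms in the Leibniz expansion yields $|\PD_x^j(fg)(x)| \le \tilde C_j |x-a|^{\alpha-j}|x-b|^{\beta-j}$, which is the definition of $fg(x) = \mathcal{O}((x-a)^\alpha(x-b)^\beta)$.

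The particular case with $b=a$ is then immediate: the estimate just obtained reads $|\PD_x^j(fg)(x)| \le \tilde C_j |x-a|^{\alpha+\beta-2j}$, but the same Leibniz calculation without the waste (combining $|x-a|^{\alpha-k}|x-a|^{\beta-(j-k)} = |x-a|^{\alpha+\beta-j}$ directly) gives the sharper bound $|\PD_x^j(fg)(x)| \le \tilde C_j |x-a|^{\alpha+\beta-j}$, so $fg(x) = \mathcal{O}((x-a)^{\alpha+\beta})$. There is no real obstacle here; the only point worth being careful about is that the absorption step in the mixed case genuinely uses $x,a,b\in[0,1]$, which is why the statement is phrased on this compact interval rather than on $\RR$.
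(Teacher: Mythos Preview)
Your proof is correct and follows essentially the same approach as the paper: Leibniz rule plus the observation that $|x-a|,|x-b|\le 1$ on $[0,1]$ to absorb the extra powers. Your additional remark on the particular case (that the direct Leibniz combination gives the sharper exponent $\alpha+\beta-j$ rather than $\alpha+\beta-2j$) is a nice clarification the paper leaves implicit.
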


\begin{proof} Fix $j\in \mathbb{N}_0$. By assumption, $f$ and $g$ satisfy
$$
\lf| \PD^k_x f(x) \rt| \lesssim |x-a|^{\alpha-j} \text{ and } \lf| \PD^k_x g(x) \rt| \lesssim |x-b|^{\beta-j} 
$$
for all $x\in (0,1)$ and $k\in \mathbb{N}_0$. Then by the Leibniz rule we have
\begin{align*}
\lf| \PD^j_x \lf( f(x) g(x) \rt) \rt| \leq & \sum_{k=0}^j \begin{pmatrix} j \\ k \end{pmatrix} \lf| \PD^{j-k}_x f(x) \rt| \lf| \PD^k_x g(x) \rt| \\
\lesssim & \sum_{k=0}^j \begin{pmatrix} j \\ k \end{pmatrix} |x-a|^{\alpha -(j-k)} |x-b|^{\beta-k} \\
\lesssim & |x-a|^{\alpha-j} |x-b|^{\beta-j},
\end{align*}
since $|x-a| < 1$ and $|x-b|<1$.
\end{proof}

Symbol behaviour is also stable under integration in certain situations. An example is the following.

\begin{lemma} Let $f:(0,1) \to \mathbb{C}$ and $\alpha > 1$ with $f(x) = \mathcal{O}(x^{-\alpha})$. Define a function $g:(0,c] \to \mathbb{C}$ with $c\in (0,1)$ by
$$
g(x) = \int_x^c f(y) dy.
$$
Then $g(x)= \mathcal{O}(x^{-\alpha +1})$.
\end{lemma}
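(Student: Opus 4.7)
The plan is to verify the two conditions directly by splitting into the cases $j=0$ and $j\geq 1$, using the fundamental theorem of calculus to handle all derivatives of order at least one and a direct estimate for $g$ itself.

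For $j=0$, I would write
\[
|g(x)| \leq \int_x^c |f(y)|\,dy \lesssim \int_x^c y^{-\alpha}\,dy = \frac{1}{\alpha-1}\bigl(x^{-\alpha+1}-c^{-\alpha+1}\bigr) \lesssim x^{-\alpha+1},
\]
where the final inequality uses $\alpha>1$ (so $-\alpha+1<0$) and $x\leq c<1$ to absorb the constant term into $x^{-\alpha+1}$. This settles the $j=0$ bound.

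For $j\geq 1$, by the fundamental theorem of calculus we have $g'(x)=-f(x)$ on $(0,c)$, so $\partial_x^j g(x) = -\partial_x^{j-1} f(x)$. The hypothesis $f(x)=\mathcal{O}(x^{-\alpha})$ gives
\[
\bigl|\partial_x^{j-1} f(x)\bigr| \leq C_{j-1}\, x^{-\alpha-(j-1)} = C_{j-1}\, x^{-\alpha+1-j},
\]
which is exactly the required symbol bound for $g$ at order $j$. Combining both cases yields $g(x)=\mathcal{O}(x^{-\alpha+1})$.

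The argument is routine; there is no real obstacle. The only mild subtlety is the $j=0$ case, where one must use $\alpha>1$ to ensure that the antiderivative $y^{-\alpha+1}$ is dominated by its value at $x$ rather than at $c$. For $j\geq 1$ everything reduces to the symbol hypothesis on $f$ via a single application of the fundamental theorem of calculus.
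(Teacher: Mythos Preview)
Your proof is correct and follows exactly the same approach as the paper: a direct estimate of the integral for $j=0$ using $\alpha>1$, and then the fundamental theorem of calculus to reduce the derivative bounds for $j\geq 1$ to the symbol hypothesis on $f$.
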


\begin{proof} First of all, since $0<x\leq c<1$ and $-\alpha+1 < 0$, we have
$$
\lf| g(x) \rt| \leq  \int_x^c |f(y)| dy \lesssim \int_x^c y^{-\alpha} dy \lesssim  c^{-\alpha +1} + x^{-\alpha+1} \lesssim  x^{-\alpha+1}.
$$
Moreover, since
$$
\PD_x \int_x^c f(y) dy = -f(x),
$$
estimates on derivatives follow from the symbol behaviour of $f$.
\end{proof}

\begin{lemma} Let $f: I\subset \RR \to \mathbb{C} $, $0\in \overline{I}$ be such that $f\neq 0$ and $f(x) = 1+ \mathcal{O}(x)$. Then there exists $\delta > 0$ such that $1/f(x) = 1+\mathcal{O}(x)$ on $(0,\delta)$.
\end{lemma}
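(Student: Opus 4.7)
The plan is to invert the Taylor expansion of $f$ around $0$ and show that the power series manipulation preserves symbol behaviour, using Faà di Bruno's formula to keep track of derivatives.

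First, I would set $g := f - 1$. By hypothesis $g(x) = \mathcal{O}(x)$, which in particular gives $|g(x)| \leq C_0 x$ for some constant $C_0 > 0$ on $(0,1) \cap I$. Choose $\delta \in (0,1)$ so small that $C_0 \delta \leq \tfrac{1}{2}$; then $|g(x)| \leq \tfrac{1}{2}$ on $(0,\delta)$, hence $|f(x)| \geq \tfrac{1}{2}$ there. This handles the case $j = 0$ of the symbol estimates immediately, since
\[
\left| \frac{1}{f(x)} - 1 \right| = \left|\frac{g(x)}{f(x)}\right| \leq 2 C_0 x = \mathcal{O}(x).
\]

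For $j \geq 1$, since $\partial_x^j(1/f - 1) = \partial_x^j(1/f)$, I would apply Faà di Bruno's formula to the composition $1/f = G \circ f$ where $G(y) = 1/y$. This gives
\[
\partial_x^j (1/f)(x) \;=\; \sum_{\substack{k_1 + 2k_2 + \cdots + j k_j = j \\ k_i \geq 0}} c_{k_1, \ldots, k_j}\, G^{(k)}(f(x))\, \prod_{i=1}^{j} \bigl(f^{(i)}(x)\bigr)^{k_i},
\]
where $k = k_1 + \cdots + k_j \geq 1$ and $G^{(k)}(y) = (-1)^k k!\, y^{-k-1}$. Using $|f(x)| \geq \tfrac{1}{2}$ on $(0,\delta)$, we have $|G^{(k)}(f(x))| \lesssim 1$. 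For $i \geq 1$ we have $f^{(i)} = g^{(i)}$, and the symbol estimate $g(x) = \mathcal{O}(x)$ yields $|g^{(i)}(x)| \leq C_i x^{1-i}$. Therefore each product satisfies
\[
\left| \prod_{i=1}^{j} (g^{(i)}(x))^{k_i} \right| \;\lesssim\; \prod_{i=1}^{j} x^{(1-i) k_i} \;=\; x^{k - j}.
\]

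Since $k \geq 1$ in every term and $x < \delta < 1$, we obtain $x^{k-j} \leq x^{1-j}$, so each term is $\mathcal{O}(x^{1-j})$, and hence $|\partial_x^j(1/f)(x)| \lesssim x^{1-j}$ on $(0,\delta)$, which is precisely the required symbol bound. The only slightly delicate point is the bookkeeping in the Faà di Bruno sum: one must verify that $k \geq 1$ in every admissible partition (which is automatic since $j \geq 1$ forces at least one $k_i \geq 1$) in order to conclude $x^{k-j} \leq x^{1-j}$. Apart from this, everything reduces to elementary power-counting, so I do not expect any genuine obstacle; an equivalent but more pedestrian route would be to prove by induction on $j$ that $\partial_x^j(1/f)$ can be written as $f^{-(j+1)}$ times a polynomial in $f, f', \ldots, f^{(j)}$ whose total "weight" (summed $i k_i$) equals $j$, and apply the same estimates.
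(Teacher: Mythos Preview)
Your argument is correct. The paper's own proof is a single sentence, ``This follows from the Taylor expansion,'' with no further detail, so there is little to compare against. Your route via Fa\`a di Bruno is one natural way to make that sentence precise; an equally natural reading of the paper's hint would be the geometric series $1/(1+g) = \sum_{n \geq 0} (-g)^n$, combined with the fact that products of symbol-type functions are again symbol-type (which the paper records as a separate lemma). Both arrive at the same place with comparable effort; your version has the minor advantage that it avoids any discussion of term-by-term differentiation of an infinite series.
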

\begin{proof}
This follows from the Taylor expansion.
\end{proof}

Next, we show some properties that are specifically used in the derivative estimates of the Volterra solution from Lemma \ref{lemma:pertsol1}.

\begin{lemma} Let $x \in [0,\infty)$ and $j \in \mathbb{N}_0$. Then
$$
| \PD^j_x (e^{-x} \mathcal{O}(x^0))| \lesssim e^{-x} x^{-j} \lla x \rra^{j}.
$$
\end{lemma}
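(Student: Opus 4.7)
The plan is to apply Leibniz's rule and then analyze the resulting sum in the two regimes $x \leq 1$ and $x \geq 1$.

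First, I would write $e^{-x}\mathcal{O}(x^0)$ as $e^{-x} f(x)$ for a function $f$ satisfying $|\partial_x^k f(x)| \lesssim x^{-k}$ for every $k \in \mathbb{N}_0$, which is the content of the symbol-type assumption. Leibniz's rule then gives
$$
\partial_x^j\bigl(e^{-x} f(x)\bigr) = \sum_{k=0}^j \binom{j}{k} (-1)^{j-k} e^{-x}\,\partial_x^k f(x),
$$
and therefore
$$
\bigl|\partial_x^j(e^{-x} f(x))\bigr| \lesssim e^{-x} \sum_{k=0}^j x^{-k}.
$$

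It remains to show $\sum_{k=0}^j x^{-k} \lesssim x^{-j}\lla x\rra^{j}$ on $(0,\infty)$. I would split: for $x \in (0,1]$ we have $\lla x\rra^{j} \simeq 1$ and $x^{-k} \le x^{-j}$ for $0 \le k \le j$, so the sum is bounded by $(j+1)\, x^{-j} \lesssim x^{-j}\lla x\rra^{j}$. For $x \ge 1$, every term in the sum is $\le 1$, while $x^{-j}\lla x\rra^{j} \simeq 1$, so again the inequality holds (with constants depending only on $j$).

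There is no real obstacle here; the statement follows from the Leibniz rule and an elementary case split. The only thing worth being careful about is that the implicit constants in the symbol estimate depend on $k$, so the bound depends on $j$, but since $j$ is fixed in the statement this is harmless.
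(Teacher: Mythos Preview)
Your proof is correct and follows the same approach as the paper: apply the Leibniz rule and bound the resulting sum $\sum_{k=0}^j x^{-k}$ by $x^{-j}\lla x\rra^j$. The paper's proof is just a one-line application of Leibniz without spelling out the case split, so your argument is essentially identical but more explicit.
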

\begin{proof}
By the Leibniz rule, we have
$$
|\PD_x^j  (e^{-x} \mathcal{O}(x^0))| =  \lf| \sum_{l=0}^j \begin{pmatrix} j \\ l \end{pmatrix} \PD^{j-l}_x e^{-x} \PD^l_x \mathcal{O}(x^0) \rt| \lesssim  e^{-x} x^{-j} \lla x \rra^j.
$$
\end{proof}

\begin{lemma} Let $x,y \in [0,\infty)$ and $j\in \mathbb{N}_0$. Then
$$
\PD^j_x \mathcal{O}(x^0(x+y)^0) = \mathcal{O} (x^{-j} (x+y)^0).
$$
\end{lemma}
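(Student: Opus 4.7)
The plan is to reduce this to a direct re-indexing of the defining derivative bounds, exactly as in the two earlier lemmas of this appendix. The only thing to keep straight is that under the notation of the paper, writing $f(x,y) = \mathcal{O}(x^0(x+y)^0)$ means that for every $(m,n) \in \mathbb{N}_0 \times \mathbb{N}_0$ there is a constant $C_{m,n}$ with
$$
\lf| \PD_x^m \PD_y^n f(x,y) \rt| \leq C_{m,n}\, x^{-m}(x+y)^{-n}
$$
for all $(x,y) \in (0,\infty)^2$, in the natural extension of the single-variable symbol definition from Section~3.1. Once this is recorded, the claim amounts to showing that $g := \PD_x^j f$ satisfies $g = \mathcal{O}(x^{-j}(x+y)^0)$, that is, $|\PD_x^m \PD_y^n g(x,y)| \leq C'_{m,n}\, x^{-j-m}(x+y)^{-n}$ for every $(m,n)$.

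First I would write $\PD_x^m \PD_y^n g = \PD_x^{m+j} \PD_y^n f$ and apply the hypothesis with the shifted index pair $(m+j,n)$ in place of $(m,n)$. This yields
$$
\lf|\PD_x^m \PD_y^n g(x,y)\rt| = \lf|\PD_x^{m+j} \PD_y^n f(x,y)\rt| \leq C_{m+j,n}\, x^{-(m+j)}(x+y)^{-n},
$$
so one can simply set $C'_{m,n} := C_{m+j,n}$ and read off the required bound.

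There is no genuine obstacle here: the lemma is purely bookkeeping, recording the fact that differentiating $j$ times in $x$ lowers the nominal $x$-exponent of a symbol by $j$ while leaving the $(x+y)$-exponent untouched, just as in the one-variable situation treated at the start of Appendix~\ref{app:symbol}. The only point to double-check is the consistency of the two-variable convention used elsewhere in the paper (for instance in the proof of Lemma~\ref{lemma:pertsol1}), so that the reader sees this lemma as the analogue, in the $(x+y)$-symbol class, of the identity $\PD_x^j \mathcal{O}(x^0) = \mathcal{O}(x^{-j})$.
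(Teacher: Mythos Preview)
Your argument is correct under the convention you assume, but it is not quite the paper's route. You read $f=\mathcal{O}(x^0(x+y)^0)$ as meaning $|\PD_x^m\PD_y^n f|\le C_{m,n}\,x^{-m}(x+y)^{-n}$, and under that reading the lemma is indeed a one-line index shift. The paper, however, treats $\mathcal{O}(x^0(x+y)^0)$ as a function of the form $g(x,x+y)$ with $g$ symbol type in each of its two slots; this is the meaning that actually arises in Appendix~\ref{app:volterra}, where such expressions appear after substituting $y\mapsto y+x$ into a symbol $\mathcal{O}(x^0 y^0)$. With that reading, $\PD_x$ hits both slots via the chain rule, giving
\[
|\PD_x^j f|\lesssim \sum_{l=0}^{j} x^{-(j-l)}(x+y)^{-l},
\]
and the substance of the lemma is the bound $(x+y)^{-l}\le x^{-l}$ (valid because $x,y\ge 0$), which collapses the sum to $x^{-j}$. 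The two conventions are equivalent on $[0,\infty)^2$---precisely because of this inequality---so your proof is not wrong, but it hides the one estimate the paper wants to record. If you keep your version, you should either verify the equivalence of the conventions or simply note that each $x$-derivative of a symbol in the composite variable $x+y$ still costs at most $x^{-1}$.
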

\begin{proof} We have
$$
| \PD^j_x \mathcal{O}(x^0(x+y)^0)| \lesssim \sum_{l=0}^j x^{-(j-l)}(x+y)^{-l} \lesssim x^{-j},
$$
since $x+y \geq x$, so $(x+y)^{-l} \leq x^{-l}$.
\end{proof}

\begin{lemma}\label{lemma:diffsymb1} Let $x \in [0,\infty)$, $y, \omega \in \mathbb{R}$ such that $y$ and $\omega$ have the same sign, and $k \in \mathbb{N}_0$. Then
$$
\PD^k_\omega \mathcal{O}\lf( \lf( \frac{y}{\omega} + x\rt) ^0 \rt) = \mathcal{O}\lf( \lf( \frac{y}{\omega} + x\rt) ^0 \omega^{-k} \rt). 
$$
\end{lemma}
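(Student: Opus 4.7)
The plan is to view $f$ as a function of the composite variable $z := y/\omega + x$, so the hypothesis $f = \mathcal{O}((y/\omega + x)^0)$ reads $|\PD_z^j f| \lesssim z^{-j}$ for every $j \in \mathbb{N}_0$ (identifying $\PD_z$ with $\PD_x$, since for fixed $y,\omega$ the two coincide). The $\omega$-derivatives are then accessed by the chain rule.

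A direct induction gives $\PD_\omega^i z = (-1)^i i!\, y/\omega^{i+1}$ for every $i \geq 1$. Faà di Bruno's formula therefore yields
\[
\PD_\omega^k f = \sum_{\pi} c_\pi \, \big(\PD_z^{|\pi|} f\big) \prod_{i \geq 1} \lf( \frac{y}{\omega^{i+1}} \rt)^{m_i},
\]
where the sum runs over all integer partitions $\pi = (m_1, m_2, \ldots)$ satisfying $\sum_i i\, m_i = k$, with $|\pi| := \sum_i m_i$, and $c_\pi$ are combinatorial constants. Substituting the hypothesis $|\PD_z^{|\pi|} f| \lesssim z^{-|\pi|}$, every summand is bounded by a constant times $|y|^{|\pi|} / \big(|\omega|^{k+|\pi|} z^{|\pi|}\big)$.

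At this point the sign hypothesis is decisive: because $y$ and $\omega$ have the same sign, $y/\omega = |y|/|\omega| \geq 0$, so $|\omega|\, z = |y| + |\omega|\, x \geq |y|$. Hence $\lf(|y|/(|\omega| z)\rt)^{|\pi|} \leq 1$, and each summand is bounded by $|\omega|^{-k}$, uniformly in $x$. This proves the estimate for the pure derivative $\PD_\omega^k f$. For the mixed derivatives $\PD_x^j \PD_\omega^k f$, the identical Faà di Bruno expansion applies with $\PD_z^{|\pi|} f$ replaced by $\PD_z^{|\pi|+j} f$, which is controlled by $z^{-|\pi|-j}$; the same absorption then produces the symbol bound $z^{-j}|\omega|^{-k}$, which is exactly the claim. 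The main obstacle is really only the bookkeeping of the Faà di Bruno expansion; no genuine analytic subtlety arises beyond using the sign hypothesis to absorb $|y|$ into $|\omega| z$.
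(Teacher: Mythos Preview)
Your proof is correct and follows essentially the same route as the paper's: the paper establishes by induction the identity $\PD_\omega^k f = \sum_{m=1}^k \mathcal{O}\big((y/\omega+x)^{-m}\big)\, y^m \omega^{-k-m}$, which is precisely the Fa\`a di Bruno expansion you write down grouped by $|\pi|=m$, and then uses the same sign-based absorption $|y|\le |y+\omega x|=|\omega|z$ to bound each term by $|\omega|^{-k}$. Your use of Fa\`a di Bruno is just a compact repackaging of the paper's induction; the key analytic step is identical.
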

\begin{proof} First, the claim is true for $k=0$. For $k\geq 1$, we show that
$$
\PD^k_\omega \mathcal{O}\lf( \lf( \frac{y}{\omega} + x\rt) ^0 \rt) = \sum_{m=1}^k \mathcal{O}\lf( \lf( \frac{y}{\omega} + x\rt)^{-m} \rt) y^m \omega^{-k-m}.
$$
We do this by induction. When $k=1$, we have
$$
\PD_\omega \mathcal{O}\lf( \lf( \frac{y}{\omega} + x\rt) ^0 \rt) = \mathcal{O}\lf( \lf( \frac{y}{\omega} + x\rt)^{-1} \rt) y \omega^{-2}
$$
which is what the formula claimed. Next, suppose that this is true for all derivatives up to order $k-1$. Then
\begin{align*}
& \PD_\omega^k \mathcal{O}\lf( \lf( \frac{y}{\omega} + x\rt) ^0 \rt)  = \PD_\omega   \sum_{m=1}^{k-1} \mathcal{O}\lf( \lf( \frac{y}{\omega} + x\rt)^{-m} \rt) y^m \omega^{-(k-1)-m} \\
= &  \sum_{m=2}^{k} \mathcal{O}\lf( \lf( \frac{y}{\omega} + x\rt)^{-m} \rt) y^{m} \omega^{-k-m} + \sum_{m=1}^{k-1}  \mathcal{O}\lf( \lf( \frac{y}{\omega} + x\rt)^{-m} \rt) y^m \omega^{-k-m}\\
= & \sum_{m=1}^{k}  \mathcal{O}\lf( \lf( \frac{y}{\omega} + x\rt)^{-m} \rt) y^m \omega^{-k-m}.
\end{align*}
So we can estimate
\begin{align*}
\lf|  \PD_\omega^k \mathcal{O}\lf( \lf( \frac{y}{\omega} + x\rt) ^0 \rt)  \rt| \lesssim &  \sum_{m=1}^{k} \lf| \lf( \frac{y}{\omega} + x\rt)^{-m}  y^m \omega^{-k-m} \rt| \\
= & \sum_{m=1}^{k} \lf| \lf( \frac{\omega}{y+ \omega x} \rt)^m y^m \omega^{-k-m} \rt| 
\lesssim  \lf| \frac{y}{y+\omega x} \rt|^m |\omega|^{-k} \leq | \omega |^{-k},
\end{align*}
since $|y| \leq |y+\omega x|$ for $\omega$ and $y$ with the same sign and $x \geq 0$.
\end{proof}

\begin{lemma}\label{lemma:diffsymb2} Let $x,y,\omega,k$ be as in the previous lemma. Then
$$
\PD^k_\omega \lf( e^{-\frac{y}{\omega}} \mathcal{O}\lf(\lf( \frac{y}{\omega}+x\rt)^0 \rt) \rt) = \sum_{m=0}^k e^{-\frac{y}{\omega}} \mathcal{O}\lf( \lf( \frac{y}{\omega}+x\rt)^0 y^m \omega^{-k-m} \rt).
$$
\end{lemma}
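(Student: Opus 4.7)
The plan is to apply the Leibniz rule to the product $e^{-y/\omega} \cdot \mathcal{O}((y/\omega+x)^0)$, combining a direct induction on the derivatives of the exponential with the already-established Lemma \ref{lemma:diffsymb1} for the second factor.

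First, I would establish by induction on $j \geq 1$ that
\[
\PD_\omega^j e^{-y/\omega} = e^{-y/\omega} \sum_{l=1}^{j} c_{j,l}\, y^l \omega^{-j-l}
\]
for some numerical constants $c_{j,l}$. The base case $j=1$ gives $\PD_\omega e^{-y/\omega} = e^{-y/\omega}\, y\omega^{-2}$. For the inductive step, differentiating the stated sum produces one family of terms from hitting $e^{-y/\omega}$ (raising both powers of $y$ and the negative power of $\omega$) and another from differentiating $\omega^{-j-l}$, and each term remains of the claimed shape $e^{-y/\omega}\, y^{l'} \omega^{-(j+1)-l'}$ with $1 \leq l' \leq j+1$.

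Next, I would apply the Leibniz rule:
\[
\PD_\omega^k\!\left( e^{-y/\omega} \mathcal{O}\!\left(\!\left(\tfrac{y}{\omega}+x\right)^{0}\right)\right)
= \sum_{j=0}^{k} \binom{k}{j} \PD_\omega^{k-j} e^{-y/\omega} \cdot \PD_\omega^{j} \mathcal{O}\!\left(\!\left(\tfrac{y}{\omega}+x\right)^{0}\right).
\]
By Lemma \ref{lemma:diffsymb1}, each $\PD_\omega^{j}$ factor on the right is $\mathcal{O}((y/\omega+x)^0\,\omega^{-j})$. Inserting the formula for $\PD_\omega^{k-j} e^{-y/\omega}$ from the induction (and using $e^{-y/\omega}$ itself when $j=k$), the generic summand becomes
\[
e^{-y/\omega}\, y^l \omega^{-(k-j)-l} \cdot \mathcal{O}\!\left(\!\left(\tfrac{y}{\omega}+x\right)^{0}\omega^{-j}\right)
= e^{-y/\omega}\, \mathcal{O}\!\left(\!\left(\tfrac{y}{\omega}+x\right)^{0} y^l \omega^{-k-l}\right),
\]
with $l$ ranging from $0$ (the $j=k$ contribution) up to $k$. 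Reindexing by $m=l$ collects the result into the stated sum $\sum_{m=0}^{k} e^{-y/\omega} \mathcal{O}((y/\omega+x)^0\, y^m \omega^{-k-m})$.

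The argument is essentially mechanical; the only thing one needs to be careful about is the bookkeeping in the induction for $\PD_\omega^j e^{-y/\omega}$ and the verification that multiplying an $\mathcal{O}((y/\omega+x)^0\omega^{-j})$ symbol by an explicit monomial $y^l\omega^{-(k-j)-l}$ preserves the symbol class with the claimed exponents. Both of these follow directly from the definition of symbol type and the product rule already exploited earlier in the appendix, so no substantive obstacle arises.
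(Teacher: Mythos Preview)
Your argument is correct. The paper's proof uses the same ingredients but organizes them slightly differently: instead of first isolating a formula for $\PD_\omega^j e^{-y/\omega}$ and then invoking Leibniz, it runs a single induction on $k$ directly on the full expression, applying one $\PD_\omega$ to the inductive sum and noting that hitting $e^{-y/\omega}$ shifts $m\to m+1$ while hitting the $\mathcal{O}$-factor (via Lemma \ref{lemma:diffsymb1}) keeps $m$ fixed. Both routes are equally elementary and of the same length; your decomposition makes the role of each factor slightly more explicit, while the paper's induction avoids having to state the intermediate formula for $\PD_\omega^j e^{-y/\omega}$.
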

\begin{proof} The claim is true for $k=0$. Now suppose the statement is true for up to $k-1$ derivatives. Then
\begin{align*}
& \PD^k_\omega \lf( e^{-\frac{y}{\omega}} \mathcal{O}\lf(\lf( \frac{y}{\omega}+x\rt)^0 \rt) \rt) =  \PD_\omega \sum_{m=0}^{k-1} e^{-\frac{y}{\omega}} \mathcal{O}\lf( \lf( -\frac{y}{\omega}+x\rt)^0 y^m \omega^{-(k-1)-m} \rt) \\
 = &  \sum_{m=1}^{k} e^{-\frac{y}{\omega}} \mathcal{O}\lf( \lf( \frac{y}{\omega}+x\rt)^0 y^m \omega^{-k-m} \rt) + \sum_{m=0}^{k-1} e^{-\frac{y}{\omega}} \mathcal{O}\lf( \lf( \frac{y}{\omega}+x\rt)^{0} y^m \omega^{-k-m} \rt)  \\
 = & \sum_{m=0}^k e^{-\frac{y}{\omega}} \mathcal{O}\lf( \lf( \frac{y}{\omega}+x\rt)^0 y^m \omega^{-k-m} \rt),
\end{align*}
as required.
\end{proof}

\section{Derivatives of the Volterra solution}\label{app:volterra}

Here we show that the solution to the Volterra equation \eqref{eqn:ODEh1} is of symbol type. We introduce the variables $x=\varphi(\rho)$, $y=\varphi(s)$ with $\varphi (s) = \frac{1}{2} \log  \frac{1+s}{1-s}$. Define $\tilde{H}(x;\lambda):= h_1(\varphi^{-1}(x);\lambda)$, by change of variables, \eqref{eqn:ODEh1} is transformed into a Volterra equation of $\tilde{H}$,
$$
\tilde{H}(x;\lambda) = 1 + \int_x^\infty \tilde{K}(x,y;\lambda) \tilde{H}(y;\lambda) dy,
$$
where $\tilde{K}(x,y;\lambda) := K(\varphi^{-1} (x), \varphi^{-1}(y) ;\lambda) (\varphi^{-1})'(y)$. Notice that we have $|\PD^j_y \varphi^{-1}(y)| \lesssim_j e^{-2y}$ for all $j \in \mathbb{N}$ and $y \geq 0$, and
$$
\lf(\frac{1-\rho}{1+\rho} \rt)^{\frac{1}{2}-\lambda}\lf(\frac{1-s}{1+s} \rt)^{-\frac{1}{2}+\lambda} = e^{(1-2\lambda)(y-x)} = e^{2 i \omega(y-x)} e^{(1-2\epsilon)(y-x)}.
$$
Since $\epsilon \in [0,\frac{1}{4}]$ and $x\leq y$, we also have $e^{(1-2\epsilon)(y-x)} \leq e^{(y-x)}$. Hence we can write
$$
\tilde{K}(x,y;\lambda) (x,y; \lambda) = a(y;\lambda) + b(x,y;\lambda) e^{-2 i \omega(y-x)},
$$
where $a(y;\lambda) =  e^{-2y} \mathcal{O}(y^0 \lla \omega \rra^{-1})$ and $b(x,y;\lambda) = e^{-x} e^{-y} \mathcal{O}(x^0 y^0 \lla \omega \rra^{-1})$.

We rewrite this equation further, by setting
$$
\tilde{H} (x;\lambda) = 1+ e^{-2x} H(x;\lambda),
$$
so
$$
H(x;\lambda) := e^{2x} (H(x;\lambda)-1).
$$
Then we obtain the Volterra equation for $H$, given by
\begin{equation}\label{eqn:volterratrans}
H(x;\lambda) = g(x;\lambda) + \int_x^\infty \hat{K}(x,y;\lambda) H(y;\lambda) dy
\end{equation}
where
$$
g(x;\lambda) := e^{2x} \int_x^\infty \tilde{K}(x,y;\lambda) dy
$$
and
$$
\hat{K}(x,y;\lambda) := \tilde{K}(x,y;\lambda) e^{2x} e^{-2y} = \hat{a}(x,y;\lambda) + \hat{b}(x,y;\lambda) e^{-2i\omega(y-x)},
$$
with $\hat{a}(x,y;\lambda) = e^{2x} e^{-4y} \mathcal{O}(y^0 \lla \omega \rra^{-1})$ and $\hat{b}(x,y ; \lambda) = e^{x} e^{-3y} \mathcal{O}(x^0y^0 \lla \omega \rra^{-1})$.

Now we turn to \eqref{eqn:volterratrans}. We would like to show that a solution $H$ to \eqref{eqn:volterratrans} is of symbol type, then conclude that a solution to \eqref{eqn:ODEh1} is also of symbol type. The argument is broken down into a few steps.

\textbf{Step 1: } Firstly, from \eqref{eqn:h1bound} we have $|H(x;\lambda)| \lesssim \lla \omega \rra^{-1}$. This also follows directly from Volterra iterations of \eqref{eqn:volterratrans}, since
$$
|g(x;\lambda)| \lesssim  e^{2x} \int_x^\infty e^{-2y} + e^{-x} e^{-y} dy \lla \omega \rra^{-1} \lesssim \lla \omega \rra^{-1} .
$$
Hence $\| g(\cdot;\lambda)\|_{L^\infty} \lesssim \lla \omega \rra^{-1} $. Moreover, the Kernel $\hat{K}$ satisfies
$$
|\hat{K}(x,y;\lambda) | \lesssim  \lf( e^{2x}e^{-4y} + e^x e^{-3y} \rt) \lla \omega\rra^{-1},
$$
so
$$
\int_{\delta_1\lla\omega\rra^{-1}}^\infty  \sup_{x\in(\delta_1\lla\omega\rra^{-1},y)}|\hat{K}(x,y;\lambda) | dy \lesssim  \int_{\delta_1\lla\omega\rra^{-1}}^\infty  \lf( e^{2x}e^{-4y} + e^x e^{-3y} \rt) dy \lla \omega\rra^{-1} \lesssim  \lla \omega\rra^{-1}.
$$

\textbf{Step 2:} We use induction to show that $|\PD^j_x H(x;\lambda)| \lesssim x^{-j} \lla \omega \rra^{-1}$ for all $j\in \mathbb{N}_0$. This is true for $j=0$ from step 1. Now suppose that $|\PD^l_x H(x;\lambda)| \lesssim x^{-l} \lla \omega \rra^{-1}$ for all $l=0,\cdots, j-1$. Taking derivatives of \eqref{eqn:volterratrans} we have
\begin{align*}
\PD^j_x H(x;\lambda) 
= & g_{j,0} (x;\lambda) + \int_x^\infty \hat{K}(x,y;\lambda) \PD^j_y h(y;\lambda) dy
\end{align*}
where
$$
g_{j,0} (x;\lambda) := \PD^j_x g(x;\lambda) + \sum_{l=0}^{j-1} \begin{pmatrix}  j  \\ l \end{pmatrix} \int_0^\infty \PD^{j-l}_x \hat{K}(x,y+x;\lambda) \PD^l _x H(y+x;\lambda) dy.
$$
We compute
\begin{align*}
\PD^j_x g(x;\lambda) = & \PD^j_x \lf( e^{2x} \int_0^\infty \tilde{K}(x,y+x;\lambda) dy \rt) \\
= & \int_0^\infty e^{-2y} \mathcal{O}((y+x)^{-j} \lla \omega \rra^{-1}) + e^{-y}\mathcal{O}(x^{-j}(y+x)^0 \lla \omega \rra^{-1}) e^{-2i\omega y} dy.
\end{align*}
Hence $|\PD^j_x g(x;\lambda)| \lesssim x^{-j} \lla \omega \rra^{-1}$. Moreover,
\begin{align*}
& | \PD^{j-l}_x \hat{K}(x,y+x;\lambda) | \\
= & \lf| e^{-4y} \PD_x^{j-l} \lf(e^{-2x} \mathcal{O}((y+x)^0 \lla \omega \rra^{-1}) \rt) \rt| + \lf| e^{-3y}  \PD_x^{j-l} \lf(e^{-2x}\mathcal{O}(x^0 (y+x)^0 \lla \omega \rra^{-1}) \rt) \rt| \\
\lesssim & x^{-(j-l)} \lla x \rra^{j-l} \lf( e^{-4y} e^{-2x} +  e^{-3y} e^{-2x} \rt) \lla \omega \rra^{-1}.
\end{align*}
We also have
$$
| \PD^l_x H(y+x;\lambda) | \lesssim (y+x)^{-l} \lla \omega \rra^{-1} \lesssim x^{-l} \lla \omega \rra^{-1} 
$$
for $l=0,\cdots,j-1$ from the inductive hypothesis. Hence
\begin{align*}
\lf|\sum_{l=0}^{j-1} \begin{pmatrix}  j  \\ l \end{pmatrix}  \int_0^\infty \PD^{j-l}_x \hat{K}(x,y+x;\lambda) \PD^l _x h(y+x;\lambda) dy \rt| \lesssim  e^{-2x} x^{-j} \lla x \rra^j \lla \omega \rra^{-1} \lesssim  x^{-j} \lla \omega \rra^{-1}.
\end{align*}
Therefore $|g_{j,0}(x;\lambda)| \lesssim x^{-j} \lla \omega \rra^{-1}$.

Now define $\tilde{g}_{j,0}(x;\lambda) := x^j g_{j,0}(x;\lambda)$ and $H_{j,0}(x;\lambda):=x^j \PD^j_x H (x;\lambda)$. Then $H_{j,0}$ solves the Volterra equation
$$
H_{j,0}(x;\lambda) = \tilde{g}_{j,0}(x;\lambda) + \int_x^\infty \hat{K}(x,y;\lambda) x^j y^{-j} H_{j,0}(y;\lambda) dy
$$
with $\| \tilde{g}_{j,0}(\cdot ;\lambda)\| _{L^\infty} \lesssim \lla \omega \rra^{-1}$ and $|\hat{K}(x,y;\lambda) x^j y^{-j}| \leq |\hat{K}(x,y;\lambda)|$. Hence Volterra iterations give the bound $\|H_{j,0}(\cdot;\lambda)\|_{L^\infty} \lesssim \lla \omega \rra^{-1}$, and therefore $|\PD^j_x H (x;\lambda)| \lesssim x^{-j} \lla \omega \rra^{-1}$.

\textbf{Step 3: } We show that $|\PD^j_x \PD^k_\omega H(x;\lambda)| \lesssim x^{-j}\lla \omega \rra^{-1-k}$ for all $j\in \mathbb{N}_0$ and $k\in \mathbb{N}_0$, by induction on $k$. The case when $k=0$ is done in step 2. Now suppose that we have $|\PD^j_x \PD^m_\omega H(x;\lambda)| \lesssim x^{-j}\lla \omega \rra^{-1-m}$ for all $j\in \mathbb{N}_0$ and $m = 0,1,\cdots k-1$.

Differentiating \eqref{eqn:volterratrans} directly, we have
\begin{align*}
\PD^j_x \PD^k_\omega H(x;\lambda) 
& = g_{j,k}(x;\lambda) + \int_x^\infty \hat{K}(x,y;\lambda) \PD^j_y \PD^k_\omega H(y;\lambda) dy
\end{align*}
where
\begin{align*}
 g_{j,k}(x;\lambda)  := & \PD^j_x \PD^k_\omega g(x;\lambda) \\
 & + \sum_{\substack{0\leq l \leq j, 0\leq m\leq k, \\ (l,m)\neq (j,k)}} \begin{pmatrix} j \\ l \end{pmatrix} \begin{pmatrix} k \\ m \end{pmatrix} \int_0^\infty \PD^{j-l}_x \PD^{k-m}_\omega \hat{K}(x,y+x;\lambda) \PD^l_x \PD^m_\omega h(y+x;\lambda) dy.
\end{align*}
Using the forms of $g$ and $\hat{K}$ and the inductive hypothesis, we find that $|g_{j,k}(x;\lambda)| \lesssim x^{-j} \lla \omega \rra^{-1}$. Then following the same argument as in step 2, Volterra iterations give the bound $|\PD^j_x \PD^k_\omega H(x;\lambda)| \lesssim x^{-j} \lla \omega \rra^{-1}$.

So now we can assume that $|\omega|\geq 1$ and use a scaling argument to obtain the decay in $\omega$. Observe that both $g$ and $\hat{K}$ have a symbol part and an oscillatory part. Since the symbol part already gives enough decay, we only need to be concerned with the oscillatory part. In $g(x;\lambda)$, we write
\begin{align*}
e^{2x} \int_x^\infty  b(x,y;\lambda) e^{-2i\omega(y-x)} dy = & e^x \int_x^\infty e^{-y} \mathcal{O}(x^0 y^0 \lla \omega \rra^{-1}) e^{-2i\omega(y-x)} dy \\
= & \int_0^\infty e^{- \frac{y}{\omega}} \mathcal{O} \lf(x^0\lf( \frac{y}{\omega} + x \rt)^0  \omega ^{-2} \rt) e^{-2iy} dy.
\end{align*}
Then no derivative will fall on the oscillatory term. Using Lemmas \ref{lemma:diffsymb1} and \ref{lemma:diffsymb2}, we have 
\begin{align*}
& \PD^j_x \PD^k_\omega \int_0^\infty e^{- \frac{y}{\omega}} \mathcal{O} \lf(x^0\lf( \frac{y}{\omega} + x \rt)^0  \omega ^{-2} \rt) e^{-2iy} dy \\
= & \sum_{m=0}^k \int_0^\infty e^{-y} \mathcal{O} \lf(x^{-j} \lf( y+ x \rt)^0 y^m \omega ^{-1-k} \rt) e^{-2i\omega y} dy .
\end{align*}
Taking the absolute value we have
\begin{align*}
\lf| \sum_{m=0}^k \int_0^\infty e^{-y} \mathcal{O} \lf(x^{-j} \lf( y+ x \rt)^0 y^m \omega ^{-1-k} \rt) e^{-2i\omega y} dy  \rt| & \lesssim \sum_{m=0}^k x^{-j} \omega^{-1-k} \int_0^\infty e^{-y} y^m dy \\
& \lesssim x^{-j} \omega^{-1-k}.
\end{align*}
Hence we indeed have $| \PD^j_x \PD^k_\omega g(x;\lambda) | \lesssim x^{-j} \omega ^{-1-k}$. Similarly, the $\hat{a}$ part of $\hat{K}$ already gives enough decay, and we write
$$
\int_x^\infty \hat{b}(x,y;\lambda) e^{-2i\omega(y-x)} H(y;\lambda) dy = \int_0^\infty \hat{b}\lf( x,\frac{y}{\omega}+x;\lambda \rt) e^{-2i y } H \lf( \frac{y}{\omega}+x;\lambda \rt) \frac{dy}{\omega} .
$$
So we can have
\begin{align*}
 &\PD^j_x \PD^k_\omega \int_0^\infty \hat{b}\lf( x,\frac{y}{\omega}+x;\lambda \rt) e^{-2i y } H\lf( \frac{y}{\omega}+x;\lambda \rt) \frac{dy}{\omega} \\
& =  b_{j,k}(x;\lambda) + \int_x^\infty \hat{b}(x,y;\lambda) \PD^j_x \PD^k_m H(y;\lambda) e^{-2i\omega(y-x)} dy,
\end{align*}
where
\begin{align*}
b_{j,k}(x;\lambda) : = & \sum_{\substack{0\leq l \leq j, 0\leq m\leq k, \\ (l,m)\neq (j,k)}} \begin{pmatrix} j \\ l \end{pmatrix} \begin{pmatrix} k \\ m \end{pmatrix} \int_0^\infty e^{-2i y } \PD^{j-l}_x \PD^{k-m}_\omega \frac{\hat{b}\lf( x,\frac{y}{\omega}+x;\lambda \rt)}{\omega} \PD^l_x \PD^m_\omega  H\lf( \frac{y}{\omega}+x;\lambda \rt) dy  \\
& + \int_0^\infty e^{-2i y } \frac{\hat{b}\lf( x,\frac{y}{\omega}+x;\lambda \rt)}{\omega} \lf( \PD_x^j \sum_{m=0}^{k-1} \begin{pmatrix}k\\m\end{pmatrix} \PD^{k-m}_1 \PD^m_2  H\lf( \frac{y}{\omega}+x;\lambda \rt) \rt) dy.
\end{align*}
By inductive hypothesis, $H$ behaves like a symbol when the second slot is differentiated up to $k-1$ times. So we have
$$
\lf| \PD^l_x \PD^m_\omega  H\lf( \frac{y}{\omega}+x;\lambda \rt)\rt| \lesssim  \lf( \frac{y}{\omega}+x \rt)^{-l} \omega^{-1-m} \lesssim x^{-l} \omega^{-1-m},
$$
for all $l \in \mathbb{N}_0$ and $m=1,\cdots, k-1$. Moreover,
$$
\lf|\PD^j_x  \sum_{m=0}^{k-1} \begin{pmatrix}k\\m\end{pmatrix} \PD^{k-m}_1 \PD^m_2  H\lf( \frac{y}{\omega}+x;\lambda \rt) \rt| \lesssim  \lf( \frac{y}{\omega}+x \rt)^{-j} \omega^{-1-k} \lesssim x^{-j} \omega^{-1-k}.
$$
Similarly,
\begin{align*}
\PD^{j-l}_x \PD^{k-m}_\omega \frac{\hat{b}\lf( x,\frac{y}{\omega}+x;\lambda \rt)}{\omega} = & \PD^{j-l}_x \PD^{k-m}_\omega \lf( e^{x}e^{-3\lf(\frac{y}{\omega}-x\rt)}\mathcal{O}\lf(x^0 \lf(\frac{y}{\omega}-x\rt)^0 \omega^{-2} \rt) \rt) \\
= & \PD^{j-l}_x \lf( e^{-2x} \sum_{m'=0}^{k-m} e^{-3\frac{y}{\omega}}\mathcal{O}\lf(x^0 \lf(\frac{y}{\omega}-x\rt)^0 y^{m'} \omega^{-2-(k-m)-m'} \rt)\rt),
\end{align*}
and
\begin{align*}
& \lf| \PD^{j-l}_x \lf( e^{-2x} \sum_{m'=0}^{k-m} \mathcal{O}\lf(x^0 \lf(\frac{y}{\omega}-x\rt)^0 y^{m'} \omega^{-(k-m)-m'} \rt)\rt) \rt| \\
& \lesssim  e^{-2x} x^{-(j-l)} \lla x\rra^{j-l}\sum_{m'=0}^{k-m} y^{m'} \omega^{-2-(k-m)-m'} .
\end{align*}
Now we can estimate
\begin{align*}
|b_{j,k}(x;\lambda)|  \lesssim & e^{-2x} x^{-j} \lla x \rra^j \lf( \sum_{m=0}^{k}\sum_{m'=0}^{k-m} \int_0^\infty e^{-3\frac{y}{\omega}} y^{m'}  dy \omega^{-3-k-m'} + e^{-2x} \int_0^\infty e^{-3\frac{y}{\omega}} dy \omega^{-3-k} \rt) \\
\lesssim &  x^{-j}\lf( \sum_{m'=0}^k \int_0^\infty e^{-3y} y^{m'} dy \omega^{-2-k} + \int_0^\infty e^{-3y} dy \omega^{-2-k}  \rt) \\
\lesssim & x^{-j} \omega^{-2-k}.
\end{align*}
Hence we conclude that
$$
\lf| g_{j,k}(x;\lambda) \rt | \lesssim x^{-j} \lla \omega \rra ^{-1-k},
$$
and solving the Volterra equation we obtain $|\PD^j_x \PD^k_\omega H(x;\lambda)| \lesssim x^{-j} \lla \omega \rra^{-1-k}$.

\textbf{Transforming back to $h_1$:} we show that $| \PD^j_\rho \PD^k_\omega h_1(\rho;\lambda)| \lesssim \rho^{-j} (1-\rho)^{1-j} \lla \omega \rra^{-1-k}$ for $j+k \geq 1$.

So far we have shown that $\tilde{H}(x;\lambda) = 1+ e^{-2x} \mathcal{O}(x^0 \lla \omega \rra^{-1})$. Hence for any $j+k \geq 1$, we have
$$
\lf| \PD^j_x \PD^k_\omega \tilde{H}(x;\lambda) \rt| \lesssim e^{-2x} x^{-j} \lla x \rra^j \lla \omega \rra^{-1-k}.
$$
Recall that $\varphi (\rho) = \frac{1}{2} \log \frac{1+\rho}{1-\rho} $, so we have
$$
\lf| \PD^j_\rho \varphi (\rho)  \rt| \lesssim (1-\rho)^{-j}
$$
for $\rho\in(0,1)$ and $j\in \mathbb{N}$. Now
$$
\PD^j_\rho  h_1(\rho;\lambda) = \PD^j_\rho \tilde{H}(\varphi(\rho);\lambda) = \sum_{l=1}^j \tilde{H}^{(j,0)}(\varphi(\rho);\lambda) \tilde{\varphi}_{j,l}(\rho)
$$
where $|\tilde{\varphi}_{j,l}(\rho)| \lesssim (1-\rho)^{-j}$. So
\begin{align*}
|\PD^j_\rho \PD^k_\omega  h_1(\rho;\lambda)| \lesssim  &\sum_{l=1}^j \lf| \tilde{H}^{(j,k)}(\varphi(\rho);\lambda) \rt| (1-\rho)^{-j} \\
\lesssim & \varphi(\rho)^{-j} \lla \varphi(\rho) \rra^j e^{-2\varphi(\rho)} \lla \omega \rra^{-1-k} (1-\rho)^{-j} \\
\lesssim & \rho^{-j} (1-\rho)^{1-j} \lla \omega \rra^{-1-k}
\end{align*}
as desired.

\bibliography{phd-bib}{}

\begin{thebibliography}{10}

\bibitem{btt2004}
P.~Bizo{\'n}, T.~Chmaj, and Z.~Tabor.
\newblock On blowup for semilinear wave equations with a focusing nonlinearity.
\newblock {\em Nonlinearity}, 17(6):2187, 2004.

\bibitem{bukr2017}
S.~Burzio and J.~Krieger.
\newblock Type {II} blow up solutions with optimal stability properties for the
  critical focussing nonlinear wave equation on $\mathbb{R}^{3+1}$.
\newblock {\em arXiv preprint arXiv:1709.06408}, 2017.

\bibitem{dafa2008}
P.~D'Ancona and L.~Fanelli.
\newblock Strichartz and smoothing estimates of dispersive equations with
  magnetic potentials.
\newblock {\em Communications in Partial Differential Equations},
  33(4-6):1082--1112, 2008.

\bibitem{donn2011}
R.~Donninger.
\newblock On stable self-similar blowup for equivariant wave maps.
\newblock {\em Communications on Pure and Applied Mathematics},
  64(8):1095--1147, 2011.

\bibitem{donn2017}
R.~Donninger.
\newblock Strichartz estimates in similarity coordinates and stable blowup for
  the critical wave equation.
\newblock {\em Duke Mathematical Journal}, 166(9):1627--1683, 2017.

\bibitem{dss2011}
R.~Donninger, W.~Schlag, and A.~Soffer.
\newblock A proof of price's law on schwarzschild black hole manifolds for all
  angular momenta.
\newblock {\em Advances in Mathematics}, 226(1):484--540, 2011.

\bibitem{dosc2012}
R.~Donninger and B.~Sch\"orkhuber.
\newblock Stable self-similar blow up for energy subcritical wave equations.
\newblock {\em Dynamics Of Partial Differential Equations}, 9(1):63--87, 2012.

\bibitem{dosc2014}
R.~Donninger and B.~Sch\"{o}rkhuber.
\newblock Stable blow up dynamics for energy supercritical wave equations.
\newblock {\em Transactions of the American Mathematical Society},
  366(4):2167--2189, 2014.

\bibitem{dosc2016}
R.~Donninger and B.~Sch{\"o}rkhuber.
\newblock On blowup in supercritical wave equations.
\newblock {\em Communications in Mathematical Physics}, 346(3):907--943, 2016.

\bibitem{dosc2017}
R.~Donninger and B.~Sch\"orkhuber.
\newblock Stable blowup for wave equations in odd space dimensions.
\newblock {\em Annales de l'Institut Henri Poincare (C) Non Linear Analysis},
  34(5):1181--1213, 2017.

\bibitem{dsa2012}
R.~Donninger, B.~Sch\"orkhuber, and P.~C. Aichelburg.
\newblock On stable self-similar blow up for equivariant wave maps: The
  linearized problem.
\newblock {\em Annales Henri Poincar\'e}, 13:103--144, 2012.

\bibitem{dkm2011u}
T.~Duyckaerts, C.~Kenig, and F.~Merle.
\newblock Universality of blow-up profile for small radial type {II} blow-up
  solutions of the energy-critical wave equation.
\newblock {\em Journal of the European Mathematical Society}, 13(3):533--599,
  2011.

\bibitem{dkm2012}
T.~Duyckaerts, C.~Kenig, and F.~Merle.
\newblock Profiles of bounded radial solutions of the focusing, energy-critical
  wave equation.
\newblock {\em Geometric and Functional Analysis}, 22(3):639--698, 2012.

\bibitem{dkm2012u}
T.~Duyckaerts, C.~Kenig, and F.~Merle.
\newblock Universality of the blow-up profile for small type {II} blow-up
  solutions of the energy-critical wave equation: the nonradial case.
\newblock {\em Journal of the European Mathematical Society}, 14(5):1389--1454,
  2012.

\bibitem{dkm2016}
T.~Duyckaerts, C.~Kenig, and F.~Merle.
\newblock Concentration-compactness and universal profiles for the non-radial
  energy critical wave equation.
\newblock {\em Nonlinear Analysis}, 138:44--82, 2016.

\bibitem{enna2000}
K.-J. Engel and R.~Nagel.
\newblock {\em One-parameter semigroups for linear evolution equations}, volume
  194 of {\em Grad. Texts in Math.}
\newblock Springer, New York, 2000.

\bibitem{hira2012}
M.~Hillairet and P.~Rapha{\"e}l.
\newblock Smooth type {II} blow-up solutions to the four-dimensional
  energy-critical wave equation.
\newblock {\em Analysis \& PDE}, 5(4):777--829, 2012.

\bibitem{jend2017}
J.~Jendrej.
\newblock Construction of type {II} blow-up solutions for the energy-critical
  wave equation in dimension 5.
\newblock {\em Journal of Functional Analysis}, 272(3):866--917, 2017.

\bibitem{kato1995}
T.~Kato.
\newblock {\em Perturbation theory for linear operators}.
\newblock Springer-Verlag, Berlin Heidelberg New York, 1995.
\newblock Reprint of the 1980 edition.

\bibitem{krie2017}
J.~Krieger.
\newblock On stability of type {II} blow up for the critical nlw on
  $\mathbb{R}^{3+1}$.
\newblock {\em arXiv preprint arXiv:1705.03907}, 2017.

\bibitem{krsc2014}
J.~Krieger and W.~Schlag.
\newblock Full range of blow up exponents for the quintic wave equation in
  three dimensions.
\newblock {\em Journal de Math{\'e}matiques Pures et Appliqu{\'e}es},
  101(6):873--900, 2014.

\bibitem{kst2009}
J.~Krieger, W.~Schlag, and D.~Tataru.
\newblock Slow blow-up solutions for the {$H^1(\mathbb{R}^3)$} critical
  focusing semilinear wave equation.
\newblock {\em Duke Mathematical Journal}, 147(1):1--53, 2009.

\bibitem{liso1995}
H.~Lindblad and C.~Sogge.
\newblock On existence and scattering with minimal regularity for semilinear
  wave equations.
\newblock {\em Journal of Functional Analysis}, 130:357--426, 1995.

\bibitem{meza2003}
F.~Merle and H.~Zaag.
\newblock Determination of the blow-up rate for the semilinear wave equation.
\newblock {\em American journal of mathematics}, 125(5):1147--1164, 2003.

\bibitem{meza2005}
F.~Merle and H.~Zaag.
\newblock Determination of the blow-up rate for a critical semilinear wave
  equation.
\newblock {\em Mathematische Annalen}, 331(2):395--416, 2005.

\bibitem{meza2007}
F.~Merle and H.~Zaag.
\newblock Existence and universality of the blow-up profile for the semilinear
  wave equation in one space dimension.
\newblock {\em Journal of Functional Analysis}, 253(1):43--121, 2007.

\bibitem{meza2008}
F.~Merle and H.~Zaag.
\newblock Openness of the set of non-characteristic points and regularity of
  the blow-up curve for the {1D} semilinear wave equation.
\newblock {\em Communications in Mathematical Physics}, 282(1):55, 2008.

\bibitem{meza2012b}
F.~Merle and H.~Zaag.
\newblock Existence and classification of characteristic points at blow-up for
  a semilinear wave equation in one space dimension.
\newblock {\em American Journal of Mathematics}, 134(3):581--648, 2012.

\bibitem{meza2015}
F.~Merle and H.~Zaag.
\newblock On the stability of the notion of non-characteristic point and
  blow-up profile for semilinear wave equations.
\newblock {\em Communications in Mathematical Physics}, 333(3):1529--1562,
  2015.

\bibitem{meza2012a}
F.~Merle, H.~Zaag, et~al.
\newblock Isolatedness of characteristic points at blowup for a 1-dimensional
  semilinear wave equation.
\newblock {\em Duke Mathematical Journal}, 161(15):2837--2908, 2012.

\bibitem{meta2012}
J.~Metcalfe and D.~Tataru.
\newblock Global parametrices and dispersive estimates for variable coefficient
  wave equations.
\newblock {\em Mathematische Annalen}, 353(4):1183--1237, 2012.

\bibitem{dlmf}
F.~W. Olver, D.~W. Lozier, R.~F. Boisvert, and C.~W. Clark, editors.
\newblock {\em NIST handbook of mathematical functions}.
\newblock \url{http://dlmf.nist.gov/}, Release 1.0.20 of 2018-09-15.

\bibitem{sss2010}
W.~Schlag, A.~Soffer, and W.~Staubach.
\newblock Decay for the wave and {S}chr{\"o}dinger evolutions on manifolds with
  conical ends, {P}art {I}.
\newblock {\em Transactions of the American Mathematical Society},
  362(1):19--52, 2010.

\bibitem{sogg1995}
C.~D. Sogge.
\newblock {\em Lectures on non-linear wave equations}, volume~2.
\newblock International Press Boston, MA, 1995.

\bibitem{tao2006}
T.~Tao.
\newblock {\em Nonlinear Dispersive Equations, Local and Global Analysis}.
\newblock American Mathematical Society, Providence, Rhode Island, 2006.

\end{thebibliography}
\bibliographystyle{abbrv}

\end{document}